\documentclass[12pt,notitlepage]{amsart}
\usepackage{latexsym,amsfonts,amssymb,amsmath,amsthm,hyperref,xypic,mathrsfs}
\usepackage{graphicx}
\usepackage{color}
\usepackage[normalem]{ulem}

\usepackage{datetime2}

\pagestyle{headings}

\let\turc\c
\usepackage{etoolbox}
\robustify\turc 
\renewcommand{\c}{\mathfrak{c}}
\newcommand{\bpm}{\begin{pmatrix}}
\newcommand{\epm}{\end{pmatrix}}
\newcommand{\Z}{\ensuremath{\mathbb{Z}}}
\newcommand{\Q}{\ensuremath{\mathbb Q}}
\newcommand{\R}{\ensuremath{\mathbb R}}
\newcommand{\eps}{\varepsilon}

\DeclareFontFamily{U}{mathx}{}
\DeclareFontShape{U}{mathx}{m}{n}{<-> mathx10}{}
\DeclareSymbolFont{mathx}{U}{mathx}{m}{n}
\DeclareMathAccent{\widehat}{0}{mathx}{"70}
\DeclareMathAccent{\widecheck}{0}{mathx}{"71}

\usepackage[inner=1.0in,outer=1.0in,bottom=1.0in, top=1.0in]{geometry}

\newcommand{\mz}{\ensuremath{\mathbb Z}}
\newcommand{\mr}{\ensuremath{\mathbb R}}

\newcommand{\mq}{\ensuremath{\mathbb Q}}
\newcommand{\mc}{\ensuremath{\mathbb C}}
\newcommand{\mn}{\ensuremath{\mathbb N}}
\newcommand{\mf}{\ensuremath{\mathbb F}}
\newcommand{\F}{\ensuremath{\mathbb F}}

\newcommand{\fp}{\ensuremath{\mathfrak p}}

\newcommand{\shortmod}{\ensuremath{\negthickspace \negthickspace \negthickspace \pmod}}

\newcommand{\sumstar}{\sideset{}{^*}\sum}
\newcommand{\sumprime}{\sideset{}{'}\sum}
\newcommand{\sumpm}{\sideset{}{^{\pm}}\sum}

\DeclareMathOperator{\lcm}{lcm}
\DeclareMathOperator{\Nm}{Nm}
\DeclareMathOperator{\Tr}{Tr}
\DeclareMathOperator{\Fr}{Fr}

\DeclareMathOperator{\Gal}{Gal}

\DeclareMathOperator{\Spec}{Spec}
\DeclareMathOperator{\GL}{GL}
\DeclareMathOperator{\PGL}{PGL}
\DeclareMathOperator{\St}{St}
\DeclareMathOperator{\Ind}{Ind}
\DeclareMathOperator{\disc}{disc}
\DeclareMathOperator{\Imag}{Im}
\DeclareMathOperator{\real}{Re}
\newcommand{\addcharmulti}{\theta}
\newcommand{\addchar}{\phi}
\renewcommand{\P}{\mathbb{P}}
\newcommand{\G}{\mathbb{G}}
\newcommand{\A}{\mathbb{A}}
\DeclareMathOperator{\Kl}{Kl}
\newcommand{\C}{\mathbb{C}}
\DeclareMathOperator{\Swan}{Swan}
\newcommand{\cK}{\mathcal{K}}
\newcommand{\cC}{\mathcal{C}}
\newcommand{\cO}{\mathcal{O}}
\renewcommand{\cL}{\mathcal{L}}
\newcommand{\cKl}{\mathcal{K}\ell}
\renewcommand{\cH}{\mathcal{H}}
\newcommand{\cF}{\mathcal{F}}
\newcommand{\sF}{\mathscr{F}}

\newcommand{\cG}{\mathcal{G}}


\theoremstyle{plain}		
	\newtheorem{mytheo}{Theorem} [section]
	
	\newtheorem{myprop}[mytheo]{Proposition}
	\newtheorem{mycoro}[mytheo]{Corollary}
     \newtheorem{mylemma}[mytheo]{Lemma}
	\newtheorem{mydefi}[mytheo]{Definition}
	
	\newtheorem{myremark}[mytheo]{Remark}

\theoremstyle{remark}

\numberwithin{equation}{section}
\numberwithin{figure}{section}

\begin{document}
\title{The cubic moment of $L$-functions for specified local component families}

\author{Yueke Hu} 
 \address{Yau Mathematical Sciences Center\\ Tsinghua University\\
	Beijing 100084\\
	China}
\email{yhumath@tsinghua.edu.cn}
 \author{Ian Petrow}
\address{  Department of Mathematics \\
 University College London \\
 25 Gordon Street \\
  London WC1H 0AY \\
   United Kingdom}
\email{i.petrow@ucl.ac.uk}

 \author{Matthew P. Young}
 
 \address{Department of Mathematics \\
 	  Rutgers University \\
 	 Piscataway \\
 	  NJ 08854-8019 \\
 		U.S.A.}		
 \email{mpy4@rutgers.edu}
 
 \thanks{Y.H. is supported by the National Key Research and Development Program of China (No. 2021YFA1000700). This work was supported by the Engineering and Physical Sciences Research Council award EP/W009838/1 (I.P.).
This material is based upon work supported by the National Science Foundation under agreement No.
DMS-2302210 (M.Y.). Any opinions, findings and conclusions or recommendations expressed in this material
are those of the authors and do not necessarily reflect the views of the National Science Foundation. 
 }

  \begin{abstract}
We prove Lindel\"of-on-average upper bounds on the cubic moment of central values of $L$-functions over certain families of $\operatorname{PGL}_2/\mathbb{Q}$ automorphic representations $\pi$ given by specifying the local representation $\pi_p$ of $\pi$ at finitely many primes. Such bounds were previously known in the case that $\pi_p$ belongs to the principal series or is a ramified quadratic twist of the Steinberg representation; here we handle the supercuspidal case.  
Crucially, we use new Petersson/Bruggeman-Kuznetsov forumulas for supercuspidal local component families recently developed by the authors. As corollaries, we derive Weyl-strength subconvex bounds for central values of $\operatorname{PGL}_2$ $L$-functions in the square-full aspect, and in the depth aspect, or in a hybrid of these two situations. A special case of our results is the Weyl-subconvex bound for all cusp forms of level $p^2$. Previously, such a bound was only known for forms that are twists from level $p$, which cover roughly half of the level $p^2$ forms.
  \end{abstract}

 \maketitle
\section{Introduction} 
\subsection{Statement of results}
Let $\sF$ be a collection  of arithmetic objects with associated $L$-functions $L(1/2,f)$ having analytic conductors $C(f)$.
A central program in the analytic theory of $L$-functions is the subconvexity problem, which is the
problem of showing
 that there exists $\delta>0$ such that for all $f\in \sF$ one has
$$L(1/2,f) \ll C(f)^{1/4-\delta}.$$ 
The Generalized Riemann Hypothesis (GRH) for $\sF$ implies the Lindel\"of hypothesis for $\sF$, i.e.\ subconvexity for $\sF$ with the best possible exponent, namely that any $\delta<1/4$ is admissible. A Weyl-subconvex estimate for a collection $\sF$ is the statement that any $\delta<1/12$ is an admissible saving, and represents progress on the subconvexity problem one-third of the way between convexity and Lindel\"of. 

A classical approach to the subconvexity problem is to estimate moments of $L$-functions. Suppose that $(\cF_i)$ is a sequence of finite subsets of $\sF$ with $\bigcup_i \cF_i = \sF$, and suppose for simplicity that $1 \ll C(f)/C(g) \ll 1$ for all $f, g \in \cF_i$, and all $i$. 
We write $\cF$ for a generic term in $(\cF_i)$, and we informally refer to $\cF$ as a family. Write $C(\cF)= \max_{f \in \mathcal{F}} C(f) \asymp \min_{f \in \mathcal{F}} C(f)$.
 By a moment of $L$-functions we mean (abusing terminology) a sum 
$$M_k(\cF):= \sum_{ f \in \cF} |L(1/2,f)|^k,$$ and the goal is to show asymptotic estimates for $M_k(\cF)$ as $|\cF|\to \infty$. The Lindel\"of hypothesis for $\sF$ implies for any $\cF \subseteq \sF$ that $M_k(\cF) \ll_\eps |\cF| C(\cF)^{\eps}$, and an estimate of this shape is called a Lindel\"of-on-average upper bound for $M_k(\cF)$. Sometimes, one can achieve a Lindel\"of-on-average upper bound for $M_k(\cF)$ without using GRH. If so, then for each $f_0 \in \cF$ 
 we recover a bound $L(1/2,f_{0})\ll_\eps |\cF|^{\frac{1}{k}} C(\cF)^{\eps}$ by positivity. 
 In fact, such a bound is subconvex  for $\sF$ if 
\begin{equation}
\label{heuristic}
\limsup_{\cF} \frac{1}{k} \frac{\log |\cF|}{\log C(\cF) } <\frac{1}{4}.
\end{equation}
So, for the purposes of the subconvexity problem, one should either try to study moments with larger $k$, or with smaller $|\cF|$ relative to $C(\cF)$.

In this paper we prove Weyl-strength subconvexity for many new cases of $\PGL_2/\Q$ automorphic forms: first, where $\sF$ consists of forms with finite conductor $Q$ an odd perfect square, and second, where $\sF$ consists of forms with $Q$ a power of a fixed prime $p$. Our method also uniformly treats hybrid versions of these two cases. We establish these estimates 
by studying  certain thin families of $\PGL_2/\Q$ automorphic forms that are defined in terms of the local representation theory at primes dividing the conductor.  Let us call such families $\cF$ ``local component families'' 
 (see Section \ref{sec:familydef} for a precise definition). Indeed, we show Lindel\"of-on-average upper bounds for the cubic moment $M_3(\cF)$, 
 and use as our main technical tool   
new versions of the Petersson/Bruggeman-Kuznetsov (PBK) formula for local component families that were recently developed in \cite{Hu, HPY}.

We can state our main results without making reference to local representation theory, however. 
To every local component family $\cF$, there is associated a set of spectral parameters $\Pi_\infty$ (specified in Section \ref{neighborhoods_of_reps}), and a square-full integer $Q$, i.e.\ such that $p| Q \Rightarrow p^2 | Q$, 
with the following properties: 
\begin{itemize}
\item the family $\cF$ consists of trivial central character newforms for $\GL_2$ over $\Q$, 
\item every $f \in \cF$ has conductor $Q$,
\item every $f\in \cF$ has spectral parameters lying in  $\Pi_\infty$, 
\item the size of $\cF$ satisfies $|\cF| \asymp Q^{o(1)}|\Pi_\infty|^{1+o(1)}\prod_p p^{\lceil \frac{\alpha_p}{2}\rceil}$ as $|\Pi_\infty|Q \to \infty$, where $Q= \prod_pp^{\alpha_p}$ and $|\Pi_\infty|$ is the Plancherel measure of $\Pi_\infty$. 
\end{itemize}
The last point above is a weak version of the Weyl law for $\cF$, which may be derived from \cite[Thm.\ 1.8, Props.\ 6.56, 6.58]{HPY} and \cite[Thm.\ 9.3]{knightly_kuznetsovs_2013} along the lines of loc.\ cit.\ Corollary 1.14.  
See also \cite{Palm, Knightly}  for other approaches to the Weyl law for $\cF$.

For a local component family $\cF$, define the cubic moment \begin{equation}
 \mathcal{M}(\mathcal{F})
 = 
 \sum_{\substack{f \in \mathcal{F}, \text{ cuspidal}}}
 L(1/2, f)^3
 + \mathop{\sum \int}_{\substack{f \in \mathcal{F}, \text{ Eisenstein}}}
 L(1/2, f)^3,
\end{equation}
where the sum/integral on the Eisenstein contribution indicates a continuous family of Eisenstein series for each cusp/pair of inducing characters.
\begin{mytheo}\label{MT_cubic}
There exists $B>0$ such that for any local component family $\cF$ as in Section \ref{sec:familydef}, we have  $\mathcal{M}(\cF) \ll_\eps |\Pi_\infty|^{B} \prod_p p^{\lceil \frac{\alpha_p}{2}\rceil(1+\eps)}$. If there exists $\delta>0$ such that $|s| \geq Q^\delta$ for all $s\in \Pi_\infty$, then the Lindel\"of-on-average bound $\mathcal{M}(\cF) \ll_\eps |\cF|^{1+\eps}$ holds.
\end{mytheo}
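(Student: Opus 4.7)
The plan is to follow the Conrey--Iwaniec approach to the cubic moment, as developed for the level aspect by Young and Petrow--Young, substituting the supercuspidal Petersson/Bruggeman/Kuznetsov (PBK) formula of \cite{HPY} for its principal-series and Steinberg counterparts. First, I would use an approximate functional equation to expand two of the three $L$-factors, producing a double sum $\sum_{m,n}\lambda_f(m)\lambda_f(n) w(m,n)/\sqrt{mn}$ for a suitable smooth weight $w$ supported near $mn \asymp Q|\Pi_\infty|^2$, while keeping the third factor $L(1/2,f)$ intact. Applying Hecke multiplicativity at primes coprime to $Q$ collapses $\lambda_f(m)\lambda_f(n)$ into a sum of $\lambda_f(\ell)$, reducing the task to estimating $\sum_{f\in \cF} L(1/2,f)\lambda_f(\ell)$ with suitable test functions, summed over $\ell$.

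Next, I would apply the PBK formula of \cite{HPY} to the sum over $f\in \cF$, treating the cuspidal and Eisenstein contributions to $\mathcal{M}(\cF)$ in parallel through the spectral side. The delta contribution supplies the expected main term of size $|\cF|$. The Kloosterman contribution takes the schematic form $\sum_c c^{-1}\sum_\ell w(\ell) S_\cF(\ell,m;c)\,\phi_\infty(4\pi\sqrt{m\ell}/c)$, where $S_\cF(\ell,m;c)$ denotes the Kloosterman-like sum weighted by the explicit local Bessel transforms at primes dividing $Q$ (from the supercuspidal PBK of \cite{HPY}), and $\phi_\infty$ is the archimedean Bessel transform associated with $\Pi_\infty$. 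I would then open the surviving $L(1/2,f)$ by its own approximate functional equation and apply Poisson summation in the $\ell$-variable. The Kloosterman sums dualize to complete character sums modulo $cQ'$ for suitable divisors $Q'\mid Q$, with local weights inherited from the supercuspidal Bessel transforms.

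The final step is to identify the resulting expression with a fourth moment of Dirichlet $L$-functions $\sum_\chi |L(1/2,\chi)|^4$, where $\chi$ runs over characters of conductor dividing $cQ'$, and then invoke Heath--Brown's fourth moment bound together with its hybrid $t$-aspect extension due to Young. This yields the claimed estimate $\mathcal{M}(\cF)\ll_\eps |\Pi_\infty|^B\prod_p p^{\lceil \alpha_p/2\rceil(1+\eps)}$. Under the hypothesis $|s|\ge Q^\delta$ for all $s\in \Pi_\infty$, the weak Weyl law $|\cF|\asymp Q^{o(1)}|\Pi_\infty|^{1+o(1)}\prod_p p^{\lceil \alpha_p/2\rceil}$ absorbs the factor $|\Pi_\infty|^B$ into $|\cF|^\eps$ and converts the bound into the Lindel\"of-on-average form $\mathcal{M}(\cF)\ll_\eps |\cF|^{1+\eps}$.

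The principal obstacle is the local analysis at the supercuspidal primes: one must verify that the explicit local Bessel transforms of \cite{HPY} interact with Poisson summation so as to produce precisely the weighted character sums that correspond to a twisted fourth moment of Dirichlet $L$-functions, rather than some larger or more intractable object. The thin-family structure, in which $|\cF|$ scales as $\prod_p p^{\lceil \alpha_p/2\rceil}$ rather than $\prod_p p^{\alpha_p}$, is essential for the arithmetic to balance, since after Poisson summation the dual modulus should carry only a divisor $Q'$ of $Q$ and not $Q^2$; matching this cancellation is the crux. The archimedean exponent $B$ is not optimized and arises from routine Bessel-transform estimates, which suffices for the second assertion via the weak Weyl law.
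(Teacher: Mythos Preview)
Your overall strategy---approximate functional equation, PBK, Poisson, and a dual fourth moment of Dirichlet $L$-functions---matches the paper's architecture, but two essential ingredients are missing or misstated.

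First, Heath--Brown's fourth moment bound is \emph{not} sufficient. After Poisson summation the dual side takes the form $\sum_{\psi \bmod q'} |L(1/2,\psi)|^4\,\widehat{H}(\psi)$, where $\widehat{H}(\psi)$ is the Mellin transform of the supercuspidal Kloosterman sum. Generically $\widehat{H}(\psi)$ has square-root cancellation and the standard large-sieve fourth moment suffices. But the paper shows (Lemma~\ref{lemma:HhatBoundInertRhoBound}) that on certain cosets $\{\psi:\ell_\psi^2\equiv \mathrm{Nm}(\ell_\xi)/4\pmod{p^{2j}}\}$ the weight $\widehat{H}(\psi)$ is large by a factor $p^j$. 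To handle these singular cosets one needs the \emph{coset} fourth moment bound of \cite{PetrowYoungCoset} (Theorem~\ref{thm:fourthcoset} here), which saves exactly the corresponding power. Without it the estimate is off by a power of $q'$. You should also note that the bulk of the paper (Section~\ref{section:HhatCalculations}) is devoted to the precise local analysis of $\widehat{H}(\psi)$ via the Postnikov formula in the $p$-adic setting and via $\ell$-adic sheaf methods (Deligne--Laumon--Katz) for depth-zero supercuspidals; your proposal acknowledges this as the principal obstacle but offers no mechanism to resolve it.

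Second, your deduction of the Lindel\"of-on-average bound is incorrect. You claim that under $|s|\ge Q^\delta$ the factor $|\Pi_\infty|^B$ is absorbed into $|\cF|^\eps$ via the weak Weyl law, but since $|\cF|\asymp |\Pi_\infty|\,\delta_{\rm fin}$, the inequality $|\Pi_\infty|^B\delta_{\rm fin}^{1+\eps}\ll |\cF|^{1+\eps}$ would require $|\Pi_\infty|^{B-1-\eps}\ll 1$, which fails for any $B>1$. The paper instead proves the two assertions separately (Theorem~\ref{MT_precise}): the first with the bump test function \eqref{eq:hdefInitialSegmentVersion} giving $T^B$, and the second with the narrow window \eqref{eq:hdefWindowVersion} of width $\Delta=T^\eps$, which yields $T^{1+\eps}$ directly. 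The hypothesis $T\gg Q^\delta$ is needed to make the narrow-window archimedean analysis go through, not to absorb a loss.
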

For a precise version of Theorem \ref{MT_cubic}, see Theorem \ref{MT_precise}. 

Appealing to the classification of smooth irreducible unitary representations of $\GL_2(\Q_p)$, every trivial central character cuspidal automorphic form $f$ with square-full conductor $Q$ lies in a local component family $\cF$. By the non-negativity of the central values, proved by Guo \cite{Guo}, building on Waldspurger \cite{Waldspurger}, we therefore derive the following Corollary.
\begin{mycoro}\label{MainCor}
Suppose $f$ is a cuspidal newform of square-full level $Q =\prod_p p^{\alpha_p}$, where $\alpha_2 = 0$ or $\alpha_2 \geq 1000$, with archimedean analytic conductor $Q_\infty$, and trivial central character. 
Then
\begin{equation*}
L(1/2, f) \ll_\eps Q_\infty^{\frac{1}{6}} \Big(\prod_{p} p^{\frac{1}{3}\lceil \frac{\alpha_p}{2}\rceil} \Big) (Q_\infty Q)^\eps.
\end{equation*}
\end{mycoro}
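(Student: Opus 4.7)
The plan is to deduce the corollary from Theorem \ref{MT_cubic} by positivity, with a minor case split at the archimedean place. Given $f$ of square-full conductor $Q = \prod_p p^{\alpha_p}$ and trivial central character, I would first use the classification of smooth irreducible unitary representations of $\GL_2(\Q_p)$ (principal series, ramified twists of the Steinberg representation, or supercuspidal, as indicated in the paragraph just above the corollary) to build a local component family $\cF$ in the sense of Section \ref{sec:familydef} containing $f$. At the archimedean place I would take $\Pi_\infty$ to be a fixed-radius Plancherel neighborhood of the archimedean parameter $\mu_\infty(f)$ of $f$, so that $|\Pi_\infty| \asymp 1 + |\mu_\infty(f)|$ and $Q_\infty \asymp (1 + |\mu_\infty(f)|)^2$. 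The theorem of Guo (following Waldspurger) guarantees the nonnegativity of $L(1/2, \pi)$ for every cuspidal or Eisenstein $\pi$ appearing in $\cF$, so positivity yields $L(1/2, f)^3 \leq \mathcal{M}(\cF)$, and it remains to bound the right-hand side via Theorem \ref{MT_cubic}.

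Fix $\eps > 0$ and let $B$ be the constant from Theorem \ref{MT_cubic}. If $|\mu_\infty(f)| \geq Q^{\eps/(2B)}$, then every $s \in \Pi_\infty$ satisfies $|s| \geq Q^{\delta}$ for some $\delta > 0$, and the hypothesis of the Lindel\"of-on-average bound in Theorem \ref{MT_cubic} is met. Combining $\mathcal{M}(\cF) \ll_\eps |\cF|^{1+\eps}$ with the Weyl law $|\cF| \asymp Q^{o(1)}|\Pi_\infty|^{1+o(1)}\prod_p p^{\lceil \alpha_p/2\rceil}$ recalled in the introduction, together with $|\Pi_\infty| \asymp Q_\infty^{1/2}$, and then extracting a cube root, produces precisely the stated bound. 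In the complementary regime $|\mu_\infty(f)| < Q^{\eps/(2B)}$, one has $Q_\infty \ll Q^{\eps/B}$ and $|\Pi_\infty| \ll Q^{\eps/(2B)}$, so the first (unconditional) bound $\mathcal{M}(\cF) \ll_\eps |\Pi_\infty|^B \prod_p p^{\lceil \alpha_p/2\rceil(1+\eps)} \ll Q^{\eps/2}\prod_p p^{\lceil \alpha_p/2\rceil(1+\eps)}$ of Theorem \ref{MT_cubic} also delivers the desired estimate after a cube root, since the $Q_\infty^{1/6}$ factor on the right-hand side of the corollary dominates any polynomial-in-$|\Pi_\infty|$ loss in this regime.

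All the analytic difficulty is packaged into Theorem \ref{MT_cubic}; the deduction of the corollary amounts to positivity, the Weyl law, and Plancherel bookkeeping. The only slightly delicate point is the case split in $|\mu_\infty(f)|$, which is needed because the unspecified exponent $B$ in the unconditional bound is suboptimal in the archimedean aspect and must be replaced by the Lindel\"of-on-average bound precisely when $|\mu_\infty(f)|$ grows like a positive power of $Q$.
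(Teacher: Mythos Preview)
Your proposal is correct and follows the same approach as the paper: place $f$ in a local component family via the classification of local representations, invoke Guo's nonnegativity of central values to drop all other terms, and extract the individual bound from Theorem~\ref{MT_cubic}. The paper gives only a one-sentence justification before stating the corollary, so your explicit archimedean case split (treating $|\mu_\infty(f)|$ large versus small relative to $Q$ in order to reconcile the two bounds in Theorem~\ref{MT_cubic}) supplies detail the paper leaves to the reader.
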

Note that the bound in Corollary \ref{MainCor} is subconvex for all $Q$ under consideration, and is sub-Burgess if $\alpha_p\geq 8$ for all $p\mid Q$. 
Let us point out two special cases of Corollary \ref{MainCor}. First, if $Q$ is an odd perfect square, then the Weyl bound \begin{equation}\label{WeylBoundSquareAspect}L(1/2,f)\ll_\eps (Q_\infty Q)^{\frac{1}{6}+\eps}\end{equation} holds. When $v_p(Q)=2$, our proof relies crucially on bounds on exponential sums over finite fields of characteristic $p$ coming from $\ell$-adic constructible sheaves (as developed by Grothendieck, Deligne, Katz, Laumon, and others). Our results are based on the Deligne-Laumon geometric Fourier transform \cite{LaumonFT},  Katz's theory of convolution sheaves \cite{Katz1988}, and  ultimately on Deligne's proof of the Weil conjectures \cite{DeligneWeil2}.

Second, if $Q=p^\alpha$ is a prime power, then the Weyl bound in depth aspect holds, namely \begin{equation}\label{WeylBoundDepthAspect}L(1/2,f)\ll_{\eps, p} (Q_\infty p^\alpha)^{\frac{1}{6}+\eps}.\end{equation} 
In particular, the bound \eqref{WeylBoundDepthAspect}  holds when $p=2$. Through the new PBK formula in \cite{HPY}, it relies on the Local Langlands Correspondence (LLC) for $\GL_2(\Q_p)$, and more specifically on Kutzko's compact induction theory (including the wild $2$-adic case).

The results in this paper generalize results in a sequence of papers \cite{ConreyIwaniec, Ivic, PetrowTwistedMotohashi, YoungHybrid, PetrowYoungWeyl, PetrowYoungCoset} as follows. 
Let $\chi$ be a primitive Dirichlet character of conductor $q$, and let $\cF_\chi$ be the family 
$$\cF_\chi =  \bigcup_{*\in \Pi_\infty} \bigcup_{m\mid q} \cH_*(m, \overline{\chi}^2) \otimes \chi,$$
where $\cH_*(m, \overline{\chi}^2)$ is the set of normalized newforms of level $m$, central character $\overline{\chi}^2$ and spectral parameter $*$. 
Note that every $f\otimes \chi \in \cF_\chi$ is a level $q^2$ newform with trivial central character that is \emph{not} twist-minimal at any prime dividing $q$.  Conrey and Iwaniec \cite{ConreyIwaniec} first studied the cubic moment $M_3(\cF_\chi)$ and achieved a Lindel\"of-on-average upper bound for it in the case that $q$ is odd square-free and $\chi$ is quadratic. Their results on $M_3(\cF_\chi)$ were extended to general $\chi$ in \cite{PetrowYoungWeyl, PetrowYoungCoset}. 

If $Q=p^\alpha$ is an odd prime power, then indeed every trivial central character 
newform that is not twist-minimal belongs to $\cF_\chi$ for some $\chi$. However, in this setting the non-twist-minimal forms only account for approximately half of the forms when $\alpha$ is even, and if $\alpha$ is odd then \emph{every} trivial central character newform is twist-minimal. The present paper addresses  
the twist-minimal cases, and in this sense now completes the work of \cite{PetrowYoungWeyl, PetrowYoungCoset}, 
 provided $\alpha\geq 2$. 

The results in this paper, however, are best stated in terms of local representation theory. Given a holomorphic cusp form, Maass cusp form, or a unitary Eisenstein series $f$, 
let $\pi$ be the associated automorphic representation and $\pi \simeq \pi_\infty \otimes \bigotimes'_p \pi_p$ its factorization as a restricted tensor product, where each $\pi_p$ is a smooth irreducible unitary generic representation of $\GL_2(\Q_p)$ (a \emph{local representation} for short). The well-known classification of local representations is that $\pi_p$ is one of:
\begin{itemize}
 \item the principal series, 
 \item the special representations,
 \item supercuspidal.
\end{itemize}
The above discussion on twist-minimality can be recast in terms of local representations. Suppose that the representation $\pi_p$ has trivial central character and $p \neq 2$. Then, $\pi_p$ is twist-minimal if and only if $\pi_p$ is supercuspidal or has conductor exponent $c(\pi_p)\leq 1$ \cite[Prop.\ 3.4]{TunnellLLC}. The same is nearly true when $p=2$ as well, see e.g.\ \cite[Prop.\ 6.37]{HPY}. So, the results of \cite{ConreyIwaniec, PetrowYoungWeyl, PetrowYoungCoset} are that a Weyl-subconvex bound for $L(1/2,\pi)$ holds if $\pi$ has trivial central character and $\pi_p$ is neither supercuspidal nor has $c(\pi_p)=1$ for any $p$.  The current paper deals with the case that $\pi$ does have supercuspidal local components. 

The reader will have noticed that the only remaining case is when for some $p$ the representation $\pi_p$ has $c(\pi_p)=1$, i.e.\ is an unramified twist of the Steinberg representation, i.e.\ when $p \| Q$ for some $p$.  Indeed, in the indicative case of the family of level $p$ newforms, there are no evident thin subfamilies (such as e.g.\ $\cF_\chi$, or a supercuspidal local component family) by which the left hand side of \eqref{heuristic} could be made small. Thus, instead of the cubic moment, one should instead pursue higher moments of $L$-functions, see e.g. \cite{KMVdK, KiralYoung, BlomerKhan} for results in this direction.

Actually, we could allow $\pi$ with $c(\pi_p)=1$ at some primes  as well as $1 \leq c(\pi_2) \leq 999$ with some extra technical work. Indeed, the paper \cite{PetrowYounghybrid} treats a hybrid family where each ramified $\pi_p$ either has $c(\pi_p)=1$, or is a quadratic twist of an unramified principal series or the Steinberg representation (these latter two cases as in the original Conrey-Iwaniec \cite{ConreyIwaniec} family). However, the bound in \cite[Cor.\ 1]{PetrowYounghybrid} fails to 
be ``locally" subconvex at the primes with $c(\pi_p) = 1$. 
We leave this aside in the present work.

\subsection{Related works}
There are a large number of important recent works on different types of generalizations of the cubic moment, including \cite{Frolenkov, NelsonCubic,  BFW, KwanI, WuXi, KwanEisenstein,  KwanIII,   GHLN}. 
The main distinguishing feature of the present work is our comprehensive treatment of specified local component families.

\subsection{Overview of the proof}\label{section:sketch}
The structure of the proof is the same as in \cite{PetrowYoungWeyl, PetrowYoungCoset}, though crucially using the new versions of the PBK formula as developed in \cite{Hu, HPY}.  These formulas associate to each local component family $\cF$ a generalized Kloosterman sum $H(m,n;c)$. See Section \ref{section:traceformula2} for explicit formulas for $H(m,n;c)$. We begin by expanding the $L$-functions in $\mathcal{M}(\cF)$ via an approximate functional equation, applying the PBK formula, and then using Poisson summation in three variables. The resulting sum has a natural local-to-global structure, so for the sake of exposition we now assume that $Q=p^\alpha$ is an odd prime power. Set $q'=p^{\lceil \frac{\alpha}{2}\rceil}$. After many simplifications and with some finite Fourier analysis, we transform $\mathcal{M}(\cF)$ into a main term plus a dual moment essentially of the form
\begin{equation}\label{sketchmoto1}
\sum_{\psi \shortmod{q'}} L(1/2, \psi)^3L(1/2,\overline{\psi}) \widehat{H}(\psi),
\end{equation}
where $\widehat{H}(\psi)$ is given by \eqref{eq:HhatpsiDef} below, namely
$$  \widehat{H}(\psi) = (q')^{-2}
 \sum_{u, x_1, x_2, x_3 \shortmod{q'}} 
 \overline{\psi}(u)
 H(\overline{u} x_1 x_2 x_3, 1 ;q') 
e_{q'}(x_1 +  x_2 +  x_3 - u  ).$$
NB: Here $\widehat{H}(\psi)$ is a mixed multivariable Fourier/Mellin transform of $H$, and is not the same as a similarly-named quantity from \cite[(1.46)]{HPY}.

Now let us assume that $\sigma_p$ is a trivial central character dihedral supercuspidal representation, and that $\cF$ is the local component family defined by $\sigma_p$. The representation $\sigma_p$ corresponds under the LLC to a pair $(L/\Q_p,\xi)$ consisting of a quadratic extension $L/\Q_p$ and a multiplicative character $\xi$ of $L$. Then, up to a factor depending only on $L$, we have 
$$\widehat{H}(\psi)\approx (q')^{-2}\sum_{\substack{ x_1, x_2, x_3 \shortmod{q'}}} 
\sum_{t \in \mathcal{O}_L/q'\cO_L}
 \psi\Big(\frac{\mathrm{Nm}(t)}{x_1 x_2 x_3}\Big)
\xi(t) e_{q'}\Big(-\mathrm{Tr}(t)+
 x_1 +  x_2 +  x_3 - \frac{x_1 x_2 x_3  }{\mathrm{Nm}(t)} \Big).$$
See \eqref{lemma:HhatDecentFormulaSupercuspidalInert} below for the precise formula.
 The sum over $x_1$ evaluates as  $\tau(\overline{\psi}) \psi\left(1- \frac{x_2x_3}{\Nm(t)}\right)$, and we recognize the resulting Gauss sum to be the root number of $\psi$. Using this root number to change one $L(1/2,\psi)$ in \eqref{sketchmoto1} to its complex conjugate, we obtain a dual moment of the form 
 \begin{equation}\label{motohashidualmoment}\sum_{\psi \shortmod{q'}} |L(1/2,\psi)|^4 g(\xi, \psi),\end{equation} 
 with 
\begin{equation} g(\xi, \psi):=\sum_{\substack{ x_2, x_3 \shortmod{q'}}} 
\sum_{t \in \cO_L/q'\cO_L} 
 \psi\Big(\frac{\mathrm{Nm}(t)}{ x_2 x_3} -1\Big)
\xi(t) e_{q'}(-\mathrm{Tr}(t) 
   +   x_2 +   x_3 ).
   \end{equation}
  The ``Motohashi-type formula'' given roughly by $M_3(\cF) = (M.T.)+ \eqref{motohashidualmoment}$ is crucial for the results of this paper, as it was in the preceding cubic moment results. For a general discussion on the Motohashi formulas and their ramifications in the theory of $L$-functions, see \cite[\S 4.5.3, 4.5.4]{MichelVenkateshGL2}. 
   
 It is pleasing to note that if in the definition of $g(\xi,\psi)$ one makes the substitutions \begin{align*} t \in \cO_L/q'\cO_L & \to (u,v) \in (\Z/q'\Z)^2 \\ \xi(t) & \to \chi(u)\overline{\chi}(v),\\ \Nm(t) &\to uv, \text{ and } \\ \Tr(t) & \to u+v,\end{align*} then by changing variables and evaluating Gauss sums, one recovers the familiar principal series character sum $g(\chi,\psi)$ from the previous papers \cite{ConreyIwaniec,PetrowYoungWeyl,PetrowYoungCoset}.
   
   We also remark that in the principal series case, the modulus $q'$ is a prime number if and only if $Q=p^2$, i.e., if and only if $\chi$ is primitive modulo $p$. In the supercuspidal case, $q'$ is prime if and only if $c(\sigma_p)=2$, i.e., if and only if $L/\Q_p$ is unramified and $\xi$ is a non-trivial character modulo $p\cO_L$.  Thus we observe that algebro-geometric methods to bound $g(\xi,\psi)$ (see Section \ref{section:AG} for details) enter the proof only for depth-zero (equiv.\ conductor-two) supercuspidals.  We find that the $L/\Q_p$ unramified case is generally more difficult to handle than the ramified case. Indeed, suppose $\sigma_p$ has trivial central character. Then  the extension $L/\Q_p$ is ramified if and only if $c(\sigma_p)$ is odd, so one sees that in this case the size of family $\cF$ is a bit larger compared to the conductor $Q$. From this perspective it is not surprising that the ramified case poses fewer problems for our analysis.

The character sum $g(\xi,\psi)$ generically enjoys square-root cancellation. For such generic arrangements of $\xi$ and $\psi$, an easy large-sieve type upper bound on $M_4( \{\psi \pmod {q'}\})$  
leads to a bound compatible with the Lindel\"of-on-average bound on $M_3(\cF)$.

On the other hand, for special arrangements of $\psi$ and $\xi$ it does happen that $g(\xi,\psi)$, equivalently $\widehat{H}(\psi)$, is large.  In particular, such special arrangements only occur when the extension $L/\Q_p$ is unramified. This phenomenon was already observed in the principal series case with $p^6\mid Q$, so it is perhaps unsurprising that the same issue appears for supercuspidal families. As in \cite{PetrowYoungCoset}, the strategy to circumvent this problem is to understand the set of $\psi$'s for which $\widehat{H}(\psi)$ does not have square-root cancellation, and to bound the fourth moment of Dirichlet $L$-functions on these sets of ``singular" characters $\psi$.

To describe the sets of singular characters $\psi$, recall that associated to a modulus $p^k$ and a character $\psi$ (resp.\ $\xi$) of $(\Z_p/p^k\Z_p)^\times$ (resp.\ $(\cO_L/p^k\cO_L)^\times$, there is an $\ell_\psi \in \Z_p$ (resp.\ $\ell_\xi \in \cO_L$) such that the Postnikov formula \eqref{eq:PostnikovThetaVersion} holds. In the principal series case, the singular characters lie in the two cosets of $\{\psi \pmod{q'}\}$ given by $\{ \psi: \ell_\psi^2 \equiv -\ell_\chi^2/4 \pmod{p^2}\}$ when $k\geq 3$, see \cite[Thms.\ 3.3, 3.4]{PetrowYoungCoset}. In particular, these cosets are non-empty only when $p\equiv 1 \pmod{4}$. In the supercuspidal case, the singular characters lie in the two cosets of $\{\psi \pmod{q'}\}$ given by $\{\psi: \ell_\psi^2 \equiv \Nm(\ell_\xi)/4\pmod{p^2}\}$ when $k\geq 2$, see Lemma \ref{lemma:HhatBoundInertRhoBound} below. In particular, these cosets are non-empty only when $\Nm(\ell_\xi)$ is a square modulo $p$. Recall (Lemma \ref{lemma:traceRisZero}) that $\Tr(\ell_\xi)=0$ when $\sigma_p$ has trivial central character. 
Thus, $\Nm(\ell_\xi)$ is a square modulo $p$ if and only if $p\equiv 3 \pmod{4}$.

 The heart of this paper is the elaborate bounds on the new supercuspidal versions of the character sum $\widehat{H}(\psi)$ in Section \ref{section:HhatCalculations}.  
In the end, the final bounds are similar in quality to those obtained in \cite{PetrowYoungCoset}, and the bound on the fourth moment of Dirichlet $L$-functions along a coset proved in \cite{PetrowYoungCoset}
is satisfactory here as well.

\subsection{Acknowledgements}
The results in this paper have been announced some years ago, including at the
Automorphic Forms Conference at the Erd\H os Center, Budapest \url{https://erdoscenter.renyi.hu/events/automorphic-forms-conference} on 5 September, 2022.  The present paper has been delayed in large part by an expansion in scope of the prerequisite paper \cite{HPY}.  We thank the Rényi Institute for its support and pleasant and productive working conditions.
We also thank the referee for a careful reading.

\section{Background tools}
\label{section:traceformula}

\subsection{The PBK formula}\label{sec:PBK}
\subsubsection{Local Fields}\label{sec:localfields}
Let $E/\Q_p$ be a finite extension. The non-archimedean local field $E$ has a ring of integers $\cO_E$ with maximal ideal $\fp=\fp_E$, for which we may choose a uniformizer $\pi_E \in \fp$. Let $\Nm, \Tr: E \to \Q_p$ be the field norm and trace, respectively. We denote the ramification index of $E/\Q_p$ by $e=e_E$  and the valuation of the discriminant by $d=d_E = v_p(\disc(E/\Q_p))$. Note that if $E/\Q_p$ is quadratic and $p\neq 2$, then $d=0$ or $1$, while when $p=2$ we must have $d=0,2,$ or $3$. Let $U_E(i)$ denote the standard filtration of $\cO_E^\times$, namely
$$ U_E(i) = \begin{cases} \cO_E^\times & \text{ if } i=0, \\ 1+\fp_E^i & \text{ if } i \geq 1.\end{cases}$$
If $E=\Q_p$, then we denote this filtration simply by $U(i)$. 

Let $\xi$ be a character of $E^\times$. The conductor exponent $c(\xi)$ of $\xi$ is the least integer $i \geq 0$  for which $\xi$ is trivial on $U_E(i)$. Likewise, if $\addchar$ is an additive character 
of $E$, then we denote by $c(\addchar)$ the least $i$ such that $\addchar$ is trivial on $\fp^i$. It will also be convenient for us to define $$c_0 = c_0(\xi) = \frac{c(\xi)}{e_E}.$$

Given a quadratic extension $L/\Q_p$, let  $\eta_L$ be the unique non-trivial quadratic character of $\Q_p^\times$  that is trivial on $\Nm(L^\times) \subseteq \Q_p^\times$. The following is standard.  
\begin{mylemma}
\label{lemma:thetarestrictedtoZp}
Suppose $L/\Q_p$ is a quadratic extension and $\eta: \mq_p^{\times} \rightarrow \mc^{\times}$ is the nontrivial quadratic character that is trivial on $\mathrm{Nm}(L^{\times}) \subseteq \mq_p^{\times}$. Then, the conductor exponent $c(\eta )=d$. 
\end{mylemma}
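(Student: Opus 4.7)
The plan is to reduce the assertion to the conductor--discriminant formula for the abelian extension $L/\mathbb{Q}_p$. Under local class field theory, the character $\eta$ of $\mathbb{Q}_p^\times$ is precisely the image of the unique non-trivial character of the Galois group $\mathrm{Gal}(L/\mathbb{Q}_p) \cong \mathbb{Z}/2\mathbb{Z}$ under the reciprocity isomorphism. The conductor--discriminant formula for the quadratic extension $L/\mathbb{Q}_p$ gives
\[
v_p(\mathrm{disc}(L/\mathbb{Q}_p)) = \sum_{\chi} c(\chi),
\]
where $\chi$ runs over the characters of $\mathrm{Gal}(L/\mathbb{Q}_p)$. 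The trivial character contributes $0$, and the remaining term is $c(\eta)$. Since the conductor exponent of a character of $\mathbb{Q}_p^\times$ is determined by its restriction to $\mathbb{Z}_p^\times$, this yields $c(\eta\vert_{\mathbb{Z}_p^\times}) = c(\eta) = d$.

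For a self-contained argument (avoiding the full machinery of local CFT), I would instead verify the three cases directly. In the \emph{unramified} case $d=0$, the norm $\mathrm{Nm}:\mathcal{O}_L^\times \to \mathbb{Z}_p^\times$ is surjective: this follows from surjectivity of the norm $\mathbb{F}_{p^2}^\times \to \mathbb{F}_p^\times$ on residue fields together with successive lifting through the unit filtration, using surjectivity of the trace on the associated graded pieces $U_L(i)/U_L(i+1) \cong \mathbb{F}_{p^2}$. Hence $\eta\vert_{\mathbb{Z}_p^\times}$ is trivial and $c(\eta\vert_{\mathbb{Z}_p^\times}) = 0 = d$. In the \emph{tamely ramified} case ($p$ odd, $d=1$), every quadratic extension has the form $\mathbb{Q}_p(\sqrt{\varpi})$ with $v_p(\varpi)=1$, and a direct check shows that $\eta\vert_{\mathbb{Z}_p^\times}$ factors through the non-trivial quadratic character of $\mathbb{F}_p^\times$, so $c(\eta\vert_{\mathbb{Z}_p^\times})=1$.

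The only step requiring real work is the \emph{wildly ramified} case $p=2$, where $d \in \{2,3\}$. Here I would enumerate the six ramified quadratic extensions of $\mathbb{Q}_2$ (e.g.\ $\mathbb{Q}_2(\sqrt{u})$ for $u \in \{-1, \pm 2, \pm 5, \pm 10\} / \mathbb{Q}_2^{\times 2}$), compute $d$ for each by comparing with the well-known discriminants, and verify the claimed conductor directly by identifying $\mathrm{Nm}(L^\times)\cap\mathbb{Z}_2^\times$ inside the unit filtration $\mathbb{Z}_2^\times \supset U(1) \supset U(2) \supset U(3)$. This is the main (and only) obstacle, but it is a finite computation whose outcome is standard.
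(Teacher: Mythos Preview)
Your proposal is correct. The paper itself offers no proof of this lemma, merely asserting that it is ``standard''; your argument via the conductor--discriminant formula for the abelian extension $L/\mathbb{Q}_p$ is exactly the standard justification one would expect, and your case-by-case verification is a valid elementary alternative. One small quibble: in your enumeration for $p=2$ you list seven square classes $\{-1,\pm2,\pm5,\pm10\}$ while claiming six ramified extensions; the class of $5$ gives the unramified extension and should be excluded from the ramified list, but this is a cosmetic slip and does not affect the argument.
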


\begin{proof}
By local class field theory \cite[Ch.\ XIII \S4 Cor.]{SerreLocalFields}, the conductor exponent $c(\eta)$ is equal to the Artin conductor $f(\chi_1)$ of the unique nontrivial Galois character $\chi_1:\Gal(L/\Q_p) \to \{\pm 1\}$. Meanwhile, the conductor-discriminant formula (see e.g.\ loc.\ cit.\ Chapter VI \S3 Corollary 2) asserts that $\disc(L/\Q_p) = p^{f(\chi_1)+f(\chi_0)}$, where $\chi_0$ is the trivial Galois character of $\Gal(L/\Q_p)$. Since the Artin conductor $f(\chi_0)=0$, the Lemma follows.
\end{proof}
Recall that a character $\xi$ of $L^\times$ is called \emph{regular} if $\xi \neq \overline{\xi}$, where the bar denotes Galois conjugation.
\subsubsection{Neighborhood of a local representation}\label{neighborhoods_of_reps}
Let $G_p^\wedge$ be the unitary dual of $\PGL_2(\Q_p)$, i.e.\ the set of isomorphism classes of smooth irreducible unitary representations endowed with the Fell topology. Let $G_p^{\wedge,{\rm gen}}$ the subspace of generic representations. For each representation $\sigma \in G_p^{\wedge,{\rm gen}}$ we will describe a ``small'' neighborhood $\sigma[0]\subset G_p^{\wedge,{\rm gen}}$ surrounding it.

If $\sigma$ is not supercuspidal, we let $\sigma[0]$ be the connected component of $G_p^{\wedge,{\rm gen}}$ that contains $\sigma$. The non-supercuspidal connected components of $G_p^{\wedge,{\rm gen}}$ are parametrized by unitary characters $\chi$ of $\Q_p^\times$. Indeed, given $\chi$, one constructs a connected component $\cF_\chi\subset G_p^{\wedge,{\rm gen}}$ by  twisting $\chi$ by unramified quasicharacters, parabolic induction and taking irreducible sub/quotient representations. Every non-supercuspidal $\sigma \in  G_p^{\wedge,{\rm gen}}$ belongs to some $\cF_\chi$ and $\cF_{\chi_1}$ coincides with $\cF_{\chi_2}$ if and only if $\chi_1\vert_{\Z_p^\times} = \chi_2\vert_{\Z_p^\times}$ or $\chi_2^{-1}\vert_{\Z_p^\times}$.

Explicitly, if $\chi\vert_{\Z_p^\times}$ is not quadratic, then
\begin{equation}\label{sigma[0]_PS}
\cF_\chi= \{\pi(\nu \chi,\nu^{-1}\chi^{-1}): \nu \text{ unramified unitary}\},
\end{equation} 
and if $\chi\vert_{\Z_p^\times}$ is quadratic, then
\begin{multline}\label{sigma[0]_CI}
\cF_\chi= \{ \St \times \chi\} \sqcup \{\pi(|\cdot|^\sigma\chi,|\cdot|^{-\sigma}\chi) : \sigma \in (0,1/2)\} \sqcup \{\pi(|\cdot|^{it/\log p}\chi, |\cdot|^{-it/\log p}\chi): t\in [0,\pi] \} \\ \sqcup \{\pi(|\cdot|^\sigma\chi\eta,|\cdot|^{-\sigma}\chi\eta) : \sigma \in (0,1/2)\} \sqcup \{ \St \times \chi\eta\},
\end{multline} 
where $\St$ is the Steinberg representation and $\eta$ is the unramified quadratic character of $\Q_p^\times$. When $\chi\vert_{\Z_p^\times}$ is ramified quadratic we call $\cF_\chi$ the ``Conrey-Iwaniec local family'' since their important paper can be interpreted as studying the corresponding local component family.

If $\sigma \in G_p^{\wedge}$ is supercuspidal, then we let $\sigma[0]=\{\sigma , \sigma \times \eta\}$ modulo isomorphism. We also need the following explicit parametrization of supercuspidal representations. 

Recall that if $\sigma\in G_p^\wedge$ is  supercuspidal and has conductor exponent $c(\sigma)\geq 8$  when $p=2$, then $\sigma$ is dihedral \cite[\S6]{Rio}. That is, there exists a pair $(L/\Q_p,\xi)$ consisting of a quadratic field extension $L/\Q_p$ and a regular character $\xi$ of $L^\times$  such that the Weil group representation $\Ind_L^{\Q_p} \xi$ corresponds to $\sigma$ under the Local Langlands Correspondence (LLC). Let us write $\pi_\xi$ for the dihedral supercuspidal representation arising from $(L/\Q_p,\xi)$ and recall \cite[\S 1.2]{Schmidt:02a} that $\pi_\xi$ has central character $\xi\vert_{\Q_p^\times} \eta_{L}$ and conductor exponent $$c(\pi_\xi) = 2\frac{c(\xi)}{e_{L}}+d= 2c_0 + d.$$

When $p\neq 2$ the set of (isomorphism classes of) supercuspidal representations of $\GL_2(\Q_p)$ is in bijection with a subset of regular pairs $(L/\Q_p,\xi)$ called \emph{admissible} pairs (up to $\Q_p$-equivalence) \cite[\S 18.2, \S34]{BushnellHenniart:06a}.
 Thus, when $p \neq 2$ we assume throughout this paper that $(L/\Q_p,\xi)$ is an admissible pair.  When $p = 2$ the set of regular pairs $(L/\Q_2,\xi)$ (up to $\Q_2$-equivalence) with  $\xi \vert_{\Q_2^\times}= \eta_L$ and (furthermore) $2c_0+d\geq9$  is in bijection with the set of supercuspidal representations of $\PGL_2(\Q_2)$ and conductor exponent $\geq 9$, up to isomorphism \cite[Cor.\ 6.8]{HPY}.   
 When $p=2$ we assume throughout this paper that $c(\xi)$, equivalently $c(\sigma)$, is sufficiently large. Thus, in this paper we can unambiguously refer to the pair $(L/\Q_p,\xi)$ corresponding to a supercuspidal representation $\sigma$.

If $\sigma$ is supercuspidal with $\sigma \simeq \pi_\xi$ for some regular pair $(L/\Q_p,\xi)$ we define (following \cite[\S 7.4]{HPY}) for $0\leq n <c(\xi)$ the neighborhood
\begin{equation}\label{sigma[n]_def} 
\sigma[n]=\{\pi_{\xi_1}: \xi_1 \in {L^\times}^\wedge,\, c(\xi \xi_1^{-1})\leq n, \, \xi \vert_{\Q_p^\times} = \xi_1 \vert_{\Q_p^\times}\}
\end{equation}
of $\sigma$, up to isomorphism. Note when $n=0$ that \eqref{sigma[n]_def} is consistent with the previous definition of $\sigma[0]$, and that $$\#\sigma[0]= \begin{cases} 1 & \text{ if } L/\Q_p \text{ is unramified, } \\ 2 & \text{ if } L/\Q_p \text{ is ramified}. \end{cases} $$

Now let us consider the archimedean setting and give an ad-hoc definition of the neighborhood of a local representation. If $f$ is a Maass cusp form or unitary Eisenstein series of eigenvalue $\frac{1}{4}+t^2$ (resp.\ a holomorphic cusp form of weight $\kappa$), then we call $it$ (resp.\ $\frac{\kappa-1}{2}$) the spectral parameter of $f$, where we choose $t$ so that $\real(t)$ and $\Imag(t)\geq 0$.   We say that $\Pi_\infty \subseteq \C$ is a standard set of (archimedean) spectral parameters if there exists $T>1$ and $0<\eps<1$ so that $\Pi_\infty$ equals one of
\begin{itemize}
\item $(T-T^\eps,T+T^\eps)$, or 
\item $(T-T^\eps,T+T^\eps)i$, or 
\item $[\frac{1}{2},T)$, or 
\item $[0,\theta] \cup [0,T)i$,
\end{itemize}
where $\theta<1/2$ is a bound towards the Ramanujan-Selberg conjecture. Currently $\theta=7/64$ is admissible.
These sets $\Pi_\infty$ have a well-known interpretation in terms of $(\mathfrak{g},K)$-modules with $\mathfrak{g}=\mathfrak{gl}(2,\R)$  
 and $K=\mathrm{O}_2(\mr)$, making them subsets of the unitary dual of $\GL_2(\R)$, up to infinitesimal equivalence. 
Let $|\Pi_\infty|$ be the Plancherel volume of $\Pi_\infty$, in particular, we have $|\Pi_\infty| \asymp T^{1+\eps}$ in the first two cases and $|\Pi_\infty|\asymp T^2$ in the second two cases.

\subsubsection{Specified local component family}\label{sec:familydef}
Let $S$ be a finite set of finite primes. At  each $p \in S$ choose a representation $\sigma_p \in G_p^{\wedge,{\rm gen}}$, and at $\infty$, choose $\Pi_\infty$ to be one of the sets of archimedean spectral parameters from Section \ref{neighborhoods_of_reps}.

Define the specified local component family $\mathcal{F}$ associated to $\Pi_\infty, (\sigma_p)_{p\in S}$ to be 
\begin{equation}\label{eq:familydef}
\mathcal{F} = \{\pi \text{ cusp.\ aut.} : \omega_\pi =1,\, \pi_{\infty} \in \Pi_\infty,\, \pi_{p} \in \sigma_p[0] \text{ for all } p \in S, \, \pi_p \text{ unramified for all } p \not \in S\}.
\end{equation}

\subsubsection{The PBK formula for specified local components}\label{sec:pbk}
In order to derive a pleasant formula, we need to define a slightly larger family
 $\overline{\mathcal{F}} \supseteq \mathcal{F}$, as follows.  

Consider first the case that $\Pi_\infty$ equals $(T-T^\eps,T+T^\eps)i$ or  $[0,\theta] \cup [0,T)i$ for some $T,\eps$, i.e.\ corresponds to Maass forms.  
 In this case, we choose a smooth test function $h_\infty$ as in the classical Bruggeman-Kuznetsov formula (see \cite[(1.5)]{HPY}) on the spectrum
with the property that $h_\infty(t) \gg 1$ for $t \in [0,T]$ or $t \in [T-T^\eps, T  +T^{\varepsilon}]$ depending on the choice of $\Pi_\infty$, rapid decay outside this interval, and such that $h_\infty$ is nonnegative on $\mr \cup i[-1/4,1/4]$.   For example, we may take $h_\infty$ to be either
\begin{equation}
\label{eq:hdefInitialSegmentVersion}
h_\infty(t) = \frac{t^2+\frac14}{T^2} \exp\Big(-\Big(\frac{t}{T}\Big)^2\Big),
\end{equation}
or for $1\leq \Delta =T^\eps<T/100$ 
\begin{equation}
\label{eq:hdefWindowVersion}
h_\infty(t) = \frac{1}{\cosh\left(\frac{t-T}{\Delta}\right)} + \frac{1}{\cosh\left(\frac{t+T}{\Delta}\right)}.
\end{equation}

Next consider the case that $\Pi_\infty$ equals $(T-T^\eps,T+T^\eps)$ or $[\frac{1}{2},T)$ for some $T,\eps$, i.e., corresponds to holomorphic forms.  In this  section, we work with a single fixed weight $\kappa$. When applying the PBK formula in the holomorphic case, we will  work with a weighted sum over $\kappa$ (following  \cite[\S 13]{PetrowYoungWeyl}, \cite{YoungHybrid}), leading to a pleasant archimedean weight on the geometric (Kloosterman sum) side of the formula.

We also need to  enlarge our family if $2 \in S$ and $\sigma_2$ is supercuspidal. Since we are only interested in asymptotic upper bounds in this paper, we are free to enlarge our family at $2$ by any bounded size. So, we now suppose that  $\sigma_2$ has $c(\sigma_2)\geq 1000$ and trivial central character, and enlarge our family to $\sigma_2[200]\supset \sigma_2[0]$. Note that $\#\sigma_2[200]\leq 2^{201}$ (see e.g.\ \cite[Rem.\ 6.10]{HPY}), which is large but bounded as $c(\sigma_2)\to \infty$. 

Let $S_{it_j}$ be the set of trivial central character normalized cuspidal Maass newforms with spectral parameter $t_j$ (and any level). Similarly, let $S_{\kappa}$ be the set of trivial central character normalized cuspidal holomorphic weight $\kappa$ newforms (of any level). For $* = it_j$ or $\kappa$, following \cite{PetrowYoungWeyl} if $2 \in S$ and $\sigma_2$ is supercuspidal, let 
\begin{multline}\label{def:H*sigmap}\mathcal{H}_{*} ((\sigma_p)_{p \in S})= \\ \{ f  \in S_{*} :  \pi_{2,f} \in \sigma_2[200],\, \pi_{p,f} \in \sigma_p[0] \text{ for all } p \in S \text{ odd},\, \pi_{p,f} \text{ unramified for all } p \not \in S\},\end{multline}
and similarly if $2 \not \in S$ or $\sigma_2$ is not supercuspidal without the condition on $\pi_{2,f}$, and dropping the word ``odd''. Note that all forms in $\mathcal{H}_{*} ((\sigma_p)_{p \in S})$ have the same conductor, namely $Q = \prod_{p\in S} p^{c(\sigma_p)}$. 

\begin{mytheo}[PBK formula for specified local components]\label{PBKformula}
Suppose $h_\infty$, $S$, and $(\sigma_p)_{p \in S}$ are as above. Assume in addition that $c(\sigma_p)\geq 2$ for all $p\in S$, at least one of the $\sigma_p$ is supercuspidal, and if $2 \in S$ that $c(\sigma_2)\geq 1000$. Then, for all $m,n$ with $mn>0$ and $(mn,p)=1$ for all $p \in S$
we have 
\begin{equation}\label{eq:PBKformula1}
\sum_{t_j} h_\infty(t_j) \sum_{f \in \mathcal{H}_{it_j}((\sigma_p)_{p \in S})} w(f) \lambda_f(m)\overline{\lambda_f(n)} = \delta \cdot 1_{m=n} + \delta_{\rm fin} \sum_{c \equiv 0 \shortmod{q'}} \frac{H(m,n;c)}{c}H_\infty^+\Big( 4 \pi \frac{\sqrt{mn}}{c}\Big),
\end{equation}
and similarly for $\kappa \geq 2$ 
\begin{equation}
 \sum_{f \in \mathcal{H}_{\kappa}((\sigma_p)_{p \in S})} w(f) \lambda_f(m)\overline{\lambda_f(n)} = \delta \cdot \Big(1_{m=n} + 2 \pi i^{-\kappa}\sum_{c \equiv 0 \shortmod{q'}} \frac{H(m,n;c)}{c}J_{\kappa-1}\Big( 4 \pi \frac{\sqrt{mn}}{c}\Big)\Big),
\end{equation}
where 
\begin{itemize}
\item the harmonic weights $w(f)$ satisfy $w(f) = C(f)^{o(1)}$, 
\item the Hecke eigenvalues $\lambda_f(n)$ of $f$ are normalized by $\lambda_f(1)=1$, 
\item the diagonal weight $\delta = \prod_v\delta_v = \delta_\infty \delta_{\rm fin}$, is given locally by
$$\delta_{\infty} = \begin{cases} \frac{1}{4\pi} \int_{-\infty}^\infty h_\infty(t) \tanh(\pi t) t\,dt & \text{ Maass form case}, \\
\frac{\kappa-1}{4 \pi} &   \text{ weight } \kappa \text{ holomorphic form case},
\end{cases}$$
$$\delta_p = \begin{cases} 
\frac{1+p^{-1}}{1-p^{-1}} p^{c(\sigma_p)/2} & \text{ if } \sigma_p \text{ is special or principal series}, \\
p^{c_0} & \text{ if } \sigma_p \text{ is supercuspidal, } p\neq 2 \text{ and } L/\Q_p \text{ is unramified}, \\
(1+p^{-1})p^{c_0+1} & \text{ if } \sigma_p \text{ is supercuspidal, } p\neq 2 \text{ and } L/\Q_p \text{ is ramified},
\end{cases}$$
$$\delta_p = \begin{cases} 
p^{199}(1+p^{-1})p^{c_0+1} & \text{ if } \sigma_p \text{ is supercuspidal, } p= 2 \text{ and } d= 0, \\
p^{100}(1+p^{-1})p^{c_0+1} & \text{ if } \sigma_p \text{ is supercuspidal, } p= 2 \text{ and } d= 2, \\
p^{100}(1+p^{-1})p^{c_0+2} & \text{ if } \sigma_p \text{ is supercuspidal, } p= 2 \text{ and } d=3, \\
\end{cases}$$
\item the geometric conductor $q'= \prod_p p^{k_p}$ is given locally by $k_p = \lceil \frac{c(\sigma_p)}{2}\rceil$, except if $p=2$ and $\sigma_2$ is supercuspidal, in which case $k_2 = \lceil\frac{c(\sigma_2)}{2}\rceil + 199$ for $L/\Q_2$  unramified, and $k_2 = \lceil\frac{c(\sigma_2)}{2}\rceil + 100$ for $L/\Q_2$  ramified.
\item the generalized Kloosterman sum $H(m,n;c)$ is explicated in Section \ref{section:traceformula2}, and
\item the integral transform $H_\infty^+$ is as in the classical Bruggeman-Kuzentsov formula, explicitly, see \cite[(1.1)]{HPY}.
\end{itemize}
If $m,n$ are such that $(mn,p)=1$ for all $p \in S$ and $mn<0$, then an identical formula to \eqref{eq:PBKformula1} holds but with $H_\infty^+$ replaced by the function $H_\infty^-$ defined in \cite[(1.23)]{HPY}.  
\end{mytheo}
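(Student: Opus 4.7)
The plan is to derive Theorem \ref{PBKformula} by assembling the adelic Petersson/Bruggeman/Kuznetsov framework developed in \cite{Hu, HPY}. Starting from the adelic relative trace formula on $\PGL_2(\A_\Q)$ with respect to the $(U,\psi)\times(U,\psi)$ Whittaker period (where $U$ is the upper-triangular unipotent), I would choose a factorizable test function $f = f_\infty \prod_p f_p$ and expand it on both sides, obtaining a spectral-to-geometric identity whose spectral side is to match the left-hand side of \eqref{eq:PBKformula1}.

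At each $p \in S$, the local input from \cite{HPY} provides an explicit test function built from matrix coefficients of a local new vector adapted to $\sigma_p$, so that the resulting spectral projector is supported precisely on $\sigma_p[0]$ (respectively $\sigma_p[200]$ when $p=2$ and $\sigma_2$ is supercuspidal). At primes $p \notin S$ I would take $f_p$ to be the standard combination of unramified Hecke operators producing $\lambda_f(m)\overline{\lambda_f(n)}$. At infinity I would use the standard Bruggeman/Kuznetsov archimedean test function (as in \cite[(1.5)]{HPY}) built from $h_\infty$, which yields the weight $h_\infty(t_j)$ on the spectral side and the Bessel kernel $H_\infty^\pm$ on the geometric side, or its analogue producing $J_{\kappa-1}$ in the holomorphic case. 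The projector at each $p \in S$ also determines the harmonic weight $w(f)$, since dividing by the Petersson norm of the adelic new vector gives the stated normalization $w(f) = C(f)^{o(1)}$.

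On the geometric side, the Bruhat decomposition splits into two contributions. The identity cell produces $\delta \cdot 1_{m=n}$, where each $\delta_p$ is the Whittaker-normalized squared $L^2$-norm of the local test vector; these are computed in \cite{HPY} case by case (principal series/special vs.\ supercuspidal, $L/\Q_p$ ramified vs.\ unramified, $p=2$ vs.\ $p\neq 2$), and the values listed in the theorem are read off directly. The non-trivial Weyl cell produces the sum $\sum_c H(m,n;c) c^{-1}$ against the archimedean transform, where $H(m,n;c)$ is defined as the product of local Bessel integrals over $p \in S$ times the standard Kloosterman sum at $p \notin S$. The support condition $q'\mid c$ is the global assembly of a local support statement established in \cite{HPY}: at each $p\in S$, the Bessel transform of the local test vector vanishes on Weyl elements $w(c)$ unless $v_p(c)\geq k_p$.

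The main technical obstacle is the $p=2$ supercuspidal case, which relies on Kutzko's compact induction theory in the wildly ramified setting to describe the local test vector $f_2$ and to compute $\delta_2$ and $k_2$. The enlargement from $\sigma_2[0]$ to $\sigma_2[200]$ reflects that the natural local test vector at $p=2$ does not cleanly isolate $\sigma_2[0]$ but projects onto a bounded enlargement; this is harmless for the target upper-bound application since $\#\sigma_2[200] \leq 2^{201}$ is bounded as $c(\sigma_2)\to\infty$. Once the local calculations at every $p\in S$ are collected from \cite{HPY}, the passage from the local identities to the stated global formula is a direct multiplicative assembly, and the final formula for $H_\infty^-$ in the $mn<0$ case follows by reversing the sign of one of $m,n$ and tracking the corresponding change in the archimedean Bessel kernel as in \cite[(1.23)]{HPY}.
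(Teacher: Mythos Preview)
Your approach is essentially the same as the paper's: both derive the formula by applying the adelic PBK framework of \cite{HPY} with specific local test functions at each $p\in S$. The paper's actual proof is much more compressed --- it directly invokes \cite[Thm.\ 1.7]{HPY} with the explicit test functions given by precise equation numbers there (namely $f_\chi$ from (7.7) for non-quadratic principal series, $f_\xi$ from (7.18) for odd supercuspidal, $f_{\xi,200}$ from (7.27) for $p=2$ supercuspidal), and then reads off $\delta_p$, $k_p$, and $H_p$ from the ``Examples'' of \cite[\S 1.2]{HPY} together with \cite[Rem.\ 6.10]{HPY} for the $p=2$ index computation. One detail you gloss over: your phrase ``matrix coefficients of a local new vector'' is accurate for the supercuspidal and non-quadratic principal series cases, but when $\sigma_p$ lies in the Conrey--Iwaniec local family (i.e., $\chi\vert_{\Z_p^\times}$ is ramified quadratic, so $\sigma_p[0]$ contains complementary series and twisted Steinberg), the paper instead takes $f_p = (p+1)\,1_{ZK_0(p)}$ and then multiplies the resulting formula through by $\chi_p(mn)(1-p^{-1})^{-1}$ (cf.\ \cite[Rem.\ 7.7]{HPY}) to obtain the stated identity.
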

Remark: The assumption that at least one of the $\sigma_p$ is supercuspidal is merely for purposes of exposition, as this causes the Eisenstein spectrum to vanish. The more general formula can be found in \cite[(4.22)]{HPY}. PBK formulas with much narrower families of supercuspidal representations at $p=2$ are also available, see \cite[\S 7.3]{HPY}, but the statement in Theorem \ref{PBKformula} is all that is needed for this paper. 
\begin{proof}
Let us write $S= S_1 \sqcup S_2$ with $S_2$ consisting of primes $p$ at which $\sigma_p$ belongs to the Conrey-Iwaniec local family (i.e.\ $\cF_\chi$ for $\chi \vert_{\Z_p^\times}$ ramified quadratic) and $S_1= S_2^c$.

Let $f= \bigotimes_p f_p \in \mathcal{H}_{\rm fin}$ be the pure tensor with $f_p$ at $p \in S$ given by \begin{itemize}
\item $f_\chi$ as in \cite[(7.7)]{HPY} if $p \in S_1$ and $\sigma_p$ is principal series, 
\item $f_\xi$ as in \cite[(7.18)]{HPY} if $p$ is odd and $\sigma_p$ is supercuspidal,
\item $f_{\xi,200}$ as in \cite[(7.27)]{HPY} if $p=2$ and $\sigma_p$ is supercuspidal,
\item $(p+1)1_{ZK_0(p)}$ if $p \in S_2$. 
\end{itemize}
Let $h_\infty$ be given by one of \eqref{eq:hdefInitialSegmentVersion} or \eqref{eq:hdefWindowVersion}. Then \cite[Thm.\ 1.8]{HPY} applies with harmonic weights given by (1.12) of loc.\ cit.\ with $$N= \prod_{p \in S_1} p^{c(\sigma_p)} \prod_{p \in S_2}p.$$ Multiplying through the resulting formula by $\prod_{p \in S_2} \chi_p(mn)(1-p^{-1})^{-1},$ where $\chi_p$ is the Legendre symbol modulo $p$ (cf.\ loc.\ cit.\ Remark 7.7), we obtain the formula stated in Theorem \ref{PBKformula}. The claimed explicit formulas for the diagonal weights $\delta_v$, the local geometric conductors $k_p$, and the generalized Kloosterman sums $H_p(m,n;c)$ may be read off from the ``Examples'' of Section 1.2 of loc.\ cit., where we also used Remark 6.10 of loc.\ cit.\ in the $p=2$ case to compute $[\xi[200]:\xi[2-e_L]]$ numerically. 
\end{proof}

\subsection{Kloosterman sum properties}
\label{section:traceformula2}
Here we state some of the properties of the Kloosterman sums and the geometric conductor.  Let $S$ be a finite set of primes, and $\sigma_p$ be a local representation for each $p\in S$ as in Section \ref{sec:PBK}, namely, of trivial central character and conductor exponent $c(\sigma_p)\geq 2$ for all $p \in S$ and $c(\sigma_2)\geq 11$ if $2 \in S$. 

We first give some basic properties of the generalized Kloosterman sums, which hold in much greater generality (\cite[Thm.\ 3.8]{HPY}). If $c = c_1 c_2$ with $(c_1, c_2) = 1$, then
\begin{equation}
\label{eq:HCRT}
 H(m,n;c_1 c_2) = H(m \overline{c_2}, n \overline{c_2};c_1) H(m \overline{c_1}, n \overline{c_1};c_2).
\end{equation}
If $(c,p)=1$ for all $p \in S$, then 
\begin{equation*}
H(m,n;c) = S(m,n;c) = \sumstar_{x \shortmod{c}} e_c(xm + \overline{x} n), 
\end{equation*}
the standard Kloosterman sum.  On the other hand, if $c=p^j$ with $p \in S$, then
$H(m,n;p^j) = H_{\sigma_p}(m,n;p^j)$ depends on the local representation $\sigma_p$. 
The Kloosterman sums are periodic, namely 
\begin{equation}
\label{eq:KloostermanPeriodicModuloc}
H(m+ac, n+bc ; c) = H(m,n;c),
\end{equation}
for  any $m,n,a,b\in \Z$.  We also have that if $(n,c) = 1$ then
\begin{equation}
H(m,n;c) = H(mn,1;c).
\end{equation}

Next we give explicit formulas for $H_{\sigma_p}(m,n;p^k)$ in the specific cases of interest here.
If $\sigma_p$ is isomorphic to a principal series representation  $ \pi(\chi,\chi^{-1})$ or a ramified twist of the Steinberg representation $\St \times \chi$, then the geometric conductor exponent $k_p=v_p(q')$ is given by $k_p= \lceil c(\sigma_p)/2\rceil = c(\chi)$. We have 
\begin{equation}
\label{eq:KloostermanDefPScase}
 H_{\sigma_p}(m,n;p^k) = 
 \chi(m) \overline{\chi}(n)
 \sumstar_{t \shortmod{p^k}} \chi^2(t) e_{p^k}(tm + \overline{t} n)
\end{equation}
when $1\leq c(\chi)\leq k$ and $p \nmid mn$, and $H_{\sigma_p}(m,n;p^k) = 0$ when $k<c(\chi)$ or $p\mid mn$. 

Now suppose that $\sigma_p$ is a supercuspidal representation. By our assumptions, there exists a unique admissible pair $(L/\Q_p, \xi)$ with $\overline{\xi} \neq \xi$ 
and $\xi \vert_{\Q_p^\times} = \eta_{L}$ such that $\sigma_p \simeq \pi_\xi$ corresponds to $\Ind_L^{\Q_p} \xi$ under the LLC.  Let $\gamma=\lambda(L,\addchar)$ be the Langlands lambda function associated to $L$ and the additive character $\addchar = e(\{\cdot\}_p)$ of $\Q_p$ as in \cite[Lem.\ 1.2(iv)]{JacquetLanglands}. For this paper, we only need to know that $\gamma$ is independent of $\xi$ and satisfies $|\gamma|=1$. We have that
 \begin{equation}
\label{eq:KloostermanDefSCinertCaseGeneral}
H_{\sigma_p}(m,n;p^{k}) = 
 \overline{\gamma} 
p^{-\frac{d}{2}}
\sum_{\substack{t \in (\mathcal{O}_L/(p^k))^{\times} \\ \mathrm{Nm}(t) \equiv mn \shortmod{p^{k}}}} \xi(t) e_{p^{k}}(- \mathrm{Tr}(t)),
\end{equation}
when $k\geq v_p(q')$ and $p \nmid mn$, and that $H_{\sigma_p}(m,n;p^{k}) = 0$ otherwise. 

For the reader's convenience, we present the following table of data around trivial central character dihedral supercuspidal representations, for $p$ odd:
\begin{equation}
\label{eq:supercuspidalTable}
\begin{tabular}{c|c|c|c|c}
$e_L$ & $c(\pi_{\xi})$ & $c(\xi)$ & $c_0$ & $v_p(q')$  \\
\hline
$2$ & $2n+1$ & $2n$ & $n$ & $c_0+1$  \\
$1$ & $4n$ & $2n$ & $2n$ & $c_0$  \\
$1$ & $4n+2$ & $2n+1$ & $2n+1$ & $c_0$
\end{tabular}
\end{equation}
and  for our enlarged families when $p=2$ and $c(\pi)\geq 1000$:
\begin{equation}
\label{eq:supercuspidalTablepEquals2}
\begin{tabular}{c|c|c|c|c|c}
$e_L$ & $d$ & $c(\pi_{\xi})$ & $c(\xi)$ & $c_0$ & $v_p(q')$  \\
\hline
$1$ & 0 & $2n+2$ & $n+1$ & $n+1$ & $c_0+ 199$  \\
$2$ & 2 & $2n+2$ & $2n$ & $n$ & $c_0 + 101$  \\
$2$ & 3 & $2n+1$ & $2n-2$ & $n-1$ & $c_0 + 102$. 
\end{tabular}
\end{equation}

\subsection{Postnikov formula}
For a $p$-adic field $L$, let us recall the $p$-adic logarithm \cite[(5.4), (5.5) Props.]{Neukirch}.
\begin{mylemma}\label{lemma:padiclog}
For $1+x \in U_L(1)$, the series $$\log (1+x ) = x -\frac{x^2}{2} + \frac{x^3}{3} - \cdots$$ converges and defines a continuous homomorphism called the $p$-adic logarithm $\log: U_L(1) \to L$. If moreover $i > e_{L/\mq_p}/(p-1)$, then $\log$ restricts to an isomorphism of topological groups $\log : U_L(i) \to \fp_L^i.$ 
\end{mylemma}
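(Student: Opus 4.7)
The plan is to establish convergence and multiplicativity of $\log$ via valuation estimates on individual terms of the series, and then use the companion exponential series as an inverse to get the isomorphism claim for $i > e/(p-1)$. Throughout, set $e = e_{L/\Q_p}$ and let $v_L$ be the normalized valuation on $L$.

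\textbf{Convergence and the homomorphism property.} For $1+x \in U_L(1)$ we have $v_L(x) \geq 1$, and the general term satisfies
\[ v_L\!\left(\frac{x^n}{n}\right) = n\, v_L(x) - e\, v_p(n). \]
Since $v_p(n) \leq \log_p n$ while $n v_L(x)$ grows linearly in $n$, we have $v_L(x^n/n) \to \infty$, giving convergence. Continuity is also immediate from this bound. For the homomorphism property, the formal identity
\[ \log\bigl((1+X)(1+Y)\bigr) = \log(1+X) + \log(1+Y) \]
holds in $\Q[[X,Y]]$; substituting $x,y$ with $v_L(x), v_L(y) \geq 1$ and using the uniform convergence of the involved series yields $\log((1+x)(1+y)) = \log(1+x) + \log(1+y)$.

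\textbf{The isomorphism $\log: U_L(i) \to \mathfrak{p}_L^i$ for $i > e/(p-1)$.} Fix such an $i$. The key estimate is that for $x$ with $v_L(x) \geq i$ and any $n \geq 2$, one has $v_L(x^n/n) > v_L(x)$. Indeed, writing $n = p^k m$ with $(m,p)=1$, the inequality reduces to $(n-1) v_L(x) > e k$, which follows from $v_L(x) > e/(p-1)$ together with $n - 1 \geq p^k - 1 \geq k(p-1)$. This strict inequality shows $\log(1+x) \equiv x \pmod{\mathfrak{p}_L^{v_L(x)+1}}$; in particular $v_L(\log(1+x)) = v_L(x) \geq i$, so $\log$ maps $U_L(i)$ into $\mathfrak{p}_L^i$.

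\textbf{Inverse via exp.} Introduce $\exp(y) = \sum_{n\geq 0} y^n/n!$. Using $v_p(n!) = (n - s_p(n))/(p-1) \leq (n-1)/(p-1)$, for $v_L(y) \geq i > e/(p-1)$ one computes
\[ v_L\!\left(\frac{y^n}{n!}\right) \geq n\,v_L(y) - \tfrac{e(n-1)}{p-1} = v_L(y) + (n-1)\Bigl(v_L(y) - \tfrac{e}{p-1}\Bigr), \]
which is strictly greater than $v_L(y)$ for $n \geq 2$ and tends to infinity. Hence $\exp$ converges on $\mathfrak{p}_L^i$, maps it into $U_L(i)$, and satisfies $\exp(y) \equiv 1 + y \pmod{\mathfrak{p}_L^{v_L(y)+1}}$. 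The formal power series identities $\exp \circ \log = \mathrm{id}$ and $\log \circ \exp = \mathrm{id}$ over $\Q[[T]]$, combined with the convergence established above, give that $\exp$ is a two-sided inverse to $\log$ on the relevant subgroups. This proves the isomorphism claim, with continuity of both maps visible from the explicit valuation estimates.

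The only mildly delicate step is the arithmetic inequality $n-1 \geq (p-1) v_p(n)$ used in the third paragraph; once that is in hand, the rest is bookkeeping with valuations. This result is classical and the argument follows \cite[Ch.\ II, \S 5]{Neukirch}.
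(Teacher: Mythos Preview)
Your proof is correct and follows the standard argument from \cite[Ch.\ II, \S5]{Neukirch}, which is exactly what the paper does: the paper states this lemma without proof, simply citing \cite[(5.4), (5.5) Props.]{Neukirch} as the source. Your proposal faithfully reproduces that argument, so there is nothing to compare.
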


We have the following formula, quoting from \cite[Lemma 6.1]{HPY}.
\begin{mylemma}[Postnikov]
\label{lem:postnikov}
  Let $\xi$ and $\addchar$ be continuous characters of $L^{\times}$ and $L$, respectively.  If $i > e/(p-1)$ and  $c(\xi) \geq \max(i,2)$, then there exists a unique $\alpha_{\xi} \in \pi_{L}^{-c(\xi) + c(\addchar)} (\mathcal{O}_L/ \mathfrak{p}^{c(\xi )-i}  )^{\times}$ such that
\begin{equation}
\label{eq:postnikov}
\xi(1+u) = \addchar(\alpha_{\xi} \log(1+u)), \qquad \text{for all $u \in \mathfrak{p}^i$}.
\end{equation}
\end{mylemma}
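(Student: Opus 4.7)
The plan is to use the $p$-adic logarithm to transport $\xi\vert_{U_L(i)}$ to an additive character of $\mathfrak{p}^i$, and then to read off $\alpha_\xi$ via the self-duality of $L$ afforded by $\addchar$.

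First, by Lemma \ref{lemma:padiclog}, the hypothesis $i > e/(p-1)$ makes $\log \colon U_L(i) \to \mathfrak{p}^i$ an isomorphism of topological groups, and a term-by-term inspection of the defining power series (using $v_L(x^n/n) \geq v_L(x)$ when $v_L(x) \geq i$) upgrades this to isomorphisms $\log \colon U_L(j) \to \mathfrak{p}^j$ for every $j \geq i$. I then define an \emph{additive} character $\xi^{\flat} \colon \mathfrak{p}^i \to \C^\times$ by $\xi^{\flat}(x) = \xi(\log^{-1}(x))$. Because $\log$ preserves the filtration at every level $\geq i$, the character $\xi^{\flat}$ is trivial on $\mathfrak{p}^{c(\xi)}$ and, provided $c(\xi) > i$, nontrivial on $\mathfrak{p}^{c(\xi)-1}$; that is, $\xi^{\flat}$ has the same conductor as $\xi$.

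Next, I invoke local Pontryagin duality: the map $\alpha \mapsto \addchar(\alpha \cdot)$ identifies $L$ with its own continuous additive dual, and the kernel of the restriction-to-$\mathfrak{p}^i$ map is exactly $\mathfrak{p}^{c(\addchar)-i}$. Hence every continuous character of $\mathfrak{p}^i$ has the form $x \mapsto \addchar(\alpha x)$ for a unique $\alpha \in L/\mathfrak{p}^{c(\addchar)-i}$, and applying this to $\xi^{\flat}$ produces a unique $\alpha_\xi \in L/\mathfrak{p}^{c(\addchar)-i}$ satisfying \eqref{eq:postnikov}. To place $\alpha_\xi$ inside the claimed coset, I translate the conductor data into valuation bounds: triviality of $\xi^{\flat}$ on $\mathfrak{p}^{c(\xi)}$ forces $v_L(\alpha_\xi) \geq -c(\xi) + c(\addchar)$, while nontriviality on $\mathfrak{p}^{c(\xi)-1}$ (when $c(\xi) > i$) forces the reverse strict inequality, so $v_L(\alpha_\xi) = -c(\xi) + c(\addchar)$ exactly. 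Combined with uniqueness modulo $\mathfrak{p}^{c(\addchar)-i}$, this identifies $\alpha_\xi$ as a well-defined element of $\pi_L^{-c(\xi)+c(\addchar)} (\mathcal{O}_L/\mathfrak{p}^{c(\xi)-i})^\times$, as claimed; the borderline case $c(\xi) = i$ degenerates to $\alpha_\xi = 0$.

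The only step requiring real care is the first: one must verify that $\log$ intertwines $U_L(j)$ with $\mathfrak{p}^j$ not merely at $j = i$ but at every higher level, for otherwise the conductor of $\xi^{\flat}$ could drop and the exact valuation of $\alpha_\xi$ would be lost. Once this compatibility is in hand, the remainder of the argument is a direct application of the self-duality of $L$.
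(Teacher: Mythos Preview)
The paper does not prove this lemma; it simply cites \cite[Lemma 6.1]{HPY}. Your argument is correct and is the standard one for this result: transport $\xi\vert_{U_L(i)}$ to an additive character of $\mathfrak{p}^i$ via the $p$-adic logarithm, then invoke local additive self-duality to extract $\alpha_\xi$ and pin down its valuation from the conductor of $\xi$. Two small remarks: (i) since Lemma~\ref{lemma:padiclog} is stated for any $i > e/(p-1)$, it applies verbatim with $i$ replaced by any $j \geq i$, so the separate ``term-by-term inspection'' of the logarithm series is not needed to obtain the filtration compatibility $\log(U_L(j)) = \mathfrak{p}^j$; (ii) the borderline case $c(\xi) = i$ renders the target set $(\mathcal{O}_L/\mathfrak{p}^0)^\times$ degenerate, and in all applications in the paper one has $c(\xi)$ strictly larger than the minimal admissible $i$, so this edge case is immaterial.
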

If $\addchar$ is an additive character of $\Q_p$, then $\addchar_L = \addchar \circ \Tr$ is an additive character of $L$, with conductor given by $$c(\addchar_L) = e_L \cdot c(\addchar) - \frac{d}{f},$$ where $f= f_{L/\Q_p}$ is the degree of the residue field extension, see e.g.\ \cite[2.3.1 Lem.]{Schmidt:02a}. 

We will typically apply Lemma \ref{lem:postnikov} as follows.  Suppose $[L:\Q_p]=2$ and $c_0 = c(\xi)/e_L$.  For any integer $k \geq c_0$, the character $\xi \vert_{\cO_L^\times}$ factors through $\left( \cO_L/ (p^k)\right)^\times$. It will be convenient to also use an additive character modulo $p^k$, so suppose now that $\addchar(x)=e(\{x\}_p)$ is the standard additive character, take $\alpha_\xi$ as in Lemma \ref{lem:postnikov} with respect to $\addchar_L$, and then define 
\begin{equation}
\label{eq:Rdef} \ell_\xi = p^k \alpha_\xi .\end{equation} Now Lemma \ref{lem:postnikov} gives that 
\begin{equation}
\label{eq:PostnikovThetaVersion}
\xi(1+u) = e_{p^k}\left(\mathrm{Tr} (\ell_\xi \log(1+u)\right) \quad \text{for all } u \in \mathfrak{p}^i,
\end{equation}
with $e_{p^k}(x) := \phi(x/p^k)$. Note that 
\begin{equation}
\label{eq:valuationNormR}
v_L(\ell_\xi) = e_L \cdot (k-c_0)-d \quad \text{ and } \quad v_p(\mathrm{Nm}(\ell_\xi)) = 2(k-c_0)  - d.
\end{equation}

\begin{mylemma}
\label{lemma:traceRisZero}
Suppose $[L:\Q_p]=2$ and that $\xi$ is a character of $L^{\times}$ that is trivial on $\mathrm{Nm}(L^{\times}) \subseteq \mq_p^{\times}\subseteq L^\times $. 
Let $i$ be the minimal positive integer such that $i > e/(p-1)$, and suppose $c(\xi) \geq \max(i,2)$.
There exists $\ell_\xi \in L$ such that 
  \eqref{eq:PostnikovThetaVersion} holds for all $u \in \mathfrak{p}^i$ and such that $\mathrm{Tr}(\ell_\xi) = 0 $ if $p$ odd or $L/\Q_2$ is unramified, while $\mathrm{Tr}(\ell_\xi) \equiv  0 \pmod{2^{k-d}}$ if $L/\Q_2$ is ramified.
\end{mylemma}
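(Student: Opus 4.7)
The plan is to first produce a candidate $\ell_\xi$ via the Postnikov formula (Lemma \ref{lem:postnikov}) and then use the triviality-on-norms hypothesis to force vanishing (or near-vanishing) of its trace. Specifically, I would take the additive character $\addchar_L := \addchar \circ \Tr$ with $\addchar(x) = e(\{x\}_p)$; since $c(\addchar)=0$, the conductor formula $c(\addchar_L) = e c(\addchar) - d/f$ quoted in the excerpt gives $c(\addchar_L) = -d$. Lemma \ref{lem:postnikov} then furnishes $\alpha_\xi \in \pi_L^{-c(\xi) - d}\cO_L$, well-defined modulo $\fp^{-d-i}$, such that $\xi(1+u) = \addchar_L(\alpha_\xi \log(1+u))$ for all $u \in \fp^i$. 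Setting $\ell_\xi := p^k \alpha_\xi$, the goal reduces to controlling $\Tr(\alpha_\xi) \in \Q_p$, since $\Tr(\ell_\xi) = p^k\Tr(\alpha_\xi)$.

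The next step is to extract a Galois-type symmetry from the hypothesis. In a quadratic extension $\Nm(z) = z\sigma(z)$ (where $\sigma$ is the non-trivial Galois automorphism), so the condition that $\xi$ is trivial on $\Nm(L^\times)$ is equivalent to $\xi \circ \sigma = \xi^{-1}$. Substituting $u \mapsto \sigma(u)$ in the Postnikov identity, and using that both $\log$ and $\addchar_L$ commute with $\sigma$ (the latter because $\Tr \circ \sigma = \Tr$), one obtains
$$\addchar_L(\Tr(\alpha_\xi)\log(1+u)) = 1 \quad \text{for all } u \in \fp^i,$$
where $\Tr(\alpha_\xi) = \alpha_\xi + \sigma(\alpha_\xi)$ lies in $\Q_p$. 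Since $\log: U_L(i) \to \fp^i$ is an isomorphism (Lemma \ref{lemma:padiclog}), this forces $v_L(\Tr(\alpha_\xi)) \geq c(\addchar_L) - i = -d - i$, equivalently $v_p(\Tr(\alpha_\xi)) \geq \lceil(-d-i)/e\rceil$.

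The final step is to upgrade this bound; this splits into cases. For $L/\Q_2$ ramified, $e=2$ and the minimality condition $i > e/(p-1)=2$ forces $i = 3$; in both sub-cases $d\in\{2,3\}$ the bound simplifies to $v_2(\Tr(\alpha_\xi)) \geq -d$, giving $v_2(\Tr(\ell_\xi)) \geq k-d$ automatically, with no further adjustment. For $p$ odd or $L/\Q_2$ unramified, I would shift $\alpha_\xi$ by some $\beta \in \fp^{-d-i}$ to force $\Tr(\alpha_\xi - \beta) = 0$ exactly; this is possible precisely when $\Tr(\alpha_\xi) \in \Tr(\fp^{-d-i})$. The latter containment would follow from the defining identity of the codifferent, $\Tr(\fp^{-d}) = \Z_p$, combined with the $\Z_p$-linearity relation $\Tr(\fp^{j+e}) = p\Tr(\fp^j)$.

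The main obstacle is the careful enumeration of $\Tr(\fp^{j})$ at the intermediate powers of $\fp$ not of the form $\fp^{-d + me}$; in the nice cases these cause no valuation loss because either $e=1$ or $2 \in \Z_p^\times$ ensures $\Tr(\cO_L) = \Z_p$, so $\Tr(\fp^{-d-i})$ matches the Postnikov bound exactly and the shift succeeds. It is precisely the failure of this matching for $L/\Q_2$ ramified (where $\Tr(\cO_L) = 2\Z_2 \subsetneq \Z_2$) that both prevents exact cancellation and explains why only the weaker mod-$2^{k-d}$ conclusion appears in that case.
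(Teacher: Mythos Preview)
Your proposal is correct and follows essentially the same approach as the paper: both extract a valuation bound on $\Tr(\ell_\xi)$ from the triviality-on-norms hypothesis and then exploit the ambiguity in the Postnikov element to shift the trace to zero where possible. The only cosmetic differences are that you phrase the key symmetry as $\xi\circ\sigma=\xi^{-1}$ whereas the paper writes $\xi(1+u)\xi(1+\overline{u})=1$ directly, and you work with $\alpha_\xi$ rather than $\ell_\xi=p^k\alpha_\xi$; the resulting case analysis is the same.
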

\begin{proof}
Using that $\xi$ is trivial on $\mathrm{Nm}(L^{\times})$, we have
\begin{equation}
\label{eq:1equalsthetathetabar}
1= \xi(1+u) \xi(1 + \overline{u}) = 
e_{p^k}(\mathrm{Tr}(\ell_\xi \log(1+u))+\mathrm{Tr}(\ell_\xi \log(1+\overline{u}))),
\end{equation}
where $\overline{u}$ is the Galois conjugate of $u$.
Note $\mathrm{Tr}(\ell_\xi \log(1+u))+\mathrm{Tr}(\ell_\xi \log(1+\overline{u})) = \mathrm{Tr}(\ell_\xi) \cdot \mathrm{Tr} \log(1+u)$, so 
\eqref{eq:1equalsthetathetabar} implies
\begin{equation}
\label{eq:traceinclusionproperty}
\mathrm{Tr}(\ell_\xi) \cdot \mathrm{Tr}(\log(1+u)) 
\in p^k \mz_p, \qquad \text{for all $u \in \mathfrak{p}^i$}.
\end{equation}
By Lemma \ref{lemma:padiclog}, $\log(1+ \mathfrak{p}^i) = \mathfrak{p}^i$ for any $i>e/(p-1)$, the minimal such $i$ being, explicitly, 
$$ 
\begin{cases} 
1 & \text{ if } (d=0 \text{ and } p>2) \text{ or }  (d=1 \text{ and }  p>3), \\ 
2 & \text{ if }(d=1 \text{ and } p=3) \text{ or } (d=0 \text{ and } p=2), \\
3 & \text{ if }d=2,3 \text{ and } p=2.\end{cases}$$
 By \cite[Ch.\ III (2.9) Thm.]{Neukirch}
the inverse different
is given by 
 $$\mathcal{D}^{-1}_{L/\Q_p}: = \{ x \in L : \Tr (xy) \in \Z_p, \, \forall y \in \cO_L\}= \fp_L^{-d},$$ 
 so that 
\begin{equation}
\label{eq:TraceImage}
\mathrm{Tr}(\mathfrak{p}^n)
=
\begin{cases}
p^n \mz_p, \qquad &\text{$L$ unramified}, \\
p^{\lfloor\frac{n+d}{2} \rfloor} \mz_p, \qquad &\text{$L$ ramified}.
\end{cases}
\end{equation}
Hence \eqref{eq:traceinclusionproperty} is equivalent to
\begin{equation*}
\mathrm{Tr}(\ell_\xi)
\in 
\begin{cases}
p^{k-i} \mz_p,  &\text{($L$ unramified}), \\
p^{k-\lfloor\frac{i+d}{2} \rfloor} \mz_p,  &\text{($L$ ramified)}
\end{cases} =  
\begin{cases} (p)^{k-1} & p\neq 2, \\ (p)^{k-2}, & d=0,2, \thinspace p=2, \\ (p)^{k-3} & d=3, \thinspace p=2. 
\end{cases}
\end{equation*}
Meanwhile, $\ell_\xi$ is only defined up to $\fp^{ek-d-i}$ by Lemma \ref{lem:postnikov}, i.e.\ by \eqref{eq:TraceImage} again, $\Tr(\ell_\xi)$ is only defined up to $(p)^{\lfloor k-\frac{i}{e}\rfloor}$, explicitly, up to $(p)^{k-1}$ if $p \neq 2$, and up to $(p)^{k-2}$ if $p=2$. Thus, we can choose $\ell_\xi \in L$ as claimed.   
\end{proof}

\begin{myremark}\label{rem:fixedL}
For the rest of the paper, we assume that $\mathrm{Tr}(\ell_\xi) = 0$ when $p \neq 2$ and $\xi$ is as in Lemma \ref{lemma:traceRisZero}, as this will simplify numerous computations.
\end{myremark}
\begin{myremark}\label{rm:normchar}
Let $\chi_L$ be defined by $\chi_L = \chi \circ \Nm$ for $\chi$ a character of $\Q_p^{\times}$. 
Since $\log \Nm(x) = \Tr(\log x)$, we have that 
$\alpha_\chi = \alpha_{\chi_L} \in \Q_p \subseteq L$ with respect to the additive character $\addchar_L = \addchar \circ \mathrm{Tr}$. 
\end{myremark}

\begin{mylemma}
\label{lemma:vpNormOfEllTheta}
Suppose $p$ is odd and let $(L/\Q_p, \xi)$ be an admissible pair. Let $k\geq v_p(q')$, where $q'$ is the geometric conductor of the supercuspidal representation corresponding to $(L/\Q_p, \xi)$ under the LLC. 
\begin{enumerate}
\item If $L/\Q_p$ is ramified, then $v_p(\mathrm{Nm}(\ell_\xi)) = 2(k-c_0) - 1 > 0$, and 
\item if $L/\Q_p$ is unramified, then $v_p(\mathrm{Nm}(\ell_\xi)) = 2(k-c_0)\geq 0$, i.e., $p \nmid \Nm(\ell_\xi)$ for $k=c_0$.
\end{enumerate}
\end{mylemma}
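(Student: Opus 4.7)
The plan is to derive the lemma directly from equation \eqref{eq:valuationNormR} together with the table of conductor data \eqref{eq:supercuspidalTable}. Recall \eqref{eq:valuationNormR} says that $v_p(\mathrm{Nm}(\ell_\xi)) = 2(k - c_0) - d$, where $d = d_L$ is the valuation of the discriminant of $L/\Q_p$. Since $p$ is odd and $L/\Q_p$ is quadratic, we have $d = 0$ if $L/\Q_p$ is unramified and $d = 1$ if $L/\Q_p$ is ramified. Substituting these values already gives the claimed formulas $2(k - c_0)$ and $2(k - c_0) - 1$, respectively.

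The only remaining content is the positivity claims. In the unramified case, the table \eqref{eq:supercuspidalTable} shows that $v_p(q') = c_0$, so the hypothesis $k \geq v_p(q')$ immediately yields $2(k-c_0) \geq 0$, and the special case $k = c_0$ gives $p \nmid \mathrm{Nm}(\ell_\xi)$. In the ramified case, \eqref{eq:supercuspidalTable} shows $v_p(q') = c_0 + 1$, so $k \geq c_0 + 1$, hence $2(k-c_0) - 1 \geq 1 > 0$.

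Thus the whole argument reduces to reading off $d$ from the (un)ramified dichotomy and reading off $v_p(q')$ from the table; there is no substantive obstacle, and the lemma serves primarily to record a convenient lower bound on $v_p(\mathrm{Nm}(\ell_\xi))$ for later use (in particular, the distinction between whether $\ell_\xi$ has norm divisible by $p$ or not, which will govern the later analysis of singular characters $\psi$ via the condition $\ell_\psi^2 \equiv \mathrm{Nm}(\ell_\xi)/4 \pmod{p^2}$).
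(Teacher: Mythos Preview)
Your argument is correct and is essentially the paper's proof, which simply says the assertions follow directly from the table \eqref{eq:supercuspidalTable}. You have just made explicit the implicit use of \eqref{eq:valuationNormR} and spelled out the positivity checks.
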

\begin{proof}
These assertions follow directly from the table  \eqref{eq:supercuspidalTable}.
\end{proof}

\subsection{The multivariable Postnikov formula over $\Q_p$}

We quote the following material from \cite{PetrowYoungCoset}, making simplifications when possible. 

A rational function $f \in \Z(t)$ is an equivalence class of pairs of polynomials $f_1/f_2$ with integer coefficients and $f_2$ not identically zero. An integer $t_0$ is said to be in the domain of $f$ if $f_2(t_0)\neq 0$ with $f=f_1/f_2$ and $f_1$ and $f_2$ coprime. Meanwhile, a rational function $f \in (\Z/p^\beta \Z)(t)$ is an equivalence class of pairs of polynomials $f_1/f_2$ with coefficients in $\Z/p^\beta\Z$ and with $p$ not dividing all of the coefficients of $f_2$. Similarly, $t_0 \in \Z/p^\beta\Z$ is said to be in the domain of $f$ if $p \nmid f_2(t_0)$ with $f=f_1/f_2$ in lowest terms. 
 The above notions also extend naturally to several variables. Lastly, in a character sum  of the form $\sumstar_t \chi(f(t)) \addchar(g(t))$, the $*$ is always taken to mean that we sum over those $t$ lying in the intersection of the domains of $f$ and $g$.

 Let $g \in \Z(t)$ be a rational function whose reduction $\overline{g}$ modulo $p^\beta$ exists.  Let $t_0$ be an integer whose reduction modulo $p^\beta$ lies in the domain of $\overline{g}$. Then it is easy to see that $v_p\big(\frac{g^{(n)}(t_0)}{n!}\big) \geq 0$ for all $n \geq 0$.
 In particular, this shows that 
\begin{equation}
\label{eq:rationalfunctionTaylorExpansion}
g(x_0 + p^\beta x_1) \equiv g(x_0) + p^\beta  g'(x_0) x_1  \pmod{p^{2\beta}},
\end{equation}
for any integer $x_0$ reducing to the domain of $\overline{g}$.
More generally, if 
$p$ does not divide {all the coefficients of the denominator of $g \in \Z(t_1,\ldots,t_n)$ and $x_0 \in \Z^n$ reduces modulo $p^\beta$ to lie in the domain of $\overline{g}$, then
 we have 
\begin{equation}
\label{eq:rationalfunctionTaylorExpansionMultiVariableVersion}
 g(x_0 + p^\beta x_1) \equiv g(x_0) + p^\beta g'(x_0) x_1 + p^{2\beta} \tfrac12 g''(x_0)[x_1] \pmod{p^{3\beta}},
\end{equation}
where $g'$ is the gradient of $g$ and $g''$ is the Hessian matrix, and $A[x]= x^\intercal A x$ is the quadratic form associated to a square matrix $A$ and evaluated at $x$.

  For $f_i \in \Z(t_1,\ldots, t_n)$, $i = 1, \ldots d$, let  $f = (f_1, \ldots, f_d) \in \Z(t_1,\ldots, t_n)^{d}$
 be the associated $d$-tuple of rational functions.  
 For such an $f$ we have the associated $d \times n$ Jacobian matrix, which we denote by $f' \in M_{d \times n} (\Z(t_1,\ldots, t_n))$. Similarly, we have the logarithmic Jacobian $(\log f)'$, where the $ij$ entry is given by 
 $\partial_j f_i/f_i$.

Define an additive character $\addcharmulti$ modulo ${\bf q} = (q, \dots, q) \in \mn^d$ as a group homomorphism $\mz^d/q\mz^d \rightarrow \mc^{\times}$, lifted to $\mz^d$ by periodicity.
Then there exists a unique $a_{\addcharmulti} \in \mz^d/q\mz^d$ such that $\addcharmulti(n) = e_q(a_{\addcharmulti} n)$ where $a_{\addcharmulti} n$ is the standard scalar product. 
Likewise, a Dirichlet character modulo ${\bf q} = (q, \dots, q) \in \mn^d$ is a map $(\mz^d/q\mz^d)^{\times} \rightarrow \mc^{\times}$ extended to $\mz^d$ in the natural way.  Again, $\chi$ may be expressed uniquely as 
  $  \chi((n_1,\ldots, n_d)) = \chi_1(n_1) \cdots \chi_d(n_d)$, where each
 $\chi_i$ is a Dirichlet character modulo $q$.  
 If $p$ is odd  and  $q=(p^{\beta}, \dots, p^{\beta})$ with  $\beta \geq 2$, we  define $\ell_{\chi} = (\ell_{\chi_1}, \ldots, \ell_{\chi_d})$ with $\ell_{\chi_i}$ as in Lemma \ref{lem:postnikov} with $L=\Q_p$.  Note that  the Postnikov formula generalizes to give for $n = (n_1, \dots, n_d)$ with each $n_i \equiv 1 \pmod{p}$ the formula
  $\chi(n) = e_{p^{\beta}}(\ell_{\chi} \log_p(n))$, with the standard scalar product and where $\log_p(n) = (\log_p(n_1), \dots, \log_p(n_d))$.
  
  \begin{mylemma}
  \label{lemma:characterSumPrimePowerEvenExponent}
  Let $p$ be an odd prime, 
$\chi$ be a Dirichlet character modulo $(p^{2 \alpha}, \dots, p^{2 \alpha})$, 
$\addcharmulti$ be an additive character modulo $(p^{2 \alpha}, \dots, p^{2 \alpha})$ and $f, g \in \Z(t_1,\ldots, t_n)^d$  as above. Then 
\begin{equation}
\label{eq:characterSumPrimePowerEvenExponent}
S:=\sumstar_{t \in \left( \Z/p^{2\alpha} \Z \right)^n } \chi(f(t)) \addcharmulti(g(t)) = p^{n\alpha}\sumstar_{\substack{t_0 \in \left( \Z/p^\alpha \Z \right)^n \\ \eqref{eq:characterSumCongruenceCondition} \text{ holds}}} \chi(f(t_0)) \addcharmulti(g(t_0)),
\end{equation}
where
\begin{equation}
\label{eq:characterSumCongruenceCondition}
\ell_{\chi} (\log f )'(t_0) + a_{\addcharmulti} g'(t_0) \equiv 0 \shortmod {p^\alpha}.
\end{equation}

Each summand on the right hand side of \eqref{eq:characterSumPrimePowerEvenExponent} does not depend on the choice of 
lift of $t_0$ to $\Z_p^d$.
  \end{mylemma}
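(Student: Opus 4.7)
The plan is to execute a stationary phase argument: write each $t \in (\Z/p^{2\alpha}\Z)^n$ as $t = t_0 + p^\alpha t_1$ with $t_0, t_1 \in (\Z/p^\alpha\Z)^n$, Taylor-expand both $\chi \circ f$ and $\addcharmulti \circ g$ in the $t_1$ variable (modulo $p^{2\alpha}$), and then execute the sum over $t_1$ via orthogonality, producing $p^{n\alpha}$ times a restricted sum enforcing the congruence \eqref{eq:characterSumCongruenceCondition}.

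More concretely, I would first restrict the outer sum to those $t_0$ lying in the intersection of the domains of $f$ and $g$ (which is a property invariant under $t_0 \mapsto t_0 + p^\alpha s$, so the restriction depends only on $t_0 \bmod p^\alpha$). For such $t_0$ and any $t_1$, the expansion \eqref{eq:rationalfunctionTaylorExpansionMultiVariableVersion} applied componentwise gives
\begin{equation*}
g(t_0 + p^\alpha t_1) \equiv g(t_0) + p^\alpha g'(t_0) t_1 \pmod{p^{2\alpha}},
\end{equation*}
so that $\addcharmulti(g(t_0 + p^\alpha t_1)) = \addcharmulti(g(t_0)) \, e_{p^\alpha}(a_{\addcharmulti} g'(t_0) t_1)$. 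For the multiplicative part, applying \eqref{eq:rationalfunctionTaylorExpansion} to each coordinate and factoring yields, for each $i$,
\begin{equation*}
f_i(t_0 + p^\alpha t_1) \equiv f_i(t_0)\Bigl(1 + p^\alpha \tfrac{f_i'(t_0)}{f_i(t_0)} t_1\Bigr) \pmod{p^{2\alpha}}.
\end{equation*}
Since $p$ is odd and $\alpha \geq 1$, we have $\log_p(1 + p^\alpha x) \equiv p^\alpha x \pmod{p^{2\alpha}}$, so the Postnikov formula (in the form following Lemma \ref{lem:postnikov}) gives
\begin{equation*}
\chi_i(f_i(t_0 + p^\alpha t_1)) = \chi_i(f_i(t_0)) \, e_{p^\alpha}\bigl(\ell_{\chi_i} \tfrac{f_i'(t_0)}{f_i(t_0)} t_1\bigr).
\end{equation*}

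Multiplying over $i$ and combining the two pieces, the summand factors as $\chi(f(t_0)) \addcharmulti(g(t_0))$ times $e_{p^\alpha}\bigl((\ell_\chi (\log f)'(t_0) + a_{\addcharmulti} g'(t_0))\, t_1\bigr)$. Summing over $t_1 \in (\Z/p^\alpha\Z)^n$ yields $p^{n\alpha}$ when the coefficient vector vanishes mod $p^\alpha$ (which is exactly \eqref{eq:characterSumCongruenceCondition}) and zero otherwise, establishing \eqref{eq:characterSumPrimePowerEvenExponent}. The independence of the summand on the choice of lift $t_0 \mapsto t_0 + p^\alpha s$ follows by running the same Taylor expansion once more: the change of summand is precisely $e_{p^\alpha}((\ell_\chi(\log f)'(t_0) + a_{\addcharmulti} g'(t_0))\, s)$, which is $1$ when \eqref{eq:characterSumCongruenceCondition} holds.

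The only real subtlety — and the main place to be careful — is verifying that the Postnikov formula is legitimately applicable in this setup: one must confirm that $f_i(t_0)$ is a $p$-adic unit (so that the ratio $f_i'(t_0)/f_i(t_0)$ makes sense mod $p^\alpha$) and that the ``$1 + p^\alpha(\cdot)$'' argument lies in $U(\alpha)$ with $\alpha \geq 1 > e/(p-1) = 1/(p-1)$, which is automatic since $p$ is odd. The restriction of the sum to $t$ in the domain of $f$ and $g$, together with the conventions spelled out just before \eqref{eq:rationalfunctionTaylorExpansion}, ensures that all denominators appearing are coprime to $p$ and that truncating Taylor expansions at the quadratic term is sufficient — no higher derivatives of $f$ or $g$ enter at this level of precision.
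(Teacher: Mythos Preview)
Your proposal is correct and is exactly the standard $p$-adic stationary phase argument; the paper itself does not give a proof but simply cites \cite[Lem.~2.2]{PetrowYoungCoset}, where this same argument appears. One small point worth making explicit: if some $f_i(t_0)$ fails to be a $p$-adic unit then $\chi_i(f_i(t_0))=0$, and since $f_i(t_0+p^\alpha t_1)\equiv f_i(t_0)\pmod p$ the same holds for every lift, so those $t_0$ contribute zero to both sides and may be discarded before invoking Postnikov.
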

\begin{proof}
 See \cite[Lem.\ 2.2]{PetrowYoungCoset}.
 \end{proof}
  Next we consider the odd exponent case.  To this end, we introduce multi-variable Gauss sums.  Let $\mathcal{L}: \mz^n \rightarrow \mz$ be a linear form with integer coefficients, and $\mathcal{Q}: \mz^{n} \rightarrow \mz$ be a quadratic form. 
 Define
 \begin{equation}
 \label{eq:GpDef1}
  G_p(\mathcal{Q},\mathcal{L}) = \sum_{t \in \F_p^n} e_p(\mathcal{Q}[t] + \mathcal{L} t).
 \end{equation}
  
 \begin{mylemma}
 \label{lemma:characterSumPrimePowerOddExponent}
  Let $p$ be an odd prime, 
$\chi$ be a Dirichlet character modulo $(p^{2 \alpha+1}, \dots, p^{2 \alpha+1})$, $\addcharmulti$ be an additive character modulo $(p^{2 \alpha+1}, \dots, p^{2 \alpha+1})$,  and $f, g \in \Z(t_1,\ldots, t_n)^d$ as above. Then 
\begin{equation}
\label{eq:characterSumPrimePowerOddExponent}
S=
\sumstar_{t \in \left( \Z/p^{2\alpha+1} \Z \right)^n } \chi(f(t)) \addcharmulti(g(t)) 
= 
p^{n\alpha}
\sumstar_{\substack{t_0 \in \left( \Z/p^\alpha \Z \right)^n \\ \eqref{eq:characterSumCongruenceCondition} \text{ holds}}}
\chi(f(t_0)) \addcharmulti(g(t_0)) G_p(\mathcal{Q},\mathcal{L}),
\end{equation}
where 
\begin{equation}
\label{eq:Mlinearformdef}
 \mathcal{L} = p^{-\alpha} (\ell_{\chi} (\log f)'(t_0) + a_{\addcharmulti} g'(t_0)),
\end{equation}
and $\mathcal{Q}$ is the quadratic form with associated matrix (in the standard basis for $\mz^n$) given by
\begin{equation}
\label{eq:QquadraticformDef}
 \mathcal{Q} = \tfrac12 \ell_{\chi} (\log f)''(t_0) + \tfrac12 g''(t_0).
\end{equation}
Each summand on the right hand side of \eqref{eq:characterSumPrimePowerOddExponent} does not depend on the choice of 
lift of $t_0$ to $\Z_p^d$.
  \end{mylemma}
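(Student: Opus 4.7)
The strategy parallels that of Lemma \ref{lemma:characterSumPrimePowerEvenExponent}, but requires a second-order Taylor expansion in order to extract the Gauss sum. The plan is to split each $t \in (\Z/p^{2\alpha+1}\Z)^n$ uniquely as $t = t_0 + p^\alpha t_1$ with $t_0 \in (\Z/p^\alpha\Z)^n$ and $t_1 \in (\Z/p^{\alpha+1}\Z)^n$. Only those $t_0$ whose lifts lie in the domains of $f$ and $g$ contribute. For such $t_0$, the multivariable Taylor expansion \eqref{eq:rationalfunctionTaylorExpansionMultiVariableVersion} yields
\[
f(t_0 + p^\alpha t_1) \equiv f(t_0) + p^\alpha f'(t_0)\, t_1 + \tfrac12 p^{2\alpha} f''(t_0)[t_1] \pmod{p^{2\alpha+1}},
\]
and similarly for $g$, since $3\alpha \geq 2\alpha+1$ for $\alpha \geq 1$.

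Next, I would divide the $f$-expansion componentwise by $f(t_0)$ and apply the Postnikov formula \eqref{eq:PostnikovThetaVersion}, expanding $\log(1+x) = x - x^2/2 + \cdots$ to second order (higher terms being killed modulo $p^{2\alpha+1}$), to obtain
\[
\chi\!\left(\tfrac{f(t_0+p^\alpha t_1)}{f(t_0)}\right) = e_{p^{2\alpha+1}}\!\left(p^\alpha \ell_\chi (\log f)'(t_0)\, t_1 + \tfrac12 p^{2\alpha} \ell_\chi (\log f)''(t_0)[t_1]\right).
\]
Combined with the analogous Taylor expansion of $\addcharmulti(g(t_0 + p^\alpha t_1))$, the summand takes the form
\[
\chi(f(t_0))\, \addcharmulti(g(t_0)) \cdot e_{p^{2\alpha+1}}\!\left(p^\alpha A(t_0)\, t_1 + p^{2\alpha} \mathcal{Q}(t_0)[t_1]\right),
\]
where $A(t_0) := \ell_{\chi} (\log f)'(t_0) + a_{\addcharmulti} g'(t_0)$ and $\mathcal{Q}$ is as in \eqref{eq:QquadraticformDef}.

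To execute the $t_1$-sum, I would write $t_1 = u + p v$ with $u \in (\Z/p\Z)^n$ and $v \in (\Z/p^\alpha\Z)^n$. The quadratic exponential collapses to $e_p(\mathcal{Q}[u])$ since $\mathcal{Q}[u+pv] \equiv \mathcal{Q}[u] \pmod p$, while the linear exponential splits as $e_{p^{\alpha+1}}(A u)\, e_{p^\alpha}(A v)$. The $v$-sum then enforces the congruence $A(t_0) \equiv 0 \pmod{p^\alpha}$, i.e.\ \eqref{eq:characterSumCongruenceCondition}, contributing a factor $p^{n\alpha}$; when this holds, $\mathcal{L} := p^{-\alpha} A(t_0) \pmod p$ is well-defined and the remaining $u$-sum is exactly the Gauss sum $G_p(\mathcal{Q}, \mathcal{L})$ of \eqref{eq:GpDef1}. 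The independence of the summand on the lift of $t_0$ to $\Z_p^n$ follows, as in Lemma \ref{lemma:characterSumPrimePowerEvenExponent}, by invoking \eqref{eq:rationalfunctionTaylorExpansionMultiVariableVersion} once more together with \eqref{eq:characterSumCongruenceCondition} (noting that shifts $t_0 \mapsto t_0 + p^\alpha s$ alter $\mathcal{L}$ by a quantity that cancels against a linear change of variables in $u$, and leave $\mathcal{Q} \pmod p$ unchanged).

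The main obstacle I anticipate is verifying that the quadratic coefficient assembles into the form $\tfrac12 \ell_\chi (\log f)''(t_0) + \tfrac12 g''(t_0)$ asserted in \eqref{eq:QquadraticformDef}: this amounts to checking carefully that expanding $\log$ of the ratio $f(t_0+p^\alpha t_1)/f(t_0)$ to second order produces the logarithmic Hessian $(\log f)''$ rather than some other second-derivative expression. This is a routine but delicate identity of the form $(\log f)'' = f''/f - (f'/f)(f'/f)^{\intercal}$ applied componentwise; once confirmed, the remaining independence and well-definedness assertions follow by the same bookkeeping as in the even-exponent case.
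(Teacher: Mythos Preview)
The paper does not supply its own proof here; it simply cites \cite[Lem.\ 2.3]{PetrowYoungCoset}. Your proposal is correct and is precisely the standard $p$-adic stationary-phase argument that underlies that reference: the decomposition $t = t_0 + p^\alpha t_1$, the second-order Taylor expansion \eqref{eq:rationalfunctionTaylorExpansionMultiVariableVersion}, the Postnikov formula applied to $f(t_0+p^\alpha t_1)/f(t_0)$, the further splitting $t_1 = u + pv$ to isolate the congruence \eqref{eq:characterSumCongruenceCondition} and the Gauss sum $G_p(\mathcal{Q},\mathcal{L})$, and the componentwise identity $(\log f_i)'' = f_i''/f_i - (f_i'/f_i)(f_i'/f_i)^{\intercal}$ are exactly the ingredients of that proof.
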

  \begin{proof}
 See \cite[Lem.\ 2.3]{PetrowYoungCoset}.
 \end{proof}

\begin{myremark}
\label{rmk:PostnikovRemarkNumberField} 
 In Section \ref{sec:supercuspidal_simplifications} we will be faced with a situation in which one would like to apply Lemmas \ref{lemma:characterSumPrimePowerEvenExponent} and \ref{lemma:characterSumPrimePowerOddExponent} but where one variable is in $\cO_L$ instead of $\Z_p$. Rather than pursue a generalization of these lemmas, we instead opt to treat these sums ``by hand'' using additive orthogonality of characters for a quadratic extension $L/\Q_p$ of $p$-adic fields:
 for all $a \in \mathcal{O}_L$, we have
\begin{equation}\label{eq:addorth}
\sum_{t \in \mathcal{O}_L/(p^k)} 
e_{p^k}(\mathrm{Tr}(at))
= p^{2k} \cdot
\begin{cases}
\delta(p^k | a), \quad &\text{$L$ unramified}, \\
\delta(\mathfrak{p}^{2k-d} | a), \quad &\text{$L$ ramified}.
\end{cases}
\end{equation}
\end{myremark}

In view of Lemma \ref{lemma:characterSumPrimePowerOddExponent}, it will be useful to estimate quadratic Gauss sums.  

If $\mathcal{Q}$ is a finite-dimensional quadratic form over $\F_p$ with $p$ odd, then there exists a symmetric matrix $M$ with entries in $\F_p$ such that $\mathcal{Q}[x] = x M x^{\intercal}$. The kernel of $M$ is called the \emph{radical} of $\mathcal{Q}$.
\begin{mylemma}
\label{lemma:quadraticGaussSum}
 Let $p$ be an odd prime, let $\mathcal{Q}$ be a quadratic form over $\mf_p$, and let $\mathcal{L}$ be a linear form, as above.  Let $V$ be the  radical of $\mathcal{Q}$.  Let $r_{\mathcal{Q}}$ denote the rank of $\mathcal{Q}$.  Then $G_p(\mathcal{Q},\mathcal{L})$ vanishes unless $\mathcal{L} \vert_V=0$, in which case 
 \begin{equation*}
  |G_p(\mathcal{Q},\mathcal{L})|  = p^{n-\frac{r_{\mathcal{Q}}}{2}}. 
 \end{equation*}
\end{mylemma}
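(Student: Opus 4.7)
The plan is to reduce to the standard non-degenerate case by splitting off the radical, and then to diagonalize. Let $M$ be the symmetric matrix over $\F_p$ associated to $\mathcal{Q}$ and let $V=\ker M$, so $\dim V = n - r_{\mathcal{Q}}$. Since $p$ is odd, pick any vector space complement $W$ of $V$ in $\F_p^n$, so that $\F_p^n = V \oplus W$. By construction $\mathcal{Q}$ vanishes identically on $V$ and, since $\mathcal{Q}(v+w) = \mathcal{Q}(w) + 2 v^\intercal M w = \mathcal{Q}(w)$ for $v \in V$, the form $\mathcal{Q}[x]$ depends only on the $W$-component of $x$. The restriction of $\mathcal{Q}$ to $W$ is non-degenerate of rank $r_{\mathcal{Q}}$.

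With this decomposition, writing $x = v+w$ and splitting $\mathcal{L} = \mathcal{L}_V + \mathcal{L}_W$ according to the dual decomposition, we obtain
\begin{equation*}
G_p(\mathcal{Q},\mathcal{L})
= \Bigl(\sum_{v \in V} e_p(\mathcal{L}_V v)\Bigr)\Bigl(\sum_{w \in W} e_p(\mathcal{Q}[w] + \mathcal{L}_W w)\Bigr).
\end{equation*}
The first factor is $p^{n-r_{\mathcal{Q}}}$ if $\mathcal{L}|_V = 0$ and $0$ otherwise, which gives the first claim.

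Assuming now $\mathcal{L}|_V = 0$, the second factor is a quadratic Gauss sum associated to the \emph{non-degenerate} form $\mathcal{Q}|_W$ on $W \simeq \F_p^{r_{\mathcal{Q}}}$. Since $p$ is odd, we may diagonalize $\mathcal{Q}|_W$ over $\F_p$, so after a linear change of variables we have $\mathcal{Q}[w] = \sum_{i=1}^{r_{\mathcal{Q}}} a_i w_i^2$ with all $a_i \in \F_p^\times$, and $\mathcal{L}_W w = \sum_i b_i w_i$ for some $b_i \in \F_p$. The sum factors over the coordinates $w_i$, and completing the square in each variable (permissible as $2a_i$ is invertible mod $p$) reduces each factor to the classical one-variable quadratic Gauss sum $\sum_{t \in \F_p} e_p(a_i t^2)$, whose absolute value is $p^{1/2}$. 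Multiplying the $r_{\mathcal{Q}}$ factors yields $|\sum_{w \in W} e_p(\mathcal{Q}[w] + \mathcal{L}_W w)| = p^{r_{\mathcal{Q}}/2}$.

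Combining the two factors gives $|G_p(\mathcal{Q},\mathcal{L})| = p^{n-r_{\mathcal{Q}}} \cdot p^{r_{\mathcal{Q}}/2} = p^{n - r_{\mathcal{Q}}/2}$, as required. There is no serious obstacle here; the main thing to verify is that the decomposition $\F_p^n = V \oplus W$ genuinely reduces the problem to a non-degenerate quadratic form, and that $\mathcal{L}|_V = 0$ is precisely what is needed both for non-vanishing and for the completion of the square to make sense after restriction.
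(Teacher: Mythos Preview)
Your proof is correct and is the standard argument. The paper itself does not give a proof here but simply cites \cite[Lem.\ 2.4]{PetrowYoungCoset}; your diagonalization of the non-degenerate part is in fact exactly the method the paper uses in the very next lemma (Lemma \ref{lemma:quadraticGaussSumEvaluation}) to evaluate $G_p(\mathcal{Q},0)$ in the non-degenerate case, so your approach is entirely in keeping with the paper.
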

\begin{proof}
 See \cite[Lem.\ 2.4]{PetrowYoungCoset}.
 \end{proof}
\begin{mylemma}
\label{lemma:quadraticGaussSumEvaluation}
Let $p$ be an odd prime, and let $\mathcal{Q}$ be a non-degnenerate quadratic form over $\mf_p$ of rank $r$, and let $\mathcal{D}$ denote the discriminant of $\mathcal{Q}$.  Then
\begin{equation*}
G_p(\mathcal{Q}, 0) = \epsilon_p^r p^{r/2} \Big(\frac{\mathcal{D}}{p}\Big),
\qquad
\text{where }
\epsilon_p = 
\begin{cases}
1, \quad &p \equiv 1 \pmod{4} \\
i, \quad &p \equiv 3 \pmod{4}.
\end{cases}
\end{equation*}
\end{mylemma}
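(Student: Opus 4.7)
The plan is to reduce the statement to the one-variable Gauss sum $g_p = \sum_{t \in \F_p} e_p(t^2) = \epsilon_p p^{1/2}$, which is the classical quadratic Gauss sum evaluated by Gauss, and the formula $\sum_{t \in \F_p} e_p(at^2) = \left(\frac{a}{p}\right) g_p$ for $a \in \F_p^\times$, which follows by the change of variables $t \mapsto bt$ where $a = b^2 a'$ with $a' \in \{1, u\}$ for a non-residue $u$, combined with multiplicativity of the Legendre symbol.

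First, I would diagonalize the quadratic form. Since $p$ is odd and $\mathcal{Q}$ is non-degenerate of rank $r$, there exists $P \in \GL_r(\F_p)$ such that if we set $y = xP$, then
\begin{equation*}
\mathcal{Q}[x] = \sum_{i=1}^{r} a_i y_i^2
\end{equation*}
for some $a_i \in \F_p^\times$. In terms of the symmetric matrix $M$ representing $\mathcal{Q}$, this means $P^\intercal M P = \mathrm{diag}(a_1,\ldots,a_r)$, so $\det(P)^2 \mathcal{D} = \prod_i a_i$. In particular, $\prod_i a_i$ and $\mathcal{D}$ agree modulo squares in $\F_p^\times$, so $\left(\frac{\prod_i a_i}{p}\right) = \left(\frac{\mathcal{D}}{p}\right)$.

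Next, since the substitution $y = xP$ is a bijection $\F_p^r \to \F_p^r$, the Gauss sum factors as
\begin{equation*}
G_p(\mathcal{Q},0) = \sum_{y \in \F_p^r} e_p\Bigl(\sum_{i=1}^{r} a_i y_i^2\Bigr) = \prod_{i=1}^{r} \sum_{y_i \in \F_p} e_p(a_i y_i^2) = \prod_{i=1}^{r} \left(\frac{a_i}{p}\right) g_p = \left(\frac{\mathcal{D}}{p}\right) g_p^r.
\end{equation*}

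Finally, invoking $g_p = \epsilon_p p^{1/2}$ gives $g_p^r = \epsilon_p^r p^{r/2}$ and the claim follows. No serious obstacle arises; the only care needed is to verify that the discriminant is well-defined modulo squares (so that the Legendre symbol makes sense) and that the diagonalization preserves this class, which is handled by tracking $\det(P)^2$.
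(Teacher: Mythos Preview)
Your proof is correct and follows essentially the same route as the paper's: diagonalize $\mathcal{Q}$, observe that the product $a_1\cdots a_r$ differs from $\mathcal{D}$ by a square (so their Legendre symbols agree), and then factor the sum into one-variable quadratic Gauss sums. There is a harmless notational slip --- with $y = xP$ and $\mathcal{Q}[x] = xMx^\intercal$ one gets $P^{-1}M(P^{-1})^\intercal = \mathrm{diag}(a_i)$ rather than $P^\intercal M P$ --- but either way $\prod_i a_i$ equals $\mathcal{D}$ times a square in $\F_p^\times$, so the argument is unaffected.
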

\begin{proof}
Suppose $\mathcal{Q}[x] = x M x^{\intercal}$.  Then $\mathcal{D} = \det(M)$, up to multiplication by a square in $\mf_p^{\times}$.  It is well-known that $\mathcal{Q}$ can be diagonalized; that is, there is a nonsingular matrix $U$ such that $U M U^{\intercal}$ is diagonal, say of the form $\mathrm{diag}(a_1, a_2, \dots, a_r)$ with each $a_i$ nonzero.  Note $a_1 \dots a_r = \det(U M U^{\intercal}) = \det(M) \det(U)^2$.  In these coordinates, we easily evaluate 
$G_p(\mathcal{Q}, 0) = \epsilon_p^r p^{r/2} (\frac{a_1 \dots a_r}{p})$, by say \cite[Section 3.5]{IK}.  This gives the claimed result.
\end{proof}

\subsection{Coset fourth moment}
We quote the following coset bound for the fourth moment of Dirichlet $L$-functions.
\begin{mytheo}[\cite{PetrowYoungCoset}, Theorem 1.4]
\label{thm:fourthcoset}
Let $q \geq 1$, and let $q_0$ be the least positive integer such that $q_0 | q$ and $q^2 | q_0^3$.  
 Let $\psi$ have conductor $q$.  Then for any $d|q$, $\sigma \geq 1/2$, and $X \geq 1$ we have
 \begin{equation*}
\int_{-X}^{X}  \sum_{\chi \shortmod{d}} |L(\sigma+it, \chi.\psi)|^4 dt
\ll X \lcm(d, q_0) (qX)^{\varepsilon}.
 \end{equation*}
\end{mytheo}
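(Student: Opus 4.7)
The strategy is the standard one for fourth moment estimates, modified to account for the twist structure of the coset $\{\chi\psi : \chi \bmod d\}$ and in particular to exploit the conductor-dropping captured by $q_0$. First I would apply an approximate functional equation to $L(\sigma+it,\chi\psi)^2$, writing it (after a smooth dyadic partition) as a sum of the form
\[
L(\sigma+it,\chi\psi)^2 \approx \sum_{n \leq Y} \frac{d(n)(\chi\psi)(n)}{n^{\sigma+it}} V\!\left(\tfrac{n}{Y}\right),
\]
with $Y \ll (dq)^{1+\varepsilon}(1+|t|)^{1+\varepsilon}$ and $V$ smooth of compact support; one needs the analogous statement for $|L|^4 = L^2 \bar L^2$ after an auxiliary smoothing on the $t$-integral.

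Next I would expand $|L|^4$ as a double sum in $(m,n)$ with coefficients $d(m)d(n)\psi(m)\overline{\psi(n)}\chi(m\overline{n})$ (on $(mn,q)=1$), sum over $\chi \bmod d$ using orthogonality to obtain the congruence $m \equiv n \pmod{d}$, and execute the $t$-integration to enforce $m \asymp n$ up to an additive length $\ll Y/X$. The \emph{diagonal} contribution $m=n$ is the main term and yields a clean bound of size $\ll X d (qX)^\varepsilon$ via the standard fourth-moment-of-the-divisor-function estimate, which is already consistent with the claimed bound since $d \leq \operatorname{lcm}(d,q_0)$. The \emph{off-diagonal} is where $q_0$ enters: writing $m=n+dr$ with $r\neq 0$ and $|r| \ll Y/(dX)$, one must estimate
\[
\sum_{n,r} d(n)d(n+dr)\,\psi(n+dr)\overline{\psi(n)}\,W\!\left(\tfrac{n}{Y},\tfrac{r}{R}\right).
\]
Here $\psi(n+dr)\overline{\psi(n)}$ is a character of conductor dividing $q$ in the variable $n$, and is periodic in $n$ modulo $q$; the critical point is that on the restricted progression $n \equiv *\pmod{d}$ with $d\mid q$, this character depends only on $n$ modulo $\operatorname{lcm}(d,q_0)$ in a useful sense, because a Postnikov/Taylor expansion of $\psi$ at depth corresponding to $d$ forces further cancellation beyond the obvious modulus $q$. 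This is exactly the content of the definition $q_0 \mid q$, $q^2 \mid q_0^3$: it is the smallest modulus to which $\psi$ on a progression modulo $d$ can be coherently reduced.

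The main technical step, and the crux of the argument, is a large-sieve/Poisson argument for the off-diagonal: after reducing the effective modulus in the $n$-aspect to $M:=\operatorname{lcm}(d,q_0)$, apply Poisson summation in $n$ modulo $M$ (or equivalently invoke a multiplicative large sieve inequality for character sums modulo $M$), combined with Cauchy--Schwarz in the $r$-variable and the standard bound $\sum_{n\leq N} d(n)^2 \ll N(\log N)^3$. This produces a bound of size $\ll X\cdot M \cdot (qX)^\varepsilon$ for the off-diagonal, matching the desired estimate. I expect the hardest part to be establishing that the reduction of $\psi$ to conductor $q_0$ really is available on progressions modulo $d$ for every divisor $d\mid q$, i.e.\ proving the prime-power local statement: for each $p^a\|q$ and each $d'\mid p^a$, the function $n \mapsto \psi_p(n+d'r)\overline{\psi_p(n)}$ has its variation in $n$ governed by a modulus $p^{\max(v_p(d'),v_p(q_0))}$. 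This is where the Postnikov formula (Lemma \ref{lem:postnikov}) and the exponent arithmetic $q^2\mid q_0^3$ are both used, and where the proof in \cite{PetrowYoungCoset} does the detailed bookkeeping that I would follow here.
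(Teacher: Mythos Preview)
The paper does not prove this theorem; it is quoted verbatim from \cite{PetrowYoungCoset} and used as a black box, so there is no ``paper's own proof'' to compare against here.

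That said, your sketch has the right architecture (approximate functional equation, orthogonality in $\chi$, diagonal/off-diagonal split, Postnikov input) but is imprecise at two points. First, since $d \mid q$ and $\psi$ has conductor $q$, the character $\chi\psi$ has conductor exactly $q$, so the length in the approximate functional equation is $Y \ll (qX)^{1+\varepsilon}$, not $(dqX)^{1+\varepsilon}$; this matters for the size of the off-diagonal. Second, and more substantively, the assertion that $n \mapsto \psi(n+dr)\overline{\psi(n)}$ ``depends only on $n$ modulo $\lcm(d,q_0)$'' is not how $q_0$ actually enters in \cite{PetrowYoungCoset}. There one applies Voronoi/Poisson in $n$ to the shifted convolution $\sum_n d(n)d(n+h)\psi(n)\overline{\psi}(n+h)$, obtaining complete exponential sums modulo $q$; these are then evaluated by the $p$-adic stationary phase/Postnikov method, and the counting of critical points (solutions to a quadratic congruence in the Postnikov parameter $\ell_\psi$) is what produces the saving down to $q_0$. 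The exponent constraint $q^2 \mid q_0^3$ is precisely the threshold at which the second-order Taylor term in the $p$-adic logarithm becomes negligible. So the reduction is on the \emph{dual} side after Poisson, not a direct periodicity statement about $\psi(n+dr)\overline{\psi(n)}$, and the endgame is pointwise character-sum bounds rather than a large sieve.
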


\section{Broad overview}
\subsection{Summary of \cite{PetrowYoungWeyl}}

For $T\geq 1, 1\leq \Delta <T/100$, $h_\infty$ given by either \eqref{eq:hdefInitialSegmentVersion} or \eqref{eq:hdefWindowVersion}, and a list of representations $(\sigma_p)_{p\in S}$, the goal of this paper is to estimate 
\begin{equation}\label{Mdef}\mathcal{M}^\pm((\sigma_p)_{p\in S}) := \sum_{t_j} h_\infty(t_j) \sumpm_{f \in \mathcal{H}_{it_j}((\sigma_p)_{p \in S})} w(f) L(1/2,f)^3 + (\text{Eis.}),\end{equation} 
where the $\pm$ indicates that the sum is over forms with even (resp.\ odd) parity and otherwise the notation is as in Theorem \ref{PBKformula}. 
The (harmonically weighted) size of the family in $\mathcal{M}^\pm((\sigma_p)_{p\in S})$ asymptotically equals $\frac{1}{2}\delta =\frac{1}{2}\delta_\infty\delta_{\rm fin}$, and $\delta_\infty$ satisfies 
$$\delta_\infty \asymp T^2 \quad \text{ or } \quad \delta_{\infty} \asymp \Delta T$$ 
when $h_\infty$ is given by \eqref{eq:hdefInitialSegmentVersion}, \eqref{eq:hdefWindowVersion}, respectively. We also have that $\delta_{\rm fin} = \prod_p \delta_p$ with $\delta_p \asymp p^{\lceil \frac{c(\sigma_p)}{2}\rceil}$ with absolute implied constants, where $Q = \prod_p p^{c(\sigma_p)}$ is the common (finite) conductor of the forms appearing in $\mathcal{M}^\pm((\sigma_p)_{p\in S})$. 
\begin{mytheo}\label{MT_precise}
Suppose that  $c(\sigma_p)\geq 2$ for all $p\in S$ and $c(\sigma_2)\geq 1000$ if $2 \in S$.
\begin{enumerate}
\item If $h_{\infty}$  is given by \eqref{eq:hdefInitialSegmentVersion}, then there exists $B>2$ such that $\mathcal{M}^\pm((\sigma_p)_{p\in S}) \ll_\eps T^B \delta_{\rm fin}^{1+\eps}.$
\item If $h_{\infty}$  is given by \eqref{eq:hdefWindowVersion} with $\Delta = T^\eps$  and there exists  $\delta >0$  such that  $T\gg Q^\delta$, then $\mathcal{M}^\pm((\sigma_p)_{p\in S}) \ll_\eps (T\delta_{\rm fin})^{1+\eps}.$
\end{enumerate}
\end{mytheo}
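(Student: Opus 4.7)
The plan is to follow the architecture of the cubic moment treatments in \cite{PetrowYoungWeyl, PetrowYoungCoset}, with the classical Kloosterman sums in the Kuznetsov formula replaced by the generalized Kloosterman sums $H(m,n;c)$ supplied by Theorem \ref{PBKformula}. I would begin by inserting an approximate functional equation for $L(1/2,f)^3$, dyadically decomposing the resulting sums and using Hecke multiplicativity to reduce to expressions of the form $\lambda_f(m)\overline{\lambda_f(n)}$ to which Theorem \ref{PBKformula} applies. This recasts $\mathcal{M}^\pm((\sigma_p)_{p\in S})$ as a main (diagonal) term of the expected size, plus a geometric off-diagonal sum over moduli $c \equiv 0 \pmod{q'}$ weighted by $c^{-1} H(m,n;c) H_\infty^\pm(4\pi\sqrt{mn}/c)$.

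Next, I would apply Poisson summation in the three truncation variables coming from the approximate functional equation, using a Mellin decomposition to separate archimedean variables from finite ones. The finite part organizes itself around the multivariable Fourier/Mellin transform $\widehat{H}(\psi)$, producing an expression schematically of the form
\begin{equation*}
\sum_{\psi \shortmod{q'}} L(1/2,\psi)^3 L(1/2,\overline{\psi})\, \widehat{H}(\psi)
\end{equation*}
together with main and lower order contributions. Using the explicit formula \eqref{eq:KloostermanDefSCinertCaseGeneral} for the supercuspidal Kloosterman sums, the inner Gauss sum over $x_1$ evaluates to a root number $\tau(\overline{\psi})$, which combines with one copy of $L(1/2,\psi)$ to produce $\overline{L(1/2,\psi)}$. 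This converts the dual sum into a fourth moment
\begin{equation*}
\sum_{\psi \shortmod{q'}} |L(1/2,\psi)|^4 \, g(\xi,\psi),
\end{equation*}
where $g(\xi,\psi)$ is a completed exponential sum over $\cO_L/q'\cO_L \times (\Z/q'\Z)^2$.

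The main obstacle is to bound $g(\xi,\psi)$, equivalently $\widehat{H}(\psi)$, uniformly in $\psi$. For depth-zero supercuspidal primes (i.e.\ $c(\sigma_p)=2$), the modulus $q'$ at $p$ is prime and the character sum is a trace function of a constructible $\ell$-adic sheaf on $\A^1/\F_p$; the required square-root cancellation will follow from Deligne's Weil~II bounds together with the Deligne--Laumon geometric Fourier transform and Katz's theory of convolution sheaves. For higher-depth primes, I would apply the multivariable Postnikov formulas of Lemmas \ref{lemma:characterSumPrimePowerEvenExponent} and \ref{lemma:characterSumPrimePowerOddExponent} to reduce the sum to a stationary-phase-type problem over $\F_p$, handling the extra variable $t \in \cO_L/p^k\cO_L$ via the additive orthogonality relation of Remark \ref{rmk:PostnikovRemarkNumberField}. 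Crucially, one must isolate the ``singular'' characters $\psi$ for which $\widehat{H}(\psi)$ fails to have square-root cancellation; as previewed in Section \ref{section:sketch}, these live in at most two cosets modulo $q'$ specified by $\ell_\psi^2 \equiv \Nm(\ell_\xi)/4 \pmod{p^2}$ when $L/\Q_p$ is unramified (with $\Tr(\ell_\xi) = 0$ by Lemma \ref{lemma:traceRisZero}), and are non-empty only when $p \equiv 3 \pmod 4$.

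Finally, I would assemble the bound by controlling the fourth moment $M_4(\{\psi \shortmod{q'}\})$ in the two regimes. For generic $\psi$, a standard large-sieve fourth-moment inequality combined with the square-root cancellation just established for $g(\xi,\psi)$ yields a contribution of the size required by the Lindel\"of-on-average bound. For $\psi$ in the singular cosets, the coset fourth-moment estimate of Theorem \ref{thm:fourthcoset}, applied with $d$ the modulus of the coset, provides the requisite control, provided one has matching upper bounds on $g(\xi,\psi)$ on these cosets. Summing the generic and singular contributions and comparing against $\delta_\infty \delta_{\mathrm{fin}} \asymp T^2 \delta_{\mathrm{fin}}$ (respectively $\Delta T \delta_{\mathrm{fin}}$) produces bounds (1) and (2). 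In case (2), the hypothesis $T \gg Q^\delta$ is used precisely to ensure that the archimedean weights contribute only a factor $T^\eps$, yielding the clean Lindel\"of-on-average shape $(T\delta_{\mathrm{fin}})^{1+\eps}$.
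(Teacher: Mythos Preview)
Your proposal is correct and follows essentially the same architecture as the paper: approximate functional equation, application of the PBK formula, triple Poisson summation leading to the character sum $\widehat{H}(\psi)$, reduction to a dual fourth moment via the Gauss-sum/root-number trick, algebro-geometric bounds in depth zero versus Postnikov-type stationary phase in higher depth, isolation of singular cosets, and finally the coset fourth-moment bound of Theorem~\ref{thm:fourthcoset}. The paper packages the arithmetic input as a single Dirichlet-series lemma (Lemma~\ref{lemma:Zlemma}, with the decomposition $Z=Z_0+Z_1$) and separately treats the zero-frequency terms $m_1m_2m_3=0$ (Lemma~\ref{lemma:GboundZero}), but these are organizational choices rather than substantive differences from your outline.
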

As a preliminary simplification, we may assume here that $\sigma_p$ is supercuspidal for at least one $p\in S$, as the complementary case was already covered in \cite{ConreyIwaniec, PetrowYoungWeyl, PetrowYoungCoset}.  Thus, the Eisenstein series term in \eqref{Mdef} vanishes and we omit it from all further discussion. 

\begin{mylemma}[Approximate Functional Equation]\label{AFE}
Let $f$ be a Maass form with trivial central character, conductor $Q$ and spectral parameter $t_f$. We have
\begin{equation}
\label{eq:LcubedAFE}
L(1/2, f)^3 = 
\sum_{(d, Q) = 1} \frac{4}{d}
\sum_{n_1, n_2, n_3} \frac{\lambda_f(n_1) \lambda_f(n_2 n_3)}{\sqrt{n_1 n_2 n_3}}
V_1\Big(\frac{n_1}{\sqrt{Q}}, t_f\Big)
V_2\Big(\frac{n_2 n_3 d^2}{Q}, t_f \Big),
\end{equation}
where 
$V_1$ and $V_2$ are the smooth functions given in \cite[(4.3)]{PetrowYoungWeyl}. 
\end{mylemma}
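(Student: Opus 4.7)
The plan is to factor $L(1/2,f)^3 = L(1/2,f)\cdot L(1/2,f)^2$ and apply a standard Mellin-contour approximate functional equation to each of these two factors separately, at the natural truncation lengths $\sqrt{Q}$ and $Q$. Since $f$ has trivial central character, both $L(s,f)$ and $L(s,f)^2$ are self-dual (with root numbers $\varepsilon_f\in\{\pm 1\}$ and $\varepsilon_f^2=1$ respectively), so each AFE contributes a factor of $2$ in front of its Dirichlet sum. These combine to give the overall prefactor $4 = 2\cdot 2$ in \eqref{eq:LcubedAFE}. When $\varepsilon_f = -1$, $L(1/2,f)^3$ vanishes and the identity reduces to a triviality; the substantive case is $\varepsilon_f = +1$.

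The first step is to re-express $L(s,f)^2$ in the variables $(n_2, n_3, d)$ that appear in \eqref{eq:LcubedAFE}. Starting from the Hecke multiplicativity $\lambda_f(m_1)\lambda_f(m_2) = \sum_{d\mid(m_1,m_2),\,(d,Q)=1}\lambda_f(m_1 m_2/d^2)$ and substituting $m_j = d n_j$, a direct rearrangement yields
\begin{equation*}
L(s,f)^2 = \sum_{(d,Q)=1}\frac{1}{d^{2s}}\sum_{n_2, n_3 \geq 1}\frac{\lambda_f(n_2 n_3)}{(n_2 n_3)^s}.
\end{equation*}

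The AFE itself is by now standard (cf.\ \cite[\S 5.2]{IK}): with $\gamma_\infty(s,t_f) = \pi^{-s}\Gamma(\tfrac{s+it_f}{2})\Gamma(\tfrac{s-it_f}{2})$ the archimedean factor of $L(s,f)$ and $G$ an even holomorphic test function with $G(0)=1$ and rapid decay in vertical strips, for $k \in \{1,2\}$ one integrates $L(\tfrac12+s,f)^k \, \gamma_\infty(\tfrac12+s,t_f)^k / \gamma_\infty(\tfrac12,t_f)^k \, G(s) \, Q^{ks/2}/s$ along $\real(s)=2$. Expanding the Dirichlet series and interchanging integral and sum gives a convergent expression $\sum_m a_k(m) m^{-1/2} V_k(m/Q^{k/2}, t_f)$, where $a_k(m)$ are the coefficients of $L(s,f)^k$ and $V_k$ is an inverse Mellin transform. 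Shifting the contour to $\real(s) = -2$ picks up the residue $L(1/2,f)^k$ at $s=0$; the functional equation of $\Lambda(s,f)^k$ (with root number $\varepsilon_f^k$) together with the evenness of $G$ then identifies the shifted integral with the original sum. Combining,
\begin{equation*}
L(1/2,f)^k = (1+\varepsilon_f^k)\sum_m \frac{a_k(m)}{\sqrt{m}} V_k\!\left(\frac{m}{Q^{k/2}}, t_f\right).
\end{equation*}

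Finally, multiplying the $k=1$ formula by the $k=2$ formula, inserting the Dirichlet series expansion from the previous step into the latter, and aligning the test function $G$ and the gamma-factor normalizations with those used to define $V_1, V_2$ in \cite[(4.3)]{PetrowYoungWeyl}, yields \eqref{eq:LcubedAFE} with the stated prefactor $4 = (1+\varepsilon_f)(1+\varepsilon_f^2)$ (valid when $\varepsilon_f = 1$). No step presents any real difficulty; the only book-keeping concern is matching conventions with the cited reference.
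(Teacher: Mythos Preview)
Your approach is the same as the paper's: factor $L(1/2,f)^3 = L(1/2,f)\cdot L(1/2,f)^2$, apply the AFE to each factor, and use the Hecke relation to open $L(s,f)^2$ into the $(d,n_2,n_3)$-sum. The paper likewise cites \cite[Lem.\ 4.2]{PetrowYoungWeyl} and explains that it goes through regardless of the root number.

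There is, however, one small gap in your handling of the case $\varepsilon_f=-1$. You say the identity ``reduces to a triviality'', but the right-hand side of \eqref{eq:LcubedAFE} carries the fixed constant $4$, not $(1+\varepsilon_f)(1+\varepsilon_f^2)$, so you must explain why that right-hand side vanishes. The missing observation (which is exactly what the paper supplies) is that the $k=2$ AFE is valid unconditionally, since the root number of $L(s,f)^2$ is $\varepsilon_f^2=1$; hence the $V_2$-sum equals $\tfrac12 L(1/2,f)^2$ regardless of $\varepsilon_f$. When $\varepsilon_f=-1$ one has $L(1/2,f)=0$, so $L(1/2,f)^2=0$, and therefore the $V_2$-sum itself vanishes, forcing the whole right-hand side to equal $0=L(1/2,f)^3$. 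Equivalently: multiply the exact identity $L(1/2,f)^2 = 2A_2$ by $2A_1$ (which need not equal $L(1/2,f)$ when $\varepsilon_f=-1$, but that is irrelevant since you are multiplying by zero). With that one sentence added, your argument is complete and matches the paper's.
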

Remark. 
In the Maass form case, $V_1$ and $V_2$ depend on the parity $\delta$ of $f$.
If on the other hand, $f$ is a weight $\kappa$ holomorphic cusp form, the same formula \eqref{eq:LcubedAFE} holds with a different definition of $V_1$ and $V_2$, namely, in loc.\ cit.\ (4.3), instances of $\delta+it$ are replaced by $\frac{\kappa-1}{2}$ and instances of $\delta-it$ are replaced by $\frac{\kappa+1}{2}$.
\begin{proof}
The proof in \cite[Lem.\ 4.2]{PetrowYoungWeyl} goes through independently of value of the root number $\epsilon(f)$. Indeed, the standard approximate functional equation (AFE) for $L(1/2,f)$ says that $L(1/2,f) = (1+\epsilon(f))A(1/2,f)$, where $$A(1/2,f) =  \sum_n \frac{\lambda_f(n)}{\sqrt{n}} V_1\Big(\frac{n}{\sqrt{Q}},t_j\Big).$$ To give a formula for $L(1/2,f)^3$, we multiply $(1+\epsilon(f))A(1/2,f)$ by an AFE for $L(1/2,f)^2$, which does not depend on the value of $\epsilon(f)$ (the root number of $L(1/2,f)^2$ being $+1$). If $\epsilon(f)=+1$, the result is the claimed formula. If $\epsilon(f)=-1$, then $L(1/2,f)^2 = 0$ already, so we may multiply this by $2A(1/2,f)$ (which may not equal $L(1/2,f)$) to obtain a correct formula for $L(1/2,f)^3$. 
\end{proof}

Now we proceed to follow Section 4.2 of \cite{PetrowYoungWeyl}. We insert $\frac{1}{2}(1\pm \lambda_f(-1))$ to detect the parity condition in $\mathcal{M}^\pm((\sigma_p)_{p\in S})$ and apply Lemma \ref{AFE} and Theorem \ref{PBKformula} to obtain that $\mathcal{M}^\pm((\sigma_p)_{p\in S}) = \mathcal{D} + \mathcal{S}^{+} \pm \mathcal{S}^{-}$, where $\mathcal{D}$ is the diagonal term, and $\mathcal{S}^{\pm}$ is given by 
\begin{equation}\label{Spm_def}\mathcal{S}^{\pm}= \frac{1}{2}\delta_{\rm fin} \sum_{(d,Q)=1} \frac{4}{d} \sum_{n_1,n_2,n_3} \frac{1}{\sqrt{n_1n_2n_3}} \sum_{c \equiv 0 \shortmod{q'}} \frac{H(\pm n_1,n_2n_3 ;c)}{c} H_\infty^{\pm}\left( \frac{4 \pi \sqrt{n_1n_2n_3}}{c}\right),\end{equation}
with $q'$ the geometric conductor and $H^\pm_\infty$ here defined with respect to loc.\ cit.\ (4.7) divided by 4, cf.\ loc.\ cit.\ (4.9). By the trivial bound
 for supercuspidal Kloosterman sums \cite[Thm.\ 3.8(5)]{HPY} along with the classical Weil bound and estimates for $H_\infty^\pm$ found in \cite[\S 10, 13]{PetrowYoungWeyl}, the sum in \eqref{Spm_def} converges absolutely.

Inserting a partition of unity to \eqref{Spm_def} that localizes the variables to $n_j \asymp N_j$, $j=1,2,3$ and $c \asymp C$, trivial bounds show that the sum is very small unless $N_1 \ll (T\sqrt{Q})^{1+\varepsilon}$, $N_2 N_3 \ll d^{-2} (T\sqrt{Q})^{2+\varepsilon}$, and $C \ll (TQ)^{100}$, with $C \gg q'$, cf.\ \cite[Prop.\ 4.3]{PetrowYoungWeyl}. Both $\mathcal{S}^{+}$ and $\mathcal{S}^{-}$ are similar, so we only estimate $\mathcal{S}^{+}$ for simplicity of notation.

Next we follow Section 4.3 of \cite{PetrowYoungWeyl} and apply Poisson summation to each of $n_1, n_2, n_3$ modulo $c$ (using that the generalized Kloosterman sum $H(m,n;c)$ is periodic in $m,n$ modulo $c$; recall \eqref{eq:KloostermanPeriodicModuloc}). The results of Section 4.3 of loc.\ cit.\ go through with an extra factor of $\delta_{\rm fin}$, an identical treatment of the (archimedean) integral transforms, and where the character sum $G = G(m_1,m_2,m_3;c)$ appearing there should now be given by 
\begin{equation}\label{eq:Gdef}
G = c^{-3}
\sum_{x_1, x_2, x_3 \shortmod{c}} H(x_1, x_2 x_3 ;c) e_c(m_1 x_1 + m_2 x_2 + m_3 x_3).
\end{equation}
Thus, the only departure in this paper from the previous work of the last two authors is the different analysis of the character sum $G$. 

Following \cite[Section 5]{PetrowYoungWeyl} now, we 
define the Dirichlet series
\begin{equation}
\label{eq:Zdef}
Z(s_1, s_2, s_3, s_4) = \sum_{\epsilon_1 m_1, \epsilon_2 m_2, \epsilon_3 m_3 \geq 1}
\sum_{c \equiv 0 \shortmod{q'}}
\frac{c q' G(m_1, m_2, m_3;c) e_c(-m_1 m_2 m_3)}{m_1^{s_1} m_2^{s_2} m_3^{s_3} (c/q')^{s_4}}
\end{equation}
cf.\ loc.\ cit.\ (5.1). 
We claim the following bound on $Z$.
\begin{mylemma}
\label{lemma:Zlemma}
There exists a decomposition $Z = Z_0 + Z_1$ where 
$Z_0$ and $Z_1$ satisfy the following properties.  Firstly, $Z_0$ is meromorphic in $\mathrm{Re}(s_j) \geq \sigma > 1/2$ for all $j$ and analytic for $\mathrm{Re}(s_j) \geq \sigma > 1$ for all $j$.  It has a pole whenever some $s_j = 1$ and the other variables are held fixed.  In the region $\mathrm{Re}(s_j) \geq \sigma > 1$ it satisfies the bound
\begin{equation}
\label{eq:Z0bound}
Z_0(s_1, s_2, s_3, s_4) \ll_{\sigma, \varepsilon} \frac{(q')^2}{Q^{3/4}} Q^{\varepsilon}.
\end{equation}
Secondly, $Z_1$ is analytic for $\mathrm{Re}(s_j) \geq \sigma > 1/2$ for all $j$, wherein it satisfies the bound for $X \gg 1$
\begin{equation}
\label{eq:Z1bound}
\int_{-X}^{X} |Z_1(\sigma+it, \sigma+it, \sigma+it, \sigma-it)| dt
\ll_{\sigma, \varepsilon} (q')^{3/2+\varepsilon} X^{1+\varepsilon}. 
\end{equation}
The same bound \eqref{eq:Z1bound} also holds for $Z_0$, provided (say) $1/2 < \sigma \leq  \mathrm{Re}(s_j) \leq 0.99$.
\end{mylemma}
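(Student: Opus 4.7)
My plan is to follow the strategy of \cite[Lemma 5.2]{PetrowYoungWeyl} and its refinement in \cite{PetrowYoungCoset}, adapting it to the supercuspidal setting by invoking the new character-sum analysis for $\widehat{H}(\psi)$ sketched in Section \ref{section:sketch}. The first step is to use the twisted multiplicativity \eqref{eq:HCRT} of the generalized Kloosterman sum, combined with the Chinese Remainder Theorem on $c$, to factor $Z$ as a product of local factors at primes $p \mid q'$ (where $H$ is the supercuspidal or Conrey--Iwaniec-type Kloosterman sum) times a ``prime-to-$Q$'' factor in which $H$ reduces to the classical Kloosterman sum $S(m,n;c)$. The prime-to-$Q$ factor in the variable $s_4$ can be treated as in \cite[\S 5]{PetrowYoungWeyl} using the Kuznetsov formula / Weil bound to obtain a Dirichlet series of classical automorphic $L$-functions with the usual growth.

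Second, at each $p \in S$ I would use the explicit formula \eqref{eq:KloostermanDefSCinertCaseGeneral} (or the analogous principal-series formula when $\sigma_p$ is not supercuspidal) to rewrite the local piece of $G$ in terms of the mixed Fourier/Mellin transform $\widehat{H}(\psi)$. Applying Mellin duality in $m_1, m_2, m_3$ modulo $q'$ converts the $m_j$-sums into Dirichlet $L$-functions $L(s_j, \psi)$ for $\psi$ running over characters modulo $q'$, while a fourth $L(s_4, \overline{\psi})$ arises from the $c$-sum (the root-number manipulation of Section \ref{section:sketch} turns the cubic moment into the fourth moment \eqref{motohashidualmoment}). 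The result is an identity of the shape
\begin{equation*}
Z(s_1, s_2, s_3, s_4) \approx \sum_{\psi \shortmod{q'}} L(s_1,\psi) L(s_2,\psi) L(s_3,\psi) L(s_4,\overline{\psi}) \, \widehat{H}(\psi),
\end{equation*}
up to standard local correction factors and Dirichlet series in the prime-to-$Q$ variables.

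Third, the decomposition $Z = Z_0 + Z_1$ is obtained by splitting the sum over $\psi$ according to whether $\widehat{H}(\psi)$ exhibits square-root cancellation or not. The generic part $Z_1$ collects those $\psi$ for which $\widehat{H}(\psi)$ is small; the bound \eqref{eq:Z1bound} then reduces to the coset fourth-moment estimate Theorem \ref{thm:fourthcoset} applied on $\real(s_j) = \sigma$ after extracting the pointwise bound on $\widehat{H}(\psi)$. Since the generic $\psi$ have no reason to produce poles, $Z_1$ is analytic on $\real(s_j) > 1/2$. The singular part $Z_0$ collects those few cosets of $\psi$ (the ``square-root condition'' $\ell_\psi^2 \equiv \Nm(\ell_\xi)/4 \pmod{p^2}$ in the inert supercuspidal case, nonempty only when $p \equiv 3 \pmod 4$) where $\widehat{H}(\psi)$ is anomalously large; the principal and trivial characters in this set produce the poles at $s_j = 1$, and the bound \eqref{eq:Z0bound} follows from counting the singular cosets (of size $O(q'/p^2)$) against the pointwise bound on $|\widehat{H}|$ and the convexity bound for $L(s_j, \psi)$ on $\real(s_j) > 1$.

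The main obstacle is the precise pointwise analysis of $\widehat{H}(\psi)$ in the supercuspidal case: identifying the singular cosets, establishing square-root cancellation off of them, and handling the depth-zero case $c(\sigma_p) = 2$ where no Postnikov expansion is available and one must invoke the Deligne--Laumon geometric Fourier transform together with Katz's convolution sheaves to obtain the needed bound on $\widehat{H}(\psi)$. Once these estimates (carried out in Section \ref{section:HhatCalculations}) are established and shown to be of comparable strength to their principal-series counterparts in \cite{PetrowYoungCoset}, the assembly of $Z_0$ and $Z_1$ and the verification of \eqref{eq:Z0bound}--\eqref{eq:Z1bound} proceeds exactly as in loc.\ cit., with the supercuspidal character sum bounds substituted for the principal-series ones.
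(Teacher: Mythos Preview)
Your broad strategy --- CRT factorization, Mellin expansion in $\psi$, expressing $Z$ via $\widehat{H}(\psi)$ and a product of four Dirichlet $L$-functions, then invoking the coset fourth moment Theorem~\ref{thm:fourthcoset} --- is correct and matches the paper. However, your proposed decomposition $Z = Z_0 + Z_1$ is \emph{not} the one the paper uses, and as written it would not yield the bound~\eqref{eq:Z0bound}.

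The paper takes $Z_0$ to be the sub-sum with $\psi = \psi_0$ trivial and $Z_1$ the sum over all nontrivial $\psi$. The nontrivial singular characters (those in the small cosets where $\widehat{H}(\psi)$ is anomalously large) are kept inside $Z_1$, and their contribution is controlled by pairing the large $\widehat{H}$ against the \emph{small} coset fourth moment from Theorem~\ref{thm:fourthcoset}. Your proposal instead places these singular nontrivial $\psi$ into $Z_0$. But in the region $\mathrm{Re}(s_j) > 1$ the $L$-functions for nontrivial $\psi$ are $O(1)$, so your $Z_0$ would be bounded below by (say, for $k=c_0$, $L/\Q_p$ unramified)
\[
\sum_{\psi \text{ singular}} |\widehat{H}(\psi)|
\gtrsim \sum_{1 \le j < n/2} p^{k/2} p^{j} \cdot p^{k-2j}
\asymp p^{3k/2},
\]
whereas the target~\eqref{eq:Z0bound} is $(q')^2/Q^{3/4} = p^{c_0/2} = p^{k/2}$. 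The gap is a full factor of $q'$. The bound~\eqref{eq:Z0bound} is only attainable because the trivial character alone has the extra cancellation recorded in Lemmas~\ref{lemma:HhatpsiTrivialInertkLarger} and~\ref{lemma:HhatpsiTrivialInertkSmallest} (essentially, $\widehat{H}(\psi_0)$ collapses to a Ramanujan sum); no such miracle holds for the nontrivial singular $\psi$.

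Two smaller inaccuracies: the prime-to-$Q$ part of $G$ (your ``classical Kloosterman'' factor) does not require any Kuznetsov or Weil input --- it evaluates by orthogonality to the indicator $\delta((m_1,c')=1)$, see Lemma~\ref{lemma:Zformulaprelim}. And the trivial character is not generally a member of the singular cosets $\{\ell_\psi^2 \equiv \mathrm{Nm}(\ell_\xi)/4\}$: when $k=c_0$ one has $v_p(\mathrm{Nm}(\ell_\xi))=0$, so $\ell_\psi = 0$ does not satisfy the congruence.
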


It is also necessary to bound the contribution from the terms with $m_1 m_2 m_3 = 0$, which are not included in $Z$.  For these terms, all that we need is the following estimate.  
\begin{mylemma}
\label{lemma:GboundZero}
If $m_1 m_2 m_3 = 0$ then
\begin{equation}
\label{eq:GboundZero}
|G(m_1, m_2, m_3;c)| \ll \frac{Q^{\varepsilon}}{c q'} \frac{(m_1, q') (m_2, q')(m_3,q')}{q'}.
\end{equation}
\end{mylemma}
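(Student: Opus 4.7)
The plan is to reduce the global estimate to a local one at each prime dividing $c$ via the Chinese Remainder factorization \eqref{eq:HCRT}. After this factorization the additive character $e_c(m_1 x_1 + m_2 x_2 + m_3 x_3)$ also splits multiplicatively, so writing $k_p = v_p(c)$ and $k'_p = v_p(q')$ we obtain $|G(m_1, m_2, m_3; c)| = \prod_{p \mid c} |G_p(m_1, m_2, m_3; p^{k_p})|$, where each $G_p$ has the same shape as $G$ but with $H$ replaced by $H_{\sigma_p}$ and with unit rescalings of the $m_i$'s that preserve $|G_p|$ and $(m_i, p^{k'_p})$. Since the right hand side of \eqref{eq:GboundZero} also factors as a product over $p \mid c$, it suffices to prove the local estimate $|G_p(m_1, m_2, m_3; p^{k_p})| \ll_\varepsilon p^{\varepsilon k_p}(m_1, p^{k'_p})(m_2, p^{k'_p})(m_3, p^{k'_p}) \cdot p^{-k_p - 2 k'_p}$, adopting the convention $(0, p^{k'_p}) = p^{k'_p}$.

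At primes $p \nmid S$ (so $k'_p = 0$) the sum $H_{\sigma_p} = S$ is the classical Kloosterman sum. Performing the $x_1$-sum first via Fourier inversion gives $\sum_{x_1 \shortmod{p^{k_p}}} S(x_1, n; p^{k_p}) e_{p^{k_p}}(m_1 x_1) = p^{k_p} e_{p^{k_p}}(-\overline{m_1} n)$ when $(m_1, p) = 1$ and zero otherwise; this immediately handles $m_1 = 0$. When $m_2 = 0$ or $m_3 = 0$, the residual $x_2, x_3$ sum $p^{-2k_p} \sum_{x_2, x_3} e_{p^{k_p}}(m_2 x_2 + m_3 x_3 - \overline{m_1} x_2 x_3)$ collapses via orthogonality to $|G_p| \leq p^{-k_p}$. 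At primes $p \in S$ we combine the explicit formulas \eqref{eq:KloostermanDefPScase}, \eqref{eq:KloostermanDefSCinertCaseGeneral} with the support condition $H_{\sigma_p}(m, n; p^{k_p}) = 0$ unless $p \nmid mn$ to restrict $x_1, x_2, x_3$ to units. One then swaps the order of summation to sum the additive character $e_{p^{k_p}}$ against the $x_i$'s first: the zero-$m_i$ variable produces a Ramanujan-type sum of size bounded by $p^{k_p}$, while the remaining two $x_i$ sums are bounded by $(m_j, p^{k_p})$. For the supercuspidal case the inner sum over $t \in \mathcal{O}_L/(p^{k_p})$ is handled via the additive orthogonality identity \eqref{eq:addorth}, producing an epsilon-type Gauss sum whose magnitude is controlled by $p^{k'_p}$.

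The main obstacle is the supercuspidal case, where one cannot directly evaluate the $x_1$-sum in closed form due to the extra variable $t$ over $\mathcal{O}_L$ present in the definition of $H_{\sigma_p}$. Instead, one swaps the $x_i$ and $t$ summations and carefully tracks all factors, handling separately the unramified and ramified cases for $L/\Q_p$ and the $p = 2$ case with its enlarged family $\sigma_2[200]$. In each subcase the savings $(m_i, p^{k'_p})/p^{2k'_p}$ arise from combining the size bound on the $t$-Gauss sum with the Ramanujan-type bound on the $x_i$ sums having $m_i = 0$, and these local bounds assemble multiplicatively via the CRT decomposition to complete the proof of the lemma.
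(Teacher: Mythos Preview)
Your overall strategy---reduce to prime powers via \eqref{eq:HCRT} and then estimate each local $G_p$ by opening $H_{\sigma_p}$ and separating the $x_i$-sums from the $t$-sum---is the same as the paper's. However, your bookkeeping in the supercuspidal case is off by exactly one factor of $p^{k_p}$, and with that error the bound you write down is too weak.

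The issue is your sentence ``the zero-$m_i$ variable produces a Ramanujan-type sum of size bounded by $p^{k_p}$, while the remaining two $x_i$ sums are bounded by $(m_j,p^{k_p})$''. After inserting \eqref{eq:KloostermanDefSCinertCaseGeneral} the four variables $x_1,x_2,x_3,t$ are linked by the single constraint $\mathrm{Nm}(t)\equiv x_1x_2x_3$, so you cannot simply sum all three $x_i$ freely as Ramanujan sums and then also sum $t$ freely as a Gauss sum; that double-counts one degree of freedom. The paper (and the correct argument) instead observes that $H(x_1,x_2x_3;p^k)=H(x_1x_2x_3,1;p^k)$ depends only on the product, so one may change variables $x_3\to \overline{x_1x_2}\,x_3$; the variable $x_3$ (chosen to be the one with $m_i=0$) then decouples and the remaining inner sum becomes $\sum_{x_3}H(x_3,1;p^k)$. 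This yields
\[
|G_p|\;\le\; p^{-3k_p}\,|S(m_1,0;p^{k_p})|\,|S(m_2,0;p^{k_p})|\,\Big|\sum_{x_3}H(x_3,1;p^{k_p})\Big|,
\]
with only \emph{two} Ramanujan factors. The paper then invokes \cite[Prop.~7.8]{HPY}: the $x_3$-sum vanishes unless $c(\xi)=ek_p-d$ (forcing $k_p=k'_p$ for $p$ odd unramified, and vanishing outright for $p$ odd ramified and for $p=2$ in the enlarged family), in which case it has magnitude $p^{k_p}$. With your accounting of three Ramanujan factors times a Gauss sum of size $p^{k'_p}$, you obtain $p^{-2k_p+k'_p}(m_1,p^{k_p})(m_2,p^{k_p})$, which exceeds the target $p^{-k_p-k'_p}(m_1,p^{k'_p})(m_2,p^{k'_p})$ already at $k_p=k'_p$ and $(m_1m_2,p)=1$. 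The fix is exactly the change of variables above.
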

We also need the following very minor fact about $G$, which is contained in Lemma \ref{lemma:Zformulaprelim}
\begin{mylemma}\label{lemma:veryeasylemma}
Suppose that $c = c_Q c'$ where $c_Q | Q^{\infty}$ and $(c', Q) = 1$. Then $G(m_1,m_2,m_3;c)$ vanishes unless $(m_1,c')=1$.
\end{mylemma}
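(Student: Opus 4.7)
The plan is to factor the sum defining $G(m_1, m_2, m_3; c)$ via the Chinese Remainder Theorem and then isolate the inner sum over $x_1$ modulo $c'$.

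Writing $c = c_Q c'$ with $(c_Q, c') = 1$, the CRT bijection $\Z/c\Z \leftrightarrow \Z/c_Q\Z \times \Z/c'\Z$ for each of $x_1, x_2, x_3$ combined with the multiplicativity relation \eqref{eq:HCRT}
$$H(x_1, x_2 x_3; c) = H(x_1 \overline{c'}, x_2 x_3 \overline{c'}; c_Q)\, H(x_1 \overline{c_Q}, x_2 x_3 \overline{c_Q}; c')$$
and the factorization of the additive character $e_c(m_i x_i) = e_{c_Q}(\overline{c'} m_i y_i)\, e_{c'}(\overline{c_Q} m_i x_i')$ (where $y_i$, $x_i'$ denote $x_i \bmod c_Q$ and $x_i \bmod c'$) show that $G(m_1, m_2, m_3; c)$ factors as a product of two sums, one to modulus $c_Q$ and one to modulus $c'$.

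The factor to modulus $c'$ has the form
$$G_{c'} = (c')^{-3} \sum_{x_1', x_2', x_3' \shortmod{c'}} S(x_1' \overline{c_Q}, x_2' x_3' \overline{c_Q}; c')\, e_{c'}\!\bigl(\overline{c_Q}(m_1 x_1' + m_2 x_2' + m_3 x_3')\bigr),$$
where we used that $H(\cdot, \cdot; c') = S(\cdot, \cdot; c')$ is the standard Kloosterman sum since $(c', Q) = 1$. Opening the Kloosterman sum and performing the sum over $x_1'$ first gives
$$\sum_{x_1' \shortmod{c'}} e_{c'}\!\bigl(\overline{c_Q}(t + m_1) x_1'\bigr) = c' \cdot \mathbb{1}\bigl(t \equiv -m_1 \pmod{c'}\bigr),$$
using $(c_Q, c') = 1$. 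Since the $t$-variable is restricted to units modulo $c'$, this congruence has no solution unless $(m_1, c') = 1$. Hence $G_{c'}$, and therefore $G$, vanishes unless $(m_1, c') = 1$.

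The only conceptual subtlety is correctly tracking the change of variable and the unit range of $t$; otherwise the argument is a direct application of CRT and additive orthogonality, with no anticipated obstacle.
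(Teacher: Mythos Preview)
The proposal is correct and follows essentially the same route as the paper: the paper deduces this lemma from Lemma \ref{lemma:Zformulaprelim}, whose proof likewise factors via CRT into $G' = AB$ and evaluates the $c'$-factor $A$ (with $H=S$ the ordinary Kloosterman sum) by orthogonality to obtain $A=\delta((m_1,c')=1)$. Your argument is the same computation, just with the Kloosterman sum opened and the $x_1'$-sum carried out explicitly.
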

Lemma \ref{lemma:veryeasylemma} is the appropriate substitute for the condition $(m_1,c/q')=1$ from \cite[(5.2)]{PetrowYoungWeyl}.
Inserting Lemma \ref{lemma:Zlemma} into the machinery developed in \cite{PetrowYoungWeyl} together with Lemmas \ref{lemma:veryeasylemma} and \ref{lemma:GboundZero} implies Theorem \ref{MT_precise}. 
 One can check this by following through the arguments of \cite[Section 12]{PetrowYoungWeyl}.  
There are a few key points to observe to aid in this (long) exercise:
\begin{enumerate}
 \item The integral transforms here are identical to \cite{PetrowYoungWeyl}, so the properties of the weight functions stated in \cite[\S 10, 11, 13]{PetrowYoungWeyl} hold verbatim here.  Thus, the Weyl bound holds in the archimedean aspect.
 \item The calculation of the contribution from the $Z$-function near the $1/2$-lines are essentially identical, using \eqref{eq:Z1bound} in place of \cite[(8.1)]{PetrowYoungWeyl}.
 \item On the other hand, in \cite[Lem.\ 8.1]{PetrowYoungWeyl} a stronger bound
$Z_0 \ll Q^{\varepsilon}$ was claimed in place of \eqref{eq:Z0bound}. These bounds are used for the estimation with $Z_0$ near the $1$-lines, and in the corresponding estimates in \cite[\S 12]{PetrowYoungWeyl} one finds that there is essentially $\sqrt{q'}$ to spare. Going over these calculations carefully, one finds that this extra slack is exactly what is needed in the present paper. 
 \item In checking the calculations for the estimates with $Z_0$ near the $1$-lines  as in \cite[\S 12]{PetrowYoungWeyl}, it is helpful to note by the explicit formula for $q'$ found in Theorem \ref{PBKformula} that
 \begin{equation}
 \label{eq:MvF}
  Q = \prod_p p^{c(\sigma_p)} \leq \prod_p p^{2 \lceil c(\sigma_p)/2 \rceil} =  (q')^2.
 \end{equation}
\item The contribution of the terms with $m_1 m_2 m_3 = 0$ in \cite{PetrowYoungWeyl} was estimated at the end of Section 12.  Going over these arguments, one sees that Lemma \ref{lemma:GboundZero} replaces \cite[(6.9)]{PetrowYoungWeyl} and Lemma \ref{lemma:veryeasylemma} replaces the condition that $(m_1,r)=1$. 
 \end{enumerate}

This ends our discussion on the extraction of work from \cite{PetrowYoungWeyl}.  The rest of the paper is devoted to proving Lemma \ref{lemma:Zlemma}. 
We will treat \eqref{eq:Zdef} in the case that $\epsilon_1 = \epsilon_2 = \epsilon_3 = +1$.  The other sign combinations are similar, so the benefit of this assumption is to simplify the notation, omit absolute value signs, and employ other similar conveniences.

\section{Simplifying $G$}
Our first steps in the analysis of $Z$ mirror \cite[Section 5]{PetrowYoungWeyl}, and use only the most elementary properties of the generalized Kloosterman sums.  

\subsection{Fourier decomposition}
Suppose that $c = c_Q c'$ where $c_Q | Q^{\infty}$ and $(c', Q) = 1$.  
Recall the character sum $G$ defined in \eqref{eq:Gdef} and let
$$G'(m_1, m_2, m_3;c) = c G(m_1, m_2, m_3;c) e_c(-m_1 m_2 m_3).$$
\begin{mylemma}
\label{lemma:Zformulaprelim}
We have that 
\begin{multline}
\label{eq:Bformulasimplified}
G'(m_1, m_2, m_3;c) 
\\
=
\frac{1_{(m_1,c')=1}}{c_Q^2}
\sum_{x_1, x_2, x_3 \shortmod{c_Q}} H(c' x_1 x_2 x_3, 1 ;c_Q) 
e_{c_Q}(m_1 x_1 + m_2 x_2 + m_3 x_3 - \overline{c'}m_1 m_2 m_3).
\end{multline}
\end{mylemma}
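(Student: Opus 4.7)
\medskip

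\noindent\textbf{Proof proposal.} The plan is to exploit the multiplicativity of the generalized Kloosterman sum via \eqref{eq:HCRT} to separate the coprime-to-$Q$ part from the $c_Q$ part, and then to evaluate the coprime part explicitly using orthogonality. Since $(c_Q,c')=1$, the CRT identity \eqref{eq:HCRT} gives
\[ H(x_1,x_2x_3;c_Qc') = H(x_1\overline{c'},x_2x_3\overline{c'};c_Q)\cdot S(x_1\overline{c_Q},x_2x_3\overline{c_Q};c'),\]
using that $H(\cdot,\cdot;c')$ is a classical Kloosterman sum because $(c',Q)=1$. The next step is to reparametrise the sum over $x_i\pmod{c}$ via the CRT bijection $x_i\leftrightarrow (y_i,z_i)\in\Z/c_Q\Z\times\Z/c'\Z$ defined by $y_i\equiv x_i\overline{c'}\pmod{c_Q}$ and $z_i\equiv x_i\overline{c_Q}\pmod{c'}$, together with the additive CRT identity $e_{c_Qc'}(\cdot)=e_{c_Q}(\overline{c'}\cdot)e_{c'}(\overline{c_Q}\cdot)$, which converts $e_{c}(m_ix_i)$ into $e_{c_Q}(m_iy_i)e_{c'}(m_iz_i)$. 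The triple sum then factors as a product of a sum mod $c_Q$ in the $y_i$ and a sum mod $c'$ in the $z_i$.

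I would then evaluate the $c'$-factor directly. Opening the classical Kloosterman sum with auxiliary variable $u\in(\Z/c'\Z)^\times$, the sum over $z_3$ forces $z_2\equiv -m_3u\overline{c_Q}\pmod{c'}$ (by orthogonality), the sum over $z_2$ then collapses leaving $e_{c'}(-m_2m_3\overline{c_Q}u)$, and finally the sum over $z_1$ forces $u\equiv -m_1\pmod{c'}$. Since $u$ must be a unit, this produces the factor $1_{(m_1,c')=1}$ and contributes $(c')^2\,e_{c'}(m_1m_2m_3\overline{c_Q})$.

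For the $c_Q$-factor I would use that $H(m,n;p^k)$ vanishes unless $p\nmid mn$ (both in the principal series and supercuspidal cases from Section \ref{section:traceformula2}), so $H(y_1,c'y_2y_3;c_Q)$ forces each $y_i$ to be a unit modulo $c_Q$; then the identity $H(m,n;c)=H(mn,1;c)$ when $(n,c)=1$ yields $H(c'y_1y_2y_3,1;c_Q)$. Multiplying together the two factors, normalising by $c/c^3=c^{-2}$ to form $G$, and multiplying by the additional $c\,e_c(-m_1m_2m_3)$ to form $G'$, the prefactor $c^{-2}(c')^2=c_Q^{-2}$ appears and the twist $e_c(-m_1m_2m_3)$ decomposes via CRT as $e_{c_Q}(-\overline{c'}m_1m_2m_3)e_{c'}(-\overline{c_Q}m_1m_2m_3)$, exactly cancelling the $e_{c'}(m_1m_2m_3\overline{c_Q})$ produced above and leaving a lone $e_{c_Q}(-\overline{c'}m_1m_2m_3)$ which can be absorbed into the additive phase inside the $y$-sum (renamed $x$). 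This produces the asserted formula \eqref{eq:Bformulasimplified}.

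There is no real obstacle here; all steps are bookkeeping with the CRT for Kloosterman sums, Dirichlet characters, and additive characters. The only point requiring minor care is keeping track of the inverses $\overline{c'}$, $\overline{c_Q}$ consistently, and verifying that the evaluation of the classical Kloosterman sum sum-over-$z_i$ produces precisely the indicator $1_{(m_1,c')=1}$ claimed in the statement (and hence Lemma \ref{lemma:veryeasylemma}).
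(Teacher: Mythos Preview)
Your proposal is correct and follows essentially the same route as the paper: CRT-split $G'$ into a $c'$-part and a $c_Q$-part, evaluate the $c'$-part to $1_{(m_1,c')=1}$ times a phase using orthogonality, and reduce the $c_Q$-part to the stated expression via $H(m,n;c_Q)=H(mn,1;c_Q)$. The only cosmetic difference is that the paper observes directly from the explicit formulas \eqref{eq:KloostermanDefPScase}, \eqref{eq:KloostermanDefSCinertCaseGeneral} that $H(m,n;c_Q)=H(mn,1;c_Q)$ holds for \emph{all} $n$ (both sides vanishing when $p\mid n$), which lets it skip your ``restrict to units, then re-extend'' step; your version works just as well since the non-unit terms contribute zero on both sides.
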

\begin{proof}
By the Chinese remainder theorem and \eqref{eq:HCRT}, we have
$G'(m_1, m_2, m_3;c) = A B$, where
\begin{equation*}
 A =\frac{1}{(c')^2}
\sum_{x_1, x_2, x_3 \shortmod{c'}} H(\overline{c_Q} x_1, \overline{c_Q} x_2 x_3 ;c') 
e_{c'}(\overline{c_Q}(m_1 x_1 + m_2 x_2 + m_3 x_3 - m_1 m_2 m_3)),
\end{equation*}
and
\begin{equation*}
 B=
\frac{1}{c_Q^2}
\sum_{x_1, x_2, x_3 \shortmod{c_Q}} H(\overline{c'} x_1, \overline{c'} x_2 x_3 ;c_Q) 
e_{c_Q}(\overline{c'}(m_1 x_1 + m_2 x_2 + m_3 x_3 - m_1 m_2 m_3)).
\end{equation*}
Since $H(m,n;c') = S(m,n;c')$ is the ordinary Kloosterman sum, we calculate by 
opening the definition and executing the $x_1$ and $x_2$ sums by
orthogonality of characters that
$A =   \delta((m_1, c') = 1)$. 

Next we turn to $B$.  We first observe that $H(m,n;c_Q) = H(mn,1;c_Q)$ holds for all $n$ (not just $(n, c_Q) = 1$), since it holds prime-by-prime by inspection of \eqref{eq:KloostermanDefPScase} and \eqref{eq:KloostermanDefSCinertCaseGeneral}.
Using this, and changing variables $x_i \rightarrow c' x_i$, we obtain the claimed formula.
\end{proof}

We may also write $m_i = m_i' m_{i,0}$ where $(m_i', Q) = 1$ and $m_{i,0} | Q^{\infty}$.

\begin{mylemma}
\label{lemma:Zformula}
For $\psi$ a Dirichlet character modulo $c_Q$,
define 
$\widehat{H}(\psi) = \widehat{H}(\psi, a_1, a_2, a_3)$ 
by
\begin{equation}
\label{eq:HhatpsiDef}
 \widehat{H}(\psi) = c_Q^{-2}
 \sum_{u, x_1, x_2, x_3 \shortmod{c_Q}} 
 \overline{\psi}(u)
 H(\overline{u} x_1 x_2 x_3, 1 ;c_Q) 
e_{c_Q}(a_1 x_1 + a_{2} x_2 + a_{3} x_3 - u a_{1}a_{2}a_{3} ).
\end{equation}
Then for $\mathrm{Re}(s_i) > 1$, we have that $Z= Z(s_1, s_2, s_3, s_4)$ equals
\begin{equation}\label{eq:Zformula}
\sum_{\substack{m_{i} | Q^{\infty} \\ (i=1,2,3)}}
 \sum_{\substack{c_Q | Q^{\infty} \\ c_Q \equiv 0 \shortmod {q'}}}
 \frac{ q'}{  \varphi(c_Q)}
 \sum_{\psi \shortmod{c_Q}} 
 \frac{\widehat{H}(\psi, m_{1}, m_{2}, m_{3})}{m_{1}^{s_1} m_{2}^{s_2} m_{3}^{s_3} (c_Q/q')^{s_4}}
\frac{ L(s_1, \psi) L(s_2, \psi) L(s_3, \psi) L(s_4, \overline{\psi})}{L(s_1 + s_4, \chi_0)},
 \end{equation}
 where $\chi_0$ is the trivial character modulo $q'$.
\end{mylemma}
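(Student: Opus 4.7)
\textbf{Proof plan for Lemma \ref{lemma:Zformula}.}

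The plan is to insert Lemma \ref{lemma:Zformulaprelim} into the definition \eqref{eq:Zdef} of $Z$ and reorganize the resulting sum by Fourier-expanding its dependence on Dirichlet characters modulo $c_Q$. Split $c = c_Q c'$ with $c_Q \mid Q^\infty$ and $(c',Q)=1$, and split each $m_i = m_i^{(Q)} m_i'$ with $m_i^{(Q)} \mid Q^\infty$ and $(m_i',Q)=1$. Lemma \ref{lemma:Zformulaprelim} expresses $G'(m_1,m_2,m_3;c_Qc')$ as a function of $c'$ only through the indicator $1_{(m_1,c')=1}$ (equivalently $1_{(m_1',c')=1}$, since $(m_1^{(Q)},c')=1$) and through $\overline{c'}\pmod{c_Q}$. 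Setting $u = \overline{c'}\pmod{c_Q}$, Fourier inversion on $(\mz/c_Q\mz)^\times$ gives
\begin{equation*}
G'(m_1,m_2,m_3;c_Q c') = \frac{1_{(m_1,c')=1}}{\varphi(c_Q)}\sum_{\psi\shortmod{c_Q}} \widehat H(\psi,m_1,m_2,m_3)\,\overline{\psi}(c'),
\end{equation*}
matching the definition \eqref{eq:HhatpsiDef} exactly.

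Next I would establish the scaling relation
$\widehat H(\psi,b_1 a_1,b_2 a_2,b_3 a_3) = \psi(b_1 b_2 b_3)\widehat H(\psi,a_1,a_2,a_3)$ whenever $(b_i,c_Q)=1$. This follows immediately from the substitutions $x_i \mapsto \overline{b_i}x_i$ and $u \mapsto u\,\overline{b_1 b_2 b_3}$ in the definition \eqref{eq:HhatpsiDef}, which leave the measure on $(\mz/c_Q\mz)^\times \times (\mz/c_Q\mz)^3$ invariant and transform the argument of $H$ back to $\overline u x_1 x_2 x_3$ while producing $\psi(b_1 b_2 b_3)$ from $\overline\psi(u)$. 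Applying this with $b_i = m_i'$ (which are coprime to $c_Q$ since $(m_i',Q)=1$ and $c_Q\mid Q^\infty$) yields
\begin{equation*}
\widehat H(\psi,m_1,m_2,m_3) = \psi(m_1' m_2' m_3')\widehat H(\psi,m_1^{(Q)},m_2^{(Q)},m_3^{(Q)}).
\end{equation*}

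Plugging back into $Z$, the sums over $m_1', m_2', m_3', c'$ decouple from the residual sum over $m_i^{(Q)}\mid Q^\infty$ and $c_Q$. The sums in $m_2'$ and $m_3'$ are straightforward: since every prime dividing $Q$ also divides $q'\mid c_Q$, the character $\psi$ vanishes at every $p\mid Q$, so $\sum_{(m_i',Q)=1} \psi(m_i')/(m_i')^{s_i} = L(s_i,\psi)$ for $i=2,3$ with no extra Euler factors. For the remaining joint sum over $m_1'$ and $c'$, subject to $(m_1',c')=1$, I would apply Möbius inversion $1_{(m_1',c')=1}=\sum_{d\mid (m_1',c')}\mu(d)$, change variables $c'=dc''$ and $m_1'=de$, and collapse using $\psi\overline\psi=1$ on $(d,Q)=1$:
\begin{equation*}
\sum_{\substack{m_1',c'\geq 1\\(m_1'c',Q)=1\\(m_1',c')=1}}\frac{\psi(m_1')\overline\psi(c')}{(m_1')^{s_1}(c')^{s_4}}
=
L(s_1,\psi)L(s_4,\overline\psi)\sum_{(d,Q)=1}\frac{\mu(d)}{d^{s_1+s_4}}
=
\frac{L(s_1,\psi)L(s_4,\overline\psi)}{L(s_1+s_4,\chi_0)},
\end{equation*}
where $\chi_0$ is the principal character modulo $q'$ (equivalently, modulo $Q$, since $q'$ and $Q$ have the same prime support). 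Collecting all factors and renaming $m_i^{(Q)}$ as $m_i$ gives the formula \eqref{eq:Zformula}; absolute convergence on $\mathrm{Re}(s_j)>1$ justifies the rearrangements throughout.

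The only step requiring genuine thought is the scaling identity for $\widehat H$; the rest is a bookkeeping exercise in Dirichlet series combined with the observation that $p\mid Q \Rightarrow p\mid c_Q$, which ensures that the sums over primes coprime to $Q$ assemble cleanly into global $L$-functions.
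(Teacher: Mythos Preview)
Your proof is correct and essentially the same as the paper's. The only organizational difference is that the paper performs the change of variables $x_i \to \overline{m_i'} x_i$ \emph{before} Fourier expanding, so that $B$ becomes a function of the single variable $m_1' m_2' m_3' \overline{c'} \in (\mz/c_Q\mz)^\times$ and one Mellin expansion yields \eqref{eq:BMellinFourierExpansion} directly; you instead Fourier expand in $\overline{c'}$ first and then extract the $m_i'$ dependence via your scaling identity, which is the same change of variables performed after the fact. The subsequent summation over $m_i'$ and $c'$ is identical.
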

\begin{proof}
Changing variables in \eqref{eq:Bformulasimplified} 
by $x_i \rightarrow \overline{m_i'} x_i$ gives
\begin{multline*}
 G'= 1_{(m_1,c')=1} B = 
\frac{1_{(m_1,c')=1}}{c_Q^2}
\sum_{x_1, x_2, x_3 \shortmod{c_Q}} H(c' \overline{(m_1' m_2' m_3')} x_1 x_2 x_3, 1 ;c_Q) 
\\
 e_{c_Q}((m_{1,0} x_1 + m_{2,0} x_2 + m_{3,0} x_3 - \overline{c'}m_1' m_2' m_3' m_{1,0}m_{2,0}m_{3,0} )).
\end{multline*}
Now we view $B$ as a function of $m_1' m_2' m_3' \overline{c'}$ on $(\mz/c_Q\mz)^{\times}$, and apply the (multiplicative) finite 
Fourier expansion, giving
\begin{equation}
\label{eq:BMellinFourierExpansion}
 G' = \frac{1_{(m_1,c')=1}}{\varphi(c_Q)}
 \sum_{\psi \shortmod{c_Q}} \widehat{H}(\psi) \psi(m_1' m_2' m_3' \overline{c'}),
\end{equation}
where recall $\widehat{H}(\psi) = \widehat{H}(\psi, m_{1,0}, m_{2,0}, m_{3,0})$ was defined by \eqref{eq:HhatpsiDef}.

Inserting the definition of $G'$ into the definition of $Z$ in \eqref{eq:Zdef} and using \eqref{eq:BMellinFourierExpansion}  gives
\begin{multline}
Z = \sum_{\substack{m_1, m_2, m_3 \geq 1 \\ c \equiv 0 \shortmod{q'}}}
 \frac{q' G'(m_1, m_2, m_3, c)}{m_1^{s_1} m_2^{s_2} m_3^{s_3} (c/q')^{s_4}} \\ 
= \sum_{m_1, m_2, m_3 \geq 1}
\sum_{\substack{q' | c_Q, \thinspace c_Q| Q^{\infty} \\ (c',  Q) = 1 }} \frac{\delta((m_1, c') = 1)}{m_1^{s_1} m_2^{s_2} m_3^{s_3} (c/q')^{s_4}}
\frac{q'}{\varphi(c_Q)}
 \sum_{\psi \shortmod{c_Q}} \widehat{H}(\psi) \psi(m_1' m_2' m_3' \overline{c'})
.
\end{multline}
Summing over the $m_i'$ and $c'$ forms the product of four Dirichlet $L$-functions, and the condition $(m_1, c') = 1$ leads to the factor $L(s_1 + s_4,\chi_0)^{-1}$ where $\chi_0$ is mod $c_Q$.
\end{proof}

Based on Lemma \ref{lemma:Zformula}, write
\begin{equation}
\label{eq:Zformula2}
Z = 
\sum_{\substack{c_Q | Q^{\infty} \\ q' | c_Q}}
\frac{q'}{\varphi(c_Q)} \sum_{\psi \shortmod{c_Q}} 
\frac{Z_{\mathrm{fin}, Q}}{(c_Q/q')^{s_4}} \cdot
\frac{ L(s_1, \psi) L(s_2, \psi) L(s_3, \psi) L(s_4, \overline{\psi})}{L(s_1 + s_4, \chi_0)},
\end{equation}
where $Z_{\mathrm{fin}, Q} = Z_{\mathrm{fin}, Q}(s_1, s_2, s_3, \psi, c_Q)$ is defined by
\begin{equation}
\label{eq:ZfinFormula2}
Z_{\mathrm{fin}, Q} = 
\sum_{\substack{m_{i} | Q^{\infty} \\ (i=1,2,3)}}
 \frac{\widehat{H}(\psi, m_{1}, m_{2}, m_{3})}{m_{1}^{s_1} m_{2}^{s_2} m_{3}^{s_3} }.
\end{equation}

\subsection{Multiplicativity of $\widehat{H}$}
Next we work out a form of multiplicativity of $\widehat{H}$. 
\begin{mylemma}\label{lemma:multiplicativity}
Suppose that $Q = Q_1 Q_2$ with $(Q_1, Q_2) = 1$, $c_Q = c_{Q_1} c_{Q_2}$,
$\psi = \psi_1 \psi_2$ with $\psi_i$ modulo $c_{Q_i}$, and $m_{j} = m_{j,1} m_{j,2}$, where $m_{j,i} | c_{Q_i}^{\infty}$, for $i=1,2$ and each $j=1,2,3$.  Then
\begin{equation*}
 \widehat{H}(\psi, m_{1}, m_{2}, m_{3})
 = \epsilon 
 \widehat{H}(\psi_1, m_{1,1}, m_{2,1}, m_{3,1})
 \widehat{H}(\psi_2, m_{1,2}, m_{2,2}, m_{3,2}),
\end{equation*}
where  $\epsilon$  is the function given by \eqref{eq:epsilonformula} below, which satisfies $|\epsilon| = 1$.  
In particular, $|\widehat{H}|$ is multiplicative.
\end{mylemma}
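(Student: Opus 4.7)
The plan is to leverage the CRT-multiplicativity of the generalized Kloosterman sum \eqref{eq:HCRT} together with the natural factorizations of both the multiplicative character $\overline{\psi}$ and the additive character $e_{c_Q}$ across the coprime moduli $c_{Q_1}, c_{Q_2}$. This splits the defining sum \eqref{eq:HhatpsiDef} into a product of two identical-shape sums, one at each modulus, times a product of $\psi_i$-values on various CRT conversion scalars that account for the factor $\epsilon$.

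First, I would reparametrize each of the four summation variables $u, x_1, x_2, x_3 \shortmod{c_Q}$ by CRT as pairs $(u_1,u_2), (x_{j,1}, x_{j,2})$ with entries modulo $c_{Q_i}$. Under this bijection, $\overline{\psi}(u) = \overline{\psi_1}(u_1)\overline{\psi_2}(u_2)$ and for any $n \shortmod{c_Q}$ with CRT components $(n_1,n_2)$,
$$e_{c_Q}(n) = e_{c_{Q_1}}(\overline{c_{Q_2}}\, n_1)\, e_{c_{Q_2}}(\overline{c_{Q_1}}\, n_2),$$
where the bars denote inverses modulo $c_{Q_1}$ and $c_{Q_2}$ respectively. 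I would apply this to the argument $a_1 x_1 + a_2 x_2 + a_3 x_3 - u a_1 a_2 a_3$ of the additive character, using $m_j = m_{j,1} m_{j,2}$ and that $m_{j,3-i}$ is a unit modulo $c_{Q_i}$ (by the hypothesis $m_{j,i}\mid c_{Q_i}^\infty$).

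Next, \eqref{eq:HCRT} applied with $m = \overline{u}x_1 x_2 x_3$, $n=1$ and the observation from the proof of Lemma~\ref{lemma:Zformulaprelim} that $H(m,n;c_{Q_i}) = H(mn,1;c_{Q_i})$ for $(n,c_{Q_i})=1$ yields
$$H(\overline{u}x_1 x_2 x_3,1;c_Q) = H(\overline{u_1} x_{1,1} x_{2,1} x_{3,1}\overline{c_{Q_2}}^{\,2},1;c_{Q_1}) \cdot H(\overline{u_2} x_{1,2} x_{2,2} x_{3,2}\overline{c_{Q_1}}^{\,2},1;c_{Q_2}).$$
At this point the entire expression factors completely as a product over $i=1,2$, once we note that $c_Q^{-2} = c_{Q_1}^{-2} c_{Q_2}^{-2}$.

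The final step is to perform changes of variables $u_i \mapsto \alpha_i u_i$ and $x_{j,i}\mapsto \beta_{j,i}\, x_{j,i}$ by suitable units $\alpha_i, \beta_{j,i} \in (\mz/c_{Q_i}\mz)^\times$, built from powers of $c_{Q_{3-i}}$ and of $m_{\cdot,3-i}$, in order to reshape each factor into exactly $\widehat{H}(\psi_i, m_{1,i}, m_{2,i}, m_{3,i})$ as defined by \eqref{eq:HhatpsiDef} at modulus $c_{Q_i}$. The only residue of these substitutions is a product $\overline{\psi_1}(\alpha_1)\prod_j \psi_1(\beta_{j,1})$ and its $i=2$ analogue, which together form the constant $\epsilon$; since $\psi_1,\psi_2$ are unitary, $|\epsilon|=1$, and the multiplicativity of $|\widehat{H}|$ follows immediately.

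The main obstacle is purely bookkeeping: matching the various powers of $\overline{c_{Q_{3-i}}}$ produced by the Kloosterman CRT against those produced by the additive-character CRT, and choosing $\alpha_i,\beta_{j,i}$ consistently so that both the arguments of the Kloosterman sums and the coefficients in the additive character reduce to the canonical shape $x_{1,i}+x_{2,i}+x_{3,i}$ and $\overline{u_i}x_{1,i}x_{2,i}x_{3,i}$, up to the coefficients $m_{j,i}$ required by the definition. There is no conceptual difficulty; the structural claim is forced by the multiplicativity of all the ingredients.
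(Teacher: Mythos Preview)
Your proposal is correct and follows essentially the same approach as the paper's proof: apply the Chinese remainder theorem to split the sum over $u,x_1,x_2,x_3$ and the Kloosterman sum via \eqref{eq:HCRT}, then make unit changes of variables to bring each factor into the canonical shape of \eqref{eq:HhatpsiDef}, collecting the resulting character values into $\epsilon$. The paper carries this out with the explicit substitutions $u \to c_{Q_2}\overline{m_{1,2}m_{2,2}m_{3,2}}\,u$ and $x_i \to c_{Q_2}\overline{m_{i,2}}\,x_i$ (and symmetrically for the other factor), arriving at $\epsilon = \psi_1(\overline{c_{Q_2}}m_{1,2}m_{2,2}m_{3,2})\psi_2(\overline{c_{Q_1}}m_{1,1}m_{2,1}m_{3,1})$, which is exactly the kind of $\alpha_i,\beta_{j,i}$ you describe.
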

Remark.  This multiplicativity allows us to evaluate $\widehat{H}$ one prime at a time.

\begin{proof}
  The Chinese remainder theorem gives
\begin{multline*}
 \widehat{H}(\psi) = 
\Big(c_{Q_1}^{-2} \sum_{u, x_1, x_2, x_3 \shortmod{c_{Q_1}}} 
 \overline{\psi_1}(u)
 H(\overline{c_{Q_2}^2}\overline{u} x_1 x_2 x_3, 1 ;c_{Q_1}) 
 \\
e_{c_{Q_1}}(\overline{c_{Q_2}} (m_{1,1} m_{1,2} x_1 + m_{2,1} m_{2,2} x_2 + m_{3,1} m_{3,2} x_3 - u  m_{1,1} m_{1,2} m_{2,1} m_{2,2} m_{3,1} m_{3,2} ))
\Big)
\big(\dots \big),
\end{multline*}
with $(\dots)$ representing the symmetrically-defined term from the complementary modulus.  To simplify this, change variables
$u \rightarrow c_{Q_2} \overline{m_{1,2} m_{2,2} m_{3,2}} u$ and
$x_i \rightarrow c_{Q_2} \overline{m_{i,2}} x_i$ (for $i=1,2,3$), giving
\begin{multline*}
 \widehat{H}(\psi) = 
\Big(c_{Q_1}^{-2} \sum_{u, x_1, x_2, x_3 \shortmod{c_{Q_1}}} 
 \overline{\psi_1}(c_{Q_2} \overline{m_{1,2} m_{2,2} m_{3,2}} u)
 H( \overline{u} x_1 x_2 x_3, 1 ;c_{Q_1}) 
 \\
e_{c_{Q_1}}( m_{1,1}  x_1 + m_{2,1}  x_2 + m_{3,1} x_3 - 
u   m_{1,1}  m_{2,1} m_{3,1}  )
\Big)
\big(\dots \big).
\end{multline*}
This is the desired result, with the explicit value
\begin{equation}
\label{eq:epsilonformula}
 \epsilon = 
 \psi_1(\overline{c_{Q_2}} m_{1,2} m_{2,2} m_{3,2})
\psi_2(\overline{c_{Q_1}} m_{1,1} m_{2,1} m_{3,1}). \qedhere
\end{equation}
\end{proof}
In light of Lemma \ref{lemma:multiplicativity}, define
\begin{equation}
\label{eq:ZfinL1normDef}
\|Z_{\mathrm{fin}, Q}\| =  
\sum_{\substack{m_{i} | Q^{\infty} \\ (i=1,2,3)}}
 \frac{|\widehat{H}(\psi, m_{1}, m_{2}, m_{3})|}{m_{1}^{\sigma_1} m_{2}^{\sigma_2} m_{3}^{\sigma_3}},
\end{equation}
where $\sigma_i = \mathrm{Re}(s_i) > 1/2$.  Then Lemma \ref{lemma:multiplicativity} implies
\begin{equation}\label{Zfinfactorization}
\| Z_{\mathrm{fin}, Q} \| = \prod_{p \mid Q } \| Z_{\mathrm{fin}, p} \|.
\end{equation}

\subsection{Zero frequency}

\begin{proof}[Proof of Lemma \ref{lemma:GboundZero}]
We carry out the argument when $m_3=0$, the other cases being similar by symmetry.
 Then $G'(m_1, m_2, m_3;c) = c G(m_1, m_2, m_3;c)$, so the task is to show $|G'(m_1, m_2, m_3;c)| \ll \frac{Q^{\varepsilon}}{(q')^2} \prod_{i=1}^{3} (m_i, q')$.
By Lemma \ref{lemma:Zformulaprelim} we have that $G'= 1_{(m_1,c')=1}B,$ where 
$$B = \frac{1}{c_Q^2}
\sum_{x_1, x_2, x_3 \shortmod{c_Q}} H(c' x_1 x_2 x_3, 1 ;c_Q) 
e_{c_Q}(m_1 x_1 + m_2 x_2 + m_3 x_3 - \overline{c'}m_1 m_2 m_3).$$
Note that $B$ is multliplicative in $c_Q$; to see this
 simply change variables $x_3 \rightarrow \overline{c'} x_3$. 
It thus suffices to work one prime at a time. If $p \mid Q$ is such that the corresponding specified local representation $\sigma_p$ is not supercuspidal, then the required bound was already shown in \cite[(6.9)]{PetrowYoungWeyl}.

Assume now that  $p \mid Q$ is such that $\sigma_p$ is supercuspidal.
With $c_Q = p^k$, note
\begin{equation*}
B = p^{-2k}
\sum_{x_1, x_2, x_3 \shortmod{p^k}} H(x_1 x_2 x_3, 1;p^k) e_{p^k}(m_1 x_1 + m_2 x_2 ).
\end{equation*}
 Since $H(x_1 x_2 x_3, 1;p^k) = 0$ unless $(x_1 x_2 x_3, p) = 1$ (see around \eqref{eq:KloostermanDefSCinertCaseGeneral}), 
 we may restrict to $(x_1 x_2 x_3, p) = 1$ and then change variables $x_3 \rightarrow \overline{x_1 x_2} x_3$, giving
\begin{align*}
B &= p^{-2k}
\sumstar_{x_1, x_2, x_3 \shortmod{p^k}} 
H( x_3, 1;p^k) e_{p^k}(m_1 x_1 + m_2 x_2)
\\
&=  p^{-2k} S(m_1, 0;p^k) S(m_2, 0;p^k) 
\sumstar_{x_3 \shortmod{p^k}}
H( x_3, 1;p^k).
\end{align*}
It is well-known that the Ramanujan sum satisfies the bound $|S(n, 0;p^k)| \leq (n, p^k)$.  We also have by \cite[Prop.\ 7.8]{HPY} that the $x_3$-sum vanishes unless $c(\xi) = ek-d$, i.e., $c_0 = k - \frac{d}{e}$.
In particular, the sum vanishes unless $d=0$ or $2$ (see e.g.\ \eqref{eq:supercuspidalTable} \eqref{eq:supercuspidalTablepEquals2}). 
In addition, when $p=2$ each individual $H(x_3,1;p^k)$ vanishes if $k\leq c_0+100$, finishing the proof. Only the $p\neq 2$ unramified case remains to be treated. Here we have $q'=p^{c_0}$ (see \eqref{eq:supercuspidalTable}) and \cite[Prop.\ 7.8]{HPY} gives
\begin{equation*}
\big| \sumstar_{x_3 \shortmod{p^k}}
H( x_3, 1;p^k)\big| = \delta_p,
\end{equation*}
where we recall from Theorem \ref{PBKformula} that $\delta_p = p^{c_0}$.  From the above discussion, $k=c_0$, so
\begin{equation*}
|B| \leq p^{-k} (m_1, p^{k}) (m_2, p^{k}) \delta(c_0 = k) \leq (q')^{-2} \prod_{i=1}^{3} (m_i, q'). \qedhere
\end{equation*}
\end{proof}

\subsection{Evaluation of $\widehat{H}$: principal series and special}\label{sec:ps}
In light of the twisted multiplicativity of $\widehat{H}(\psi)$ (Lemma \ref{lemma:multiplicativity}), we proceed to investigate it one prime at a time. 
Thus, let us fix $p\in S$ now, and suppose that the specified $\sigma_p$ is either isomorphic to a principal series representation $\pi(\chi, \chi^{-1})$ or isomorphic to a trivial central character special representation $\St \times \chi$ with $\chi$ of conductor $p^{c_0}$, $c_0\geq 1$. In these cases, we have $c(\sigma_p)=2c_0$ and $v_p(q') = c_0$. Let us write $p^k$ with $k\geq v_p(q')$ for the $p$-part of $c_Q$, and $r= p^{k-c_0}$ for the $p$-part of $c_Q/q'$. 
\begin{mylemma}
\label{lemma:HhatEvalPScase}
Let assumptions be as directly above. 
\begin{enumerate}
\item If $c(\psi) > c_0$ then $\widehat{H}(\psi) = 0$.  
\item If $c(\psi) \leq c_0$, then
\begin{equation}
\label{eq:HhatHerevsHhatPY}
\widehat{H}(\psi, a_1, a_2, a_3)
= \frac{r}{p^{c_0}} \chi(-1) \widehat{H}_{\mathrm{PY}}(\psi, \chi, a_1, a_2, a_3, r),
\end{equation}
where $\widehat{H}_{\mathrm{PY}}$ corresponds to the definition in \cite[(5.12)]{PetrowYoungWeyl}.
\end{enumerate}
\end{mylemma}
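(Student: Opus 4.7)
The plan is to substitute the explicit formula \eqref{eq:KloostermanDefPScase} for $H(m,n;p^k)$ in the principal series or special case directly into the definition \eqref{eq:HhatpsiDef} of $\widehat{H}(\psi)$, perform a small number of elementary variable changes to decouple the summation variables, and compare the resulting expression with \cite[(5.12)]{PetrowYoungWeyl}. After substitution, $\widehat{H}(\psi)$ becomes a five-fold sum over units $u,t,x_1,x_2,x_3 \in (\Z/p^k\Z)^\times$ with integrand proportional to $\overline{(\psi\chi)}(u)\chi(x_1 x_2 x_3)\chi^2(t) e_{p^k}(t\overline{u}x_1 x_2 x_3 + \overline{t} + \sum_i a_i x_i - u a_1 a_2 a_3)$, weighted by $p^{-2k}$. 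The first useful substitution is $t \mapsto t u\overline{x_1 x_2 x_3}$, which converts the term $t\overline{u}x_1 x_2 x_3$ in the phase into $t$, at the cost of multiplying $\chi^2(t)$ by $\chi^2(u)\overline{\chi^2}(x_1 x_2 x_3)$ and replacing $\overline{t}$ by $\overline{t}\,\overline{u}x_1 x_2 x_3$.

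Following this simplification, the $u$-dependence takes the form of a twisted Kloosterman sum $\sumstar_u \eta(u)\, e_{p^k}(uA + \overline{u}B)$ where $\eta = \overline{\psi\chi}$, and $A,B$ depend on the other variables. For part (1), since $c(\chi)=c_0$, the character $\eta$ has conductor exactly $c(\psi)>c_0$ under the hypothesis. A short Postnikov-based stationary-phase analysis (in the spirit of Lemmas \ref{lemma:characterSumPrimePowerEvenExponent} and \ref{lemma:characterSumPrimePowerOddExponent}) shows that this twisted Kloosterman sum vanishes unless the critical-point equations derived from $uA+\overline{u}B$ and the character $\eta$ force specific congruences on $A,B$. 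Propagating these congruences backward through the remaining $t,x_1,x_2,x_3$ sums, they turn out to be incompatible with nonvanishing of those character sums when $c(\psi)>c_0$, forcing $\widehat{H}(\psi)=0$.

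For part (2), with $c(\psi)\leq c_0$, the same manipulations yield an expression whose structure matches \cite[(5.12)]{PetrowYoungWeyl}. The normalization factor $r/p^{c_0}$ emerges from collecting the overall $p^{-2k}$ against the factors of $r=p^{k-c_0}$ produced as each of the three $x_i$-sums collapses from modulus $p^k$ to modulus $p^{c_0}$ (the conductor of $\chi$), together with the drop from $p^k$ to $p^{c_0}$ in the remaining $u$- and $t$-variables. The factor $\chi(-1)$ comes from a final substitution such as $u\mapsto -u$ (or $t\mapsto -t$) needed to align the sign in the $-u a_1 a_2 a_3$ term with the sign convention in the PY definition. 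The main obstacle I expect is the careful bookkeeping required to match the explicit power-of-$p$ normalizations and sign conventions between the two formulations; this is purely algebraic but requires sustained attention to the correspondence of variables.
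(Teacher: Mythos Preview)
Your overall strategy of inserting \eqref{eq:KloostermanDefPScase} into \eqref{eq:HhatpsiDef} and manipulating is right, but your first substitution $t\mapsto tu\overline{x_1x_2x_3}$ sends you down a harder path than necessary, and the argument you sketch for part~(1) has a real gap.

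The paper's route is cleaner: instead of touching $t$, write $H(\overline{u}x_1x_2x_3,1;p^k)=H(\overline{u}x_1,x_2x_3;p^k)$ so that the phase is $e_{p^k}(\overline{u}x_1 t+\overline{t}x_2x_3)$, keeping $x_1$ decoupled from $x_2,x_3$. Then the $x_1$-sum is a genuine Gauss sum $\sum_{x_1}\chi(x_1)e_{p^k}(x_1(a_1+\overline{u}t))$, which evaluates to $p^{k-c_0}\tau(\chi)$ times a delta condition $r\mid t\overline{u}+a_1$ and a character $\overline{\chi}((t\overline{u}+a_1)/r)$. The $x_2$-sum is handled the same way. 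The two delta conditions are solved by writing $t=-a_1u+rv$ and $x_3=a_1a_2u+ry$ with $v,y$ running modulo $p^{c_0}$. After this reparametrisation one checks directly that every factor in the summand except $\overline{\psi}(u)$ has period $p^{c_0}$ in $u$; part~(1) follows at once by orthogonality, with no stationary-phase analysis needed. By contrast, your proposed $u$-sum $\sumstar_u\eta(u)e_{p^k}(uA+\overline{u}B)$ does \emph{not} vanish for individual values of the other variables when $c(\eta)>c_0$, so the claim that ``propagating the congruences backward'' forces incompatibility is not substantiated and would require considerably more work to make rigorous.

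For part~(2), the same periodicity reduces $u$ to modulus $p^{c_0}$, producing exactly one factor of $r$; together with the two Gauss-sum factors $p^{k-c_0}\tau(\chi)$ and $p^{k-c_0}\tau(\overline{\chi})$ and the identity $\tau(\chi)\tau(\overline{\chi})=\chi(-1)p^{c_0}$, this accounts for the prefactor $\chi(-1)r/p^{c_0}$. Note that only two of the three $x_i$-sums are evaluated as Gauss sums (not three as you wrote), and $\chi(-1)$ arises from the product of Gauss sums, not from a sign substitution. A final change $v\mapsto uv$, $y\mapsto uy$ then matches \cite[(5.13)]{PetrowYoungWeyl}.
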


\begin{proof}
Abusing notation, we now consider $\chi$ as a Dirichlet character modulo $p^k$ in lieu of a character of $\Q_p^\times$. We pick up the calculation at \eqref{eq:HhatpsiDef}, write $H(\overline{u} x_1 x_2 x_3, 1;p^k) = H(\overline{u} x_1, x_2 x_3;p^k)$, and insert the formula \eqref{eq:KloostermanDefPScase} for the Kloosterman sum to get 
\begin{multline*}
\widehat{H}(\psi, a_1, a_2, a_3) = 
p^{-2k}
\sum_{t,u, x_1, x_2, x_3 \shortmod{p^k}} 
 \overline{\psi}(u)  \chi^2(t)
\chi(\overline{u} x_1) \overline{\chi}(x_2 x_3)
\\
\times e_{p^k}(a_1 x_1 + a_{2} x_2 + a_{3} x_3 - u a_{1}a_{2}a_{3} )
  e_{p^k}(\overline{u} x_1  t + \overline{t} x_2 x_3).
\end{multline*}
Taking the sum over $x_1$ to the inside is beneficial because we can evaluate it as follows:
\begin{equation*}
\sum_{x_1 \shortmod{p^k}} \chi(x_1) e_{p^k}(x_1(a_1 + \overline{u} t))
= p^{k-c_0} \tau(\chi) \delta(p^{k-c_0} | t \overline{u} + a_1) \overline{\chi}\Big(\frac{t \overline{u}+a_1}{p^{k-c_0}}\Big),
\end{equation*}
where $\tau(\chi)$ is the Gauss sum of $\chi$ defined with respect to its conductor $p^{c_0}$.  To see this, write $x_1 = X + Y p^{c_0}$ where $X$ runs modulo $p^{c_0}$ and $Y$ runs modulo $p^{k-c_0}$.  The $Y$-sum is evaluated with orthogonality of additive characters, and then the $X$-sum is a Gauss sum.  Similarly, we evaluate $x_2$ with
\begin{equation*}
\sum_{x_2 \shortmod{p^k}} \overline{\chi}(x_2) e_{p^k}(x_2(a_2 + \overline{t} x_3))
= p^{k-c_0} \tau(\overline{\chi}) \delta(p^{k-c_0} | x_3 \overline{t} + a_2) \chi\Big(\frac{x_3 \overline{t}+a_2}{p^{k-c_0}}\Big).
\end{equation*}
Write $t = -a_1 u + rv$ and $x_3 = a_1 a_2 u + ry$, where $v,y$ run modulo $p^{c_0}$.
Note that
\begin{equation*}
\chi\Big(\frac{x_3 \overline{t}+a_2}{p^{k-c_0}}\Big)
= 
\overline{\chi}(t)
\chi\Big(\frac{x_3 +a_2 t}{p^{k-c_0}}\Big)
= \overline{\chi}(t)
\chi( y + a_2 v).
\end{equation*}
Similarly, $\overline{\chi}((t\overline{u} + a_1)/r) = \chi(u) \overline{\chi}( v )$.
Substituting and simplifying, we obtain that $\widehat{H}(\psi, a_1, a_2, a_3)$ equals
\begin{equation}
\label{eq:HhatPSsomewhatSimplified} 
\frac{\chi(-1)}{p^{c_0}}
\sum_{\substack{u \shortmod{p^k} \\ v,y \shortmod{p^{c_0}}}} 
 \overline{\psi}(u)  \chi(-a_1 u + rv)
\chi( y + a_2 v)
 \overline{\chi}(a_1 a_2 u + ry)
 \overline{\chi}( v )
e_{p^{c_0}}(a_3 y).
\end{equation}
As a function of $u$, the summands besides $\overline{\psi}(u)$ have period $p^{c_0}$, which shows that the sum vanishes if $c(\psi) > c_0$, giving the first item to be proved.

Now suppose that $c(\psi) \leq c_0$.  The summand in \eqref{eq:HhatPSsomewhatSimplified} is periodic in $u \pmod{p^{c_0}}$, so $\widehat{H}(\psi)$ equals
\begin{equation*}
\frac{\chi(-1)r}{p^{c_0}}
\sum_{\substack{u, v,y \shortmod{p^{c_0}}}} 
 \overline{\psi}(u)  \chi(-a_1 u + rv)
\chi( y + a_2 v)
 \overline{\chi}(a_1 a_2 u + ry)
 \overline{\chi}( v )
e_{p^{c_0}}(a_3 y).
\end{equation*}
Changing variables $v \rightarrow uv$ and $y \rightarrow uy$ and simplifying gives
\begin{equation*}
\frac{\chi(-1)r}{p^{c_0}}
\sum_{\substack{u, v,y \shortmod{p^{c_0}}}} 
 \overline{\psi}(u)  \chi(-a_1  + rv) 
\chi( y + a_2 v) 
 \overline{\chi}(a_1 a_2 + ry)
 \overline{\chi}(v )
e_{p^{c_0}}(a_3 yu).
\end{equation*}
The sum over $u,v,y$ above  is precisely \cite[(5.13)]{PetrowYoungWeyl}. 
\end{proof}

A difference between our approach here and that in \cite{PetrowYoungWeyl} is that here we expanded into Dirichlet characters modulo $c_Q$ while in \cite{PetrowYoungWeyl} the expansion was modulo $q'$.  
However,
Lemma \ref{lemma:HhatEvalPScase} shows that in the principal series case $\widehat{H}(\psi)$ vanishes if $v_p(c (\psi)) > v_p(q')$, so the expansion reduces into characters of modulus $q'$ after all.

\section{Bounds on $\widehat{H}$ in supercuspidal cases}
\label{section:HhatCalculations}
For the rest of this section, we
 fix $p\in S$ and suppose that the specified $\sigma_p$ is supercuspidal, and moreover that $c(\sigma_p) \geq 11$ if $p=2$. To such a $\sigma_p$ we associate by \cite[Thms.\ 6.6, 6.7]{HPY} a unique pair $(L/\Q_p,\xi)$ as in Section \ref{neighborhoods_of_reps}. In this section of the paper, we write $v_L$ for the valuation on $L$ and $v$ in lieu of $v_p$ for the $p$-adic valuation, as $p$ is fixed. Recall the Kloosterman sum $H(m,n;p^k)$ corresponding to $\sigma_p$ is given by  \eqref{eq:KloostermanDefSCinertCaseGeneral}.  For such $k$, which we may assume is $\geq v(q')$, define $\ell_\xi$ according to  \eqref{eq:Rdef}.  Also, recall that $\xi$ has least period $p^{c_0}$.

We also recall that the Postnikov formula (Lemma \ref{lem:postnikov}) for $\xi(1+u)$  holds provided either $p$ is large, or $|u|_L$ is small. 
Throughout this section, we use the condition
``$p^k$ is large" as a concise way to say that the Postnikov formula (precisely, in the form of \eqref{eq:PostnikovThetaVersion}) may be applied and that some sporadic higher-order terms in the $p$-adic logarithm may be discarded.  
 The definition ``We say `$p^k$ is large' if $k$ or $p$ is $\geq 10$.'' is more than sufficient.

We assume $p \neq 2$ in Sections 
\ref{section:HhatSimplifications}--\ref{section:AG},
and treat $p=2$ in
Section \ref{section:pequals2supercuspidal}.

\subsection{Warming up}
We begin with some computations that hold in generality. \begin{mylemma}
\label{lemma:HhatDecentFormulaSupercuspidalInert}
We have that  $\gamma p^{d/2} p^{2k} \widehat{H}(\psi, a_1, a_2, a_3)$ equals
\begin{equation}
\label{eq:HhatDecentFormulaSupercuspidalInert2}
\sum_{\substack{ x_1, x_2, x_3 \shortmod{p^k}}} 
\sum_{\substack{t \in \mathcal{O}_L/(p^k)}} 
 \psi\Big(\frac{\mathrm{Nm}(t)}{x_1 x_2 x_3}\Big)
\xi(t) e_{p^k}\Big(-\mathrm{Tr}(t)+
 a_1 x_1 + a_{2} x_2 + a_{3} x_3 - \frac{x_1 x_2 x_3  a_{1}a_{2}a_{3}}{\mathrm{Nm}(t)} \Big).
\end{equation}
\end{mylemma}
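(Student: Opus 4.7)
The proof is a direct computation starting from the definition \eqref{eq:HhatpsiDef} of $\widehat{H}(\psi)$ with $c_Q = p^k$, and substituting the explicit formula \eqref{eq:KloostermanDefSCinertCaseGeneral} for the supercuspidal Kloosterman sum. My plan is to insert
\[
H(\overline{u} x_1 x_2 x_3, 1; p^k) = \overline{\gamma}\, p^{-d/2} \sum_{\substack{t \in (\mathcal{O}_L/(p^k))^{\times} \\ \mathrm{Nm}(t) \equiv \overline{u} x_1 x_2 x_3 \shortmod{p^{k}}}} \xi(t) e_{p^k}(-\mathrm{Tr}(t))
\]
into \eqref{eq:HhatpsiDef}. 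Multiplying both sides by $\gamma p^{d/2} p^{2k}$ immediately cancels the prefactors $c_Q^{-2} = p^{-2k}$ and $\overline{\gamma} p^{-d/2}$.

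Next I would swap the order of summation so that the sum over $u$ is innermost. The congruence $\mathrm{Nm}(t) \equiv \overline{u} x_1 x_2 x_3 \pmod{p^k}$ is equivalent to $u \equiv x_1 x_2 x_3 / \mathrm{Nm}(t) \pmod{p^k}$; since we are summing over units $t \in (\mathcal{O}_L/(p^k))^{\times}$, $\mathrm{Nm}(t)$ is a unit in $\mathbb{Z}/p^k\mathbb{Z}$, so the congruence uniquely determines $u$ modulo $p^k$. Eliminating $u$ in this way converts $\overline{\psi}(u)$ into $\overline{\psi}(x_1 x_2 x_3/\mathrm{Nm}(t)) = \psi(\mathrm{Nm}(t)/(x_1 x_2 x_3))$, and converts $e_{p^k}(-u a_1 a_2 a_3)$ into $e_{p^k}(-x_1 x_2 x_3 a_1 a_2 a_3/\mathrm{Nm}(t))$, matching \eqref{eq:HhatDecentFormulaSupercuspidalInert2}.

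Finally, I would address the minor bookkeeping that the target formula sums over all $t \in \mathcal{O}_L/(p^k)$ rather than just units. Under the standard convention that $\psi$ and $\xi$ are extended by zero to non-units, any $t$ with $p \mid \mathrm{Nm}(t)$ contributes zero (since $\xi(t) = 0$, and also $\psi(\mathrm{Nm}(t)/x_1 x_2 x_3) = 0$). Similarly, the $x_i$ sums are effectively restricted to $p \nmid x_1 x_2 x_3$ since for other $x_i$ the Kloosterman sum vanishes (as noted after \eqref{eq:KloostermanDefSCinertCaseGeneral}), or equivalently by the vanishing convention on $\psi$.

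There is no real obstacle here; the entire step is a reindexing, with the only subtlety being the matching of support conventions for $\psi$ and $\xi$ on non-units. This lemma serves as the starting point for the deeper analysis in the remainder of Section \ref{section:HhatCalculations}.
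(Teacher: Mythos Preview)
Your proposal is correct and follows essentially the same route as the paper's proof: insert \eqref{eq:KloostermanDefSCinertCaseGeneral} into \eqref{eq:HhatpsiDef}, clear the prefactors, and eliminate $u$ via the norm congruence. The paper's write-up is more terse (two sentences), but your additional remarks on extending the $t$-sum to all of $\mathcal{O}_L/(p^k)$ and the $x_i$-sums via the vanishing conventions on $\xi$, $\psi$, and $H$ are accurate bookkeeping that the paper leaves implicit.
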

\begin{proof}
The definition, via \eqref{eq:HhatpsiDef} and \eqref{eq:KloostermanDefSCinertCaseGeneral}, implies that $\gamma p^{d/2} p^{2k} \widehat{H}(\psi, a_1, a_2, a_3)$ equals
\begin{equation}
\label{eq:HhatDecentFormulaSupercuspidalInert1}
\sumstar_{\substack{u, x_1, x_2, x_3 \shortmod{p^k}}} 
\sum_{\substack{t \in \mathcal{O}_L/(p^k) \\ \mathrm{Nm}(t) \equiv u^{-1} x_1 x_2 x_3 \shortmod{p^k}}} 
 \overline{\psi}(u)
\xi(t) e_{p^k}(-\mathrm{Tr}(t)+
 a_1 x_1 + a_{2} x_2 + a_{3} x_3 - u a_{1}a_{2}a_{3} ).
\end{equation}
Solving the inner congruence for $u$ gives \eqref{eq:HhatDecentFormulaSupercuspidalInert2}.
\end{proof} 

\begin{mylemma}
\label{lemma:HhatVanishesUnlessCoprimeSupercuspidalInert}
Suppose $p$ is odd and $k \geq c_0 +1$.  If $(a_1 a_2 a_3, p) \neq 1$, then $\widehat{H}(\psi, a_1, a_2, a_3) = 0$.
\end{mylemma}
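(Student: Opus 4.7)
By the symmetry of $\widehat{H}(\psi, a_1, a_2, a_3)$ in $(a_1, a_2, a_3)$ (visible from \eqref{eq:HhatpsiDef} together with the identity $H(m,n;c) = H(mn,1;c)$), I may assume without loss of generality that $p \mid a_1$. The plan is then to show the sum $S$ on the right side of \eqref{eq:HhatDecentFormulaSupercuspidalInert2} vanishes by exploiting a substitution symmetry. For each $\mu \in \cO_L/p\cO_L$, set $\lambda := 1 + p^{k-1}\mu \in \cO_L^\times$ and consider the change of variables
\[
(t, x_1) \longmapsto (t\lambda,\; x_1\,\Nm(\lambda)), \qquad \text{($x_2, x_3$ held fixed)},
\]
which is a bijection of the domain of units $(\cO_L/p^k)^\times \times (\Z/p^k)^\times$.

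Next I would verify factor-by-factor that the summand $F$ of $S$ transforms by the single scalar $e_p(-\Tr(t\mu))$ under this substitution. The $\psi$-factor is invariant because $\Nm(t\lambda)/(x_1\Nm(\lambda)x_2 x_3) = \Nm(t)/(x_1 x_2 x_3)$; the $\xi$-factor is invariant because the hypothesis $k \geq c_0 + 1$ forces $\lambda \in U_L((k-1)e_L) \subseteq U_L(c(\xi))$, so $\xi(\lambda) = 1$; the factor $e_{p^k}(-x_1 x_2 x_3 a_1 a_2 a_3 / \Nm(t))$ is invariant since the ratio $x_1/\Nm(t)$ is unchanged; and the factor $e_{p^k}(a_1 x_1)$ is invariant by a short calculation, since $k \geq 2$ implies $\Nm(\lambda) \equiv 1 + p^{k-1}\Tr(\mu) \pmod{p^k}$ (the $p^{2k-2}\Nm(\mu)$ contribution drops), and the resulting new term $a_1 x_1 p^{k-1}\Tr(\mu)$ vanishes modulo $p^k$ exactly because $p \mid a_1$. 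The sole change occurs in $e_{p^k}(-\Tr(t\lambda)) = e_{p^k}(-\Tr(t))\cdot e_p(-\Tr(t\mu))$.

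Re-parameterizing then yields $S = \sum_{t, x_1, x_2, x_3} F \cdot e_p(-\Tr(t\mu))$ for every $\mu \in \cO_L/p\cO_L$.  Averaging this identity over all such $\mu$ and applying the additive orthogonality relation \eqref{eq:addorth} with $k=1$ (noting that for $p$ odd one has $d \in \{0,1\}$, so the orthogonality condition reduces to $v_L(t) \geq 1$ in both the unramified and ramified cases) collapses the inner $\mu$-sum to $p^2 \cdot \mathbf{1}[v_L(t) \geq 1]$, which vanishes identically on the range $t \in (\cO_L/p^k)^\times$. Thus $p^2 S = 0$, hence $S = 0$, which yields the claim. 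The main technical point is the factor-by-factor invariance check of the summand, and the two hypotheses of the lemma, $k \geq c_0 + 1$ and $(a_1 a_2 a_3, p) \neq 1$, are consumed exactly by the $\xi$-invariance and the $e_{p^k}(a_1 x_1)$-invariance, respectively.
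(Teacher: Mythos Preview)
Your proof is correct. Both your argument and the paper's exploit the same underlying symmetry—shifting $t$ by an element of $L$-valuation $\geq e_L(k-1)$, which fixes $\xi(t)$ (by $k-1 \geq c_0$) and the $a_1 x_1$-term (by $p \mid a_1$) but moves the $\Tr(t)$-term nontrivially—but the execution differs. The paper first changes variables $x_1 \to \Nm(t)\, x_1$ to decouple the $x_1$- and $t$-dependencies, then applies a single \emph{additive} shift $t \to t + p^{k-1}$, yielding $S = e_p(-2)\cdot S$ and hence $S=0$ since $p$ is odd. You instead keep the variables coupled by applying the joint \emph{multiplicative} shift $(t,x_1) \mapsto (t\lambda,\; x_1 \Nm(\lambda))$ for each $\mu$, and then average over $\mu \in \cO_L/p\cO_L$ and invoke additive orthogonality. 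The paper's route is marginally shorter, while yours avoids the preliminary change of variables and makes the role of orthogonality explicit; both are perfectly valid.
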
 
\begin{proof}
Suppose $p|a_1$, say.
Changing variables $x_1 \rightarrow \mathrm{Nm}(t) x_1$ in \eqref{eq:HhatDecentFormulaSupercuspidalInert2} gives
\begin{equation}
\label{eq:HhatDecentFormulaSupercuspidalInert3}
\sum_{\substack{ x_1, x_2, x_3 \shortmod{p^k}}} 
\sum_{\substack{t \in (\mathcal{O}_L/(p^k))^{\times}}} 
 \overline{\psi}(x_1 x_2 x_3)
\xi(t) e_{p^k}(-\mathrm{Tr}(t)+
 a_1 \mathrm{Nm}(t) x_1 + a_{2} x_2 + a_{3} x_3 - x_1 x_2 x_3  a_{1}a_{2}a_{3} ).
\end{equation}  
We change variables $t \rightarrow t + p^{k-1}$.  Since $k -1 \geq c_0$, this implies $\xi(t+p^{k-1}) = \xi(t)$, and also $e_{p^k}(a_1 \mathrm{Nm}(t+p^{k-1}) x_1) = e_{p^k}(a_1 \mathrm{Nm}(t) x_1)$.  Since $\mathrm{Tr}(t+p^{k-1}) = \mathrm{Tr}(t) + 2 p^{k-1}$, this shows the sum vanishes.  
\end{proof}

\subsection{Simplifications}\label{sec:supercuspidal_simplifications}
Recall that throughout this section, $p$ is an odd prime.
\label{section:HhatSimplifications}
\begin{mylemma}[First simplification, unramified case]
\label{lemma:HhatInitialEvaluation}
Suppose $L$ is unramified,
$p$ is odd,  $k \geq \max(c_0,2)$, and $p^k$ is large.  Let $n= \lfloor k/2 \rfloor$,
\begin{equation}
\label{eq:Adef}
A =A(x_1, x_2, x_3, t_0) := \frac{x_1 x_2 x_3 a_1 a_2 a_3}{\mathrm{Nm}(t_0)},
\end{equation}
and define the functions 
$$E = (E_1,E_2,E_3,E_4): ((\Z/p^n\Z)^\times)^3 \times (\cO_L/p^n\cO_L)^\times \to (\Z/p^n\Z)^3 \times \cO_L/p^n\cO_L$$ 
by
\begin{equation}
\label{eq:PostnikovCongruenceSystemSupercuspidalInert}
\begin{split}
E_i(x_1, x_2, x_3, t_0) &:=-\ell_{\psi} + a_i x_i - A, \qquad (i=1,2,3), \\
E_4(x_1, x_2, x_3, t_0) &:= A + \ell_{\psi} 
+\ell_\xi
- t_0.
\end{split}
\end{equation}
Let 
\begin{equation}
S_{\psi, \xi, \mathbf{a}} = \{ (x_1,x_2,x_3,t_0) \in ((\Z/p^n\Z)^\times)^3 \times (\cO_L/p^n\cO_L)^\times : E(x_1,x_2,x_3,t_0) = (0,0,0,0) \}.
\end{equation}
For $k$ even, with $k=2n$, we have  
\begin{equation}
\label{eq:HhatFormulaPostPosnikovEvenExponent}
\widehat{H}(\psi) = \frac{\overline{\gamma} p^{5n}}{p^{2k}}
\sumprime_{S_{\psi, \xi, \mathbf{a}}}
\psi\Big(\frac{\mathrm{Nm}(t_0)}{x_1 x_2 x_3}\Big)
\xi(t_0) e_{p^k}(-\mathrm{Tr}(t_0)+
 a_1 x_1 + a_{2} x_2 + a_{3} x_3 
 - 
 A
 ),
\end{equation}
where the $'$ on the summation here and below indicates that $(x_1, x_2, x_3, t_0)$ runs over any fixed choice of coset representatives for $[(\Z/p^k\Z)/ (p^n\Z/p^k\Z)]^3 \times (\cO_L/p^k\cO_L)/ (p^n\cO_L/p^k\cO_L)$ that project to  $S_{\psi, \xi, \mathbf{a}}$.
For $k$ odd, with $k=2n+1$, we have
\begin{equation}\label{eq:HhatInitialEvaluation2}
\widehat{H}(\psi) = \frac{ \overline{\gamma} p^{5n}}{p^{2k}}
\sumprime_{S_{\psi, \xi, \mathbf{a}}}
\psi\Big(\frac{\mathrm{Nm}(t_0)}{x_1 x_2 x_3}\Big)
\xi(t_0) e_{p^k}(-\mathrm{Tr}(t_0)+
 a_1 x_1 + a_{2} x_2 + a_{3} x_3 
 - 
 A
 )
\cdot G_p,
\end{equation}
 and where $G_p$ depending on $x_1, x_2, x_3, t_0$ is given by \eqref{eq:GpDef2} below. Each summand on the right-hand sides of \eqref{eq:HhatFormulaPostPosnikovEvenExponent} and \eqref{eq:HhatInitialEvaluation2} is independent of the choices of coset representatives.
\end{mylemma}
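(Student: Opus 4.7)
My plan is to perform a two-scale expansion of the sum in Lemma \ref{lemma:HhatDecentFormulaSupercuspidalInert}, following the strategy of Lemmas \ref{lemma:characterSumPrimePowerEvenExponent} and \ref{lemma:characterSumPrimePowerOddExponent}, and handling the $t \in \cO_L/p^k\cO_L$ variable ``by hand'' via additive orthogonality as suggested in Remark \ref{rmk:PostnikovRemarkNumberField}. Since $L$ is unramified, $d=0$, so the prefactor in Lemma \ref{lemma:HhatDecentFormulaSupercuspidalInert} is $\gamma p^{2k}$. Write each variable in two scales: $x_i = y_i + p^n z_i$ with $y_i$ a fixed representative of the small-scale class in $\Z/p^n\Z$ and $z_i \in \Z/p^{k-n}\Z$; similarly $t = t_0 + p^n s$ with $t_0$ a representative and $s \in \cO_L/p^{k-n}\cO_L$. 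By Lemma \ref{lemma:HhatVanishesUnlessCoprimeSupercuspidalInert} and the analogous statement for $t$, the sum is supported on $y_i \in (\Z/p^n\Z)^\times$ and $t_0 \in (\cO_L/p^n\cO_L)^\times$.

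For the even case $k=2n$, I will expand each factor modulo $p^k$. The character $\psi(\Nm(t)/(x_1x_2x_3))$ factors as $\psi(\Nm(t_0)/(y_1y_2y_3))$ times a ``close to $1$'' piece that yields by Lemma \ref{lem:postnikov} a factor of $e_{p^n}\left(\ell_\psi\left(\tfrac{\Tr(\bar t_0 s)}{\Nm(t_0)} - \sum_i \tfrac{z_i}{y_i}\right)\right)$; similarly $\xi(t) = \xi(t_0) e_{p^n}(\Tr(\ell_\xi s/t_0))$ via \eqref{eq:PostnikovThetaVersion}. The additive character arguments expand via \eqref{eq:rationalfunctionTaylorExpansionMultiVariableVersion}. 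Collecting, the large-scale variables $(z_1,z_2,z_3,s)$ appear only linearly in the exponent of $e_{p^n}$. The coefficient of $z_i$ is $y_i^{-1}(a_i y_i - A - \ell_\psi)$, and summing $z_i \in \Z/p^n\Z$ by additive orthogonality produces a factor $p^n$ and the condition $E_i \equiv 0 \pmod{p^n}$. The $s$-coefficient simplifies (using $\bar t_0/\Nm(t_0)=1/t_0$ and $\ell_\psi \in \Z_p$) to $\Tr(s \cdot t_0^{-1}(A+\ell_\psi + \ell_\xi - t_0))$, and summing $s \in \cO_L/p^n\cO_L$ via \eqref{eq:addorth} gives $p^{2n}$ and the condition $E_4 \equiv 0 \pmod{p^n}$. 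Combining these factors with the $1/(\gamma p^{2k})$ prefactor produces \eqref{eq:HhatFormulaPostPosnikovEvenExponent}, with coset-independence being automatic from the congruences $E = 0$.

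For the odd case $k = 2n+1$, the same expansion is performed, but now $e_{p^{2n+1}}(p^{2n} \cdot ) = e_p(\cdot)$ is non-trivial, so the quadratic-in-$(z_1,z_2,z_3,s)$ terms coming from the second-order Postnikov remainder $-\tfrac12(p^n v)^2$ in $\log(1+p^n v)$ and from the Taylor quadratic of the rational function $A$ must be retained. The linear terms again enforce the congruences $E_i \equiv 0 \pmod{p^n}$ and $E_4 \equiv 0 \pmod{p^n}$, while the quadratic terms assemble into a quadratic form on the five-dimensional $\F_p$-vector space $\F_p^3 \times \cO_L/p\cO_L$, whose Gauss sum is precisely the factor $G_p = G_p(x_1,x_2,x_3,t_0)$ defined in \eqref{eq:GpDef2}. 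The main obstacle is purely in the bookkeeping: computing the Hessian of the total exponent at $(y_1,y_2,y_3,t_0)$, verifying that all higher-order terms $O(p^{3n})$ vanish in $e_{p^{2n+1}}$, and packaging the quadratic form consistently with the explicit shape asserted in \eqref{eq:GpDef2}; coset-independence then follows from the same mechanism as in Lemma \ref{lemma:characterSumPrimePowerOddExponent}.
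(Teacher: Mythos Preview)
Your approach is correct and essentially the same as the paper's: both perform a two-scale Postnikov expansion of the sum in Lemma~\ref{lemma:HhatDecentFormulaSupercuspidalInert} and evaluate the inner variables by additive orthogonality, yielding the congruence system $E=0$ and (for $k$ odd) a residual quadratic Gauss sum. The paper uses the multiplicative splitting $x_i \to x_i(1+p^n y_i)$, $t \to t_0(1+p^n t_1)$ rather than your additive one; these are equivalent via the unit changes $z_i \leftrightarrow x_i y_i$, $s \leftrightarrow t_0 t_1$, though you would need that substitution to land exactly on the explicit form of $G_p$ in \eqref{eq:GpDef2}. One small correction: the restriction of the sum to units in the $x_i$ and $t$ variables does not come from Lemma~\ref{lemma:HhatVanishesUnlessCoprimeSupercuspidalInert} (which concerns the parameters $a_i$, and moreover requires $k \geq c_0+1$), but simply from the vanishing of the multiplicative characters $\psi$ and $\xi$ on non-units.
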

\begin{proof}
We continue the computation from \eqref{eq:HhatDecentFormulaSupercuspidalInert2}.  
For $i=1,2,3$, we replace $x_i$ by $x_i(1 + p^n y_i)$ and write $t=t_0(1+p^nt_1)$, where $(x_1, x_2, x_3, t_0)$ runs over a fixed choice $\mathcal{C}$ of coset representatives for $[(\Z/p^k\Z)/ (p^n\Z/p^k\Z)]^3 \times (\cO_L/p^k\cO_L)/ (p^n\cO_L/p^k\cO_L)$, and where 
 $y_i \in \Z/p^{k-n}\Z$ and $t_1 \in \cO_L/p^{k-n}\cO_L$.
 Note that $3^{-1} p^{3n} \equiv 0 \pmod{p^k}$, since 
when $p=3$ then $3n-1 \geq k$ for $k \geq 5$. 
 Thus
the assumption that $p^k$ is large implies
\begin{equation}
\log(1+ p^n y_i) \equiv p^{n} y_i - \tfrac12 p^{2n} y_i^2 \pmod{p^k}.
\end{equation}
Hence by \eqref{eq:PostnikovThetaVersion} with $L=\Q_p$, then 
\begin{equation}\label{eq:psiexp}
\overline{\psi}(1+ p^{n} y_i)
= e_{p^{k-n}}(-\ell_{\psi} y_i) e_{p^{k-2n}}(\tfrac12 \ell_{\psi} y_i^2).
\end{equation}

By \eqref{eq:PostnikovThetaVersion} and Remark \ref{rm:normchar}, we similarly have for $p^k$ large
\begin{equation}
\label{eq:psiPostnikovInert}
\psi(\mathrm{Nm}(t_0(1 + p^n t_1))
= \psi(\mathrm{Nm}(t_0)) e_{p^{k-n}}( \ell_{\psi} \mathrm{Tr}( t_1))
e_{p^{k-2n}}(-\tfrac12 \ell_{\psi} \mathrm{Tr}(t_1^2)),
\end{equation}
and
\begin{equation}
\label{eq:thetaPostnikovInert}
\xi(t_0(1+p^n t_1)) = \xi(t_0)  e_{p^{k-n}} (\mathrm{Tr}(\ell_\xi t_1))
e_{p^{k-2n}}(-\tfrac12 \mathrm{Tr}(\ell_\xi t_1^2))
.
\end{equation}
To understand the behavior of $e_{p^k}(-\frac{x_1 x_2 x_3 a_1 a_2 a_3}{\mathrm{Nm}(t)})$, note that
\begin{multline}
\label{eq:AformulaTaylorExpansion}
-\frac{x_1(1 + p^n y_1) x_2(1 + p^n y_2) x_3(1 + p^n y_3) a_1 a_2 a_3}{\mathrm{Nm}(t_0(1 + p^n t_1))}
\\
=
-A \cdot
[1 
+ p^n (y_1 + y_2 + y_3 - \mathrm{Tr}(t_1)) + c_2 p^{2n} ] \pmod{p^{3n}},
\end{multline}
where
\begin{equation}
\label{eq:c2def}
 c_2 = y_1 y_2 + y_1 y_3 + y_2 y_3
 - \Tr(t_1)(y_1 + y_2 + y_3)
 + (t_1^2 + t_1 \overline{t_1} + \overline{t_1}^2).
\end{equation}

Assembling all these formulas, we have
\begin{equation}
\label{eq:AformulaTaylorExpansion2}
\widehat{H}(\psi) = \frac{\overline{\gamma} }{p^{2k}}
\sumstar_{(x_1, x_2, x_3, t_0) \in \mathcal{C}} 
\psi\Big(\frac{\mathrm{Nm}(t_0)}{x_1 x_2 x_3}\Big)
\xi(t_0) e_{p^k}(-\mathrm{Tr}(t_0)+
 a_1 x_1 + a_{2} x_2 + a_{3} x_3 
 - 
 A)
 \cdot S_1,
\end{equation}
where
\begin{multline}
S_1 = 
\sum_{\substack{y_1, y_2, y_3 \shortmod{p^{k-n}} \\ t_1 \in \mathcal{O}_L/(p^{k-n})}}
e_{p^{k-n}}(-(\ell_{\psi}+A)(y_1 + y_2 + y_3) + a_1 x_1 y_1 + a_2 x_2 y_2 + a_3 x_3 y_3
)
\\
\times e_{p^{k-n}}(\mathrm{Tr}(t_1[\ell_{\psi} + \ell_\xi - t_0 + A]))
e_{p^{k-2n}}(r_2-Ac_2),
\end{multline}
and where
\begin{equation}
\label{eq:r2def}
 r_2 = \tfrac12 \ell_{\psi} (-\mathrm{Tr}(t_1^2) + y_1^2 + y_2^2 + y_3^2) 
 -\tfrac12 \mathrm{Tr}(\ell_\xi t_1^2).
\end{equation}

If $k=2n$, then by additive orthogonality of characters \eqref{eq:addorth}, $S_1$ evaluates as $p^{5n}$ times the indicator function of the system of equations \eqref{eq:PostnikovCongruenceSystemSupercuspidalInert}.
If $k=2n+1$, then $S_1$ evaluates as $p^{5n}$ times the indicator function of the system \eqref{eq:PostnikovCongruenceSystemSupercuspidalInert} times a quadratic exponential sum $G_p$ defined by
\begin{multline}
\label{eq:GpDef2}
G_p = 
\sum_{\substack{y_1, y_2, y_3 \shortmod{p^{ }} \\ t_1 \in \mathcal{O}_L/(p^{ })}}
e_{p^{}} \Big(\sum_{i=1}^{3} y_i \frac{a_i x_i - \ell_{\psi} - A}{p^n} + \frac{\mathrm{Tr}(t_1[\ell_{\psi} + \ell_\xi - t_0 + A])}{p^n} \Big)
e_{p^{}}(r_2-Ac_2).
\end{multline}
The equations \eqref{eq:PostnikovCongruenceSystemSupercuspidalInert} imply that the arguments of $e_p$  in \eqref{eq:GpDef2} are integers.

The final expression for $\widehat{H}(\psi)$ could \`a priori depend on the choice of $\cC$, which for sake of argument we temporarily write $\widehat{H}(\psi)_{\cC}$. However, the proof of course shows that $\widehat{H}(\psi)= \widehat{H}(\psi)_{\cC}$ is independent of $\cC$. Now, take two different choices of $\mathcal{C}$, $\mathcal{C}'$ that agree for all but one element $(x_{1,0}, x_{2,0}, x_{3,0},t_{0,0})$. Clearly, $\widehat{H}(\psi)_{\cC}- \widehat{H}(\psi)_{\cC'}=0$, but also note that all terms in the difference cancel except the one at $(x_{1,0}, x_{2,0}, x_{3,0},t_{0,0})$. Thus, the summands on the right hand sides of \eqref{eq:HhatFormulaPostPosnikovEvenExponent} and \eqref{eq:HhatInitialEvaluation2} are independent of the choice of coset representatives. 
\end{proof}

The following result further refines the evaluation from Lemma \ref{lemma:HhatInitialEvaluation}, when $a_1 = a_2 = a_3 = 1$.
\begin{mylemma}[Second simplification, unramified case]
\label{lemma:HhatInitialEvaluation2}
Suppose $L$ is unramified, $p$ is odd,  $k \geq \max(c_0, 2)$,
and $p^k$ is large.
Suppose $a_1 = a_2 = a_3 = 1$.   Let
\begin{equation}
\label{eq:PostnikovCongruenceSystemReducedSupercuspidalInert}
T_{\psi,\xi} := \{ s \in (\Z/p^n\Z)^\times: s^2 \ell_{\psi} - s \mathrm{Nm}(\ell_\xi) + \mathrm{Nm}(\ell_\xi) \ell_{\psi} =0\}.
\end{equation}
For $k=2n$, we have
\begin{equation}
\label{eq:HhatSimplifiedUnramifiedEvenk}
\widehat{H}(\psi) = \overline{\gamma} p^{k/2}
 \sumprime_{s \in T_{\psi,\xi}}
\psi\Big(\frac{s^2+\mathrm{Nm}(\ell_\xi)}{s^3}\Big)
\xi(s+\ell_\xi) e_{p^k}\Big(
 \frac{s\mathrm{Nm}(\ell_\xi)}{s^2+\mathrm{Nm}(\ell_\xi)}\Big).
\end{equation}
For $k=2n+1$, we have
\begin{equation}
\label{eq:HhatSimplifiedUnramifiedOddk}
\widehat{H}(\psi) = \frac{  \overline{\gamma} p^{5n}}{p^{2k}}
 \sumprime_{s \in T_{\psi,\xi}}
\psi\Big(\frac{s^2+\mathrm{Nm}(\ell_\xi)}{s^3}\Big)
\xi(s+\ell_\xi) e_{p^k}\Big(
 \frac{s\mathrm{Nm}(\ell_\xi)}{s^2+\mathrm{Nm}(\ell_\xi)} \Big) G_p',
\end{equation}
where  the $'$ on the summation indicates that $s$ runs over any fixed choice of coset representatives for $(\Z/p^k\Z)/ (p^n\Z/p^k\Z)$ that project to $T_{\psi, \xi}$ and where $G_p'$ depending on $s$ is given by \eqref{eq:Gp'Def} below. Each summand on the right-hand sides of \eqref{eq:HhatSimplifiedUnramifiedEvenk} and \eqref{eq:HhatSimplifiedUnramifiedOddk} is independent of the choices of coset representatives.
\end{mylemma}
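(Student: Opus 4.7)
The plan is to apply Lemma \ref{lemma:HhatInitialEvaluation} with $a_1 = a_2 = a_3 = 1$ and solve the system \eqref{eq:PostnikovCongruenceSystemSupercuspidalInert} explicitly. From $E_i = -\ell_{\psi} + x_i - A \equiv 0 \pmod{p^n}$ for $i=1,2,3$, the three variables are forced to share a common value $s \pmod{p^n}$, and then $E_4 \equiv 0$ forces $t_0 \equiv s + \ell_\xi \pmod{p^n \cO_L}$. Thus every element of $S_{\psi, \xi, \mathbf{1}}$ has the form $(s, s, s, s + \ell_\xi)$ for some $s \in (\Z/p^n\Z)^\times$.

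I would then derive the quadratic constraint cutting out $T_{\psi, \xi}$. Invoking $\Tr(\ell_\xi) = 0$ (Remark \ref{rem:fixedL}) gives $\mathrm{Nm}(s + \ell_\xi) = s^2 + \mathrm{Nm}(\ell_\xi)$. Enforcing consistency between the two available expressions for $A$, namely $A \equiv s - \ell_{\psi}$ (from $E_1 \equiv 0$) and $A = x_1 x_2 x_3 / \mathrm{Nm}(t_0) = s^3/(s^2 + \mathrm{Nm}(\ell_\xi))$ (from \eqref{eq:Adef}), yields $(s - \ell_{\psi})(s^2 + \mathrm{Nm}(\ell_\xi)) \equiv s^3 \pmod{p^n}$. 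Expanding this produces precisely $s^2 \ell_{\psi} - s \mathrm{Nm}(\ell_\xi) + \mathrm{Nm}(\ell_\xi) \ell_{\psi} \equiv 0 \pmod{p^n}$, so the map $s \mapsto (s, s, s, s + \ell_\xi)$ bijects $T_{\psi, \xi}$ onto $S_{\psi, \xi, \mathbf{1}}$.

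It then remains to substitute $(x_1, x_2, x_3, t_0) = (s, s, s, s + \ell_\xi)$ into the summand of \eqref{eq:HhatFormulaPostPosnikovEvenExponent} and \eqref{eq:HhatInitialEvaluation2}. The arithmetic factor becomes $\psi((s^2 + \mathrm{Nm}(\ell_\xi))/s^3)\,\xi(s + \ell_\xi)$ directly. Using $\mathrm{Tr}(s + \ell_\xi) = 2s$, the argument of $e_{p^k}$ simplifies as
\[
-2s + 3s - \frac{s^3}{s^2 + \mathrm{Nm}(\ell_\xi)} = \frac{s\,\mathrm{Nm}(\ell_\xi)}{s^2 + \mathrm{Nm}(\ell_\xi)}.
\]
For $k = 2n$, the prefactor $\overline{\gamma} p^{5n}/p^{2k}$ collapses to $\overline{\gamma} p^{k/2}$, yielding \eqref{eq:HhatSimplifiedUnramifiedEvenk}. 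For $k = 2n+1$, the same prefactor is retained and the quadratic exponential sum $G_p$ of \eqref{eq:GpDef2} specializes to a sum depending only on $s$; this specialization is taken as the definition of $G_p'$, yielding \eqref{eq:HhatSimplifiedUnramifiedOddk}.

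The main (mild) obstacle is producing the explicit formula for $G_p'$ in the odd-$k$ case by substituting the constraints $x_i = s$, $t_0 = s + \ell_\xi$, $A = s^3/(s^2 + \mathrm{Nm}(\ell_\xi))$ into \eqref{eq:GpDef2}, \eqref{eq:c2def}, and \eqref{eq:r2def} and collecting the resulting linear and quadratic forms in $(y_1, y_2, y_3, t_1)$. Independence of the choice of $\Z/p^k$-lifts of $s$ (and its image $s + \ell_\xi \in \cO_L/p^k\cO_L$) is inherited directly from the analogous statement in Lemma \ref{lemma:HhatInitialEvaluation}, so no separate verification is needed.
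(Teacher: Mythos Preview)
Your proposal is correct and follows essentially the same route as the paper: solve the system $E_i \equiv 0$ to force $x_1 = x_2 = x_3 = s$ and $t_0 = s + \ell_\xi$, use $\Tr(\ell_\xi) = 0$ to compute $\Nm(t_0)$ and reduce the remaining constraint to the quadratic defining $T_{\psi,\xi}$, then substitute and simplify the exponential. The paper additionally records that $s^2 + \Nm(\ell_\xi)$ is a unit (since $\pm\ell_\xi \notin \Z_p^\times$) and writes out $G_p'$ explicitly in terms of the integer $m = p^{-n}(\ell_\psi s^2 - s\Nm(\ell_\xi) + \ell_\psi \Nm(\ell_\xi))$, which you would need to fill in to match the referenced formula \eqref{eq:Gp'Def}.
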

\begin{proof}
We continue from Lemma \ref{lemma:HhatInitialEvaluation}.  Recall from Remark \ref{rem:fixedL} that throughout we have made a fixed a choice of $\ell_\xi \in \cO_L$ that satisfies $\Tr(\ell_\xi)=0$.
From $E_1=E_2=E_3=0$, we see that any $(x_1,x_2,x_3,t_0) \in S_{\psi,\xi,\mathbf{1}}$ satisfies $x_1 = x_2 = x_3$.  Say $s$ is this common value modulo $p^n$.
From $E_4=0$, we get that $t_0 = \ell_\xi + s \mod p^n\cO_L$, which completely determines $t_0 \in \cO_L/p^n\cO_L$. 
Fix any choice of lift of $s$ modulo $p^k$, and set $t_0=\ell_\xi+s \pmod{p^k}$, which determines a lift of $t_0$  modulo $p^k\cO_L$.
For these choices, we have  that $\mathrm{Nm}(t_0)\equiv s^2 + \mathrm{Nm}(\ell_\xi) \pmod{p^{k}}$ since $\Tr(\ell_\xi)=0$.  Note that $s^2 +\Nm(\ell_\xi) \neq 0$, since $\Tr(\ell_\xi)=0$ and $\pm \ell_\xi \not \in \Z_p^\times$.
Thus, for any choice of coset representatives modulo ${p^k}$ for 
\begin{equation}\label{eq:Tpsithetaprelim}
\{ s \in (\Z/p^n\Z)^\times: -\ell_{\psi} + s - \frac{s^3}{s^2 + \mathrm{Nm}(\ell_\xi)}=0 \},
\end{equation}
 there is a choice of coset representatives for the domain of summation in \eqref{eq:HhatFormulaPostPosnikovEvenExponent} or \eqref{eq:HhatInitialEvaluation2} so that these sets are in bijection.  The set in \eqref{eq:Tpsithetaprelim} is  in turn  in bijection with $T_{\psi,\xi}$.

  For $k=2n$, making this change of variables gives
\begin{equation}
\widehat{H}(\psi) = \frac{  \overline{\gamma} p^{5n}}{p^{2k}}
 \sumprime_{s \in T_{\psi,\xi}}
\psi\Big(\frac{s^2+\mathrm{Nm}(\ell_\xi)}{s^3}\Big)
\xi(s+\ell_\xi) e_{p^k}\Big(-\mathrm{Tr}(\ell_\xi+s)+
   3s 
 - 
 \frac{s^3}{s^2+\mathrm{Nm}(\ell_\xi)} \Big).
\end{equation}
which  simplifies as \eqref{eq:HhatSimplifiedUnramifiedEvenk}. 
The evaluation for $k=2n+1$ is the same, except for the simplification of $G_p$.  Define $m = m(s,\psi, \xi, n) \in \mz$ by
\begin{equation}
\label{eq:Mdef}
m = p^{-n} (\ell_{\psi} s^2 - s \mathrm{Nm}(\ell_\xi) + \ell_{\psi} \mathrm{Nm}(\ell_\xi)),
\end{equation}
which depends on the choice of coset representative modulo $p^k$ for $s$. 
We obtain
\begin{equation}
\label{eq:Gp'Def}
G_p' = 
\sum_{\substack{y_1, y_2, y_3 \shortmod{p^{ }} \\ t_1 \in \mathcal{O}_L/(p^{ })}}
e_{p^{}}\Big((-\sum_{i=1}^{3} y_i + \mathrm{Tr}(t_1)) (s^2 + \mathrm{Nm}(\ell_\xi))^{-1} m \Big)
e_{p^{}}(r_2-Ac_2), 
\end{equation}
which may depend on the choice of coset representatives, but the final expression \eqref{eq:HhatSimplifiedUnramifiedOddk} for $\widehat{H}(\psi)$ does not.
\end{proof}

Next we perform a similar analysis when $L$ is ramified.   
In the ramified case, we have $k \geq c_0 + 1$ (from the table \eqref{eq:supercuspidalTable}), and so by Lemma \ref{lemma:HhatVanishesUnlessCoprimeSupercuspidalInert} we may assume $a_1 = a_2 = a_3 = 1$.

\begin{mylemma}[Simplification, ramified case]
\label{lemma:HhatInitialEvaluationRamified}
Suppose $L$ is ramified,
$p$ is odd, $k \geq c_0+1$,  and $p^k$ is large.  
Suppose $a_1 = a_2 = a_3 = 1$.
For $k$ even, with $k=2n \geq 2$, we have  
\begin{equation}
\label{eq:HhatSimplifiedRamifiedEvenk}
\widehat{H}(\psi) = \epsilon p^{k/2}
\sumprime_{s \in T_{\psi,\xi}}
\Big(\frac{s}{p}\Big) 
\psi\Big(\frac{s^2+\mathrm{Nm}(\ell_\xi)}{s^3}\Big)
\xi(s+\ell_\xi) e_{p^k}\Big(
  \frac{s \mathrm{Nm}(\ell_\xi)}{s^2+\mathrm{Nm}(\ell_\xi)}\Big),
\end{equation}
for some $\epsilon$ of modulus $1$ depending only on the isomorphism class of $L$.

For $k$ odd, with $k=2n+1$, the sum $\widehat{H}(\psi)$ vanishes unless $v(\ell_\psi)>0$, in which case let
\begin{equation}
\label{eq:PostnikovCongruenceSystemReducedSupercuspidalRam2}
T_{\psi,\xi}' := \{ s \in (\Z/p^n\Z)^\times: s^2 \frac{\ell_{\psi}}{p} - s \frac{\mathrm{Nm}(\ell_\xi)}{p} + \frac{\mathrm{Nm}(\ell_\xi) \ell_{\psi}}{p} \equiv 0\pmod{p^n}\}.
\end{equation}
 For some $\epsilon$ of modulus $1$ depending only on the isomorphism class of $L$, we have
\begin{equation}
\label{eq:HhatSimplifiedRamifiedOddk}
\widehat{H}(\psi) = \epsilon p^{(k+1)/2}
\sumprime_{s \in T_{\psi,\xi}'}
\Big(\frac{s}{p}\Big) 
\psi\Big(\frac{s^2+\mathrm{Nm}(\ell_\xi)}{s^3}\Big)
\xi(s+\ell_\xi) e_{p^k}\Big(
  \frac{s \mathrm{Nm}(\ell_\xi)}{s^2+\mathrm{Nm}(\ell_\xi)}\Big).
\end{equation}
The $'$ on the summation in \eqref{eq:HhatSimplifiedRamifiedEvenk} and \eqref{eq:HhatSimplifiedRamifiedOddk} has the same meaning as in Lemma \ref{lemma:HhatInitialEvaluation2}.
\end{mylemma}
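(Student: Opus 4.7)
The approach is to mirror the proofs of Lemmas \ref{lemma:HhatInitialEvaluation} and \ref{lemma:HhatInitialEvaluation2}, tracking the changes induced by the ramification invariants $e_L = 2$ and $d = 1$. I would begin from \eqref{eq:HhatDecentFormulaSupercuspidalInert2} specialized to $a_1 = a_2 = a_3 = 1$ (permissible by Lemma \ref{lemma:HhatVanishesUnlessCoprimeSupercuspidalInert}), apply the substitutions $x_i = x_i(1 + p^n y_i)$ and $t = t_0(1 + p^n t_1)$ with $n = \lfloor k/2 \rfloor$, $y_i \in \Z/p^{k-n}\Z$ and $t_1 \in \cO_L/p^{k-n}\cO_L$, and invoke the Postnikov expansions \eqref{eq:psiexp}, \eqref{eq:psiPostnikovInert}, \eqref{eq:thetaPostnikovInert} together with the Taylor expansion \eqref{eq:AformulaTaylorExpansion}. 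The ``$p^k$ large'' hypothesis, combined with $v_L(\ell_\xi) = 2(k-c_0) - 1 \geq 1$ from Lemma \ref{lemma:vpNormOfEllTheta}(1), ensures that all cubic and higher error terms vanish exactly as in the unramified case.

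The essential deviation appears at the additive orthogonality step for the $t_1$-sum. The second (ramified) branch of \eqref{eq:addorth} imposes $\pi_L^{2(k-n)-1}$-divisibility on $M = \ell_\psi + \ell_\xi - t_0 + A$ rather than $\pi_L^{2(k-n)}$-divisibility. In the even case $k = 2n$, this weakening leaves $t_0 \pmod{\pi_L^{2n}\cO_L}$ determined only up to an additive $\F_p$-direction represented by $\pi_L^{2n-1}\cO_L/\pi_L^{2n}\cO_L$; summing the $t$-summand in \eqref{eq:HhatDecentFormulaSupercuspidalInert2} over these extra $p$ lifts (treating the one-variable ramified sum by hand, as in Remark \ref{rmk:PostnikovRemarkNumberField}) produces a sum whose evaluation yields the Legendre symbol $(s/p)$, using Lemma \ref{lemma:thetarestrictedtoZp} to identify $\xi|_{\Z_p^\times} = \eta_L|_{\Z_p^\times}$ with the quadratic residue character modulo $p$. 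The accompanying factor of $p^{1/2}$, combined with the $\overline{\gamma}p^{-1/2}$ prefactor from \eqref{eq:KloostermanDefSCinertCaseGeneral}, assembles into the claimed $\epsilon p^{k/2}$.

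For $k = 2n+1$ odd, the ramified orthogonality condition, after clearing $\pi_L^2 = p \cdot (\text{unit})$, places the leading-order congruence for $s$ at one $p$-power lower than in the unramified odd case.  Since $\Nm(\ell_\xi)/p \in \Z_p$ automatically by Lemma \ref{lemma:vpNormOfEllTheta}(1), the only remaining obstruction to solvability is $\ell_\psi \in p\Z_p$, which produces the vanishing statement when $v(\ell_\psi) = 0$ and yields \eqref{eq:PostnikovCongruenceSystemReducedSupercuspidalRam2} upon division by $p$.  The residual quadratic Gauss sum in $(y_1, y_2, y_3, t_1)$ is evaluated via Lemma \ref{lemma:quadraticGaussSumEvaluation} to a modulus-one scalar depending only on $L$ times $p^{1/2}$; this, combined with the $p^{k/2}$ from the even-case analysis, produces the $p^{(k+1)/2}$ in \eqref{eq:HhatSimplifiedRamifiedOddk}.

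The main technical obstacle will be the careful $\pi_L$-valuation bookkeeping at the orthogonality step, verifying in particular that: (i) in the even case, the character on the extra $\F_p$-direction of $t_0$-lifts is precisely the Legendre symbol and not some other quadratic character; (ii) in the odd case, the residual quadratic Gauss sum is $s$-independent, so that the overall phase $\epsilon$ depends only on the isomorphism class of $L$; and (iii) the leading-order congruence in the odd case acquires exactly one power of $p$, making the divisibility condition $v(\ell_\psi) > 0$ both necessary and sufficient for non-vanishing.
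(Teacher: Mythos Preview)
Your outline correctly identifies Postnikov plus orthogonality as the framework, but diverges from the paper at the key technical step: the $t$-decomposition. The paper does \emph{not} write $t = t_0(1 + p^n t_1)$ in the ramified case; instead it uses the asymmetric split $t = t_0(1 + \pi^{k-1}t_1)$ with $t_0$ running modulo $\fp^{k-1}$ and $t_1 \in \cO_L/\fp^{k+1}$, while keeping $x_i \to x_i(1+p^n y_i)$. This choice is made precisely so that the orthogonality constraint fixes $t_0$ exactly modulo $\fp^{k-1}$, isolating the leftover ramified degree of freedom cleanly as a one-variable quadratic Gauss sum over $t_1 \in \cO_L/\fp$ (even case) or a four-variable sum over $(y_1,y_2,y_3,t_1) \in \F_p^4$ (odd case, after a further split $y_i \to y_i + p z_i$, $t_1 \to t_1 + \pi u_1$). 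With your symmetric split the extra $\F_p$ direction lands on $t_0$, forcing you to sum the full summand $\psi(\cdots)\xi(t_0)e_{p^k}(\cdots)$ over lifts; this can in principle be done but is considerably messier, and is not what you have described.

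Your obstacles (i)--(iii) mislocate the mechanisms. The Legendre symbol $(\tfrac{s}{p})$ does not come from $\xi|_{\Z_p^\times}$ via Lemma~\ref{lemma:thetarestrictedtoZp}; it comes from evaluating the residual quadratic Gauss sum via Lemma~\ref{lemma:quadraticGaussSumEvaluation}, whose discriminant picks up $s$ through the relation $\ell_\psi + A \equiv s \pmod p$. Thus (ii) is backwards: in the odd case the paper's residual Gauss sum is a rank-$3$ form in four variables with radical spanned by $(1,1,1,1)$, and its evaluation contributes both the factor $p^{5/2}$ and the $(\tfrac{s}{p})$---it is not $s$-independent. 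The upgrade to $T_{\psi,\xi}'$ in (iii) is likewise not a consequence of the ramified orthogonality shift $\fp^{2(k-n)-d}$; it arises because Lemma~\ref{lemma:quadraticGaussSum} forces the linear form $\mathcal{L}$ to vanish on that radical, which is the condition $m \equiv 0 \pmod p$ and strengthens the $s$-congruence from modulus $p^n$ to $p^{n+1}$. The vanishing unless $v(\ell_\psi)>0$ then follows because $\Nm(\ell_\xi) \equiv 0 \pmod p$ already makes $T_{\psi,\xi}$ empty when $p \nmid \ell_\psi$.
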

\begin{proof}
We roughly follow the strategy of proof of Lemma \ref{lemma:HhatInitialEvaluation}.   
Set $n =\lfloor k/2 \rfloor$.  We replace $x_i$ by $x_i(1 + p^n y_i)$ where the new $x_i $ runs over 
$(\Z/p^k\Z)/ (p^n\Z/p^k\Z)$, and $y_i \in \Z/p^{k-n}\Z$. However, for $t$ we choose a uniformizer $\pi$ of $L$ and write $t = t_0 ( 1+ \pi^{k-1} t_1)$, with $t_0 \in (\cO_L/p^k\cO_L)/(\fp^{k-1}/p^k\cO_L)$ and $t_1\in \cO_L/\fp^{2k-(k-1)}$. 
Precisely, we let $(x_1, x_2, x_3, t_0)$ run over a fixed choice $\mathcal{C}'$ of coset representatives for
$[(\Z/p^k\Z)/ (p^n\Z/p^k\Z)]^3 \times (\cO_L/p^k\cO_L)/(\fp^{k-1}/p^k\cO_L)$.
The Taylor expansions involving solely the $y_i$ variables are the same as in the proof of Lemma \ref{lemma:HhatInitialEvaluation}, but those involving $t_1$ need modification.  We have
$$
\psi(\Nm( 1+ \pi^{k-1} t_1)) = e_{p^k} ( \ell_\psi \Tr(\pi^{k-1}t_1 - \tfrac{1}{2}(\pi^{k-1}t_1)^2)),
$$
and 
$$
\xi( 1+ \pi^{k-1} t_1)) = e_{p^k} (\Tr(\ell_\xi \pi^{k-1}t_1 - \tfrac{\ell_\xi}{2}(\pi^{k-1}t_1)^2)).
$$ 
For one of the additive components, we have
\begin{equation}
e_{p^k}(-\mathrm{Tr}(t_0 (1 + \pi^{k-1} t_1))) 
= e_{p^k}(-\mathrm{Tr}(t_0)) e_{p^{k}}(-\mathrm{Tr}( \pi^{k-1}t_0 t_1)).
\end{equation}
Recalling \eqref{eq:TraceImage}, for the part involving $-\frac{x_1 x_2 x_3}{\mathrm{Nm}(t_0)}$, we have
\begin{multline}
- \frac{x_1 x_2 x_3}{\mathrm{Nm}(t_0)} 
\frac{(1 + p^n y_1)(1+p^n y_2)(1 + p^n y_3)}{(1 + \pi^{k-1} t_1)(1 + \overline{\pi^{k-1} t_1})}
= -A\Big[1 + p^n(y_1 + y_2 + y_3) -  \mathrm{Tr}(\pi^{k-1} t_1))
\\
+ p^{2n}(y_1 y_2 + y_1 y_3 + y_2 y_3) -  p^n\mathrm{Tr}(\pi^{k-1} t_1) (y_1 + y_2 + y_3) 
+  \Tr((\pi^{k-1}t_1)^2) + \Nm(\pi^{k-1}t_1)
\Big]+ O(p^{k-1+n}).
\end{multline}
Hence putting these together, along with the relevant Taylor expansions already computed in the proof of Lemma \ref{lemma:HhatInitialEvaluation}, we obtain (recalling the $p^{-d/2}$ factor in \eqref{eq:KloostermanDefSCinertCaseGeneral}) that 
\begin{equation}
\widehat{H}(\psi) =  \frac{\overline{\gamma}p^{-\frac12}}{p^{2k}}
\sumstar_{(x_1, x_2, x_3, t_0) \in \cC'}
\psi\Big(\frac{\mathrm{Nm}(t_0)}{x_1 x_2 x_3}\Big)
\xi(t_0) e_{p^k}(-\mathrm{Tr}(t_0)+
  x_1 +  x_2 +  x_3 
 - 
 A)
 \cdot S_1,
\end{equation}
where
\begin{multline}\label{eq:S1computation_ramified}
S_1 = 
\sum_{\substack{y_1, y_2, y_3 \shortmod{p^{k-n}} \\ t_1 \in \mathcal{O}_L/(\fp^{k+1})}}
e_{p^{k-n}}(-(\ell_{\psi}+A)(y_1 + y_2 + y_3) + x_1 y_1 + x_2 y_2 + x_3 y_3
)
\\
e_{p^{k}}(\mathrm{Tr}(\pi^{k-1} t_1[\ell_{\psi} + \ell_\xi - t_0 + A]))
e_{p^{k-2n}}(\tfrac12 \ell_{\psi} (y_1^2 + y_2^2 + y_3^2) - A (y_1 y_2 + y_1 y_3 + y_2 y_3))
\\
e_{p^{k-n}}(A (y_1 + y_2 + y_3) \mathrm{Tr}(\pi^{k-1} t_1))
e_{p^{k}}(-\tfrac{1}{2} \ell_{\psi} \mathrm{Tr}((\pi^{k-1}t_1)^2)-\tfrac{1}{2} \mathrm{Tr}(\ell_\xi (\pi^{k-1}t_1)^2))
\\
e_{p^{k}}(-A    \Tr((\pi^{k-1}t_1)^2) -A \Nm(\pi^{k-1}t_1)).
\end{multline}

Now we evaluate $S_1$.   First suppose $k=2n$ and recall that $\Tr(\fp^{k-1}) = (p^n)$. Therefore, in this case, all parts of $S_1$ that are degree 2 in terms of $y_i$, as well as the term $A (y_1 + y_2 + y_3) \mathrm{Tr}(\pi^{k-1} t_1)$, are trivial. Now, consider the change of variables $t_1 \to t_1 + \pi$. The terms of degree 2 in $t_1$ are unchanged by this transformation, so the sum vanishes unless $\ell_{\psi} + \ell_\xi -t_0 + A = 0 \in \mathcal{O}_L/\mathfrak{p}^{k-1}$.  The $y_i$-aspect is the same as in the unramified case. Similarly, define
$$E = (E_1,E_2,E_3,E_4): ((\Z/p^n\Z)^\times)^3 \times (\cO_L/\fp^{k-1})^\times \to (\Z/p^n\Z)^3 \times \cO_L/\fp^{k-1}$$
and set
\begin{equation}
S'_{\psi, \xi, \mathbf{a}} = \{ (x_1,x_2,x_3,t_0) \in ((\Z/p^n\Z)^\times)^3 \times (\cO_L/\fp^{k-1})^\times : E(x_1,x_2,x_3,t_0)=(0,0,0,0)\}.
\end{equation}
Then 
$$\widehat{H}(\psi) = \frac{\overline{\gamma} p^{5n-\frac{1}{2}}}{p^{2k}}
\sumprime_{S'_{\psi, \xi, \mathbf{a}}}
\psi\Big(\frac{\mathrm{Nm}(t_0)}{x_1 x_2 x_3}\Big)
\xi(t_0) e_{p^k}(-\mathrm{Tr}(t_0)+
  x_1 +  x_2 +  x_3 
 - A
 ) S'_1,
$$
with 
\begin{multline}
\label{eq:S'1}
S'_1 = \sum_{ t_1 \in \mathcal{O}_L/\fp}
e_{p^{k}}(\mathrm{Tr}(\pi^{k-1} t_1[\ell_{\psi} + \ell_\xi - t_0 + A]))
e_{p^{k}}(-\tfrac{1}{2} \ell_{\psi} \mathrm{Tr}((\pi^{k-1}t_1)^2))
\\
e_{p^k}(
-\tfrac{1}{2} \mathrm{Tr}(\ell_\xi (\pi^{k-1}t_1)^2))
e_{p^{k}}(-A    \Tr((\pi^{k-1}t_1)^2) -A \Nm(\pi^{k-1}t_1)).
\end{multline}
As in the unramified case, the summand of $\widehat{H}(\psi)$ is independent of the choice of coset representative. We choose a common coset representative $s$ modulo $p^k$ for $x_1=x_2=x_3$, and choose the coset representative for $t_0$ modulo $p^k \cO_L$ determined by $t_0=\ell_\xi+s$.  Thus, $\ell_{\psi} + \ell_\xi - t_0 + A \equiv \ell_{\psi} -s + A \pmod{p^k \cO_L},$ and $\ell_{\psi} -s + A \in \Z_p$ and $\equiv 0 \pmod{p^n}$. So, $v_L(\pi^{k-1} t_1[\ell_{\psi} + \ell_\xi - t_0 + A]) \geq k-1+2n=2k-1$, i.e.\ the corresponding term in \eqref{eq:S'1} vanishes.   

Now we assume in addition that $ \Tr \pi = 0$, which is possible since $p\neq 2$.  Using this, we see that  
$$S_1' = \sum_{t_1 \in \cO_L/\fp} e_p 
\Big( \frac{\pi^{2(k-1)}}{p^{k-1}} [-\tfrac{\ell_{\psi}}{2}  \mathrm{Tr}(t_1^2)-\tfrac{1}{2} \mathrm{Tr}(\ell_\xi t_1^2)
-A    (\Tr(t_1^2) - \Nm(t_1))]
\Big).
$$ 
Here we can identify $t_1$ as an element of $\Z/p\Z$, so that
\begin{equation}
S_1' = 
\sum_{x \shortmod{p}} e_{p}\Big( \frac{\pi^{2(k-1)}}{p^{k-1}} [ - \ell_{\psi}x^2 
-A x^2]\Big) = \epsilon_p  p^{1/2} \Big(\frac{-s (p/\pi^2)}{p}\Big)= \epsilon_p  p^{1/2} \Big(\frac{s (p/\Nm(\pi))}{p}\Big),
\end{equation}
where we used 
$\mathrm{Tr}(\ell_{\xi}) = 0$ and 
$\ell_{\psi} + A \equiv s \pmod{p}$.
Substituting the value of $t_0$ and simplifying gives \eqref{eq:HhatSimplifiedRamifiedEvenk}, using $A \equiv \frac{s^3}{s^2 + \mathrm{Nm}(\ell_{\xi})} \pmod{p^k}$ to aid in simplification. Note that the square class of $p/\Nm(\pi)$ does not depend on the choice of uniformizer $\pi$ (trace 0 or otherwise). Indeed, if $\pi'$ is another uniformizer, then $\pi'=u\pi$ for some $u \in \cO_L^\times$, but then $\Nm(\pi')/\Nm(\pi) = \Nm(u)$ is a square modulo $p$. 

Next consider $k$ odd of the form $k=2n+1$ and pick up the computation of $S_1$ at \eqref{eq:S1computation_ramified}. In this case, we make the change of variables $y_i \to y_i + p z_i$, with $y_i$ running modulo $p$ and $z_i$ running modulo $p^n$.  Likewise, we replace $t_1$ by $t_1 + \pi u_1$, where $t_1 \in \mathcal{O}_L/\fp$ and $u_1 \in \mathcal{O}_L/ \fp^k$.  The sums over the $z_i$ show $S_1$ vanishes unless $E_1$, $E_2$ and $E_3$  
vanish modulo $p^n$. Meanwhile, additive orthogonality of characters over $L$ (i.e., \eqref{eq:addorth}) shows that the $u_1$ sum vanishes unless $\ell_{\psi} + \ell_\xi - t_0 + A \equiv 0 \pmod{\fp^{k-1}}$. Thus, we have $$\widehat{H}(\psi) = \frac{\overline{\gamma} p^{5n+\frac{1}{2}}}{p^{2k}}
\sumprime_{S'_{\psi, \xi, \mathbf{a}}}
\psi\Big(\frac{\mathrm{Nm}(t_0)}{x_1 x_2 x_3}\Big)
\xi(t_0) e_{p^k}(-\mathrm{Tr}(t_0)+
  x_1 +  x_2 +  x_3 
 - A
 ) S'_1,
$$
with  
\begin{multline}
\label{eq:S1'computation_ramified2}
S_1' = 
\sum_{\substack{y_1, y_2, y_3 \shortmod{p} \\ t_1 \in \mathcal{O}_L/\fp}}
e_{p^{k-n}}(-(\ell_{\psi}+A)(y_1 + y_2 + y_3) + x_1 y_1 + x_2 y_2 + x_3 y_3
)
\\
e_{p^{k}}(\mathrm{Tr}(\pi^{k-1} t_1[\ell_{\psi} + \ell_\xi - t_0 + A]))
e_{p^{k-2n}}(\tfrac12 \ell_{\psi} (y_1^2 + y_2^2 + y_3^2) - A (y_1 y_2 + y_1 y_3 + y_2 y_3))
\\
e_{p^{k-n}}(A (y_1 + y_2 + y_3) \mathrm{Tr}(\pi^{k-1} t_1))
e_{p^{k}}(-\tfrac{1}{2} \ell_{\psi} \mathrm{Tr}((\pi^{k-1}t_1)^2)-\tfrac{1}{2} \mathrm{Tr}(\ell_\xi (\pi^{k-1}t_1)^2))
\\
e_{p^{k}}(-A    \Tr((\pi^{k-1}t_1)^2) -A \Nm(\pi^{k-1}t_1)).
\end{multline}
Again, the summand of $\widehat{H}(\psi)$ is independent of the choice of coset representative. We choose a common coset representative $s$ modulo $p^k$ for  $x_1=x_2=x_3$, and choose the coset representative for $t_0$ modulo $p^k \cO_L$ determined by $t_0=\ell_\xi+s$.  
We may also now  assume that $\Tr \pi = 0$, so that $\pi^2 = -\Nm(\pi) \in \Z_p$. We get 
\begin{multline}\label{eq:S1'computation_ramified3}
S_1' = 
\sum_{\substack{y_1, y_2, y_3 \shortmod{p} \\ t_1 \in \mathcal{O}_L/\fp}}
e_{p^{k-n}}(-(\ell_{\psi}+A-s)(y_1 + y_2 + y_3 - \frac{\pi^{2n}}{p^{n}}\mathrm{Tr}( t_1)) 
)
\\
\times
e_{p}(\tfrac12 \ell_{\psi} (y_1^2 + y_2^2 + y_3^2) - A (y_1 y_2 + y_1 y_3 + y_2 y_3)) e_{p}(\frac{\pi^{2n}}{p^n}A (y_1 + y_2 + y_3) \mathrm{Tr}( t_1))
\\
\times e_{p}(-\tfrac{1}{2}\frac{\pi^{2(k-1)}}{p^{k-1}} \ell_{\psi} \mathrm{Tr}((t_1)^2)-\tfrac{1}{2}\frac{\pi^{2(k-1)}}{p^{k-1}} \mathrm{Tr}(\ell_\xi (t_1)^2))e_{p}(-\frac{\pi^{2(k-1)}}{p^{k-1}} A(    \Tr((t_1)^2) + \Nm(t_1))).
\end{multline}
We can identify $t_1$ as an element $t$ of $\Z/p\Z$ and
make the change of variables $t \mapsto \frac{p^{n}}{\pi^{2n}} t$ to get
\begin{multline}\label{eq:S1'computation_ramified5}
S_1' = 
\sum_{y_1, y_2, y_3, t \shortmod{p} }
e_{p^{n+1}}(-(\ell_{\psi}+A-s)(y_1 + y_2 + y_3 -2 t) 
)
\\
e_{p}(\tfrac12 \ell_{\psi} (y_1^2 + y_2^2 + y_3^2) - A (y_1 y_2 + y_1 y_3 + y_2 y_3))
\\
e_{p}(2A (y_1 + y_2 + y_3) t)
e_{p}(-(3A+\ell_\psi)    t^2 ).
\end{multline}
Of course, this expression for $S_1'$ is independent of the choice of uniformizer. 
We can simplify this further.  Using $\mathrm{Nm}(\ell_\xi) \equiv 0 \pmod{p}$, \eqref{eq:PostnikovCongruenceSystemReducedSupercuspidalInert} implies that $\ell_{\psi} \equiv 0 \pmod{p}$.  
In addition, $A \equiv \frac{s^3}{s^2 + \mathrm{Nm}(\ell_\xi)} \equiv s \pmod{p}$.  Recall the definition of $m$ from \eqref{eq:Mdef}.
Then
\begin{equation}
p^{-n-1}[s - \ell_{\psi} - A] = p^{-n-1}[s - \ell_{\psi} - \frac{s^3}{s^2 + \mathrm{Nm}(\ell_\xi)}]
\equiv p^{-1} s^{-2} m \pmod{1}.
\end{equation}
Thus 
$S_1' = G_p(\mathcal{Q}, \mathcal{L})$, with
$$
\mathcal{Q}(y_1, y_2, y_3, t) = -s  (y_1 y_2 + y_1 y_3 + y_2 y_3)+2s(y_1 + y_2 + y_3) t - 3s t^2$$
and $$\mathcal{L}(y_1, y_2, y_3, t) = ms^{-2}(y_1 + y_2 + y_3 -2 t). $$

Using a computer algebra package, we calculate that
Using a computer algebra package shows $\mathcal{Q}$  has rank $ 3$ for $p$ odd.  
The $0$-eigenspace of the radical of $\mathcal{Q}$ is the span of $(1,1,1,1)$.  Lemma \ref{lemma:quadraticGaussSum} shows that $S_1'$ vanishes unless $\mathcal{L}$  vanishes at $(1,1,1,1)$, equivalently, unless $m \equiv 0 \pmod{p}$.  Then quotienting by the radical and applying Lemma \ref{lemma:quadraticGaussSumEvaluation} we get that $S_1' =  p^{5/2} \epsilon' (\frac{s}{p})$ 
where $\epsilon'$ has modulus $1$ and depends only on $p$.
This means
\begin{equation}
\widehat{H}(\psi) = \epsilon p^{\frac{k+1}{2}}
\sumprime_{s \in T_{\psi,\xi}'}
\Big(\frac{s}{p}\Big)
\psi\Big(\frac{s^2+\mathrm{Nm}(\ell_\xi)}{s^3}\Big)
\xi(s+\ell_\xi) e_{p^k}\Big(
  \frac{s \mathrm{Nm}(\ell_\xi)}{s^2+\mathrm{Nm}(\ell_\xi)}\Big),
\end{equation}
where
\begin{equation}\label{eq:Tprimedef2}
T_{\psi,\xi}' = \{ s \in (\Z/p^n\Z)^\times: s^2 \ell_{\psi} - s \mathrm{Nm}(\ell_\xi) + \mathrm{Nm}(\ell_\xi) \ell_{\psi} \equiv 0 \pmod{p^{n+1}}\}.
\end{equation} 
Since $p$ divides both $\ell_{\psi}$ and $\mathrm{Nm}(\ell_{\xi})$, we may clear a factor of $p$ to get the congruence stated in \eqref{eq:PostnikovCongruenceSystemReducedSupercuspidalRam2}. 
\end{proof}

\subsection{Bounds for $k \geq c_0 + 1$}
\label{section:kbiggish}
Recall from Lemma \ref{lemma:HhatVanishesUnlessCoprimeSupercuspidalInert} that we may assume $a_1 = a_2 =a_3 = 1$ when $k \geq c_0 + 1$.
The goal of this subsection is to prove the following.
\begin{myprop}
\label{prop:HhatBoundkbiggish}
Suppose $p$ is odd and $k \geq c_0 + 1$.  We have
\begin{equation}
\label{eq:HhatBoundkbiggish}
|\widehat{H}(\psi)| \ll \frac{p^{\lfloor 3k/2\rfloor}}{C(\psi)}.
\end{equation}
\end{myprop}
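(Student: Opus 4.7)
The plan is to combine the explicit simplifications of $\widehat{H}(\psi)$ from Section~\ref{section:HhatSimplifications} with an analysis of the number of solutions to the quadratic congruence defining $T_{\psi,\xi}$ (resp.~$T_{\psi,\xi}'$). By Lemma~\ref{lemma:HhatVanishesUnlessCoprimeSupercuspidalInert} we may assume $a_1=a_2=a_3=1$, so Lemmas~\ref{lemma:HhatInitialEvaluation2} and~\ref{lemma:HhatInitialEvaluationRamified} apply. I will split into four cases according to the ramification of $L/\Q_p$ and the parity of $k$. In each case, $\widehat{H}(\psi)$ is expressed as a prefactor of modulus $\asymp p^{\lceil k/2\rceil}$, possibly multiplied by a quadratic Gauss sum $G_p'$ in the unramified odd-$k$ case, times a sum of unimodular terms indexed by coset representatives for $T_{\psi,\xi}$ (or $T_{\psi,\xi}'$), defined by
\[
P(s) := \ell_\psi s^2 - \Nm(\ell_\xi)\, s + \ell_\psi \Nm(\ell_\xi) \equiv 0 \pmod{p^n}, \qquad s \in (\Z/p^n\Z)^\times,
\]
with $n=\lfloor k/2\rfloor$.

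The heart of the argument is the estimate $|T_{\psi,\xi}|\ll p^{k-c(\psi)}$ in the even case (and the analogous bound $|T_{\psi,\xi}'|\ll p^{k-1-c(\psi)}$ in the odd cases), which combined with the prefactor yields the proposition. Recall $v(\ell_\psi)=k-c(\psi)$ and $v(\Nm(\ell_\xi))=2(k-c_0)$ (minus $1$ if $L/\Q_p$ is ramified). When $c(\psi)\leq n$ the trivial bound $|T_{\psi,\xi}|\leq p^n$ already suffices. In the remaining range $c(\psi)>n$, I will argue as follows: if $v(\ell_\psi)>v(\Nm(\ell_\xi))$, reducing $P(s)\equiv 0$ modulo $p^{v(\ell_\psi)}$ forces $s\equiv 0\pmod p$, contradicting $s\in(\Z/p^n\Z)^\times$, so $T_{\psi,\xi}=\emptyset$; otherwise all three coefficients of $P$ are divisible by $p^{v(\ell_\psi)}$, and dividing through yields an honest quadratic of unit leading coefficient modulo $p^{n-v(\ell_\psi)}$, whose $O(1)$ roots lift to at most $O(p^{v(\ell_\psi)})$ roots modulo $p^n$ by a Hensel-type analysis.

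For the unramified odd-$k$ case there is the extra factor $G_p'$ defined in~\eqref{eq:Gp'Def}, which I will estimate in parallel to the treatment of $S_1'$ at the end of the proof of Lemma~\ref{lemma:HhatInitialEvaluationRamified}: computing the rank of its quadratic form and applying Lemmas~\ref{lemma:quadraticGaussSum} and~\ref{lemma:quadraticGaussSumEvaluation} to obtain both the modulus of $G_p'$ and a vanishing criterion that further restricts $s$ by an extra congruence modulo $p^{n+1}$, refining $T_{\psi,\xi}$ by a factor of $p$ exactly analogous to the passage from $T_{\psi,\xi}$ to $T_{\psi,\xi}'$ in the ramified case.

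The main obstacle will be the degenerate subcase when the reduced quadratic has a repeated root modulo $p$, i.e.\ when the discriminant $\Nm(\ell_\xi)(\Nm(\ell_\xi)-4\ell_\psi^2)$ carries positive $p$-adic valuation. Here the naive one-step Hensel lift fails and must be replaced by an iterated factorization that successively strips common powers of $p$ from $P$, while carefully accounting for how each degeneracy multiplies the solution count. The odd-$k$ cases are also delicate because Lemma~\ref{lemma:HhatInitialEvaluationRamified} already restricts to $v(\ell_\psi)\geq 1$ and works with $P/p$, so the valuation bookkeeping must be shifted accordingly throughout.
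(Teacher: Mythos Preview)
Your overall strategy matches the paper's: reduce to the formulae of Lemmas~\ref{lemma:HhatInitialEvaluation2} and~\ref{lemma:HhatInitialEvaluationRamified}, then count $|T_{\psi,\xi}|$ (resp.~$|T_{\psi,\xi}'|$) via the quadratic $P(s)$, and in the unramified odd-$k$ case first analyze $G_p'$ to extract the extra congruence modulo $p^{n+1}$ (this is exactly the content of Lemma~\ref{lemma:ExtraInfoUnramifiedkodd}, where the simplification $\ell_\psi\equiv 0$, $A\equiv s\pmod p$ collapses the quadratic form to $-sc_2$, of rank $4$). Your bound $|T_{\psi,\xi}|\ll p^{k-c(\psi)}$ and the subsequent multiplication by the prefactor recover the paper's Lemmas~\ref{lemma:propHhatBoundramified} and~\ref{lemma:k=2n+1}.

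The one point where you overshoot is the ``main obstacle'' you anticipate. In the regime $k\ge c_0+1$ it never occurs. Indeed, by Lemma~\ref{lemma:vpNormOfEllTheta} one has $v(\Nm(\ell_\xi))\ge 1$, so if $T_{\psi,\xi}\neq\emptyset$ then (as you yourself note) $v(\ell_\psi)=v(\Nm(\ell_\xi))$; writing $\ell_\psi'=p^{-v}\ell_\psi$ and $\ell_\xi'=p^{-v}\Nm(\ell_\xi)$ with $v=v(\Nm(\ell_\xi))$, the reduced quadratic $\ell_\psi' s^2-\ell_\xi' s+\Nm(\ell_\xi)\ell_\psi'$ has discriminant
\[
(\ell_\xi')^2-4(\ell_\psi')^2\Nm(\ell_\xi)\equiv(\ell_\xi')^2\not\equiv 0\pmod p,
\]
since $p\mid\Nm(\ell_\xi)$. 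Thus the reduced quadratic is always separable modulo $p$, Hensel applies directly, and no iterated factorization is needed. (The degenerate scenario you worry about \emph{does} arise when $k=c_0$, where $v(\Nm(\ell_\xi))=0$; that is precisely why Section~\ref{section:ksmallish} is more delicate and introduces the $\rho(\Delta,p^n)$ machinery, but that case is outside the scope of this proposition.) The paper's organization also splits on $v(\Nm(\ell_\xi))$ versus $n$ (or $n{+}1$) rather than on $c(\psi)$ versus $n$; this is a cosmetic difference, though the former is slightly cleaner since $\Nm(\ell_\xi)$ is fixed while $\psi$ varies.
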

We need some lemmas before we can make this deduction.  One simple observation is that \eqref{eq:HhatBoundkbiggish} is trivial for $p^k \ll 1$, so in the proof we may assume that $p^k$ is large.  In what follows, we will impose this assumption without further mention.

\begin{mylemma}
\label{lemma:propHhatBoundramified}
Proposition \ref{prop:HhatBoundkbiggish} holds under the additional assumption that $k$ is even.
\end{mylemma}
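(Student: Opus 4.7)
The plan is to start from the explicit evaluations of $\widehat{H}(\psi)$ for even $k$ already provided by Lemma \ref{lemma:HhatInitialEvaluation2} in the unramified case and Lemma \ref{lemma:HhatInitialEvaluationRamified} in the ramified case. Both formulas express $\widehat{H}(\psi)$ as $p^{k/2}$ times a sum over $s \in T_{\psi,\xi}$ of terms each of modulus one (a product of $p$-adic characters and, in the ramified case, a Legendre symbol). The triangle inequality will then reduce the task to bounding $|T_{\psi,\xi}|$, and since by construction $v_p(\ell_\psi) = k - c(\psi)$ and $C(\psi) = p^{c(\psi)}$, the desired bound $|\widehat{H}(\psi)| \ll p^{3k/2}/C(\psi)$ is equivalent to showing $|T_{\psi,\xi}| \ll p^{a}$ with $a := v_p(\ell_\psi)$.

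Setting $b := v_p(\Nm(\ell_\xi))$ (which is positive by Lemma \ref{lemma:vpNormOfEllTheta}, since we are in the regime $k \geq c_0+1$), I plan to count the units $s \pmod{p^n}$ satisfying
\[
\ell_\psi\, s^2 - \Nm(\ell_\xi)\, s + \ell_\psi\, \Nm(\ell_\xi) \equiv 0 \pmod{p^n}
\]
by a case analysis based on the relation between $a$ and $b$. After factoring out $p^{\min(a,b)}$ and reducing modulo $p$, both the case $a < b$ (which produces $u s^2 \equiv 0 \pmod p$) and the case $a > b$ (which produces $-vs \equiv 0\pmod p$) admit no unit solutions, where $u,v$ denote the unit parts of $\ell_\psi$ and $\Nm(\ell_\xi)$ respectively. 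In the remaining case $a = b < n$, the congruence reads $u s^2 - v s + p^a uv \equiv 0 \pmod{p^{n-a}}$; modulo $p$ this is $s(us-v) \equiv 0$, yielding the unique unit root $s \equiv v/u \pmod p$. Since the derivative $2us - v$ evaluated at this root equals the unit $v$, Hensel's lemma lifts it uniquely to a residue modulo $p^{n-a}$, and pulling back to $\Z/p^n\Z$ yields exactly $p^a$ elements of $T_{\psi,\xi}$. The edge cases $a \geq n$ or $b \geq n$ are handled directly: when $a\geq n$ the bound $|T_{\psi,\xi}| \leq p^n \leq p^a$ is immediate, and when $b\geq n\leq a$ the congruence becomes $\ell_\psi s^2\equiv 0\pmod{p^n}$, trivially resolved by the same inequality.

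The analysis will proceed identically in the ramified and unramified settings, since it only uses the valuation data $a,b$ and not the particular structure of $L/\Q_p$. Combining all cases gives $|T_{\psi,\xi}| \leq p^{a}$ universally, hence the claimed bound. The main step requiring care is the Hensel lifting in the case $a=b$, but this is routine once it has been verified that all characters and exponentials in the summands of the formulas in Lemmas \ref{lemma:HhatInitialEvaluation2} and \ref{lemma:HhatInitialEvaluationRamified} are well-defined on unit inputs (for instance, that $s+\ell_\xi \in \cO_L^\times$ and that $s^2+\Nm(\ell_\xi) \in \Z_p^\times$), both of which follow painlessly from $p \mid \Nm(\ell_\xi)$ together with $s$ being a unit. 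Thus I do not anticipate any serious obstacle in the even-$k$ case; the real difficulty in Proposition \ref{prop:HhatBoundkbiggish} lies in the odd-$k$ case treated subsequently.
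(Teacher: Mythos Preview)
Your proposal is correct and follows essentially the same approach as the paper: triangle-inequality the formulas \eqref{eq:HhatSimplifiedUnramifiedEvenk} and \eqref{eq:HhatSimplifiedRamifiedEvenk}, then count $|T_{\psi,\xi}|$ via Hensel's lemma applied to the quadratic $\ell_\psi s^2 - \Nm(\ell_\xi) s + \ell_\psi \Nm(\ell_\xi)$. The paper organizes the case split as $v(\Nm(\ell_\xi)) \geq n$ versus $v(\Nm(\ell_\xi)) < n$, whereas you compare $a = v(\ell_\psi)$ to $b = v(\Nm(\ell_\xi))$; your observation that the root $s \equiv 0 \pmod p$ is not a unit (yielding one lift rather than the paper's ``at most two'') is a harmless sharpening. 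One small imprecision: the identity $v_p(\ell_\psi) = k - c(\psi)$ only holds literally for $c(\psi) \geq 2$, but for $c(\psi) \leq 1$ the trivial bound $|T_{\psi,\xi}| \leq p^n \leq p^{k-1}$ suffices, so nothing is lost.
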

\begin{proof}
Suppose that $L$ is unramified and $k=2n$.  If $v(\mathrm{Nm}(\ell_\xi)) \geq n$, then the congruence \eqref{eq:PostnikovCongruenceSystemReducedSupercuspidalInert} has solutions if and only if $v(\ell_{\psi}) \geq n$.  
In this case, every value of $s \pmod{p^n}$ is a solution.  Moreover, Lemma \ref{lem:postnikov} with $L=\Q_p$  shows that the condition $v(\ell_{\psi}) \geq n$ implies $C(\psi) \leq p^{k-n} = p^n$.  Therefore, applying the trivial bound to \eqref{eq:HhatSimplifiedUnramifiedEvenk} gives 
\begin{equation*}
|\widehat{H}(\psi)| \leq p^{k/2} p^n = \frac{p^{k/2} p^{2n}}{p^n} \leq \frac{p^{3k/2}}{C(\psi)}.
\end{equation*}
In the opposite case with $v(\mathrm{Nm}(\ell_\xi)) < n$, then \eqref{eq:PostnikovCongruenceSystemReducedSupercuspidalInert} implies $v(\ell_{\psi}) = v(\mathrm{Nm}(\ell_\xi))$.  By Lemma \ref{lem:postnikov} with $L=\Q_p$ , this implies $C(\psi) = p^{k-v(\mathrm{Nm}(\ell_\xi))}$.  Set $\ell_{\psi}' = p^{-v(\mathrm{Nm}(\ell_\xi))} \ell_{\psi}$  and $\ell_{\xi}'  = p^{-v(\mathrm{Nm}(\ell_\xi))} \Nm(\ell_\xi)$. 
Then \eqref{eq:PostnikovCongruenceSystemReducedSupercuspidalInert} is equivalent to
\begin{equation}
\label{eq:tcongruenceSupercuspdialRamified2}
\ell_{\psi}' s^2 - \ell_{\xi}'  s  +\mathrm{Nm}(\ell_\xi) \ell_{\psi}' \equiv 0 \pmod{p^{n-v(\mathrm{Nm}(\ell_\xi))}}.
\end{equation}
The discriminant of this polynomial is $\ell_\xi'^2 - 4 \ell_{\psi}' \mathrm{Nm}(\ell_\xi) \ell_{\psi}' \equiv {\ell_\xi'}^2 \pmod{p}$, which is nonzero modulo $p$.  Hence Hensel's lemma implies that the number of solutions to \eqref{eq:tcongruenceSupercuspdialRamified2}, for $s \pmod{p^{n-v(\mathrm{Nm}(\ell_\xi))}}$ is at most $2$.  Each solution to \eqref{eq:tcongruenceSupercuspdialRamified2} gives rise to $p^{v(\mathrm{Nm}(\ell_\xi))}$ solutions to \eqref{eq:PostnikovCongruenceSystemReducedSupercuspidalInert}.  Hence
\begin{equation*}
|\widehat{H}(\psi)| \leq
 2    p^{k/2} p^{v(\mathrm{Nm}(\ell_\xi))} 
=  \frac{2  p^{3k/2}}{p^{k-v(\mathrm{Nm}(\ell_\xi))}}
= \frac{2   p^{3k/2}}{C(\psi)}.
\end{equation*}

Now suppose that $L$ is ramified.  If $k=2n$ is even, then the trivial bound applied to \eqref{eq:HhatSimplifiedRamifiedEvenk} is the same as the trivial bound applied to \eqref{eq:HhatSimplifiedUnramifiedEvenk} as  in the just-treated unramified case.  Therefore, the desired bound holds here.  
\end{proof}
  Next we examine the case of  Proposition \ref{prop:HhatBoundkbiggish}  that   $L$ is unramified and $k$ is odd.  For this, we need to extract some more information, as follows.
\begin{mylemma}
\label{lemma:ExtraInfoUnramifiedkodd}
Suppose $p$ is odd, $L$ is unramified, $k \geq c_0+1$,  and $p^k$ is large.  
Suppose $a_1 = a_2 = a_3 = 1$, and $k=2n+1$.
The quadratic exponential sum $G_p'$ vanishes unless
\begin{equation}
  \label{eq:PostnikovCongruenceSystemReducedSupercuspidalInert2}
  \ell_{\psi} s^2  -  \mathrm{Nm}(\ell_\xi) s + \mathrm{Nm}(\ell_\xi) \ell_{\psi} \equiv 0 \pmod{p^{n+1}}.
 \end{equation}
Moreover, $G_p' = \epsilon p^3$, where $\epsilon$ has modulus $1$ and depends only on $p$.  
\end{mylemma}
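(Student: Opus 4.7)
The plan is to interpret $G_p'$ as a five-variable quadratic Gauss sum over $\F_p$ and apply Lemmas \ref{lemma:quadraticGaussSum} and \ref{lemma:quadraticGaussSumEvaluation}. Since $L/\Q_p$ is unramified, I fix a generator $\omega \in \cO_L$ with $\Tr(\omega) = 0$ and $\omega^2 = D$ for some non-residue $D \in \Z_p^\times$, and parameterize $t_1 = u + v\omega$ with $u,v \in \F_p$. By Remark \ref{rem:fixedL}, $\Tr(\ell_\xi) = 0$, so $\ell_\xi = b\omega$ and $\mathrm{Nm}(\ell_\xi) = -b^2 D$ for some $b \in \Z_p$, and both $r_2$ and $Ac_2$ become explicit $\F_p$-quadratic forms in $(y_1,y_2,y_3,u,v)$. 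Crucially, since $k \geq c_0 + 1$, Lemma \ref{lemma:vpNormOfEllTheta}(2) gives $v_p(\mathrm{Nm}(\ell_\xi)) = 2(k - c_0) \geq 2$, so $N := \mathrm{Nm}(\ell_\xi) \equiv 0 \pmod p$ and $b \equiv 0 \pmod p$. Combined with the defining relations $A(s^2 + N) \equiv s^3 \pmod{p^k}$ and $\ell_\psi + A \equiv s \pmod{p^n}$ coming from the support conditions in $S_{\psi,\xi,\mathbf{1}}$, we conclude $A \equiv s$ and $\ell_\psi \equiv 0 \pmod p$, producing dramatic simplifications modulo $p$.

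The next step is to identify the radical of $\mathcal{Q} = r_2 - Ac_2$ modulo $p$. Setting $\nabla \mathcal{Q} = 0$: the differences $\partial_{y_i}\mathcal{Q} - \partial_{y_j}\mathcal{Q} = 0$ yield $(\ell_\psi + A)(y_i - y_j) \equiv 0$, and since $\ell_\psi + A \equiv s$ is a unit, this forces $y_1 = y_2 = y_3$. The variable $v$ appears in $\mathcal{Q}$ only through $-(\ell_\psi + A)Dv^2 - 2bD uv$, which reduces to $-sDv^2$ modulo $p$ (since $b \equiv 0$), so $v$ decouples as a rank-$1$ non-degenerate block. The remaining equations in the common value $y := y_i$ and $u$ are consistent (one checks the consistency condition reduces to $N(2A-s) \equiv 0 \pmod p$, which holds since $N \equiv 0 \pmod p$), yielding a one-dimensional radical spanned by $(y_1,y_2,y_3,u,v) = (1,1,1,1,0)$, i.e.\ the symmetric direction $y_i = 1$, $t_1 = 1 \in \F_p \subset \F_{p^2}$. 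Evaluating the linear form appearing in \eqref{eq:Gp'Def} on this vector gives $\mathcal{L}(1,1,1,1,0) = m(-3+2)/(s^2 + N) \equiv -m/s^2 \pmod p$, which vanishes iff $m \equiv 0 \pmod p$. Since $p^n m = \ell_\psi s^2 - sN + N\ell_\psi$, this is exactly \eqref{eq:PostnikovCongruenceSystemReducedSupercuspidalInert2}, and Lemma \ref{lemma:quadraticGaussSum} gives the first claim.

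When \eqref{eq:PostnikovCongruenceSystemReducedSupercuspidalInert2} holds, $\mathcal{L}$ is identically zero modulo $p$ (being proportional to $m$), so no completion-of-squares shift is needed and $G_p' = p \cdot G_p(\mathcal{Q}', 0)$, where $\mathcal{Q}'$ is the induced rank-$4$ non-degenerate form on $\F_p^5/\mathrm{Rad}(\mathcal{Q})$. Choosing representatives that eliminate $u$ via $u \equiv -(y_1+y_2+y_3)$ on the quotient, a direct computation shows the induced form on $(y_1,y_2,y_3,v)$ has matrix proportional by a unit factor of $-s$ to an explicit universal symmetric matrix whose determinant is a nonzero square times $D$. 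Applying Lemma \ref{lemma:quadraticGaussSumEvaluation} with $r = 4$, noting $\epsilon_p^4 = 1$, and using $(D/p) = -1$ since $D$ is a non-residue, yields $G_p' = p \cdot p^2 \cdot (D/p) = -p^3$. Thus $\epsilon = -1$ depends only on $p$ (and the unique isomorphism class of unramified quadratic $L/\Q_p$), completing the proof. The principal delicacy is the final discriminant calculation: one must verify that the various $s$-, $\psi$-, and $\xi$-dependent unit factors contribute only squares modulo $p$, leaving a universal non-square factor of $D$.
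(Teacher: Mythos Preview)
Your proof is correct and follows the same route as the paper: reduce $\mathcal{Q}$ modulo $p$ using $\mathrm{Nm}(\ell_\xi)\equiv\ell_\psi\equiv 0$, parameterize $t_1=u+v\omega$, identify the radical as the line through $(1,1,1,1,0)$ (the paper does this by computing the eigenvalues $8,-1,-1,0,-2\mathrm{Nm}(\alpha)$ of the matrix for $c_2$, you by solving $\nabla\mathcal{Q}=0$), and then observe that the linear form on this vector is $-m/s^2$. Your argument goes slightly further than the paper in pinning down $\epsilon=-1$ via the discriminant $\sim D$ of the induced rank-$4$ form; the paper simply invokes Lemmas \ref{lemma:quadraticGaussSum} and \ref{lemma:quadraticGaussSumEvaluation} and notes the answer is independent of the choice of $\alpha$, without computing $\epsilon$ explicitly.
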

\begin{proof}
First note 
$k \geq c_0 + 1$ implies
$\mathrm{Nm}(\ell_\xi) \equiv 0 \pmod{p}$, so \eqref{eq:PostnikovCongruenceSystemReducedSupercuspidalInert} implies $\ell_{\psi} \equiv 0 \pmod{p}$.  Thus $r_2 \equiv 0 \pmod{p}$ (recall \eqref{eq:r2def} for its definition).  Also, $A \equiv s \pmod{p}$.  Now we can view $G_p'$ as a sum of the form $G_p(\mathcal{Q}, \mathcal{L})$ as in \eqref{eq:GpDef1}, where $\mathcal{Q} = - s c_2$ (recall \eqref{eq:c2def} for its definition), and where $\mathcal{L}$ takes the form
\begin{equation}
\mathcal{L}(y_1, y_2, y_3, t) = p^{-n} s^{-2} m [y_1 + y_2 + y_3 - \mathrm{Tr}(t)],
\end{equation}
with $m = m(s,\psi, \xi, n) \in \mz$ defined by \eqref{eq:Mdef}.  

Choose a non-zero $\alpha \in \cO_L/(p)$ with $\Tr(\alpha)=0$ so that we may write $t_1 = u + v \alpha$ where $u,v \in \mf_p$.    Then $\mathrm{Tr}(t) = 2u$ and $t^2 + t \overline{t} + \overline{t}^2 = 3u^2 -\Nm(\alpha) v^2$.  In these coordinates,
\begin{equation}
c_2 = (y_1 y_2 + y_1 y_3 + y_2 y_3) - 2u(y_1 + y_2 + y_3) + (3u^2 -\Nm(\alpha) v^2).
\end{equation}
Note that $c_2$ takes the form $\frac12 (y_1, y_2, y_3, u, v) M (y_1, y_2, y_3, u, v)^{\intercal}$, where
\begin{equation}
\label{eq:M3matrixFormula}
 M = 
 \begin{pmatrix}
  0 & 1 & 1 & -2 & 0 \\
  1  & 0 & 1 & -2 & 0 \\
   1 &  1 & 0 & -2 & 0 \\
   -2 & -2  & -2  &  6 & 0 \\
   0 & 0  &  0 &  0  & -2\Nm(\alpha)
 \end{pmatrix}.  
\end{equation}
  Using a computer algebra package, we calculate that the eigenvalues of $M$ are 
  $8$, $-1$, $-1$, $0$, and $-2\Nm(\alpha)  $.
Moreover, the eigenspace corresponding to the $0$-eigenvalue is spanned by $(1,1,1,1,0)^{\intercal}$.

Now that the radical of $\mathcal{Q}$ has been determined, we next turn to the question of when the linear form $\mathcal{L}$ vanishes on it.
 Observe that the vanishing of $\mathcal{L}$ on $(1,1,1,1,0)^{\intercal}$ simply means that $m \equiv 0 \pmod{p}$, which is the same as \eqref{eq:PostnikovCongruenceSystemReducedSupercuspidalInert2}.

Finally, note that the quadratic form has rank $4$, and $\mathcal{L} = 0$ on the orthogonal complement of the span of $(1,1,1,1,0)^{\intercal}$.  The evaluation of $G_p'$ follows from
 Lemmas \ref{lemma:quadraticGaussSum} and \ref{lemma:quadraticGaussSumEvaluation}, and in particular does not depend on the choice of $\alpha$.
\end{proof}

\begin{mylemma}\label{lemma:k=2n+1}
Proposition \ref{prop:HhatBoundkbiggish} holds under the additional assumption that $k=2n+1$.
\end{mylemma}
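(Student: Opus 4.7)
The plan is to mirror the strategy of Lemma \ref{lemma:propHhatBoundramified}, now for the odd case $k = 2n+1$. The argument splits according to whether $L/\Q_p$ is unramified or ramified; in both settings we may assume $p^k$ is large, as the bound is trivial otherwise.

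In the unramified subcase, substituting the evaluation $G_p' = \epsilon p^3$ supplied by Lemma \ref{lemma:ExtraInfoUnramifiedkodd} into \eqref{eq:HhatSimplifiedUnramifiedOddk} yields $|\widehat{H}(\psi)| \leq p^{n+1} N_{\mathrm{u}}$, where $N_{\mathrm{u}}$ counts those cosets $s \in T_{\psi,\xi}$ admitting a lift to $(\Z/p^{n+1}\Z)^{\times}$ satisfying the sharpened congruence \eqref{eq:PostnikovCongruenceSystemReducedSupercuspidalInert2}. In the ramified subcase, Lemma \ref{lemma:HhatInitialEvaluationRamified} shows $\widehat{H}(\psi) = 0$ unless $v(\ell_\psi) \geq 1$, and otherwise \eqref{eq:HhatSimplifiedRamifiedOddk} gives $|\widehat{H}(\psi)| \leq p^{(k+1)/2} |T'_{\psi,\xi}| = p^{n+1} |T'_{\psi,\xi}|$. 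In both subcases the task reduces to bounding the number of unit residues $s$ (modulo a suitable power of $p$) satisfying the quadratic congruence $\ell_\psi s^2 - \Nm(\ell_\xi) s + \Nm(\ell_\xi) \ell_\psi \equiv 0 \pmod{p^{n+1}}$.

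Set $a := v(\ell_\psi)$ and $b := v(\Nm(\ell_\xi))$, and split cases by comparison with $n+1$. When $a \geq n+1$ and $b \geq n+1$, every unit is a solution, $C(\psi) \leq p^{k-a} \leq p^n$, and the bound $|\widehat{H}(\psi)| \ll p^{2n+1} \leq p^{3n+1}/C(\psi)$ follows immediately. When exactly one of $a,b$ exceeds $n$, inspection of the leading term of the congruence modulo $p^{\min(a,b)+1}$ contradicts $s$ being a unit, so no solutions exist. Otherwise $a,b \leq n$; solutions force $a = b$, and factoring $p^a$ out of the congruence produces a quadratic polynomial with unit discriminant (congruent to $(\Nm(\ell_\xi)/p^b)^2 \pmod p$). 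The formal root $s = 0$ is excluded by the unit condition on $s$, leaving one nonzero root modulo $p$ at which the derivative is a unit times $p^b$; Hensel's lemma then delivers a unique lift. Crucially, the mod $p^{n+1}$ refinement (versus mod $p^n$ as in the even-$k$ case) reduces the count of admissible cosets from $p^b$ to $p^{b-1}$, yielding $|\widehat{H}(\psi)| \leq p^{n+b} = p^{3n+1}/C(\psi)$.

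The main obstacle, and the reason this proof is non-trivial beyond the even-$k$ analogue, is recovering the saving of $p^{1/2}$ represented by the floor $\lfloor 3k/2 \rfloor = 3n+1$ (versus $3k/2$) in the target bound. This saving is supplied precisely by the mod $p^{n+1}$ refinement that comes from the quadratic Gauss sum evaluation of $G_p'$ in Lemma \ref{lemma:ExtraInfoUnramifiedkodd} (for $L$ unramified) and from the structure of \eqref{eq:HhatSimplifiedRamifiedOddk} itself (for $L$ ramified). The tightest case is $a = b$ near $n$, where the Hensel lifting must be tracked carefully; in the ramified case with $b = 1$, the count collapses to a single admissible coset and matches the desired bound $p^{n+1}$ exactly.
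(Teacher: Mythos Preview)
Your proof is correct and follows essentially the same route as the paper: split according to the size of $v(\mathrm{Nm}(\ell_\xi))$ relative to $n+1$, use the mod $p^{n+1}$ sharpening of the congruence (coming from $G_p'$ in the unramified case and from the definition of $T'_{\psi,\xi}$ in the ramified case), and count solutions via Hensel after dividing through by $p^a$. One small expository wrinkle: after factoring out $p^a$, the derivative of the \emph{scaled} polynomial at the nonzero root is a unit (not ``a unit times $p^b$''), which is what makes standard Hensel apply; your conclusion of a unique lift mod $p^{n+1-a}$, hence $p^{a-1}$ admissible residues mod $p^n$, is correct regardless.
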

Lemmas \ref{lemma:propHhatBoundramified} and \ref{lemma:k=2n+1} then imply Proposition \ref{prop:HhatBoundkbiggish}.
\begin{proof}
We follow the same method as in Lemma \ref{lemma:propHhatBoundramified}, treating the ramified and unramified cases simultaneously.  Looking at \eqref{eq:PostnikovCongruenceSystemReducedSupercuspidalInert2}, resp.\ \eqref{eq:PostnikovCongruenceSystemReducedSupercuspidalRam2}, if $v(\mathrm{Nm}(\ell_\xi)) \geq n+1$, then $v(\ell_{\psi}) \geq n+1$, so $C(\psi) \leq p^{k-n-1} = p^n$. From Lemma \ref{lemma:ExtraInfoUnramifiedkodd} 
and \eqref{eq:HhatSimplifiedUnramifiedOddk}, resp.\ Lemma \ref{lemma:HhatInitialEvaluationRamified}
 applying the triangle inequality implies
\begin{equation*}
|\widehat{H}(\psi)| \leq \frac{  p^{5n}}{p^{2k}} p^3  p^n =    p^{\frac{k+1}{2}}p^n \leq \frac{   p^{\frac{k+1}{2}} p^n p^{n}}{C(\psi)}
= \frac{p^{\frac{3k-1}{2}}}{C(\psi)}.
\end{equation*}
If $v(\mathrm{Nm}(\ell_\xi)) \leq n$, then the congruence \eqref{eq:PostnikovCongruenceSystemReducedSupercuspidalInert2}  resp.\ \eqref{eq:PostnikovCongruenceSystemReducedSupercuspidalRam2} implies that $v(\ell_{\psi}) = v(\mathrm{Nm}(\ell_\xi))$, and hence $C(\psi) = p^{k-v(\mathrm{Nm}(\ell_\xi))}$.  As in the proof of Lemma \ref{lemma:propHhatBoundramified}, let 
 $\ell_{\psi}' = p^{-v(\mathrm{Nm}(\ell_\xi))} \ell_{\psi}$ and $\ell_{\xi}'  = p^{-v(\mathrm{Nm}(\ell_\xi))} \Nm(\ell_\xi)$.  Then \eqref{eq:PostnikovCongruenceSystemReducedSupercuspidalInert2} resp.\ \eqref{eq:PostnikovCongruenceSystemReducedSupercuspidalRam2} is equivalent to
\begin{equation}
\label{eq:tcongruenceSupercuspdialRamified3}
\ell_{\psi}' s^2 - \ell_{\xi}'  s  +\mathrm{Nm}(\ell_\xi) \ell_{\psi}' \equiv 0 \pmod{p^{n+1-v(\mathrm{Nm}(\ell_\xi))}}.
\end{equation}
The discriminant of this polynomial is ${\ell_\xi'}^2 - 4 \ell_{\psi}' \mathrm{Nm}(\ell_\xi) \ell_{\psi}' \equiv {\ell_\xi'}^2 \pmod{p}$, which is nonzero modulo $p$.  Hence Hensel's lemma implies that the number of solutions to \eqref{eq:tcongruenceSupercuspdialRamified3}, for $s \pmod{p^{n+1-v(\mathrm{Nm}(\ell_\xi))}}$ is at most $2$.  Each solution to \eqref{eq:tcongruenceSupercuspdialRamified3} gives rise to $p^{v(\mathrm{Nm}(\ell_\xi))-1}$ solutions to \eqref{eq:PostnikovCongruenceSystemReducedSupercuspidalInert2}  resp.\ \eqref{eq:PostnikovCongruenceSystemReducedSupercuspidalRam2}.  Applying this information to \eqref{eq:HhatSimplifiedUnramifiedOddk} resp.\ \eqref{eq:HhatSimplifiedRamifiedOddk} leads to 
\begin{equation*}
|\widehat{H}(\psi)| \leq
 2  \frac{p^{5n}}{p^{2k}} p^{v(\mathrm{Nm}(\ell_\xi))-1} p^3 = 2   p^{\frac{k+1}{2}} p^{v(\mathrm{Nm}(\ell_\xi))-1}
=  2  p^{n + v(\mathrm{Nm}(\ell_\xi))}
= \frac{2  p^{k+n}}{C(\psi)} = \frac{2  p^{\frac{3k-1}{2}}}{C(\psi)}. \qedhere
\end{equation*}
\end{proof}

\subsection{Trivial character}
\label{section:HhatBoundTrivialCharacter}
The following lemma improves upon Proposition \ref{prop:HhatBoundkbiggish} for $\psi$ trivial.
\begin{mylemma}
\label{lemma:HhatpsiTrivialInertkLarger}
Suppose $p$ is odd, $k \geq c_0 +1$, $p^k$ is large, and
$\psi = \psi_0$ is trivial.  Then 
\begin{equation}
\label{eq:Hhatpsi0boundViaRamanujanSum}
|\widehat{H}(\psi_0, 1, 1, 1)| \leq  |S(\mathrm{Nm}(\ell_{\xi}), 0;p^k)|.
\end{equation}
\end{mylemma}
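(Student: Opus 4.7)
The plan is to specialize the explicit formulas in Lemmas \ref{lemma:HhatInitialEvaluation2} and \ref{lemma:HhatInitialEvaluationRamified} to $\psi = \psi_0$ trivial, so $\ell_\psi = 0$. The congruence defining $T_{\psi_0,\xi}$ then reduces to $s\,\Nm(\ell_\xi) \equiv 0 \pmod{p^n}$ (and $\pmod{p^{n+1}}$ for $T'_{\psi_0,\xi}$ in the odd-$k$ ramified case). Since $s$ is constrained to be a unit, the set is empty unless $v_p(\Nm(\ell_\xi))$ meets the threshold, in which event $\widehat{H}(\psi_0,1,1,1) = 0$ and the inequality is immediate. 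In the remaining case $T_{\psi_0,\xi} = (\Z/p^n\Z)^\times$, and using the lift-invariance of the summand asserted in the cited lemmas, I would convert $\sumprime$ into a sum over every unit $s \in (\Z/p^k\Z)^\times$, the resulting factor $p^{-(k-n)}$ cancelling the explicit $p^{k/2}$ prefactor (for $k$ even; for $k$ odd the analogous simplification uses Lemma \ref{lemma:ExtraInfoUnramifiedkodd} to absorb $G_p'$).

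The heart of the proof is to combine $\xi(s+\ell_\xi)$ and $e_{p^k}\bigl(s\,\Nm(\ell_\xi)/(s^2+\Nm(\ell_\xi))\bigr)$ into a single additive character. I would write $\xi(s+\ell_\xi) = \xi(s)\,\xi(1+\ell_\xi/s)$; by Lemma \ref{lemma:thetarestrictedtoZp} the restriction $\xi|_{\Z_p^\times}$ is trivial when $L/\Q_p$ is unramified and equals the Legendre symbol modulo $p$ when $L/\Q_p$ is ramified, in which case it cancels the extra $(\tfrac{s}{p})$ in \eqref{eq:HhatSimplifiedRamifiedEvenk}. Applying the Postnikov formula \eqref{eq:PostnikovThetaVersion} to $\xi(1+\ell_\xi/s)$ and exploiting $\Tr(\ell_\xi) = 0$ together with $\ell_\xi^2 = -\Nm(\ell_\xi)$, only odd-$j$ terms survive the trace and contribute $2(-\Nm(\ell_\xi))^{(j+1)/2}$; combining term-by-term with the geometric expansion of $s\,\Nm(\ell_\xi)/(s^2+\Nm(\ell_\xi))$ yields a series whose $i=0$ term is exactly $-\Nm(\ell_\xi)/s$ and whose higher terms have the form $\frac{(-1)^i\Nm(\ell_\xi)^{i+1}}{s^{2i+1}}\cdot\frac{2i-1}{2i+1}$. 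Under the hypotheses $v_p(\Nm(\ell_\xi)) \geq n$ and ``$p^k$ large'', each of these higher corrections has $p$-adic valuation at least $k$ and therefore vanishes modulo $p^k$. The sum collapses to $\sumstar_s e_{p^k}(-\Nm(\ell_\xi)/s)$ up to a unit-modulus constant; after substituting $s \mapsto \bar{s}$ this is the Ramanujan sum $S(-\Nm(\ell_\xi),0;p^k)$, and $|S(-N,0;p^k)| = |S(N,0;p^k)|$ yields \eqref{eq:Hhatpsi0boundViaRamanujanSum}.

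The main obstacle is the control of the higher-order Postnikov corrections. This is delicate at small primes such as $p = 3$, where $1/(2i+1)$ contributes a factor of negative $p$-adic valuation starting at $i=1$, and at the boundary case $v_p(\Nm(\ell_\xi)) = n$, where the $i \geq 1$ terms threaten to contribute modulo $p^k$; the ``$p^k$ is large'' hypothesis is exactly what ensures these corrections remain negligible.
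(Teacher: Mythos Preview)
Your approach is essentially the paper's: specialize $\ell_\psi=0$, reduce $T_{\psi_0,\xi}$ (or $T'_{\psi_0,\xi}$) to all of $(\Z/p^n\Z)^\times$ under the appropriate divisibility of $\Nm(\ell_\xi)$, use $\xi|_{\Z_p^\times}$ via Lemma~\ref{lemma:thetarestrictedtoZp} to strip (or cancel) the Legendre factor, apply Postnikov to $\xi(1+s^{-1}\ell_\xi)$, and combine with the geometric expansion of $s\Nm(\ell_\xi)/(s^2+\Nm(\ell_\xi))$ to collapse everything to a Ramanujan sum. Your computation of the combined series, including the $i=0$ term $-\Nm(\ell_\xi)/s$ and the general $i\ge 1$ coefficients, is correct.

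The gap is precisely where you flag the ``main obstacle'' but then misattribute its resolution. The hypothesis ``$p^k$ is large'' (meaning $k\ge 10$ or $p\ge 10$) only guarantees that Postnikov applies and that high-order log terms can be discarded; it does \emph{not} control the $i=1$ correction $-\tfrac{1}{3}\Nm(\ell_\xi)^2/s^3$ when $p=3$. In the boundary case $k=2n$, $v_p(\Nm(\ell_\xi))=n$, that term has $3$-adic valuation exactly $2n-1=k-1<k$, so it survives modulo $p^k$ regardless of how large $k$ is. The paper handles this with a self-improvement step you are missing: after reaching the expression $p^n\sumstar_{s}e_{p^k}(-s^{-1}\Nm(\ell_\xi)+\text{correction})$, one observes (e.g.\ via the substitution $s\to s(1+p)^{-1}$, which leaves the correction fixed modulo $p^k$ but shifts the main term by $-ps^{-1}\Nm(\ell_\xi)$) that the sum vanishes unless $v_p(\Nm(\ell_\xi))\ge k-1$. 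Under this stronger constraint, $v_3(\Nm(\ell_\xi)^2/3)\ge 2(k-1)-1\ge k$ for $k\ge 3$, so the correction now genuinely dies and your argument goes through. For odd $k$ you are fine: invoking Lemma~\ref{lemma:ExtraInfoUnramifiedkodd} (unramified) or the $T'_{\psi_0,\xi}$ condition (ramified) already forces $v_p(\Nm(\ell_\xi))\ge n+1$, which suffices even at $p=3$.
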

\begin{proof}
Suppose $L$ is unramified and $k=2n$.  The congruence \eqref{eq:PostnikovCongruenceSystemReducedSupercuspidalInert}, with $\ell_{\psi} = 0$, implies
$\widehat{H}(\psi_0) = 0$ unless 
 $\mathrm{Nm}(\ell_\xi) \equiv 0 \pmod{p^n}$ (if $\widehat{H}(\psi_0) = 0$ then \eqref{eq:Hhatpsi0boundViaRamanujanSum} holds trivially).
By Lemma \ref{lemma:HhatInitialEvaluation2}, we have
\begin{equation}
\widehat{H}(\psi_0) =  \overline{\gamma} p^{n}
\sumstar_{s \shortmod{p^n}} \xi(s+\ell_\xi) e_{p^k}\Big(\frac{s \mathrm{Nm}(\ell_\xi)}{s^2 + \mathrm{Nm}(\ell_\xi)} \Big).
\end{equation}
Using Lemma \ref{lemma:thetarestrictedtoZp}, we have $\xi(s)=1$, so 
using $\Tr \ell_\xi=0$ we have 
$$
\xi(s + \ell_{\xi}) = 
\xi(1+s^{-1} \ell_\xi) = e_{p^{k}}(\Tr \left( s^{-1}\ell_\xi^2 + s^{-3}\ell_\xi^4/3 + O(5^{-1}\ell_\xi^6)\right)).$$ 
Note that $\ell_\xi^4= (-\Nm(\ell_\xi))^2 = \Nm(\ell_\xi)^2$, and $v(\Nm(\ell_\xi))\geq n$. 
Assuming temporarily that $p \neq 3$, we then have $s^{-3} \ell_{\xi}^4/3 \equiv 0 \pmod{p^k}$.
Similarly,
$e_{p^k}(\frac{s \mathrm{Nm}(\ell_\xi)}{s^2 + \mathrm{Nm}(\ell_\xi)}) = e_{p^k}(s^{-1} \mathrm{Nm}(\ell_\xi))$, using that $\mathrm{Nm}(\ell_\xi)^2 \equiv 0 \pmod{p^k}$.  Therefore, for $p \neq 3$,
\begin{equation}
\label{eq:Hhatpsi0ViaRamanujan}
\widehat{H}(\psi_0) =   \overline{\gamma}p^{n} \sumstar_{s \shortmod{p^n}} e_{p^k}(-s^{-1} \mathrm{Nm}(\ell_\xi)) =   S(\mathrm{Nm}(\ell_\xi), 0;p^k).
\end{equation}
We next argue that \eqref{eq:Hhatpsi0ViaRamanujan} holds for $p=3$.  In this case, we have that $s^{-3} \ell_{\xi}^4/3 \equiv 0 \pmod{p^{k-1}}$.  Changing variables $s \rightarrow s (1+p)^{-1}$ shows the sum vanishes unless $v(\ell_{\xi}^2) \geq k-1$.   In turn, this implies $v(\ell_{\xi}^4/3) \geq 2(k-1)-1$ which for $k$ large (namely, $k \geq 3$) implies $\ell_{\xi}^4/3 \equiv 0 \pmod{p^k}$.  This discussion shows the term with $\ell_{\xi}^4/3$ may be dropped when $p=3$, in which case we obtain the claimed formula.

Now suppose $L$ is unramified and $k=2n+1$.  
We continue with \eqref{eq:HhatSimplifiedUnramifiedOddk}.
Lemma \ref{lemma:ExtraInfoUnramifiedkodd} implies that we may assume
$\mathrm{Nm}(\ell_\xi) \equiv 0 \pmod{p^{n+1}}$, in which case $G_p'$ evaluates as $\epsilon p^3$.  Mercifully, in this case we have $v(\ell_\xi^4/3) = 2v(\Nm(\ell_\xi))-v(3)\geq 2n+2-v(3) \geq k$, so there is no need to treat $p=3$ separately.
Using similar Taylor expansions as for $k=2n$, and absorbing $\overline{\gamma}$ into $\epsilon$ we get
\begin{equation}
\label{eq:HhatSimplifiedUnramifiedEvenk_trivialpsi}
\widehat{H}(\psi_0) = \epsilon \frac{p^{5n+3}}{p^{2k}}
\sumstar_{\substack{s \shortmod{p^n}}}
e_{p^k}(s^{-1} \mathrm{Nm}(\ell_\xi)) =  \epsilon S(\mathrm{Nm}(\ell_\xi), 0;p^k).
\end{equation}

Now suppose $L$ is ramified.  Recall from Lemma \ref{lemma:thetarestrictedtoZp} that $\xi \vert_{\mz_p^{\times}}$ is the Legendre symbol, so $\xi(s+\ell_\xi) = (\frac{s}{p}) \xi(1+ s^{-1} \ell_\xi)$.  
Suppose $k=2n$.
Then
\eqref{eq:HhatSimplifiedRamifiedEvenk} reduces to the same formula as in the unramified case, up to the multiplication by $\epsilon$.  Hence $\widehat{H}(\psi_0) = \epsilon S(\mathrm{Nm}(\ell_\xi), 0 ;p^k)$.  

Finally, suppose $k=2n+1$.  
We return to \eqref{eq:HhatSimplifiedRamifiedOddk}.  
The summation condition $s \in T_{\psi,\xi}'$ is empty unless $\mathrm{Nm}(\ell_{\xi}) \equiv 0 \pmod{p^{n+1}}$, in which case $s$ is free.  Note $\mathrm{Nm}(\ell_{\xi})^2 \equiv 0 \pmod{p^{k}}$ since $2n+2 \geq k$.  Therefore, similarly as in the previous cases, we have 
$$
\widehat{H}(\psi_0) = \epsilon p^{\frac{k+1}{2}} \sumstar_{s \shortmod{p^n}} e_{p^k}(s^{-1} \mathrm{Nm}(\ell_{\xi})) = \epsilon S(\mathrm{Nm}(\ell_{\xi}), 0 ;p^k). \qedhere
$$
\end{proof}

\subsection{The small $k$ range, $k=c_0$}
\label{section:ksmallish}
Our next goal is to consider $k=c_0$.  There are a variety of cases: when $c_0 = 1$, we obtain character sums over a finite field, which we mainly treat in Section \ref{section:AG} below, 
while if $c_0 \geq 2$, then the Postnikov formula is effective.  Moreover, the character sum also exhibits different behavior depending on whether $(p,a_1 a_2 a_3) = 1$ or not.

Recall from \eqref{eq:supercuspidalTable} that  if $p$ is odd, $k=c_0$, and $L/\Q_p$ is ramified, then $H_p$ vanishes identically. Therefore, we assume that $L/\Q_p$ is the unique unramified quadratic extension for the remainder of this section. It may also be helpful to recall that $v(\mathrm{Nm}(\ell_\xi)) = 0$ and $(s^2 + \mathrm{Nm}(\ell_\xi), p) = 1$ in this situation, see \eqref{eq:valuationNormR} and Lemma \ref{lemma:traceRisZero}.
 
\begin{mylemma}
\label{lemma:HhatSupercuspidalPrimeTrivialPsi}
Suppose $p \neq 2$ and $k = c_0 =1$.  Suppose $\psi = \psi_0$ is trivial.  If $p|a_1 a_2 a_3$ then
\begin{equation*}
|\widehat{H}(\psi_0,a_1,a_2,a_3)| \leq    p^{-1} (a_1, p)(a_2, p)(a_3,p).
\end{equation*}   
\end{mylemma}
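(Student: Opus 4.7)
The plan is to apply Lemma \ref{lemma:HhatDecentFormulaSupercuspidalInert} directly with $k=c_0=1$ and $d=0$ (recall that $L/\mq_p$ is forced to be unramified in the regime $k=c_0$, per the discussion preceding the statement). Since $\psi=\psi_0$ is trivial modulo $p$, the character factor $\psi(\mathrm{Nm}(t)/(x_1x_2x_3))$ reduces to the indicator $1_{(x_1x_2x_3,p)=1}$, and in particular the ranges of the $x_i$ and $t$ variables become independent.

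The essential simplification comes from the hypothesis $p\mid a_1a_2a_3$. Since $a_1a_2a_3\equiv 0\pmod p$ and $(\mathrm{Nm}(t),p)=1$, the ``cubic'' term $x_1x_2x_3a_1a_2a_3/\mathrm{Nm}(t)$ lies in $p\mz_p$, and so contributes nothing to the exponential $e_p(\cdot)$. The exponent accordingly collapses to $-\mathrm{Tr}(t)+a_1x_1+a_2x_2+a_3x_3$, and I would then separate variables to obtain
\begin{equation*}
\gamma p^2 \widehat{H}(\psi_0,a_1,a_2,a_3)
= \Big(\prod_{i=1}^3 \sumstar_{x_i \shortmod p} e_p(a_i x_i)\Big)
\cdot \sum_{t\in(\cO_L/p\cO_L)^\times} \xi(t)\, e_p(-\mathrm{Tr}(t)).
\end{equation*}

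Each $x_i$-sum is a Ramanujan sum $c_p(a_i)$ of absolute value at most $(a_i,p)$. The $t$-sum is a Gauss sum over the residue field $\cO_L/p\cO_L\cong\mf_{p^2}$, twisted by the nontrivial character $\xi|_{\mf_{p^2}^\times}$ (nontrivial precisely because $c(\xi)=c_0=1$), and so has absolute value exactly $p$. Combining these with $|\gamma|=1$ gives
\begin{equation*}
|\widehat{H}(\psi_0,a_1,a_2,a_3)| \leq p^{-2}\cdot\prod_{i=1}^{3}(a_i,p)\cdot p \;=\; p^{-1}\prod_{i=1}^{3}(a_i,p),
\end{equation*}
which is the claimed bound. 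There is no real obstacle in this argument: once the hypothesis $p\mid a_1a_2a_3$ is used to drop the cubic coupling term, the sum factors completely and the only input beyond triangle inequality is the standard bound on a Gauss sum over $\mf_{p^2}^\times$.
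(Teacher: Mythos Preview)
Your proof is correct and follows essentially the same route as the paper's: start from Lemma~\ref{lemma:HhatDecentFormulaSupercuspidalInert} with $k=c_0=1$, $d=0$, drop the coupling term $x_1x_2x_3a_1a_2a_3/\mathrm{Nm}(t)$ using $p\mid a_1a_2a_3$, and factor the remaining sum as a product of three Ramanujan sums times the Gauss sum $\tau_L(\xi)$, whose modulus is $p$. The paper records the exact evaluation $\widehat{H}(\psi_0,a_1,a_2,a_3)=\overline{\gamma}\,p^{-2}\tau_L(\xi)\,S(a_1,0;p)S(a_2,0;p)S(a_3,0;p)$ before taking absolute values, but otherwise the argument is the same.
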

\begin{proof}
From \eqref{eq:HhatDecentFormulaSupercuspidalInert2}, we have that $\widehat{H}$ equals
(for arbitrary $k \geq c_0$)
\begin{equation*}
 \overline{\gamma}p^{-2k} \sumstar_{\substack{ x_1, x_2, x_3 \shortmod{p^{k}}}} 
 \thinspace
\sumstar_{\substack{t \in \mathcal{O}_L/(p^{k})}} 
\xi(t) e_{p^{k}}(-\mathrm{Tr}(t)+
 a_1 x_1 + a_{2} x_2 + a_{3} x_3 - \frac{x_1 x_2 x_3 a_1 a_2 a_3}{\mathrm{Nm}(t)} ).
\end{equation*}
Now set $k=1$.
If $p|a_1 a_2 a_3$, then $e_p(-x_1 x_2 x_3 a_1 a_3 a_3/\mathrm{Nm}(t)) = 1$.  The $t$-sum is the Gauss sum for $\xi$.  Each of the $x_i$-sums creates a Ramanujan sum.  Indeed,
\begin{equation}
\widehat{H}(\psi_0, a_1,a_2,a_3)
= \frac{\overline{\gamma}\tau_L(\xi)}{p^2} S(a_1, 0;p) S(a_2, 0;p) S(a_3, 0;p).
\end{equation}
Using $|\tau_L(\xi)| = p$ and $|S(a,0;p)| \leq (a,p)$ completes the proof.
\end{proof}

\begin{mylemma}
\label{lemma:HhatpsiTrivialInertkSmallest}
Suppose $p \neq 2$,  and $k = c_0 \geq 2$.
Suppose $\psi = \psi_0$ is trivial, 
and $p^k$ is large.
If $p|a_1 a_2 a_3$ 
then $\widehat{H}(\psi_0) = 0$ unless
$p^k |a_1 a_2 a_3$, in which case 
\begin{equation}
\label{eq:HhatpsiTrivialInertkSmallest}
\widehat{H}(\psi_0, a_1, a_2, a_3) =   
\overline{\gamma}p^{-2k} \tau_{L}(\xi) S(a_1, 0;p^k) S(a_2, 0;p^k) S(a_3, 0;p^k).
\end{equation}
\end{mylemma}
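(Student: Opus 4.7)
The plan is to start from \eqref{eq:HhatpsiDef} and substitute $u = v x_1 x_2 x_3$, a bijection on units modulo $p^k$ that collapses the Kloosterman argument to $\bar v$. Writing $A = a_1 a_2 a_3$, one obtains
\begin{equation*}
\widehat{H}(\psi_0, a_1, a_2, a_3) = p^{-2k} \sumstar_{v} H(\bar v, 1; p^k) \sumstar_{x_1, x_2, x_3} e_{p^k}\Bigl(\textstyle \sum_i a_i x_i - v A x_1 x_2 x_3 \Bigr).
\end{equation*}
When $p^k \mid A$ the cubic phase is trivial, the $x_i$-sums factor into Ramanujan sums $S(a_i, 0; p^k)$, and parametrizing $t \in (\cO_L/p^k)^\times$ in place of $\bar v = \mathrm{Nm}(t)$ (permissible since the norm map is surjective onto $(\Z_p/p^k)^\times$ for $L/\Q_p$ unramified) yields $\sumstar_{v} H(\bar v, 1; p^k) = \overline{\gamma}\, \tau_L(\xi)$ directly from \eqref{eq:KloostermanDefSCinertCaseGeneral}. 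This produces the claimed formula \eqref{eq:HhatpsiTrivialInertkSmallest}.

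For the vanishing claim, suppose $1 \leq j := v_p(A) < k$ and set $v_i = v_p(a_i)$, $a_i = p^{v_i} b_i$, $A = p^j B$ with $b_i, B$ being $p$-adic units. I fix $v$ and show the inner $x_i$-sum vanishes. If some $v_i = 0$, the additive shift $x_i \to x_i + p^{k-1} y$ with $y \in \Z/p\Z$ multiplies the summand by $e_p\bigl(y(a_i - v A x_{i'} x_{i''})\bigr)$, whose argument is a $p$-adic unit, so summation over $y$ forces vanishing. Thus we may assume $v_i \geq 1$ for each $i$, which in particular gives $v_i < j$ strictly (else two of the $v_\ell$ would be zero), and $j \geq 3$. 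Relabelling so that $v_3 = \max_i v_i$, we have $v_3 \leq j - 2 \leq k - 3$.

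Decompose $x_i = x_i^{(0)} + p^{k-v_i} y_i$ with $x_i^{(0)}$ a unit modulo $p^{k-v_i}$ and $y_i \in \Z/p^{v_i}\Z$. The linear phase $e_{p^{k-v_i}}(b_i x_i)$ depends only on $x_i^{(0)}$; in the Taylor expansion of the cubic phase $e_{p^{k-j}}(-v B x_1 x_2 x_3)$, each cross term in $y_i$ carries a factor of $p^{k-v_i}$, which by the strict inequality $v_i < j$ is $\equiv 0 \pmod{p^{k-j}}$, and the higher cross terms are even smaller. Hence the $y_i$-sums are free and contribute $\prod_i p^{v_i} = p^j$. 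Lifting $e_{p^{k-j}}(\cdot)$ to the finer modulus $p^{k-v_3}$, the remaining $x_3^{(0)}$-sum becomes a Ramanujan sum
\begin{equation*}
c_{p^{k-v_3}}\bigl(b_3 - p^{j - v_3} v B x_1^{(0)} x_2^{(0)}\bigr),
\end{equation*}
whose argument has $p$-adic valuation $0$ because $j - v_3 = v_1 + v_2 \geq 2$. Since $c_{p^M}(\alpha)$ vanishes whenever $v_p(\alpha) < M - 1$, survival would require $v_3 \geq k - 1$, contradicting $v_3 \leq k - 3$. This proves vanishing.

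The main technical hurdle is the Taylor bookkeeping in the third paragraph: one must verify that every $y_i$-cross-term in the expansion of $x_1 x_2 x_3$ drops out modulo $p^{k-j}$, which rests on the strict inequality $v_i < j$ for each $i$ and therefore on first eliminating the case $\min_i v_i = 0$. Once this is in place, the argument collapses to a Ramanujan sum whose unit-valued argument cannot cancel against the forced inequality $v_3 < k - 1$ coming from $j < k$.
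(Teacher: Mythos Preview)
Your proof is correct but takes a different route from the paper's. For the formula \eqref{eq:HhatpsiTrivialInertkSmallest}, both arguments are essentially the same: once the cubic phase vanishes the sum factors into three Ramanujan sums times $\overline{\gamma}\,\tau_L(\xi)$. The vanishing argument is where the two diverge. The paper invokes Lemma~\ref{lemma:HhatInitialEvaluation}, which (via the Postnikov formula) rewrites $\widehat{H}(\psi_0)$ as a sum over solutions to the system \eqref{eq:PostnikovCongruenceSystemSupercuspidalInert} with $\ell_\psi=0$; a short bootstrap on the equations $a_i x_i \equiv A \pmod{p^n}$ then forces $p^n \mid a_i$ for every $i$, hence $p^k \mid a_1 a_2 a_3$ since $3n \geq k$. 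Your argument is more elementary and self-contained: after the change of variable $u = v x_1 x_2 x_3$ you first dispose of the case $\min_i v_p(a_i)=0$ by a $p^{k-1}$-shift, and then in the remaining case strip off the free $y_i$-variables to reduce the $x_3^{(0)}$-sum to a Ramanujan sum $c_{p^{k-v_3}}(\alpha)$ with $v_p(\alpha)=0$ and $k - v_3 \geq 3$. The paper's route is shorter because Lemma~\ref{lemma:HhatInitialEvaluation} is already in place and needed elsewhere, and the bootstrap step is a single paragraph; your route avoids the Postnikov machinery entirely and would work just as well in a setting where that lemma had not been developed.
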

\begin{proof}
If $p^k | a_1 a_2 a_3$, then 
\eqref{eq:HhatpsiTrivialInertkSmallest} holds by the same argument as in the proof of Lemma \ref{lemma:HhatSupercuspidalPrimeTrivialPsi}.

Now we show $\widehat{H}(\psi_0) = 0$ unless $p^k | a_1 a_2 a_3$.  
Since $\psi$ is trivial, we may take $\ell_{\psi} = 0$.
Lemma \ref{lemma:HhatInitialEvaluation} implies that $\widehat{H}(\psi_0) = 0$ if
there are no solutions to the system \eqref{eq:PostnikovCongruenceSystemSupercuspidalInert}.  Note that for $i=1,2,3$, $E_i=0$ implies that $a_i x_i \equiv \frac{a_1 a_2 a_3 x_1 x_2 x_3}{\mathrm{Nm}(t_0)} \pmod{p^n}$, where $(x_1 x_2 x_3 t_0, p) = 1$.  
Hence if there exists $1 \leq j \leq n$ such that $p^j$ divides $a_1$ (say) then $p^j$ also divides $a_2$ and $a_3$.  This then implies $p^{\min(3j,n)}$ divides $a_1 a_2 a_3$.  Feeding this information back into $E_i=0$ implies that $p^{\min(3j,n)}$ divides each $a_i$.  Repeating this argument shows that $p^n$ divides each $a_i$, which in turn implies $a_1 a_2 a_3 \equiv 0 \pmod{p^k}$ since $k \leq 3n$.
\end{proof}

Recall (cf.\ \eqref{eq:Tprimedef2}) the set 
\begin{equation}\label{eq:Tprimedef3}
T_{\psi,\xi}' = \{ s \in (\Z/p^n\Z)^\times: s^2 \ell_{\psi} - s \mathrm{Nm}(\ell_\xi) + \mathrm{Nm}(\ell_\xi) \ell_{\psi} \equiv 0 \pmod{p^{n+1}}\},
\end{equation}
by which we mean that $s\in T_{\psi,\xi}' $ if and only if there exists a lift of $s$ modulo $p^{n+1}$ for which the indicated equation holds (modulo $p^{n+1}$).  Recall also the set $T_{\psi, \xi}$ from \eqref{eq:PostnikovCongruenceSystemReducedSupercuspidalInert}, which is defined similarly but subject to the congruence modulo $p^{n}$.
\begin{mylemma}
\label{lemma:HhatBoundSupercuspidalInertkequalsi0IntermediateStep}
Suppose $p \neq 2$,  $k = c_0 \geq 2$, and $p^k$ is large. 
Suppose that $a_1 = a_2 = a_3 =1$. 
\begin{enumerate}
\item For $k=2n$, we have
\begin{equation}
|\widehat{H}(\psi, 1, 1, 1)| \leq   
p^{k/2} 
\sum_{s \in T_{\psi,\xi}}  1.
\end{equation}
\item For $k=2n+1 \geq 3$, we have
\begin{equation}
\label{eq:HhattBoundSupercuspidalInertOddkSplit}
|\widehat{H}(\psi, 1, 1, 1)| \leq  
  p^{k/2}  
\Big(
\sum_{s \in T_{\psi,\xi}} 1 
 +
\sum_{s \in T_{\psi,\xi}'}  p^{ 1/2}
 \Big).
\end{equation}
Moreover, the second term in \eqref{eq:HhattBoundSupercuspidalInertOddkSplit} may be dropped if the discriminant of
the polynomial in $s$ appearing in \eqref{eq:PostnikovCongruenceSystemReducedSupercuspidalInert2} (i.e., $\mathrm{Nm}(\ell_\xi)^2 - 4\mathrm{Nm}(\ell_\xi) \ell_{\psi}^2$) is nonzero modulo $p$.
\end{enumerate}
\end{mylemma}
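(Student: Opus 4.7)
The plan is as follows. Part (1), the case $k=2n$, is immediate: each summand on the right-hand side of \eqref{eq:HhatSimplifiedUnramifiedEvenk} has modulus one, so the triangle inequality gives the bound directly. For Part (2), $k=2n+1\ge 3$, I start from the formula \eqref{eq:HhatSimplifiedUnramifiedOddk}, so that
$$|\widehat H(\psi,1,1,1)|\le \frac{p^{5n}}{p^{2k}}\sum_{s\in T_{\psi,\xi}}|G_p'(s)| = p^{n-2}\sum_{s\in T_{\psi,\xi}}|G_p'(s)|,$$
reducing everything to bounding $|G_p'(s)|$, which via \eqref{eq:Gp'Def} is a quadratic Gauss sum $G_p(\mathcal Q,\mathcal L)$ in five variables $(y_1,y_2,y_3,u,v)$ upon writing $t_1=u+v\alpha$ with $\alpha\in\cO_L$, $\Tr(\alpha)=0$, and $\ell_\xi=b\alpha$, $b\in\Z_p$ (using Lemma \ref{lemma:traceRisZero}). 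A direct computation of the symmetric $5\times 5$ matrix of $\mathcal Q$, expanded along the $v$-row and exploiting the $S_3$-symmetry among the $y_i$, gives (up to a unit constant)
$$\det\;\propto\;\Nm(\alpha)\,(\ell_\psi+A)^2\bigl[\Nm(\ell_\xi)(\ell_\psi-2A) + \ell_\psi(\ell_\psi+A)^2\bigr],\qquad A=\frac{s^3}{s^2+\Nm(\ell_\xi)}.$$

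For $s\in T_{\psi,\xi}$, the mod-$p$ defining relation yields $\ell_\psi+A\equiv s\pmod p$ (which is nonzero, as $s\in(\Z/p^n\Z)^\times$) and $A\equiv s-\ell_\psi\pmod p$. Substituting these into the bracket and again invoking the $T_{\psi,\xi}$ relation reduces the bracket modulo $p$ to $\Nm(\ell_\xi)(s-2\ell_\psi)$. Since $\Nm(\ell_\xi)\in\Z_p^\times$ for $L$ unramified and $k=c_0$ by Lemma \ref{lemma:vpNormOfEllTheta}, $\mathcal Q$ has full rank $5$ precisely when $s\not\equiv 2\ell_\psi\pmod p$. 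Moreover, $s\equiv 2\ell_\psi\pmod p$ combined with $s\in T_{\psi,\xi}$ forces $4\ell_\psi^2\equiv\Nm(\ell_\xi)\pmod p$, i.e.\ the vanishing of the discriminant $\Nm(\ell_\xi)^2-4\Nm(\ell_\xi)\ell_\psi^2$ modulo $p$. This already proves the \emph{moreover} clause: when the discriminant is nonzero modulo $p$, the rank is $5$ for every $s\in T_{\psi,\xi}$, so Lemma \ref{lemma:quadraticGaussSum} gives $|G_p'(s)|\le p^{5/2}$, producing $|\widehat H|\le p^{k/2}|T_{\psi,\xi}|$.

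In the remaining rank-drop regime ($\Nm(\ell_\xi)\equiv 4\ell_\psi^2$ and $s\equiv 2\ell_\psi$ modulo $p$) the rank of $\mathcal Q$ is exactly $4$, and one computes that its radical $V$ is spanned by $(2,2,2,1,-b/(2\ell_\psi))$. The linear form $\mathcal L$ of \eqref{eq:Mlinearformdef}, whose nonzero coefficient is $(s^2+\Nm(\ell_\xi))^{-1}m$ with $m = p^{-n}F(s)$ and $F(s)=\ell_\psi s^2 - s\,\Nm(\ell_\xi) + \Nm(\ell_\xi)\ell_\psi$, evaluates to a nonzero scalar multiple of $m$ on this generator. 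Hence by Lemma \ref{lemma:quadraticGaussSum} $G_p'(s)$ vanishes unless $m\equiv 0\pmod p$, equivalently $F(s)\equiv 0\pmod{p^{n+1}}$, equivalently $s\in T_{\psi,\xi}'$, in which case $|G_p'(s)|\le p^{5-4/2}=p^3$. Combining,
$$|\widehat H(\psi,1,1,1)|\le p^{n-2}\bigl(p^{5/2}|T_{\psi,\xi}| + p^3|T_{\psi,\xi}'|\bigr) = p^{k/2}\bigl(|T_{\psi,\xi}| + p^{1/2}|T_{\psi,\xi}'|\bigr).$$
The main obstacle is the clean computation of this $5\times 5$ determinant: the factorization above emerges only after exploiting both the $S_3$-symmetry in $(y_1,y_2,y_3)$ and the $T_{\psi,\xi}$ relation, which together collapse the complicated bracket into $\Nm(\ell_\xi)(s-2\ell_\psi)$ and thereby expose the discriminant criterion. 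Identifying $\mathcal L|_V$ as a nonzero multiple of $m$ is then the natural follow-up, requiring only care that the choices of coset representative in $m$ and in $G_p'$ be consistent with the well-definedness of each summand asserted in Lemma \ref{lemma:HhatInitialEvaluation2}.
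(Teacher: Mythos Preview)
Your proof is correct and follows essentially the same approach as the paper: both start from Lemma~\ref{lemma:HhatInitialEvaluation2}, identify $G_p'$ as a five-variable quadratic Gauss sum in coordinates $(y_1,y_2,y_3,u,v)$ with $t_1=u+v\alpha$, compute the determinant of the associated matrix (finding the same factorization $\propto \Nm(\alpha)(\ell_\psi+A)^2 q_3$), reduce $q_3$ modulo $p$ via the $T_{\psi,\xi}$ relation to $\pm\Nm(\ell_\xi)(s-2\ell_\psi)$, and in the rank-drop case identify the one-dimensional radical and observe that $\mathcal L$ restricted to it is a nonzero multiple of $m$. Your reduction of the bracket has a harmless sign slip (it is $\Nm(\ell_\xi)(2\ell_\psi-s)$, matching the paper's $q_3=\ell_\xi^2(s-2\ell_\psi)=-\Nm(\ell_\xi)(s-2\ell_\psi)$), and where you directly assert the radical is one-dimensional the paper verifies this via an explicit row reduction of the simplified matrix $M_2'$; otherwise the arguments are identical.
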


\begin{proof}
For $k$ even, the claimed bound follows from Lemma \ref{lemma:HhatInitialEvaluation2}, so we assume $k$ odd. Continuing from \eqref{eq:HhatSimplifiedUnramifiedOddk}, to bound the sums it suffices  to understand the Gauss sum $G_p' = G_p(\mathcal{Q}, \mathcal{L})$, to which we apply Lemma \ref{lemma:quadraticGaussSum}.  

The linear form $\mathcal{L}$ may be inferred from \eqref{eq:Gp'Def}. Choose a non-zero $\alpha \in \cO_L/(p)$ with $\Tr(\alpha)=0$ so that we may write $t_1 = u + v \alpha$ where $u,v \in \mf_p$.  We will use coordinates ${\bf x}:=(y_1, y_2, y_3, u, v) \in \mf_p^5$. Then
$$
\mathcal{L}(\mathbf{x}) = 
\frac{m}{s^2 - \ell_\xi^2} (
2u-y_1 - y_2 -y_3), 
$$
where $m$ was defined in \eqref{eq:Mdef}. Next, 
we have $\mathcal{Q}[{\bf x}] = \frac12  {\bf x} M_2 {\bf x}^{\intercal}$, where
\begin{equation}
M_2 = 
\begin{pmatrix}
\ell_{\psi} & - A & - A  & 2  A & 0 \\
-A &  \ell_{\psi} & - A  & 2  A & 0 \\
-A &  - A &  \ell_{\psi}  & 2 A & 0 \\
2A & 2 A & 2 A & -6A -2  \ell_{\psi} & -2 \ell_\xi \alpha \\
0 & 0 & 0  & -2 \ell_\xi \alpha  & 2\Nm(\alpha) ( A+  \ell_{\psi})
\end{pmatrix}.
\end{equation}
The authors used a computer algebra package (Mathematica) to aid in the following calculations.  The determinant of $M_2$ is
\begin{equation}\label{eq:M2detcalc}
-4\Nm(\alpha) (A + \ell_{\psi})^2 \cdot q_3, 
\qquad \text{where}
\qquad 
q_3 = \ell_\psi(A+\ell_\psi)^2 + \ell_\xi^2(2A-\ell_\psi).
\end{equation}
Working in $\mf_p$, with $p \neq 2$, recall that $\mathrm{Tr}(\ell_\xi) \equiv 0 \pmod{p}$, so $\ell_\xi^2 = - \mathrm{Nm}(\ell_\xi) \in \mf_p$.
As mentioned earlier, $\ell_{\psi} + A \equiv s \not \equiv 0 \pmod{p}$.  Hence $\det(M_2) = 0$ if and only if $q_3 = 0$.
In $\mf_p$ we have $A = s - \ell_{\psi}$  and $\ell_\psi s^2 +\ell_\xi^2( s - \ell_\psi) =0$ (from \eqref{eq:HhatSimplifiedUnramifiedEvenk}), so
\begin{gather*}
q_3 =\ell_\psi s^2 + \ell_\xi^2(2s-3\ell_\psi) = -\ell_\xi^2 (s-\ell_\psi) + \ell_\xi^2(2s-3\ell_\psi)  
=  \ell_\xi^2(s-  2 \ell_{\psi}).  
\end{gather*}
Hence $q_3 = 0$ implies that $s= 2 \ell_{\psi}$, since $v(\ell_\xi^2) = 0$.
Note that this means that $A = s-\ell_{\psi} = \ell_{\psi}$.  With this substitution, we have
\begin{equation*}
q_3 
= \ell_{\psi} (4 \ell_{\psi}^2 + \ell_\xi^2).
\end{equation*}
Therefore, $4 \ell_{\psi}^2 = -\ell_\xi^2$ (and $A= \ell_{\psi}$) are necessary conditions for $\det(M_2) = 0$ (constrained by the $E_i$ congruences).

Next we argue that when $\det(M_2) = 0$, then $M_2$ has rank $4$.  To see this, we can simplify $M_2$ using $A = \ell_{\psi}$, giving that $M_2 = \ell_{\psi} \cdot M_2'$, where
\begin{equation}
M_2' = 
\begin{pmatrix}
1 & - 1 & - 1  & 2   & 0 \\
-1 &  1 & - 1  & 2  & 0 \\
-1 &  - 1 &  1  & 2   & 0 \\
2  & 2   & 2   & -8 & \frac{-2 \ell_\xi \alpha}{\ell_{\psi}} \\
0 & 0 & 0  & \frac{-2 \ell_\xi \alpha}{\ell_{\psi}}  & 4 \Nm(\alpha) 
\end{pmatrix}.
\end{equation}
One can manually perform row reductions on $M_2'$ to see it is row-equivalent to 
\begin{equation}
\begin{pmatrix}
1 & 0 & 0  & -2   & 0 \\
0 &  1 & 0  & -2  & 0 \\
0 &  0 &  1  & -2   & 0 \\
0  & 0   & 0   & 4 & \frac{-2 \ell_\xi \alpha}{\ell_{\psi}} \\
0 & 0 & 0  & \frac{-2 \ell_\xi \alpha}{\ell_{\psi}}  & 4 \Nm(\alpha) 
\end{pmatrix}.
\end{equation}
As a safety check, we observe that the fourth and fifth rows are linearly dependent under the assumption $4 \ell_{\psi}^2 = - \ell_\xi^2$.  The rank is thus $4$.

If the rank of $\mathcal{Q}$ is 5, then its radical is trivial and $\mathcal{L} \vert_V=0$ vacuously. On the other hand, if the rank of $\mathcal{Q}$ is 4, then its radical $V$ is the line through $(1,1,1,1/2,\frac{\ell_\xi \alpha}{4\Nm(\alpha)\ell_\psi})^\intercal$.
Converting back to $(y_1,y_2,y_3,t_1) \in \F_p^3 \times \F_{p^2}$ coordinates, one can check that $V$ does not depend on the choice of $\alpha$. Since the linear form $2u-y_1-y_2-y_3$ does not vanish on this vector, one sees that $\mathcal{L} \vert_V=0$ if and only if $m \equiv 0 \pmod p$.

Finally, for the last sentence of the lemma, the previous work derived that $\mathcal{Q}$ having rank $\leq 4$ implies that $4 \ell_{\psi}^2 + \ell_\xi^2 = 0 \in \mf_p$, as claimed.
\end{proof}

The following result refines Lemma \ref{lemma:HhatBoundSupercuspidalInertkequalsi0IntermediateStep}.
For this, define
\begin{equation}
\label{eq:rhoDef}
\rho(\Delta, p^m)
= \# \{ x \shortmod{p^m} : x^2 \equiv \Delta \pmod{p^m} \}.
\end{equation}
This notation agrees with \cite[Section 3.2]{PetrowYoungCoset}.
\begin{mylemma}
\label{lemma:HhatBoundInertRhoBound}
Suppose $p$ is odd, $k = c_0 \geq 2$, and $p^k$ is large. 
Suppose that $a_1 = a_2 = a_3 =1$.  Let 
\begin{equation}
\label{eq:DeltaDef}
\Delta = -\frac{\ell_{\psi}^2}{\mathrm{Nm}(\ell_\xi)} + \frac{1}{4} . 
\end{equation}
\begin{enumerate}
\item For $k=2n$, we have
\begin{equation}
|\widehat{H}(\psi)| \leq  
 p^{k/2} \rho(\Delta, p^n).
\end{equation}
\item For $k=2n+1 \geq 3$, we have
\begin{equation}
|\widehat{H}(\psi)| \leq  
  p^{k/2}
\Big(
   \rho(\Delta, p^n)
 +
  p^{1/2} \rho(p^{-2} \Delta, p^{n-1}) \cdot \delta(p^2|\Delta)
 \Big).
\end{equation}
\end{enumerate}
\end{mylemma}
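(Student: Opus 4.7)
The plan is to reduce the counts $|T_{\psi,\xi}|$ and $|T_{\psi,\xi}'|$ appearing in Lemma \ref{lemma:HhatBoundSupercuspidalInertkequalsi0IntermediateStep} to values of the square-root counting function $\rho$ by completing the square in the quadratic congruence $\ell_\psi s^2 - \mathrm{Nm}(\ell_\xi)\,s + \mathrm{Nm}(\ell_\xi)\ell_\psi \equiv 0$ defining those sets.

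First I would dispose of the case $\ell_\psi\notin \mz_p^\times$. Reducing the defining congruence modulo $p$ and using that $\mathrm{Nm}(\ell_\xi)$ is a unit (by Lemma \ref{lemma:vpNormOfEllTheta}, since $k=c_0$ and $L/\mq_p$ is unramified) forces $s\equiv 0\pmod p$, contradicting the hypothesis $s\in(\mz/p^n\mz)^\times$. Both $T_{\psi,\xi}$ and $T_{\psi,\xi}'$ are therefore empty, and the claimed bounds are trivial: one has $\Delta\equiv 1/4\pmod p$, so in particular $\delta(p^2\mid\Delta)=0$.

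Now assume $\ell_\psi\in\mz_p^\times$. Multiplying the defining congruence by $4\ell_\psi$ and completing the square (possible since $p$ is odd) yields the equivalent condition
\begin{equation*}
(2\ell_\psi s - \mathrm{Nm}(\ell_\xi))^2 \equiv \mathrm{Nm}(\ell_\xi)^2 - 4\mathrm{Nm}(\ell_\xi)\ell_\psi^2 = 4\mathrm{Nm}(\ell_\xi)^2 \Delta \pmod{p^m},
\end{equation*}
with $m=n$ for $T_{\psi,\xi}$ and $m=n+1$ for $T_{\psi,\xi}'$. The substitution $y=2\ell_\psi s-\mathrm{Nm}(\ell_\xi)$ is a bijection on $\mz/p^m\mz$, and the constraint that $s$ be a unit is automatic (a solution with $s\equiv 0\pmod p$ would force $\mathrm{Nm}(\ell_\xi)\ell_\psi\equiv 0\pmod p$, which is impossible). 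Counting elements of $T_{\psi,\xi}$ thus reduces to counting $y\pmod{p^n}$ with $y^2\equiv 4\mathrm{Nm}(\ell_\xi)^2\Delta\pmod{p^n}$, which equals $\rho(\Delta,p^n)$ since $(2\mathrm{Nm}(\ell_\xi))^2$ is a unit square.

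For $|T_{\psi,\xi}'|$, the final sentence of Lemma \ref{lemma:HhatBoundSupercuspidalInertkequalsi0IntermediateStep} lets us assume $p\mid\Delta$. If $v_p(\Delta)=1$, then $4\mathrm{Nm}(\ell_\xi)^2\Delta$ has odd $p$-adic valuation, so the square equation has no solution modulo $p^{n+1}$ and $|T_{\psi,\xi}'|=0$. If $v_p(\Delta)\geq 2$, I would write a solution as $\tilde y=p\tilde z$ with $\tilde z\in\mz/p^n\mz$ satisfying $\tilde z^2\equiv 4\mathrm{Nm}(\ell_\xi)^2 p^{-2}\Delta\pmod{p^{n-1}}$; since the image $\tilde y\pmod{p^n}$ is determined by $\tilde z\pmod{p^{n-1}}$, the number of distinct $y\pmod{p^n}$ that arise equals $\rho(p^{-2}\Delta,p^{n-1})$. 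Inserting these estimates into Lemma \ref{lemma:HhatBoundSupercuspidalInertkequalsi0IntermediateStep} yields the claimed bounds. The one place that requires care, and the main place something could go wrong, is the projection from $\mz/p^{n+1}\mz$ down to $\mz/p^n\mz$ in handling $T_{\psi,\xi}'$, which is precisely why the refined formula features $\rho(p^{-2}\Delta,p^{n-1})$ rather than the naive $\rho(\Delta,p^{n+1})$.
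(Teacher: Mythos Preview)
Your proof is correct and follows essentially the same approach as the paper: both reduce the counts $|T_{\psi,\xi}|$ and $|T_{\psi,\xi}'|$ from Lemma~\ref{lemma:HhatBoundSupercuspidalInertkequalsi0IntermediateStep} to values of $\rho$ by completing the square in the defining quadratic congruence. The only cosmetic difference is that the paper first changes variables $s\to \mathrm{Nm}(\ell_\xi)s/\ell_\psi$ to make the quadratic monic before completing the square, whereas you multiply through by $4\ell_\psi$ and complete the square directly; your explicit handling of the projection $\mz/p^{n+1}\mz\to\mz/p^n\mz$ in the $T_{\psi,\xi}'$ case is a bit more detailed than the paper's one-line ``factoring out $p$'' remark, but the content is the same.
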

\begin{proof}
This builds on Lemma \ref{lemma:HhatBoundSupercuspidalInertkequalsi0IntermediateStep}, which estimated $|\widehat{H}(\psi)|$ in terms of the number of solutions to the congruence $s^2 \ell_{\psi} - s \mathrm{Nm}(\ell_\xi) + \mathrm{Nm}(\ell_\xi) \ell_{\psi} \equiv 0 \pmod{p^m}$, for $m \geq 1$ and with $(s,p) = 1$.
Recall that $v(\Nm(\ell_\xi))=0$ since $k=c_0$ and that $(s^2+\mathrm{Nm}(\ell_{\xi}), p) = 1$.  Hence the existence of a solution to this congruence requires $v(\ell_{\psi}) = 0$.
Changing variables $s \rightarrow \mathrm{Nm}(\ell_\xi) s/\ell_{\psi}$ shows this congruence is equivalent to
\begin{equation}
\label{eq:scongruenceMonic}
s^2 - s  +\frac{\ell_{\psi}^2}{\mathrm{Nm}(\ell_\xi)} \equiv 0 \pmod{p^m}.
\end{equation} 
Completing the square shows the number of solutions to \eqref{eq:scongruenceMonic} equals $\rho(\Delta, p^m)$, with $\Delta$ defined by \eqref{eq:DeltaDef}.  
If $k$ is even, then applying the above observations with $m=n$ to \eqref{eq:HhatSimplifiedUnramifiedEvenk} finishes the proof.

The proof for $k$ odd is similar, but with one additional piece of information.  The last sentence of Lemma \ref{lemma:HhatBoundSupercuspidalInertkequalsi0IntermediateStep} gives that the second term in  \eqref{eq:HhattBoundSupercuspidalInertOddkSplit} may be dropped unless $p|\Delta$.  It is easy to check 
that if $p | \Delta$ then \eqref{eq:PostnikovCongruenceSystemReducedSupercuspidalInert2} has no solutions if $p^2 \nmid \Delta$. 
Then under the assumption $p^2|\Delta$, factoring out $p$ shows that the number of solutions to \eqref{eq:PostnikovCongruenceSystemReducedSupercuspidalInert2} is
$\rho(p^{-2} \Delta, p^{n-1})$.  This completes the proof for $k$ odd.
\end{proof}

\begin{mylemma}
\label{lemma:HhatSupercuspidalInertDegenerate}
Suppose $p \neq 2$, $k = c_0 \geq 2$, and $p^k$ is large.
Assume $\psi$ is nontrivial.
Suppose for some $i=1,2,3$ and some $1 \leq j \leq k$ that
$v(a_i)=j$.  Then $\widehat{H}(\psi, a_1, a_2, a_3) = 0$ unless $C(\psi) =p^{k-j}$ and $v(a_i)=j$ for all $i=1,2,3$.
\end{mylemma}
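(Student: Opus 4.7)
I will apply Lemma \ref{lemma:HhatInitialEvaluation} to express $\widehat{H}(\psi, a_1, a_2, a_3)$ as a sum over the solution set $S_{\psi,\xi,\mathbf{a}}$ of the system $E \equiv 0 \pmod{p^n}$, where $n = \lfloor k/2\rfloor$. Setting $m_i = v(a_i)$ and $e = v(\ell_\psi)$ (so that $C(\psi) = p^{k-e}$), the goal reduces to showing that $\widehat{H}(\psi) = 0$ unless $m_1 = m_2 = m_3 = e = j$. Since $\widehat{H}$ is symmetric in $(a_1, a_2, a_3)$, I may assume without loss of generality that $v(a_1) = j$. Since $k = c_0$ and $L/\Q_p$ is unramified, Lemmas \ref{lemma:vpNormOfEllTheta} and \ref{lemma:traceRisZero} provide $v(\mathrm{Nm}(\ell_\xi)) = 0$ and $\mathrm{Tr}(\ell_\xi) = 0$, which I use throughout.

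The analysis splits according to whether $j < n$ or $j \geq n$. When $j < n$, the $E_i$ constraints alone suffice: from $a_1 x_1 \equiv \ell_\psi + A \pmod{p^n}$ together with $v(a_1 x_1) = j < n$, we deduce $v(\ell_\psi + A) = j$. Reading $E_2$ and $E_3$ identically shows that $m_2, m_3$ cannot be $\geq n$ (else $\ell_\psi + A$ would be divisible by $p^n$, contradicting $v(\ell_\psi + A) = j < n$), and then $m_2 = m_3 = j$ by matching valuations. Since $v(A) = 3j > j$, the ultrametric inequality forces $e = j$. Any failure of these equalities empties $S_{\psi,\xi,\mathbf{a}}$ and gives $\widehat{H}(\psi) = 0$. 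When $j \geq n$, an analogous reading gives only $m_2, m_3 \geq n$ and $e \geq n$; moreover $v(A) \geq 3n \geq k$, and $E_4$ yields $t_0 \equiv \ell_\xi \pmod{p^n\cO_L}$. Substituting into the summand of Lemma \ref{lemma:HhatInitialEvaluation} and using $e_{p^k}(-A) = 1$ (from $v(A) \geq k$) together with $\mathrm{Tr}(\ell_\xi) = 0$, the sum decouples as
\begin{equation*}
\widehat{H}(\psi) = c \cdot \prod_{i=1}^{3} S_i, \qquad S_i = \sum_{x \in (\Z/p^n\Z)^\times} \overline{\psi}(x) e_{p^k}(a_i x),
\end{equation*}
where $c$ is a nonzero constant of known modulus. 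Evaluating each $S_i$ via the additive parametrization $x = x_0 + p^{c(\psi)} y$ with Postnikov applied to the variation of $\overline{\psi}$ on the cosets, orthogonality in $y$ shows $S_i = 0$ unless $m_i = e$, which combined with $m_1 = j$ yields the conclusion.

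The main technical step is this evaluation of $S_i$. Three subcases arise depending on the comparison of $m_i$ and $e$: when $m_i > e$, the $\overline{\psi}$-variation and $e_{p^k}(a_i \cdot)$ combine into a nontrivial additive character on $\Z/p^{m_i - e}\Z$ whose inner $y$-sum vanishes by orthogonality; when $m_i < e$, the exponential $e_{p^k}(a_i p^{c(\psi)} y)$ itself produces a nontrivial additive character in $y$ that averages to zero; only when $m_i = e$ does $S_i$ reduce to a nonzero Gauss sum of absolute value $p^{e/2}$. The case $j = k$ (which would force $\psi$ trivial in the ``unless'' clause) is handled directly: $e_{p^k}(a_1 x_1) = 1$ reduces $S_1$ to $\sum_x \overline{\psi}(x)$, which vanishes by nontriviality of $\psi$. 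For odd $k = 2n+1$, one applies the odd-$k$ version of Lemma \ref{lemma:HhatInitialEvaluation} with its quadratic Gauss-sum factor $G_p$; a parallel analysis of $G_p$ using the valuation data shows $G_p$ contributes only a nonzero constant in the non-vanishing configurations, and the same case distinction on $(m_i, e)$ recovers the stated result.
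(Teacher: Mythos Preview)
Your approach via Lemma \ref{lemma:HhatInitialEvaluation} is genuinely different from the paper's, and considerably more involved. The paper works directly with the raw formula \eqref{eq:HhatDecentFormulaSupercuspidalInert2}: the inner sum over $x_1$ is a Gauss sum in $\overline{\psi}$, so it vanishes unless $c(\psi) = k - v\big(a_1(1 - x_2 x_3 a_2 a_3/\Nm(t))\big)$. Since the parenthetical factor has nonnegative valuation this already gives $c(\psi) \leq k-j$; and once $v(a_1)\geq 1$, the analogous $x_2$-sum sees $v(1 - x_1 x_3 a_1 a_3/\Nm(t)) = 0$, forcing $c(\psi) = k - v(a_2)$. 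Feeding this back pins down all three $v(a_i)$ and $c(\psi)$ in a few lines, with no Postnikov, no case split on $j$ versus $n$, and no $G_p$. Your route recovers the same information but only after passing through the stationary-phase machinery and then undoing it via the factorization into the $S_i$.

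Your argument for $j<n$, and for $j\geq n$ with $k$ even, is correct. The gap is in the odd-$k$ case with $j\geq n$. There your assertion that ``$G_p$ contributes only a nonzero constant in the non-vanishing configurations'' is not accurate. With $t_0=\ell_\xi$, $v(\ell_\psi)\geq n$, and $v(A)\geq 3n$, one finds $e_p(r_2 - Ac_2) = e_p(-\tfrac12\Tr(\ell_\xi t_1^2))$ and the $y_i$-sums in $G_p$ each evaluate to $p\cdot\delta\big(a_i x_i \equiv \ell_\psi \pmod{p^{n+1}}\big)$. So $G_p$ is not a constant: it imposes an extra mod-$p^{n+1}$ congruence on each $x_i$, which in particular depends on the chosen lift of $x_i$ modulo $p^k$. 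This is fixable (the resulting restricted sums still factor and the same $m_i=e$ dichotomy survives, since the congruence is vacuous when $\min(m_i,e)\geq n+1$ and pins $x_i$ modulo $p$ when $m_i=e=n$), but it is real work that your write-up skips. The paper's direct Gauss-sum argument sidesteps all of this.
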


\begin{proof}
Recall that $\widehat{H}$ is given up to a scalar by \eqref{eq:HhatDecentFormulaSupercuspidalInert2}.
Suppose without loss of generality that $v(a_1)=j$, where $1 \leq j \leq k$.  The sum in $x_1$ is a Gauss sum and $\psi$ is non-trivial, so 
 the inner sum over $x_1$ is nonzero
if and only if 
\begin{equation}
\label{eq:lemma:HhatSupercuspidalInertDegenerate}
k-v\left(a_1 \left(1-\frac{x_2x_3a_2a_3}{\Nm(t)}\right)\right) = c(\overline{\psi}).
\end{equation}
Since $v \left(1-\frac{x_2x_3a_2a_3}{\Nm(t)}\right)\geq 0$, if $\widehat{H}(\psi, a_1, a_2, a_3) \neq  0$, then $c(\psi)\leq k-j$. The $x_2$-sum is also a Gauss sum, so that similarly $\widehat{H}(\psi, a_1, a_2, a_3) = 0$ unless $k-v\left(a_2 \left(1-\frac{x_1x_3a_1a_3}{\Nm(t)}\right)\right) = c(\overline{\psi})$. Since $v(a_1)>0$, we have $v\left(1-\frac{x_1x_3a_1a_3}{\Nm(t)}\right)=0$. Thus, $k-v(a_2)= c(\psi) \leq k-j$, which is to say $j\leq v(a_2)$. Since $j>0$, we now know by \eqref{eq:lemma:HhatSupercuspidalInertDegenerate} that $c(\psi) = k-v(a_1)$ as well. Thus, $v(a_1)=v(a_2)=j$ and $c(\psi)= k-j$. By symmetry, we also have $j = v(a_3)$. 
\end{proof}

\begin{mylemma}
\label{lemma:HhatBoundSupercuspidalInertDegenerate}
Suppose $p$ is odd, $k = c_0 \geq 2$, $p^k$ is large, and $\psi$ is nontrivial.
Suppose that there exists $1 \leq j \leq k-1$ such that $C(\psi)=p^{k-j}$ and $v(a_i)=j$ for $i=1,2,3$.
Then
\begin{equation*}
|\widehat{H}(\psi, a_1, a_2, a_3)| \leq  
  p^{k/2} p^{3j/2}.
\end{equation*}
\end{mylemma}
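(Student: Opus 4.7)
The plan is to reduce to $a_1 = a_2 = a_3 = p^j$ and then carry out the $x_1, x_2, x_3$ integrations as iterated Gauss sums. First, the unit rescaling $x_i \mapsto b_i^{-1} x_i$ (with $b_i = a_i/p^j \in \Z_p^\times$) applied to the formula \eqref{eq:HhatDecentFormulaSupercuspidalInert2} yields
\[
\widehat{H}(\psi, a_1, a_2, a_3) = \psi(b_1 b_2 b_3)\,\widehat{H}(\psi, p^j, p^j, p^j),
\]
so it suffices to bound $|\widehat{H}(\psi, p^j, p^j, p^j)|$. Grouping the $x_1$-dependent factors in \eqref{eq:HhatDecentFormulaSupercuspidalInert2}, the inner $x_1$-sum has the form
\[
\sum_{x_1 \in (\Z/p^k)^\times} \overline{\psi}(x_1)\, e_{p^k}\Bigl(p^j x_1 \bigl(1 - p^{2j} x_2 x_3/\Nm(t)\bigr)\Bigr);
\]
since $\overline{\psi}$ has conductor $p^{k-j}$, this is a non-primitive Gauss sum over $\Z/p^k\Z$, and by the elementary identity
\[
\sum_{x \mod p^k} \overline{\psi}(x)\, e_{p^k}(p^j c x) = p^j\, \tau(\overline{\psi})\, \psi(c) \qquad (c \in \Z_p^\times)
\]
evaluates to $p^j\,\tau(\overline{\psi})\,\psi\bigl(1 - p^{2j} x_2 x_3/\Nm(t)\bigr)$.

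Applying Postnikov to the new $\psi$-factor and regrouping the $x_2$-dependence exhibits the same structure once more, and the $x_2$-sum again produces a factor $p^j \tau(\overline{\psi})$ together with a fresh $\psi$-factor; iterating on $x_3$ as well, one obtains
\[
\gamma p^{d/2} p^{2k}\,\widehat{H}(\psi, p^j, p^j, p^j) = p^{3j}\,\tau(\overline{\psi})^3 \sum_{t \in (\cO_L/(p^k))^\times} \xi(t)\,\psi\bigl(\Nm(t) - C\bigr)\, e_{p^k}(-\Tr(t))
\]
for an explicit constant $C \in p^{4j}\Z_p$. The $t$-sum on the right is a shifted Gauss sum for the character $\xi\cdot(\psi\circ\Nm)$ on $(\cO_L/(p^k))^\times$. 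Because $c(\xi) = k$ strictly dominates $c(\psi\circ\Nm) \leq k-j$, the product $\xi\cdot(\psi\circ\Nm)$ is primitive modulo $p^k\cO_L$; for $j \geq k/5$ the Postnikov shift $\psi(1 - C/\Nm(t)) = 1$ is trivial and the sum equals a primitive Gauss sum of modulus exactly $p^k$, while for $j < k/5$ the Postnikov expansion of $\psi(1 - C/\Nm(t))$ introduces an additive perturbation that preserves the primitivity of the effective character, keeping the $t$-sum bounded by $p^k$. Using $d=0$ (forced because $L/\Q_p$ must be unramified for $\widehat{H}$ to be nonvanishing at $k = c_0$, as noted in the discussion preceding Lemma \ref{lemma:HhatSupercuspidalPrimeTrivialPsi}), I obtain
\[
|\widehat{H}(\psi, a_1, a_2, a_3)| \leq p^{-2k}\cdot p^{3j}\cdot p^{3(k-j)/2}\cdot p^k = p^{k/2 + 3j/2},
\]
as required.

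The main technical obstacle is the bookkeeping of higher-order Postnikov corrections in the iterated Gauss-sum step. When $j$ is small relative to $k$, quadratic (and higher) terms in the Postnikov expansion of each freshly produced $\psi(1 - p^{2j}(\cdot))$ factor survive modulo $p^{k-j}$ and turn the linear Gauss sum in each $x_i$-integration into a quadratic Gauss sum of the form $G_p(\mathcal{Q}, \mathcal{L})$. These are handled via completion of the square and Lemma \ref{lemma:quadraticGaussSum} in the spirit of the proof of Lemma \ref{lemma:HhatInitialEvaluation}, contributing only unit-modulus correction factors (together with $\epsilon_p p^{1/2}$'s) that are absorbed into the overall bound without affecting the final estimate.
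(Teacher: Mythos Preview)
Your iterated Gauss-sum approach is attractive and does give the right bound when $j$ is large (roughly $3j\geq k$), but it has a genuine gap for small $j$.

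First, a minor error: after carrying out the three Gauss sums (in the range where only first-order Postnikov survives, say $5j\geq k$) one finds $C=\ell_\psi^2$, so $v(C)=2j$, not $4j$. Consequently the shift $\psi(1-C/\Nm(t))$ is trivial only once $3j\geq k$, not $5j\geq k$.

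The real problem is the range $j<k/3$ (and especially $j<k/5$). After the $x_1$-sum you acquire the factor $\psi(1-p^{2j}x_2x_3/\Nm(t))$; expanding this by Postnikov yields a phase in $x_2$ of the form
\[
e_{p^{k-j}}\Bigl(c_1 x_2 + c_2 p^{2j}x_2^2 + c_3 p^{4j}x_2^3 + \cdots\Bigr)
\]
with $c_m$ units. For small $j$ many of these terms survive modulo $p^{k-j}$, so the $x_2$-sum is a character sum modulo a large prime power with a genuinely high-degree polynomial phase. This is \emph{not} a quadratic Gauss sum over $\F_p$ of the type in Lemma~\ref{lemma:quadraticGaussSum}, and your last paragraph conflates the two situations: the $G_p(\mathcal Q,\mathcal L)$ in the paper (Lemmas~\ref{lemma:characterSumPrimePowerOddExponent}, \ref{lemma:HhatInitialEvaluation}) arises from a single boundary layer in a stationary-phase decomposition, whereas here you face a full exponential sum modulo $p^{k-j}$ with many nonlinear terms. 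Completing the square does not reduce such a sum to a Gauss sum, and no uniform $O(p^{(k+j)/2})$ bound is supplied. The same issue recurs at the $x_3$-step and at the final $t$-sum, which (for $3j<k$) carries the perturbation $e_{p^k}(-\ell_\psi^3/\Nm(t)+\cdots)$ and is no longer a Gauss sum either.

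The paper sidesteps all of this by applying a simultaneous Postnikov/stationary-phase reduction to the full five-variable sum, writing $t=t_0(1+p^n t_1)$ with $n=\lfloor k/2\rfloor$ and $x_i\to x_i(1+p^{\alpha}y_i)$ with $\alpha=\lfloor(k-j)/2\rfloor$. The linear congruences in $(y_i,t_1)$ determine $t_0$ and each $x_i$ uniquely (Hensel applies since the leading coefficients are units modulo $p$), and the boundary quadratic form is already diagonal with nonzero entries (because $v(\Nm(\ell_\xi))=v(p^{-j}\ell_\psi)=0$), giving square-root cancellation in every parity case. This yields $|\widehat H|\leq p^{3j-2k}\cdot p^{3(k-j)/2}\cdot p^k=p^{k/2+3j/2}$ uniformly in $1\leq j\leq k-1$, with no case split on the size of $j$.
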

\begin{proof}
We work with
\eqref{eq:HhatDecentFormulaSupercuspidalInert2}.  The sum over $x_i$ has period $p^{k-j}$, so it is the same sum repeated $p^j$ times.  Therefore, $\gamma p^{2k} \widehat{H}(\psi,p^j,p^j,p^j)$ equals
\begin{equation}\label{eq:lemma:HhatBoundSupercuspidalInertDegenerate1}
p^{3j} 
\sum_{\substack{ x_1, x_2, x_3 \shortmod{p^{k-j}}}} 
\sum_{\substack{t \in \mathcal{O}_L/(p^k)}} 
 \psi\Big(\frac{\mathrm{Nm}(t)}{x_1 x_2 x_3}\Big)
\xi(t) e_{p^k}(-\mathrm{Tr}(t)) e_{p^{k-j}}(
 x_1 +  x_2 + x_3
  - \frac{x_1 x_2 x_3  p^{2j}}{\mathrm{Nm}(t)} ).
\end{equation}

We follow the same steps as in the proof of Lemma \ref{lemma:HhatInitialEvaluation} but applied to \eqref{eq:lemma:HhatBoundSupercuspidalInertDegenerate1}. We apply the Postnikov formula, writing $t = t_0(1+p^n t_1)$ where $n = \lfloor k/2 \rfloor$, and replace $x_i$ by $x_i(1+p^{\lfloor (k-j)/2 \rfloor} y_i)$, for $i=1,2,3$.  
Set $\alpha = \lfloor \frac{k-j}{2} \rfloor$ and note that $v(\ell_\psi)=j$.  We have 
\begin{align}
\psi(\mathrm{Nm}(1 + p^n t_1)) &  = \psi_L(1 + p^n t_1)= e_{p^{k-n}}( \ell_{\psi} \mathrm{Tr}(t_1)), \label{al:HhatBoundSupercuspidalInertDegenerate2} 
\\
\xi(1 + p^n t_1) & = e_{p^{k-n}}(\Tr(\ell_\xi t_1)) e_{p^{k-2n}}(-\tfrac12 \Tr(\ell_\xi t_1^2)), \label{al:HhatBoundSupercuspidalInertDegenerate3}
\\ 
\overline{\psi}(1+p^\alpha y_i) & = e_{p^{k-j-\alpha}}(- p^{-j}\ell_{\psi} y_i) e_{p^{k-j-2\alpha}} (\tfrac12 p^{-j}\ell_{\psi} y_i^2), \label{eq:psiPostnikovSemiDegenerateCase}
\\
e_{p^k}(-\mathrm{Tr}(t)) & = e_{p^k}(-\Tr(t_0)) e_{p^{k-n}}(-\Tr(t_0t_1)), 
\\
 e_{p^{k-j}}( x_1 +  x_2 + x_3) & \to e_{p^{k-j}}( x_1 +  x_2 + x_3) e_{p^{k-j-\alpha}} ( x_1y_1+x_2y_2 + x_3y_3.)\label{al:HhatBoundSupercuspidalInertDegenerate4}
\end{align}
In \eqref{al:HhatBoundSupercuspidalInertDegenerate2}, we used the fact that $2n \geq k-j$ to dispense with the quadratic and higher order terms. Lines \eqref{al:HhatBoundSupercuspidalInertDegenerate3} to \eqref{al:HhatBoundSupercuspidalInertDegenerate4} are essentially the same as in the proof of Lemma \ref{lemma:HhatInitialEvaluation}, see e.g.\ \eqref{eq:thetaPostnikovInert}. 

Let $A = \frac{x_1x_2x_3}{\Nm(t_0)}$, as in \eqref{eq:Adef}. Following \eqref{eq:AformulaTaylorExpansion}, the cubic term is replaced by $e_{p^{k-j}} \left(-p^{2j}\frac{x_1 x_2 x_3}{\mathrm{Nm}(t_0)}\right)$ times 
\begin{multline}
e_{p^{k-j}} \Big( -p^{2j} A
\Big[\frac{(1+p^{\alpha} y_1)(1+p^{\alpha} y_2)(1+p^{\alpha} y_3)}{(1+p^n t_1)(1+ p^n \overline{t_1})} -1 \Big] \Big)
\\
= 
e_{p^{k-j-\alpha}}\Big(-p^{2j} A (y_1 + y_2 + y_3) \Big)
e_{p^{k-n}}\Big(p^{3j} A \mathrm{Tr}(t_1) \Big).
\end{multline}
Here only the linear terms survived, since $2j + 2 \alpha \geq k-j$, $2j + \alpha +n \geq k-j$, and $3j + 2n \geq k$.

First,  suppose that $k$ and $k-j$ are both even so that we may drop the quadratic terms in \eqref{al:HhatBoundSupercuspidalInertDegenerate3} and \eqref{eq:psiPostnikovSemiDegenerateCase}. By additive orthogonality of characters, the sum $\widehat{H}(\psi, p^j, p^j, p^j)$ vanishes unless \begin{equation}
\label{eq:PostnikovCongruenceSystemSupercuspidalInertDegenerate}
\begin{split}
&-p^{-j}\ell_{\psi} +  x_i - A p^{2j}  = 0 \in \Z/p^\alpha\Z, \qquad i=1,2,3, \text{ and }\\
&-t_0 + \ell_\xi + \ell_{\psi}
+ p^{3j} A  = 0 \in \mathcal{O}_L/p^n\cO_L,
\end{split}
\end{equation} cf.\ \eqref{eq:PostnikovCongruenceSystemSupercuspidalInert}. Then 
\begin{equation*}
  \widehat{H}(\psi, p^j, p^j, p^j) = \overline{\gamma}\frac{p^{3j}}{p^{2k}} p^{3\alpha + 2n}
\sumstar_{\substack{x_1, x_2, x_3 \shortmod{p^\alpha} \\ t_0 \in (\mathcal{O}_L/(p^n))^{\times} \\ \eqref{eq:PostnikovCongruenceSystemSupercuspidalInertDegenerate} \text{ holds} }}
(\dots).
\end{equation*}
These congruences determine the $x_i$ modulo $p^{\min(\alpha, 2j)}$, and $t_0$ modulo $p^{\min(n,3j)}\cO_L$, which by Hensel's lemma, lift to a unique solution modulo $p^\alpha$ resp.\ modulo $p^n\cO_L$.  Hence in this case we obtain the claimed bound
\begin{equation*}
|\widehat{H}(\psi, p^j, p^j, p^j)| \leq  p^{3j-2k} p^{3 \frac{k-j}{2}} p^k 
=   p^{k/2} p^{3j/2}.
\end{equation*}

To treat the cases where the exponents may be odd, we point out that the relevant quadratic form from \eqref{al:HhatBoundSupercuspidalInertDegenerate3} and \eqref{eq:psiPostnikovSemiDegenerateCase}
is already diagonalized and is  nonsingular since $v(\mathrm{Nm}(\ell_\xi)) = v(p^{-j}\ell_{\psi}) = 0$.  Hence the quadratic exponential sum always has square-root cancellation.
The end result is the claimed bound.
\end{proof}

We close this section with two elementary preparatory lemmas in the case $k=c_0=1$. This case is the only one that gets no information from the Postnikov formula. 
\begin{mylemma}
\label{lemma:HhatSupercuspidalPrime}
Suppose $p \neq 2$, $k = c_0 =1$ and $\psi$ has conductor $p$.  If $p|a_1 a_2 a_3$, then $\widehat{H}(\psi, a_1, a_2, a_3) = 0$.   If $(p, a_1 a_2 a_3) = 1$, then $\widehat{H}(\psi, a_1, a_2, a_3)$ is given by \eqref{eq:HhatPrimeSupercuspidal} below.
\end{mylemma}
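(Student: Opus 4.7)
The plan is to proceed directly from the explicit formula in Lemma \ref{lemma:HhatDecentFormulaSupercuspidalInert}, which for $k = c_0 = 1$ and (since $L/\mq_p$ is unramified here) $d = 0$ gives
\begin{equation*}
\gamma\, p^{2}\, \widehat{H}(\psi, a_1, a_2, a_3) = \sum_{x_1, x_2, x_3 \in \F_p^\times}\;\sum_{t \in (\cO_L/(p))^\times} \psi\!\Big(\tfrac{\Nm(t)}{x_1 x_2 x_3}\Big)\,\xi(t)\, e_p\!\Big({-}\Tr(t) + \sum_i a_i x_i - \tfrac{x_1 x_2 x_3\, a_1 a_2 a_3}{\Nm(t)}\Big),
\end{equation*}
where the inner sum is restricted to $(\Nm(t), p) = 1$ by \eqref{eq:KloostermanDefSCinertCaseGeneral}.

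For part (1), suppose without loss of generality that $p \mid a_1$. Then the $x_1$-dependence of the summand reduces to $\psi^{-1}(x_1)$, since $e_p(a_1 x_1) = 1$ and the cubic term $e_p(-x_1 x_2 x_3 a_1 a_2 a_3/\Nm(t)) = 1$. Since $\psi$ has conductor exactly $p$, the sum $\sum_{x_1 \in \F_p^\times} \psi^{-1}(x_1)$ vanishes, so $\widehat{H}(\psi, a_1, a_2, a_3) = 0$.

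For part (2), when $(p, a_1 a_2 a_3) = 1$, I will first carry out the change of variables $x_i \mapsto x_i \overline{a_i}$ (for $i = 1, 2, 3$), which sends $a_i x_i \mapsto x_i$ and $x_1 x_2 x_3 a_1 a_2 a_3 \mapsto x_1 x_2 x_3$, while the argument of $\psi$ is multiplied by $a_1 a_2 a_3$. This produces the identity
\begin{equation*}
\widehat{H}(\psi, a_1, a_2, a_3) = \psi(a_1 a_2 a_3)\, \widehat{H}(\psi, 1, 1, 1).
\end{equation*}
Next, I evaluate the inner $x_1$-sum in $\widehat{H}(\psi, 1, 1, 1)$. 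Extracting the $x_1$-independent factor $\psi(\Nm(t)/(x_2 x_3))$ from $\psi(\Nm(t)/(x_1 x_2 x_3))$, the remaining $x_1$-sum is a twisted Gauss sum
\begin{equation*}
\sum_{x_1 \in \F_p^\times} \overline{\psi}(x_1)\, e_p\!\Big(x_1\Big(1 - \tfrac{x_2 x_3}{\Nm(t)}\Big)\Big) = \tau(\overline{\psi})\, \psi\!\Big(1 - \tfrac{x_2 x_3}{\Nm(t)}\Big)
\end{equation*}
when $\Nm(t) \not\equiv x_2 x_3 \pmod p$, and vanishes otherwise (again by non-triviality of $\psi$). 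Combining the two $\psi$-factors simplifies to $\psi((\Nm(t) - x_2 x_3)/(x_2 x_3))$, yielding the expected closed form
\begin{equation*}
\widehat{H}(\psi, a_1, a_2, a_3) = \frac{\overline{\gamma}\, \tau(\overline{\psi})\, \psi(a_1 a_2 a_3)}{p^{2}} \sum_{\substack{x_2, x_3 \in \F_p^\times \\ t \in (\cO_L/(p))^\times \\ \Nm(t) \not\equiv x_2 x_3}} \psi\!\Big(\tfrac{\Nm(t) - x_2 x_3}{x_2 x_3}\Big) \xi(t)\, e_p({-}\Tr(t) + x_2 + x_3).
\end{equation*}

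There is no substantial obstacle here: the proof is a direct manipulation of the formula from Lemma \ref{lemma:HhatDecentFormulaSupercuspidalInert} using orthogonality of $\psi$ on $\F_p^\times$. The content lies not in the derivation of this formula but in its subsequent analysis in Section \ref{section:AG}, where the residual character sum over $\F_p$ and $\cO_L/(p)$ is the one that genuinely requires $\ell$-adic sheaf-theoretic methods for its estimation.
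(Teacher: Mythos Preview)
Your proof is correct and follows essentially the same approach as the paper: both arguments start from \eqref{eq:HhatDecentFormulaSupercuspidalInert2}, observe that when $p\mid a_1$ the $x_1$-sum collapses to $\sum_{x_1}\overline{\psi}(x_1)=0$, and in the coprime case evaluate the $x_1$-sum as the Gauss sum $\tau(\overline{\psi})\,\psi(1-x_2x_3/\Nm(t))$. The paper's proof actually only writes out the case $a_1=a_2=a_3=1$; your explicit change of variables $x_i\mapsto \overline{a_i}x_i$, yielding $\widehat{H}(\psi,a_1,a_2,a_3)=\psi(a_1a_2a_3)\widehat{H}(\psi,1,1,1)$, fills in that small gap and makes the general statement precise.
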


\begin{proof}
If $p|a_1$ (say), then the inner sum over $x_1$ in \eqref{eq:HhatDecentFormulaSupercuspidalInert2} vanishes by orthogonality of characters.  This gives the first claim, by symmetry.

Now suppose $a_1 = a_2 =a_3 = 1$.  The inner sum over $x_1$ in \eqref{eq:HhatDecentFormulaSupercuspidalInert2} evaluates as a Gauss sum, giving that $\widehat{H}(\psi)$ equals
\begin{equation}
\label{eq:HhatPrimeSupercuspidal}
\frac{\overline{\gamma}\tau(\overline{\psi})}{p^{2}}
\sum_{\substack{ x_2, x_3 \shortmod{p}}} 
\sum_{\substack{t \in \mathcal{O}_L/(p)}} 
 \psi\Big(\frac{\mathrm{Nm}(t)}{ x_2 x_3}\Big)
 \psi\Big(1 - \frac{x_2 x_3}{\mathrm{Nm}(t)}\Big)
\xi(t) e_{p}(-\mathrm{Tr}(t) 
   +   x_2 +   x_3 ). 
\end{equation}
\end{proof}
\begin{mylemma}\label{lemma:HhatSupercuspidalPrime_triv}
Suppose $p \neq 2$, $k = c_0 =1$, and $a_1 = a_2 = a_3 = 1$. For $\psi=\psi_0$ trivial we have 
\begin{equation}\label{eqn:kc01_triv}\gamma p^2 \widehat{H}(\psi_0,1,1,1) =p \sumstar_{y \shortmod{p}} \Kl_{\rm ns} (y) S(y,1;p)-  \tau_L(\xi) ,\end{equation}
where $$\Kl_{\rm ns} (y):= \sum_{\substack{t \in \cO_L/(p) \\ \Nm(t)=y}} \xi(t) e_p(-\Tr(t)), \quad \text{ and } \quad \tau_L(\xi) = \sumstar_{ t \in \cO_L/(p)}\xi(t) e_p(-\mathrm{Tr}(t)).$$ 
\end{mylemma}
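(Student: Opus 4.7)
The plan is a direct computation. First note that since $k=c_0=1$ the table \eqref{eq:supercuspidalTable} forces $L/\Q_p$ to be unramified (as $H_p$ vanishes identically in the ramified $k=c_0$ case), so $d=0$. Specializing the definition \eqref{eq:HhatpsiDef} to $\psi=\psi_0$ trivial, $c_Q=p$, and $a_1=a_2=a_3=1$, using $\overline{\psi_0}(u)=1_{(u,p)=1}$, and invoking \eqref{eq:KloostermanDefSCinertCaseGeneral} in the form $H(\overline{u}x_1x_2x_3,1;p)=\overline{\gamma}\,\Kl_{\rm ns}(\overline{u}x_1x_2x_3)$, one obtains
\begin{equation*}
\gamma p^2 \widehat{H}(\psi_0,1,1,1) = \sumstar_{u \shortmod p}\,\sum_{x_1,x_2,x_3 \shortmod p} \Kl_{\rm ns}(\overline{u}x_1x_2x_3)\, e_p(x_1+x_2+x_3-u).
\end{equation*}

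Next I would substitute $x_1 \mapsto u x_1$ (a bijection since $(u,p)=1$) to decouple $u$ from the argument of $\Kl_{\rm ns}$, and then evaluate the resulting inner sum $\sumstar_{u \shortmod p} e_p(u(x_1-1))$ as the Ramanujan sum $S(x_1-1,0;p)$, which equals $p\cdot 1_{x_1 \equiv 1 \shortmod p} - 1$. Splitting the right-hand side accordingly gives two pieces.

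The first piece, carrying the factor $p$, equals $p\sum_{x_2,x_3 \shortmod p} \Kl_{\rm ns}(x_2x_3)\, e_p(x_2+x_3)$. Since $\Kl_{\rm ns}(y)=0$ unless $(y,p)=1$, I restrict to $(x_2x_3,p)=1$ and parametrize the level set $x_2x_3\equiv y \pmod p$ by $x_2 \in (\Z/p\Z)^\times$ with $x_3=y\overline{x_2}$, producing
\begin{equation*}
p\sumstar_{y\shortmod p} \Kl_{\rm ns}(y) \sumstar_{x_2 \shortmod p} e_p(x_2 + y\overline{x_2}) = p \sumstar_{y\shortmod p} \Kl_{\rm ns}(y)\, S(y,1;p),
\end{equation*}
matching the first term of the claimed formula.

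The second piece is $-\sum_{x_1,x_2,x_3 \shortmod p} \Kl_{\rm ns}(x_1x_2x_3)\, e_p(x_2+x_3)$. Again only $(x_2x_3,p)=1$ contributes, and for such $(x_2,x_3)$ the substitution $x_1 \mapsto \overline{x_2 x_3}\, x_1$ reduces the inner $x_1$-sum to $\sum_{x_1 \shortmod p}\Kl_{\rm ns}(x_1)$. Unfolding $\Kl_{\rm ns}$ and interchanging the order of summation identifies this with $\sum_{t \in \cO_L/(p)} \xi(t) e_p(-\Tr(t)) = \tau_L(\xi)$ (the $t=0$ term contributes zero by our convention $\xi(0)=0$), while the remaining outer sums give $\sumstar_{x_2}e_p(x_2)\sumstar_{x_3}e_p(x_3) = (-1)(-1) = 1$. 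Thus the second piece contributes exactly $-\tau_L(\xi)$, and together with the first piece yields the asserted formula \eqref{eqn:kc01_triv}. There is no real obstacle: the only mild subtleties are the initial reduction to the unramified case and the identification $\sum_{x_1 \shortmod p}\Kl_{\rm ns}(x_1) = \tau_L(\xi)$, both of which are immediate.
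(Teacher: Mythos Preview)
Your proof is correct and follows essentially the same route as the paper: extract a Ramanujan sum and split into the two displayed pieces. The only cosmetic difference is that the paper starts from \eqref{eq:HhatDecentFormulaSupercuspidalInert2} (where $u$ has already been eliminated) and sums over $x_1$ to obtain the Ramanujan sum $S(1-x_2x_3/\Nm(t),0;p)$, whereas you start directly from \eqref{eq:HhatpsiDef}, package the $t$-sum into $\Kl_{\rm ns}$ at the outset, and sum over $u$ to obtain $S(x_1-1,0;p)$.
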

\begin{proof}
 The inner sum over $x_1$ in \eqref{eq:HhatDecentFormulaSupercuspidalInert2} evaluates as a Ramanujan sum, giving
\begin{multline*}
\gamma p^2 \widehat{H}(\psi_0) = \sumstar_{x_2,x_3\shortmod{p}} \sum_{t \in \cO_L/(p)} S\left(1-\frac{x_2x_3}{\Nm(t)}, 0;p\right) \xi(t)  e_p(-\mathrm{Tr}(t) + x_2 + x_3) \\
 =  p\sumstar_{x_3 \shortmod{p}} \sum_{t \in \cO_L/(p)} \xi(t) e_p(-\mathrm{Tr}(t) + x_3 + \frac{\mathrm{Nm}(t)}{x_3}) -   \sumstar_{x_2,x_3 \shortmod{p}}  \sum_{t \in \cO_L/(p)}  \xi(t)  e_p(-\mathrm{Tr}(t) + x_2 + x_3),
\end{multline*}from which \eqref{eqn:kc01_triv} follows. 
\end{proof}

\subsection{Estimates from algebraic geometry.}
\label{section:AG}
Next we use the theory of $\ell$-adic trace functions to bound the sum $\widehat{H}(\psi)$ as it appears  in Lemmas \ref{lemma:HhatSupercuspidalPrime} and \ref{lemma:HhatSupercuspidalPrime_triv}. For background, see \cite{SGA4.5, Katz1980, Katz1988, FKMS}. 

In fact, we work in somewhat more generality than is strictly required. Let $k$ be a finite field of characteristic $p$ and cardinality $q$, and $K$ be a quadratic extension of $k$. Let $\addchar$ be a fixed non-trivial additive character of $k$. Let $\psi, \xi$ be non-trivial multiplicative characters of $k^\times,K^\times$, respectively, extended by $0$ to $\P^1(k), \P^1(K)$. Let 
\begin{equation}
g(\xi,\psi) = \sum_{x_2,x_3 \in k} \sum_{t \in K}  \psi\Big(\frac{\mathrm{Nm}(t)}{ x_2 x_3}-1\Big)
\xi(t) \addchar(-\mathrm{Tr}(t) 
   +   x_2 +   x_3 ). 
\end{equation}
 The main goal of this section is to prove the following.
\begin{myprop}\label{AGprop}
If $\xi$ is non-trivial, then 
$g(\xi,\psi) \ll q^2$ for all non-trivial $\psi$, with an absolute and effective implied constant. 
\end{myprop}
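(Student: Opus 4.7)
The plan is to write $g(\xi,\psi) = \sum_{y\in k^\times} F(y)G(y)$ where $F$ and $G$ are trace functions of middle-extension $\ell$-adic sheaves on $\G_{m,k}$ of bounded rank and conductor, pure of weights $1$ and $2$ respectively, and then apply Deligne's equidistribution theorem \cite{DeligneWeil2} to the tensor product sheaf. First, interchanging the order of summation in $g(\xi,\psi)$ and grouping over $y = \Nm(t) \in k$ (using $\xi(0) = 0$) yields
\begin{equation*}
g(\xi,\psi) = \sum_{y \in k^\times} \Kl_{\mathrm{ns}}(y)\, B(y), \qquad B(y) := \sum_{x_2,x_3 \in k^\times}\psi\left(\frac{y}{x_2x_3}-1\right)\phi(x_2+x_3),
\end{equation*}
with $\Kl_{\mathrm{ns}}(y)$ as in Lemma \ref{lemma:HhatSupercuspidalPrime_triv}. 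Setting $u = x_2 x_3$ and evaluating the inner sum over pairs $(x_2, x_3)$ with $x_2 x_3 = u$ as a classical Kloosterman sum $\Kl(u) := \sum_{x \in k^\times}\phi(x+u/x)$, then substituting $w = y/u$, exhibits $B(y) = \sum_{w \in k^\times}\psi(w-1)\Kl(y/w)$ as the multiplicative convolution on $\G_{m,k}$ of the rank-$1$ trace function $w \mapsto \psi(w-1)$ against the Kloosterman trace function.

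I then identify the two underlying sheaves. The nonsplit Kloosterman trace function $\Kl_{\mathrm{ns}}$ is that of $\mathcal{N} := \Nm_*(\cL_\xi \otimes \cL_{\phi\circ(-\Tr)})$ where $\Nm \colon \mathrm{Res}_{K/k}\G_m \to \G_{m,k}$, a rank-$2$ middle-extension sheaf pure of weight $1$, geometrically irreducible whenever $\xi$ is Galois-regular ($\xi \neq \xi^\sigma$ for $\sigma$ generating $\Gal(K/k)$). In the non-regular case $\xi = \xi_0 \circ \Nm$, the sheaf $\mathcal{N}$ splits as a sum of two rank-$1$ Kummer sheaves and the argument below reduces to a direct invocation of the Weil bound; I assume regularity in the sequel. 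Meanwhile, by the convolution structure above, $B$ is the trace function of Katz's middle multiplicative convolution $\mathcal{M} := \mathcal{H} \star_{\mathrm{mid}} \cKl_2$ on $\G_{m,k}$ (in the sense of \cite{Katz1988}), where $\mathcal{H}$ is the tame rank-$1$ sheaf with trace function $\psi(w-1)$ and $\cKl_2$ is the classical Kloosterman sheaf. By Katz's theory \cite{Katz1988}, $\mathcal{M}$ is a geometrically irreducible middle-extension sheaf of bounded rank and conductor, pure of weight $2$.

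The tensor product $\mathcal{N} \otimes \mathcal{M}$ is then mixed of weights $\leq 3$ on a dense open of $\G_{m,k}$, of bounded rank and conductor, so Deligne's theorem yields
\begin{equation*}
\left|\sum_{y \in k^\times} \Kl_{\mathrm{ns}}(y)\, B(y)\right| \ll q^{(3+1)/2} = q^2,
\end{equation*}
with effective implied constant depending only on the rank and conductor of $\mathcal{N}\otimes\mathcal{M}$, provided that $\mathcal{N}\otimes\mathcal{M}$ contains no trivial geometric isotypic component, equivalently that $\mathcal{N}$ is not geometrically isomorphic to any constituent of $\mathcal{M}^\vee$. The main obstacle is verifying this non-isomorphism. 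I would argue by comparing local monodromies: $\mathcal{N}$ has tame inertia at $0$ controlled by the restriction of $\xi$ to the unit torus, and a wild structure at $\infty$ determined by the geometry of the norm together with $\phi\circ(-\Tr)$ restricted to the fiber; meanwhile $\mathcal{M}$ inherits the Swan conductor $1$ wild structure of $\cKl_2$ at $\infty$ through Katz's local convolution formulas, and carries tame inertia at $0$ and $1$ controlled by $\psi$. Comparing these local data—Swan conductors and breaks at $\infty$, or the tame characters appearing at $0$—suffices to distinguish $\mathcal{N}$ from every constituent of $\mathcal{M}^\vee$ in all but a bounded, sporadic set of configurations of $(\xi,\psi)$; these remaining cases are disposed of by a direct rank or trace comparison.
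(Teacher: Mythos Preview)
Your overall architecture—writing $g(\xi,\psi)=\sum_{y}\Kl_{\mathrm{ns}}(y)B(y)$ and then invoking Deligne's theorem for the correlation of two geometrically irreducible trace functions—is the same as the paper's. The identification of $\Kl_{\mathrm{ns}}$ with the rank-$2$ sheaf $\cKl_{\mathrm{ns}}$ is also correct. The difficulty is entirely in your treatment of $B(y)$.

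The genuine gap is your appeal to ``Katz's middle multiplicative convolution $\mathcal{H}\star_{\mathrm{mid}}\cKl_2$'' with the citation \cite{Katz1988}. The convolution theory of that book (Chapter~5) applies to sheaves in the class $\mathcal{C}$: lisse on $\G_m$, tame at $0$, \emph{totally wildly ramified} at $\infty$. Your $\mathcal{H}$, with trace function $w\mapsto\psi(w-1)$, is ramified at $w=1\in\G_m$ and \emph{tame} at $\infty$, so it fails both hypotheses; the convolution $\mathcal{H}\ast\cKl_2$ is not covered by that theorem, and you get no a priori control on rank, purity, or irreducibility of $\mathcal{M}$. The paper explicitly flags this obstruction (see the remark after their Lemma on the sheaf $\cG$). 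Their workaround is to build the $B$-sheaf by two successive Deligne--Laumon Fourier transforms and then invoke Katz's one-sided convolution isomorphism \cite[8.6.2~Prop.]{Katz1988}, which only requires one factor (namely $\cL_\phi$) to be a genuine convolution sheaf and the other merely to lie in $\mathcal{T}$. This yields precise invariants: the resulting sheaf $\cH$ is geometrically irreducible, pure of weight $2$, and of rank exactly $3$. That rank computation is what makes the non-isomorphism step trivial---$\cKl_{\mathrm{ns}}$ has rank $2$, $\cH$ has rank $3$, done. By contrast, your closing paragraph (comparing Swan conductors and tame characters, with an escape clause for ``sporadic'' configurations handled by ``direct rank or trace comparison'') is not a proof: without knowing the rank or even the irreducibility of $\mathcal{M}$, you cannot rule out that some rank-$2$ constituent of $\mathcal{M}^\vee$ matches $\cKl_{\mathrm{ns}}$, and the local-monodromy comparison you sketch is not carried out.
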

Proposition \ref{AGprop} implies that $H(\psi)\ll p^{1/2}$ in the case that $p$ is odd, $k=c_0=1$, $c(\psi)=1$ and $p \nmid a_1a_2a_3$ cf.\ Lemma \ref{lemma:HhatSupercuspidalPrime}.

\subsubsection{Notation and tools} 
Refreshing the notation, let $\addchar$ be a fixed non-trivial $\overline{\Q}_\ell^\times$-valued additive character of $k$, and define the additive character $\overline{\addchar}_K$ of $K$ by $\overline{\addchar}_K(x) =\addchar(-\Tr(x)).$ Fix an embedding $\overline{\Q}_\ell \hookrightarrow \C$ so that our trace functions may take complex values. Finally, we mention that in this paper by a ``sheaf on $V$'' we will always mean a constructible $\overline{\Q}_\ell$-sheaf on $V\subseteq \P^1$.

Fouvry, Kowalski and Michel have introduced a notion of conductor for sheaves on dense open sets of $\P^1_k$. 
In this paper, we use the definition \cite[Def.\ 4.3]{FKMS}, namely: 
\begin{mydefi}
For a sheaf $\cF$ on $V \subseteq \P_k^1$, lisse on $U\subseteq V$, the positive integer 
\begin{equation*}
{\rm c}(\cF): = {\rm Rk}(\cF) +  |\P^1(\overline{k}) - U(\overline{k})| + \sum_{x \in \P^1(\overline{k})} \Swan_x(\cF),
\end{equation*} is called the conductor of $\cF$. 
\end{mydefi}
Remark. More natural and advanced notions of conductor of sheaves on general varieties are now available, see \cite{FFKS_conductor}. 

Our main tool to prove Proposition \ref{AGprop} is the Deligne-Laumon geometric Fourier transform. We give a few preliminary definitions, following closely \cite[\S8.2]{Katz1988}. Let $\mathcal{L}_{\addchar}$ be the Artin-Schreier sheaf on $\A^1_k$ associated to $\addchar$, which we may pull back to $\A_{\overline{k}}^1$ for $\overline{k}$ an algebraic closure of $k$. For any $a \in k$ also define $\mathcal{L}_{\addchar(ax)} = (x\mapsto ax)^*\mathcal{L}_{\addchar}$, which is another Artin-Schreier sheaf on $\A^1_{\overline{k}}$. If $a=0$, the sheaf $\mathcal{L}_{\addchar(ax)}$ is just the constant sheaf. 
\begin{mydefi}
A sheaf $\cF$ on $\A^1_{\overline{k}}$ is called \emph{elementary} if it satisfies the following two conditions
\begin{enumerate}
\item If $U$ is a non-empty open set on which $\cF$ is lisse and $j: U \hookrightarrow \A^1_{\overline{k}}$ is the open immersion, then the map $\cF \to j_*j^* \cF$ is injective. Equivalently, for all $a \in k$, the cohomology group $H_c^0(\A^1_{\overline{k}}, \cF \otimes \cL_{\addchar(ax)})=0$, i.e.\ $\cF$ has no non-zero punctual sections. 
\item For all $a \in k$, the cohomology group $H_c^2(\A^1_{\overline{k}}, \cF \otimes \cL_{\addchar(ax)})=0$.
\end{enumerate}
\end{mydefi}
A sheaf $\cF$ on $\A_k^1$ is said to be elementary 
if for some (equiv.\ any) algebraic closure $\overline{k}$ of $k$, the sheaf $\cF$ pulled back to $\A_{\overline{k}}^1$
is elementary.

Given an elementary $\cF$ on $\A_k^1$, let $\widehat{\cF}$ be the geometric Fourier transform of $\cF$ with respect to $\addchar$, as defined e.g.\ in \cite[8.2.3 Def.]{Katz1988}. 
\begin{mytheo}[Deligne, Katz, Laumon]\label{ThmGeomFT_MT} The Deligne-Laumon geometric Fourier transform enjoys the following properties. 
\begin{enumerate}
\item If $\cF$ is elementary, then $\widehat{\cF}$ is elementary.
\item If $\cF$ is elementary, then we have have a canonical isomorphism 
$$ \widehat{\widehat{\cF}} \simeq \cF(-1),$$ where the outer transform is defined with respect to $\overline{\addchar}$. 
\item If $\cF$ is elementary and pure of weight $w$, then $\widehat{\cF}$ is pure of weight $w+1$. 
\item If $\cF$ is elementary, geometrically irreducible, and pure of weight $0$, then $\widehat{\cF}$ is geometrically irreducible. 
\end{enumerate}
\end{mytheo}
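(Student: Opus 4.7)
The plan is to deduce each of the four assertions from standard foundational results on the $\ell$-adic Fourier transform in \cite{LaumonFT} and Chapter 8 of \cite{Katz1988}, specialized to the category of elementary sheaves, and to verify that the conventions used there match those of the present paper.

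First I would address (1). Since $\cF$ is elementary, the two vanishing conditions say that $H^i_c(\A^1_{\overline{k}}, \cF \otimes \cL_{\addchar(ax)}) = 0$ for $i \neq 1$ and all $a \in k$, so the complex $\widehat{\cF}$ is concentrated in a single cohomological degree and is therefore naturally a sheaf, whose stalk at $a$ is computed by $H^1_c$. To check the two elementary conditions for $\widehat{\cF}$, I would invoke Fourier inversion (item (2)) to translate them back into the original vanishings for $\cF$, together with the compatibility of $\widehat{(\cdot)}$ with $j_*j^*$ on the lisse locus. This step is essentially the content of \cite[Lem.\ 8.2.5]{Katz1988}.

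Next, (2) is the $\ell$-adic Fourier inversion theorem of \cite[Thm.\ 2.1.3]{LaumonFT}; the Tate twist $(-1)$ appears from the normalization of $\widehat{\cF}$ as $Rp_{2,!}(p_1^*\cF \otimes \cL_{\addchar(xy)})$, together with the fact that $H^2_c(\A^1,\overline{\Q}_\ell) \simeq \overline{\Q}_\ell(-1)$. For (3), the shift of weight by $+1$ is Laumon's purity theorem for the Fourier transform (an application of Deligne's main theorem of weights via stationary phase), see \cite[Thm.\ 8.3]{Katz1988}. Finally, (4) follows formally: by (1) and (2), the functor $\widehat{(\cdot)}$ and its variant defined using $\overline{\addchar}$ are mutually quasi-inverse equivalences (up to a Tate twist) between the category of elementary sheaves on $\A^1_k$ with respect to $\addchar$ and that with respect to $\overline{\addchar}$. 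Any equivalence of abelian categories preserves simple objects, hence geometric irreducibility is preserved.

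The main obstacle I anticipate is not mathematical but notational. Different sources fix different normalizations (sign conventions on the additive character, cohomological versus perverse-degree shifts, and the precise placement of Tate twists), so the real work lies in tracing through the conventions of \cite{LaumonFT} and \cite{Katz1988} to confirm that the isomorphism in (2) is exactly $\widehat{\widehat{\cF}} \simeq \cF(-1)$ rather than a dualized or differently-twisted variant, and that the input hypotheses (elementarity in the sense defined in the paper versus perverse irreducibility in Katz's sense) line up correctly.
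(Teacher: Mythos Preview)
For (1)--(3) you and the paper both defer to the foundational literature, with only minor differences in citation (the paper points to \cite[8.2.5 Main Thm.]{Katz1988} for (1)--(2) and to \cite[Thm.~7.3.8(5)]{Katz1990} for (3)). The real divergence is (4). The paper does \emph{not} use the categorical equivalence; instead it applies Katz's diophantine irreducibility criterion \cite[Lem.~7.0.3]{KatzRigidLocalSystems} --- a lisse sheaf pointwise pure of weight $0$ is geometrically irreducible iff the normalized second moment of its trace function over $k_n$-points is $1+O(q^{-n/2})$ --- and transfers this moment condition from $\cF$ to $\widehat{\cF}$ via the Plancherel formula, following Fouvry--Kowalski--Michel \cite[Prop.~6.8]{FKMS}. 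This is precisely where the purity hypothesis enters, and explains why it is stated. Your categorical argument is a legitimate alternative and, as you implicitly observe, does not need purity at all; but note one imprecision. ``Geometrically irreducible'' means simple among \emph{all} constructible sheaves on $\A^1_{\overline{k}}$, not just among elementary ones, so an equivalence on the subcategory of elementary sheaves does not by itself rule out a non-elementary subobject of $\widehat{\cF}$. The clean way to run your argument is in $\mathrm{Perv}(\A^1_{\overline{k}})$, where the Fourier--Deligne transform is a $t$-exact autoequivalence of an honest abelian category and a geometrically irreducible elementary $\cF$ corresponds to the simple object $\cF[1]$. The paper's trace-function route sidesteps this bookkeeping entirely, at the cost of the extra purity assumption.
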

\begin{proof}
\begin{enumerate}
\item See e.g.\ \cite[8.2.5 Main Thm.(1)]{Katz1988}.
\item See e.g.\ \cite[8.2.5 Main Thm.(1)]{Katz1988}.
\item This is the  main theorem of the geometric Fourier transform, due to Katz and Laumon, see e.g.\ \cite[Thm.\ 7.3.8(5)]{Katz1990}.
\item The geometric irreducibility of $\widehat{\cF}$ follows from Katz's diophantine irreducibility criterion \cite[Lem.\ 7.0.3]{KatzRigidLocalSystems} and the Plancherel formula, as observed by Fouvry, Kowalski, and Michel, see \cite[Prop.\ 6.8]{FKMS}. 
\end{enumerate}
\end{proof}

Lastly, we recall a sheaf on $\G_{m,k}$ associated to a multiplicative character $\xi$ on $K^\times$. 
\begin{mylemma}[Katz]\label{sheafKlnsproperties}
There exists a lisse sheaf $\cKl_{\rm ns} = \cKl(K,\overline{\addchar}_K,\xi)$ on $\G_{m,k}$ with the following properties. The sheaf $\cKl_{\rm ns}$
\begin{enumerate}
\item  is of rank 2, pure of weight 1, with trace function $t_{\cKl_{\rm ns}}$ for $y \in k^\times$ given by $$t_{\cKl_{\rm ns}}(y) = - \sum_{\substack{t\in K \\ \Nm(t)=y}} \xi(t) \overline{\addchar}_K(t),$$ 
\item  is tamely ramified at 0, totally ramified at $\infty$, and has $\Swan_\infty (\cKl_{\rm ns})) = 1$, 
\item is geometrically irreducible, and 
\item has ${\rm c}(\cKl_{\rm ns})=5$. 
\end{enumerate}
\end{mylemma}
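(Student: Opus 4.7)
The plan is to construct $\cKl_{\rm ns}$ as a higher direct image under the norm map and then verify each of the four properties using standard tools: Grothendieck--Ogg--Shafarevich, Deligne's purity theorem, and the local theory of wild ramification for Artin--Schreier twists.

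\emph{Construction.} Let $V = R_{K/k}(\G_m)$ be the Weil restriction, a two-dimensional $k$-torus with $V(k)=K^{\times}$, and write $\mathbf{N}\colon V\to\G_m$ for the norm and $\mathbf{T}\colon V\to\A^1$ for the trace (both arising from $R_{K/k}$). Let $\cL_{\xi}$ be the Kummer-type character sheaf on $V$ attached to $\xi$ via the Lang torsor, and $\cL_{\overline{\addchar}}$ the Artin--Schreier sheaf on $\A^1$ attached to $\addchar$. Put $\cM:=\cL_{\xi}\otimes \mathbf{T}^{*}\cL_{\overline{\addchar}}$ on $V$ and define
\[
\cKl_{\rm ns} := R^{1}\mathbf{N}_{!}\,\cM.
\]
The Grothendieck--Lefschetz trace formula applied fibrewise then yields the desired trace function once one checks that $H^{0}_{c}$ and $H^{2}_{c}$ of the relevant sheaf on each geometric fibre vanish, which follows from the nontriviality of $\xi$ and $\addchar$ by standard criteria for Kummer/Artin--Schreier sheaves on $\G_m$.

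\emph{Rank, lisseness, and purity.} For $\bar y\in\G_m(\bar k)$ the fibre $\mathbf{N}^{-1}(\bar y)$ is isomorphic to $\G_{m,\bar k}$ after choosing a splitting $K\otimes_{k}\bar k\cong \bar k\times\bar k$; under this splitting $t\mapsto(x,\bar y/x)$ and $\cM$ restricts to a rank-one sheaf of the form $\cL_{\xi_{1}}(x)\otimes[x\mapsto \bar y/x]^{*}\cL_{\xi_{2}}\otimes\cL_{\overline{\addchar}(x+\bar y/x)}$. Grothendieck--Ogg--Shafarevich on $\G_{m,\bar k}$ then gives $\dim H^{1}_{c}=\Swan_{0}+\Swan_{\infty}+(\text{drop terms})=2$ since the Artin--Schreier twist contributes Swan $1$ at each of $0$ and $\infty$. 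Constancy of this Euler characteristic along the base shows that $\cKl_{\rm ns}$ is lisse on $\G_{m,k}$ of rank $2$. Purity follows from Deligne's Weil II: $\cM$ is pure of weight $0$ and $\mathbf{N}$ is affine of relative dimension $1$, so $R^{1}\mathbf{N}_{!}\cM$ is mixed of weights $\leq 1$; Verdier duality (using $\xi^{-1}$) supplies the matching lower bound, yielding purity of weight $1$.

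\emph{Ramification at $0$ and $\infty$.} At $y=0$, only the Kummer factor $\xi$ controls the ramification (the Artin--Schreier contribution degenerates without introducing wild inertia), giving tame ramification of $\cKl_{\rm ns}$ at $0$. At $y=\infty$, a stationary-phase/Laumon local Fourier transform analysis of $\cL_{\overline{\addchar}\circ \mathbf{T}}$ along the norm fibration shows that $\cKl_{\rm ns}$ is totally wild with a single break equal to $1/2$, exactly as in the standard non-split Kloosterman sheaf setup of Katz. A rank-$2$ sheaf with a unique break of slope $1/2$ at $\infty$ has $\Swan_{\infty}=2\cdot\tfrac12=1$.

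\emph{Irreducibility and conductor.} The local representation of the inertia group at $\infty$ attached to $\cKl_{\rm ns}$ is a single break-$1/2$ piece of dimension $2$, and any such inertia representation is induced from a tamely ramified character of the degree-$2$ wild extension, hence irreducible. Irreducibility of the local monodromy at $\infty$ forces geometric irreducibility of the lisse sheaf $\cKl_{\rm ns}$ on $\G_{m,k}$. The conductor is then assembled from the established data as $c(\cKl_{\rm ns})=\mathrm{Rk}+|\mathbb{P}^{1}\smallsetminus\G_{m}|+\Swan_{0}+\Swan_{\infty}=2+2+0+1=5$.

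\emph{Main obstacle.} The delicate step is the local analysis at $\infty$: showing that the wild part has a unique break at $1/2$ and is irreducible. This is precisely the non-split analogue of the classical stationary-phase computation for Kloosterman sheaves and is the heart of the argument; all the other properties follow formally once this slope decomposition is in hand.
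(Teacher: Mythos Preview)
Your proposal is correct and essentially reconstructs the arguments from \cite{Katz1988} that the paper simply cites (specifically \cite[8.8.5 Thm., \S8.8.6, 1.11 Lem.(2)]{Katz1988} for parts (1)--(3), with (4) read off directly from the definition). One small point of comparison worth noting: for geometric irreducibility you compute the exact break $1/2$ at $\infty$ and then invoke the structure theory of such inertia representations, whereas the paper's cited lemma \cite[1.11 Lem.(2)]{Katz1988} uses only the coarser information that the sheaf is totally wild of rank $2$ with $\Swan_\infty=1$. That already forces $I_\infty$-irreducibility, since a reducible such representation would split into two totally wild characters, each of integer Swan conductor $\geq 1$ by Hasse--Arf, contradicting total Swan $=1$. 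This route sidesteps the stationary-phase break computation that you yourself flag as the delicate step, so it is slightly more economical; your approach, on the other hand, yields the finer break information $=1/2$ as a byproduct.
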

\begin{proof}
\begin{enumerate}
\item The sheaf $\cKl_{\rm ns} = \cKl(K,\overline{\addchar}_K,\xi)$ is constructed and these basic properties are proved in \cite[8.8.5 Thm.]{Katz1988}.
\item This is \cite[\S 8.8.6]{Katz1988}.
\item The continuous representation $V = \cKl_{{\rm ns},\overline{\eta}}$ of the \'etale fundamental group $\pi_1(\G_m, \overline{\eta})$ restricts to a continuous representation of the inertia subgroup $I_\infty$. The hypotheses of \cite[1.11 Lem.(2)]{Katz1988} are satisfied thanks to point (2) of the lemma, and so $V$ is absolutely irreducible as an $I_\infty$-representation, \`a fortiori $\cKl_{\rm ns}$ is geometrically irreducible. 
\item Direct from the definition of the conductor and points (1), (2). 
\end{enumerate}
\end{proof}

\subsubsection{Proof of Proposition \ref{AGprop} when $\psi$ is non-trivial}
We re-arrange $g(\xi,\psi)$ as
$$ -\sum_{y} \sum_{x_1,x_2} t_{\cKl_{\rm ns}}(y) \psi(\frac{y}{x_1x_2}-1) \addchar(x_1+x_2) = -\sum_y  t_{\cKl_{\rm ns}}(y) \sum_{x_1} \addchar(x_1) \sum_{x_2} \psi(\frac{y}{x_1x_2}-1) \addchar(x_2),$$
where each of $y,x_1,x_2$ run over $k^\times$. 
Changing variables $x_2 \rightarrow yx_2/x_1$ we get that
$$g(\xi,\psi)=- \sum_y  t_{\cKl_{\rm ns}}(y) \sum_{x_1} \addchar(x_1) \sum_{x_2} \psi(\frac{1}{x_2}-1) \addchar(\frac{y}{x_1}x_2).$$ 
Let $\gamma: \A^1_k \to \P^1_k$ be the fractional linear transformation defined by $\gamma(X) = \frac{1}{X}-1$ and let $\cK_\psi$ be the Kummer sheaf corresponding to $\psi$ (non-trivial). Then the sheaf $\gamma^{*}K_\psi$ on $\A^1_k$ is  lisse on $U=\A^1_{k} \smallsetminus \{0,1\}$ of rank 1, pure of weight 0, and geometrically irreducible with trace function $t_{\gamma^* \cK_\psi}(x) =\psi(\frac{1}{x}-1)$. Since $\gamma^{*}K_\psi$ is rank 1 and not isomorphic to any Artin-Schreier sheaf, it is a Fourier sheaf, \`a fortiori elementary. Precisely, $\gamma^{*}K_\psi$ is middle-extension, rank 1, and ramified at 1, so that \cite[8.3.1 Lem.(2), 8.2.1.3 Lem.(1)]{Katz1988} applies.    
\begin{mylemma}\label{sheafGproperties}
For $\psi$ non-trivial, the sheaf $\cG:= \widehat{\gamma^* \cK_\psi}$ on $\A_k^1$ 
\begin{enumerate}
\item is elementary, pure of weight 1, and geometrically irreducible,
\item is lisse on $\G_{m,k}$, ramified at $0$, of rank $2$,
\item has $\Swan_0 (\cG)=0$ and $\Swan_\infty(\cG)=1$ with $\infty$-breaks $0$ and $1$, and
\item has trace function $$t_{\cG}(u) =- \sum_{x \in k \smallsetminus \{0,1\}} \psi(x^{-1}-1) \addchar(ux)$$ for all $u \in k$.
\end{enumerate}
\end{mylemma}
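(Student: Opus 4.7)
The plan is to verify each claim in turn. Part (1) is a direct consequence of Theorem \ref{ThmGeomFT_MT} parts (1), (3), and (4), applied to $\gamma^*\cK_\psi$, which the preceding paragraph has established to be elementary, rank $1$, pure of weight $0$, and geometrically irreducible on $\A^1_k$. Part (4) is the trace function formula unpacked from the definition of the geometric Fourier transform, using that $t_{\gamma^*\cK_\psi}(x)$ vanishes outside $\A^1 \smallsetminus \{0,1\}$ and equals $\psi(1/x - 1)$ there.

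For parts (2) and (3), I would first record the local structure of $\cF := \gamma^*\cK_\psi$ on $\P^1$. Because $\gamma$ is an automorphism of $\P^1$ sending $0 \mapsto \infty$, $1 \mapsto 0$, and $\infty \mapsto -1$, the sheaf $\cF$ is tamely ramified at $0$ and $1$ with $\dim \cF^{I_0} = \dim \cF^{I_1} = 0$, and lisse at $\infty$ with $\dim \cF^{I_\infty} = 1$. For generic $a \in k^\times$, the sheaf $\cF \otimes \cL_{\addchar(ax)}$ is lisse of rank $1$ on $U := \A^1 \smallsetminus \{0,1\}$ with Swan $1$ at $\infty$ coming from the Artin-Schreier factor, so by Grothendieck-Ogg-Shafarevich $\chi_c(U, \cF \otimes \cL_{\addchar(ax)}) = 1\cdot \chi_c(U) - 1 = -2$. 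The vanishing of $H^0_c$ and $H^2_c$ from the elementary condition yields the generic rank $\text{rk}(\cG) = 2$. The same computation at $a = 0$ gives $\dim \cG_0 = -\chi_c(\A^1, \cF) = 1$, so $\cG$ drops rank at $0$, in particular is not lisse there. For any $a \in k^\times$ the analogous GOS computation gives $\dim \cG_a = 2$, establishing lisseness of $\cG$ on $\G_{m,k}$.

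For the Swan conductors and the break decomposition at $\infty$, I would invoke Laumon's local theory of the Fourier transform (stationary phase). The tame rank-$1$ ramification of $\cF$ at $0$ contributes, via $FT^{(0,\infty)}$, a tame rank-$1$ piece at $\infty$ of $\cG$ (slope $0$, Swan $0$); this is the local version of the classical fact that the Fourier transform of a Kummer sheaf at $0$ is a Kummer sheaf at $\infty$. The tame rank-$1$ ramification of $\cF$ at $1$ contributes, via $FT^{(1,\infty)}$, a rank-$1$ piece at $\infty$ of slope exactly $1$ and Swan $1$; the nonzero translation shift $s = 1$ produces the slope-$1$ contribution via a twist by $\cL_{\addchar(x)}$. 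Summing these gives generic rank $2$ at $\infty$, $\Swan_\infty(\cG) = 1$, and breaks $(0,1)$. The unramified piece of $\cF$ at $\infty$ contributes, via $FT^{(\infty,0)}$, a rank-$1$ tame piece at $0$ of $\cG$, yielding $\Swan_0(\cG) = 0$.

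The main obstacle in this plan is the careful bookkeeping of Laumon's local Fourier transforms, especially tracking the additive shifts responsible for the slope-$1$ contribution at $\infty$. As a consistency check, the Fourier involution $\widehat{\widehat{\cF}} = \cF(-1)$ of Theorem \ref{ThmGeomFT_MT}(2) forces $\chi(\P^1, j_*\cG) = \chi(\P^1, j_*\cF) = 0$, which combined with the rank $2$, drop $1$ at $0$, $\Swan_0(\cG) = 0$, and $\Swan_\infty(\cG) = 1$ already established above pins down $\dim \cG^{I_\infty} = 0$, consistent with the tame and slope-$1$ pieces at $\infty$ both having trivial $I_\infty$-invariants.
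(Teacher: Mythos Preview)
Your approach is correct and covers the same ground as the paper, but by a different route. The paper argues (1) and (4) exactly as you do, but for (2) and (3) it cites Katz's results as black boxes: \cite[8.5.8 Cor.]{Katz1988} gives lisseness on $\G_m$ and ramification at $0$ directly from $\cF$ being tame at $\infty$; \cite[Lem.~7.3.9(2)]{Katz1990} gives the rank; and \cite[Thm.~7.5.4(5)]{Katz1990} gives $\Swan_0(\cG)\le\Swan_\infty(\cF)=0$. For the $\infty$-breaks the paper runs Katz's 8.5.8 ``backwards'' via Fourier inversion rather than invoking stationary phase explicitly. Your hands-on route via Grothendieck--Ogg--Shafarevich for the stalk dimensions and Laumon's local Fourier transforms for the break structure at $\infty$ is equally valid and arguably more transparent; the paper's route is shorter because the bookkeeping has already been packaged.

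Two places deserve tightening. First, constancy of stalk dimension over $k^\times$ is not enough for lisseness; you need it over $\overline{k}^\times$ (your GOS computation is geometric, so this is free), and you are implicitly using that $\cG$ is elementary so that $\cG_a\hookrightarrow(\cG_{\overline\eta})^{I_a}$, whence $\dim\cG_a=\mathrm{rk}(\cG)$ forces trivial inertia. Second, your $\Swan_0(\cG)=0$ argument is imprecise: the $I_0$-representation of $\cG$ has rank $2$, so exhibiting a ``rank-$1$ tame piece'' via $FT^{(\infty,0)}$ does not by itself kill the Swan conductor. The clean statement is that all $\infty$-slopes of $\cF$ being $\le 1$ forces $\cG$ tame at $0$ (this is exactly \cite[Thm.~7.5.4(5)]{Katz1990}). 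Alternatively, you can close this with an Euler-characteristic argument entirely within your framework: since $\cG$ is elementary, $\chi_c(\A^1,\cG)=-\dim(FT_{\overline\addchar}\cG)_0=-\dim\cF_0=0$, while $\chi_c(\A^1,\cG)=\chi_c(\G_m,\cG)+\dim\cG_0=-\Swan_0(\cG)-\Swan_\infty(\cG)+1$; with $\Swan_\infty(\cG)=1$ already established this forces $\Swan_0(\cG)=0$. Your consistency check at the end is in this spirit, but the asserted equality $\chi(\P^1,j_*\cG)=\chi(\P^1,j_*\cF)$ is not an immediate formal consequence of Fourier inversion and should be replaced by the computation above.
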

\begin{proof}
\begin{enumerate}
\item These facts follow directly from Theorem \ref{ThmGeomFT_MT}(1), (3), (4). 
\item Since $\gamma^* \cK_\psi$ is tamely ramified (indeed, lisse) at $\infty$, the sheaf $\cG$ is lisse on $\G_{m, k}$ and is ramified at $0$, see \cite[8.5.8 Cor.]{Katz1988}.  The rank of $\cG$ may be computed using \cite[Lem.\ 7.3.9(2)]{Katz1990}, since $\gamma^*\cK_{\psi}$ is lisse at $\infty$, everywhere tame, rank 1, and ramified at $0,1$. 
\item It follows from Laumon's local geometric Fourier transform \cite{LaumonFT} that $$\Swan_0( \cG) \leq \Swan_\infty(\gamma^* \cK_\psi) = 0,$$ see e.g.\ \cite[Thm.\ 7.5.4(5)]{Katz1990}. For the second part, by Fourier inversion Theorem \ref{ThmGeomFT_MT}(2), we may apply \cite[8.5.8 Cor.]{Katz1988} ``backwards'': since $\gamma^*\cK_\psi$ is not lisse on $\G_m$, the sheaf $\cG$ has a break at 1, and since $ \gamma^*\cK_\psi$ is not lisse at 0, the sheaf $\cG$ has a break $<1$. As $\cG$ has generic rank $2$ and $\Swan_\infty (\cG) \in \Z_{\geq 0}$, we must have that $0,1$ are the only two $\infty$-breaks of $\cG$, so $\Swan_\infty (\cG)=1$. 
\item The formula for the trace function follows from proper base change and the Grothendieck-Lefshetz trace formula \cite[Arcata IV Thm.\ 5.4, Rapport Thm.\ 3.2]{SGA4.5}, since by (1) $\cG$ is elementary. 
\end{enumerate}
\end{proof}
Remark: The sheaf $\cG$ is not a convolution sheaf, as it is not totally wildly ramified at $\infty$. Therefore we cannot directly apply \cite[5.1 Convolution Thm.]{Katz1988}, which if it were permissible would quickly finish the proof of Proposition \ref{AGprop}. 

By Lemma \ref{sheafGproperties}(4), we have
$$g(\xi,\psi)= \sum_y  t_{\cKl_{\rm ns}}(y) \sum_{x} \addchar(x)t_{\cG}(y/x).$$ 
Let $i: \G_{m,k} \to \A^1_{k}$ be the fractional linear transformation defined by $i(X)=1/X$. Let $j: \G_{m,k} \hookrightarrow \A^1_{k}$ be the inclusion map, and consider the extension by $0$ sheaf $j_!i^*\cG$ on $\A^1_{k}$.
\begin{mylemma}
The sheaf  $j_!i^*\cG$ on $\A^1_{k}$ is elementary. 
\end{mylemma}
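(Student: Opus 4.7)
My plan is to verify directly the two defining conditions of an elementary sheaf for $\cH := j_!i^*\cG$, namely that (a) $\cH$ has no nonzero punctual sections, and (b) $H_c^2(\A^1_{\overline{k}}, \cH \otimes \cL_{\addchar(ax)}) = 0$ for every $a \in k$. Both will reduce to standard facts about lisse sheaves on $\G_m$ together with the geometric irreducibility of $\cG$ supplied by Lemma \ref{sheafGproperties}(1).

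First I would check condition (a). By Lemma \ref{sheafGproperties}(2), $\cG$ is lisse on $\G_{m,k}$; since $i: \G_{m,k} \to \G_{m,k}$ is an automorphism, the pullback $i^*\cG$ is likewise lisse on $\G_{m,k}$. A lisse sheaf on a smooth connected curve has no nonzero punctual subsheaves, and the extension by zero $j_!$ assigns stalk $0$ at the origin, so $\cH$ admits no nonzero punctual sections anywhere on $\A^1_{\overline{k}}$. Equivalently, the map $\cH \to j_* j^* \cH = j_* i^*\cG$ is injective, since on $\G_m$ it is the identity and at $0$ the source vanishes.

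Next I would verify condition (b). Because $\cH$ has zero stalk at the origin, excision gives
\[
H_c^2(\A^1_{\overline{k}}, \cH \otimes \cL_{\addchar(ax)}) \;\cong\; H_c^2(\G_{m,\overline{k}}, i^*\cG \otimes \cL_{\addchar(ax)}|_{\G_m}).
\]
For any lisse $\overline{\Q}_\ell$-sheaf $\cF$ on $\G_{m,\overline{k}}$, this $H_c^2$ is (up to a Tate twist) the space of coinvariants of the associated $\pi_1^{\mathrm{geom}}(\G_{m,\overline{k}})$-representation on a generic stalk. By Lemma \ref{sheafGproperties}(1), $\cG$ is geometrically irreducible, and since $\cG$ is lisse on $\G_m$ this means precisely that the $\pi_1^{\mathrm{geom}}(\G_{m,\overline{k}})$-representation on its generic stalk is irreducible; pullback by the automorphism $i$ and twisting by the rank-one sheaf $\cL_{\addchar(ax)}|_{\G_m}$ both preserve this irreducibility. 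Hence $i^*\cG \otimes \cL_{\addchar(ax)}|_{\G_m}$ is a geometrically irreducible representation of rank $2$. A rank-$2$ irreducible representation admits no trivial (rank-one) quotient, so its coinvariants vanish, giving $H_c^2 = 0$.

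I do not anticipate a genuine obstacle in this lemma: the entire argument is a direct unpacking of definitions, using only (i) that $\cG$ is lisse of rank $2$ on $\G_m$ and (ii) that geometric irreducibility survives pullback by $i$ and twisting by an Artin--Schreier character. The one point worth flagging is the case $a=0$, for which $\cL_{\addchar(0\cdot x)}$ is geometrically trivial; but $i^*\cG|_{\G_m}$ is itself geometrically irreducible of rank $2$, so the coinvariants vanishing argument still applies uniformly in $a \in k$.
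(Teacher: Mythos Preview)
Your proof is correct and follows essentially the same structure as the paper's: verify the two defining conditions of ``elementary'' directly, using excision to reduce the $H_c^2$ computation to $\G_m$, then the coinvariants description of $H_c^2$ together with geometric irreducibility of $i^*\cG \otimes \cL_{\addchar(ax)}|_{\G_m}$ in rank $2$. The one difference is in how that irreducibility is established: the paper invokes Katz's diophantine irreducibility criterion (passing through trace-function sums and purity), whereas you use the more elementary observation that pullback by an automorphism and twisting by a rank-one sheaf each preserve geometric irreducibility --- your route is cleaner here.
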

\begin{proof}
It is clear from basic properties of the maps $j_*$, $j_!$, $j^*$ \cite[\S 4.4 Relations entre les foncteurs]{Katz1980} that the map $j_!i^*\cG \hookrightarrow j_*j^*j_!i^*\cG$ is injective, i.e.\ $j_!i^*\cG$ has no non-zero punctual sections.

For the second condition, by excision \cite[Sommes Trig.\ (2.5.1)*]{SGA4.5} the sequence 
$$ H_c^2(\G_{m,{\overline{k}}}, j_!i^*\cG \otimes \mathcal{L}_{\addchar(ax)}) \to H_c^2(\A_{\overline{k}}^1, j_!i^*\cG \otimes \mathcal{L}_{\addchar(ax)}) \to H_c^2(\{0\}, j_!i^*\cG \otimes \mathcal{L}_{\addchar(ax)})$$
is exact for any $a \in k$. The right-hand group vanishes for dimension reasons, precisely \cite[Arcata IV Thm.\ 6.1]{SGA4.5} applied to the structure morphism $f:\{0\}\to \Spec (k)$.  So, to show that $j_!i^*\cG$ is elementary it suffices to prove the left-hand $H_c^2$ vanishes. 

Recall \cite[Sommes trig.\ Rem.\ 1.18(d)]{SGA4.5} or \cite[2.0.6]{Katz1988} that if $\cF$ is a sheaf lisse on a connected dense open set $U$, its $H_c^2$ admits a simple interpretation in terms of the Galois representation of the \'etale fundamental group $\pi^{\rm geom}_1=\pi_1(U_{\overline{k}}, \overline{\eta})$ on $\cF_{\overline{\eta}}$, where $\overline{\eta}$ is a geometric generic point of $U_{\overline{k}}:=U \times_k \overline{k}$. Precisely, we have
\begin{equation}\label{H2cExplicit}H_c^2(U_{\overline{k}}, \cF) \simeq (\cF_{\overline{\eta}})_{\pi^{\rm geom}_1}(-1),\end{equation}
i.e.\ the $\pi^{\rm geom}_1$-coinvariants twisted by $\overline{\Q}_\ell(-1)$. 

We claim that the sheaf $j_!i^*\cG \otimes \mathcal{L}_{\addchar(ax)}$ is geometrically irreducible. Indeed, the sheaf $j_!i^*\cG(1/2)$ is lisse on $\G_{m,k}$,  pure of weight 0, and geometrically irreducible by Lemma \ref{sheafGproperties}(1). Then, by Katz's diophantine irreducibility criterion \cite[Lem.\ 7.0.3]{KatzRigidLocalSystems}, we have for all $n\geq 1$ that 
$$\frac{1}{q^n}\sum_{x \in k_n^\times} |t_{j_!i^*\cG(1/2)}(x,k_n)|^2 = 1+ O(q^{-n/2}),$$
where $k_n/k$ is an extension of degree $n$ and $t_{\cF}(x,k_n) = \Tr(\Fr_x| \cF)$. Given two finite-dimensional representations $V$ and $W$ of a compact group $G$, the eigenvalues of $\sigma \in G$ on $V \otimes W$ are the pairwise product of the eigenvalues of $\sigma$ on $V$ and $W$. Therefore, the sheaf $j_!i^*\cG(1/2) \otimes  \mathcal{L}_{\addchar(ax)}$ is pure of weight 0 because $j_!i^*\cG(1/2)$ and $\mathcal{L}_{\addchar(ax)}$ are pure of weight 0. 
Then, since $|t_{j_!i^*\cG(1/2) \otimes  \mathcal{L}_{\addchar(ax)}}(x,k_n)|= |t_{j_!i^*\cG(1/2)}(x,k_n)\addchar(\Tr_{k_n/k}(ax))|= |t_{j_!i^*\cG(1/2)}(x,k_n)|$, it follows by a reverse application of Katz's diophantine irreducibility criterion that $j_!i^*\cG \otimes \mathcal{L}_{\addchar(ax)}$ is geometrically irreducible, as claimed. 

Finally, since $j_!i^*\cG \otimes \mathcal{L}_{\addchar(ax)}$ is geometrically irreducible and  not geometrically trivial (it has generic rank 2), we have from the explicit description \eqref{H2cExplicit} that $H_c^2(\G_{m,{\overline{k}}}, j_!i^*\cG \otimes \mathcal{L}_{\addchar(ax)})=0$ as claimed. 
\end{proof}

\begin{mylemma}\label{sheafHproperties}
For $\psi$ non-trivial, the elementary sheaf $\cH:= \widehat{j_!i^* \cG}$
\begin{enumerate}
\item is lisse on $\G_{m,k}$, tame at 0, and of rank 3 with trace function 
\begin{equation}\label{eq:t_Heqn}
t_{\cH}(u) = -\sum_{x \in k^{\times}} t_{\cG}(x^{-1})\addchar(ux),
\end{equation} for all $u \in k$
\item is geometrically irreducible and pure of weight 2, and
\item has $\Swan_\infty(\cH)=1$, and ${\rm c}(\cH) \leq 6.$
\end{enumerate}
\end{mylemma}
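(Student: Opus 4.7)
\emph{Plan.} I would deduce the three items of Lemma~\ref{sheafHproperties} from the general properties of the Deligne-Laumon Fourier transform collected in Theorem~\ref{ThmGeomFT_MT} together with the Euler-Poincar\'e formula. The essential input is the ramification data of $j_!i^*\cG$, read off from Lemma~\ref{sheafGproperties}: since $X \mapsto 1/X$ interchanges $0$ and $\infty$, the sheaf $j_!i^*\cG$ is lisse on $\G_{m,k}$ of rank $2$, tamely ramified at $\infty$, and has zero stalk at $0$ with $\Swan_0 = 1$ and $0$-breaks $\{0,1\}$.

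For item (1), elementarity of $\cH$ follows from Theorem~\ref{ThmGeomFT_MT}(1), and the trace function formula \eqref{eq:t_Heqn} is immediate from the definition of the Fourier transform and the Grothendieck-Lefschetz trace formula, using $t_{j_!i^*\cG}(x) = t_\cG(1/x)$ for $x \in k^\times$ and $=0$ at $x=0$. Lisseness of $\cH$ on $\G_m$ and tameness at $0$ follow from Laumon's local Fourier transform \cite{LaumonFT}: all $\infty$-breaks of $j_!i^*\cG$ are strictly less than $1$, so no stationary-phase singularities appear in $\G_m$, and tame $\infty$-data of the input is transported to tame $0$-data of the output (cf.\ \cite[8.5.7, 8.5.8]{Katz1988}). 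The generic rank of $\cH$ is then computed by Grothendieck-Ogg-Shafarevich: for any $y \in k^\times$, $\cH_y \simeq H_c^1(\G_m, i^*\cG \otimes \cL_{\addchar(yx)})$, and
\begin{equation*}
\chi_c(\G_m, i^*\cG \otimes \cL_{\addchar(yx)}) = 2\chi_c(\G_m) - \Swan_0 - \Swan_\infty = 0 - 1 - 2 = -3,
\end{equation*}
while the non-trivial wild ramification at $\infty$ kills $H_c^0$ and $H_c^2$, giving $\dim \cH_y = 3$.

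For item (2), $j_!i^*\cG$ is pure of weight $1$ (its restriction to $\G_m$ is, and its stalk at $0$ is zero), so $\cH$ is pure of weight $2$ by Theorem~\ref{ThmGeomFT_MT}(3). Applying Theorem~\ref{ThmGeomFT_MT}(4) to the weight-zero twist $(j_!i^*\cG)(\thalf)$---which is elementary and geometrically irreducible, since $i^*\cG$ is geometrically irreducible on $\G_m$ by Lemma~\ref{sheafGproperties}(1) and $j_!$ of an irreducible lisse sheaf admits no proper nonzero subsheaf on $\A^1$---yields the geometric irreducibility of $\cH$.

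The main step is item (3). Since $\cH$ is lisse on $\G_m$ of rank $3$ and tame at $0$, the Euler-Poincar\'e formula gives
\begin{equation*}
\chi_c(\A^1, \cH) = \chi_c(\G_m, \cH) + \dim \cH_0 = -\Swan_\infty(\cH) + \dim \cH_0.
\end{equation*}
Elementarity kills $h_c^0$ and $h_c^2$, while Fourier inversion (Theorem~\ref{ThmGeomFT_MT}(2)) identifies $h_c^1(\A^1, \cH)$ with the stalk $\widehat{\cH}_0 = (j_!i^*\cG)_0 = 0$, so $\chi_c(\A^1, \cH) = 0$. Separately, $\cH_0 = H_c^1(\G_m, i^*\cG)$ has dimension $-\chi_c(\G_m, i^*\cG) = 1$ by Grothendieck-Ogg-Shafarevich (with $\Swan_0(i^*\cG)=1$, $\Swan_\infty(i^*\cG)=0$), using geometric irreducibility of $i^*\cG$ to kill the other cohomologies. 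Combining gives $\Swan_\infty(\cH) = 1$, and the conductor bound is immediate: $\mathrm{c}(\cH) = 3 + 2 + 0 + 1 = 6$. The delicate point of the argument, and the one requiring the most care, is distinguishing the generic stalk of $\widehat{\cH}$ (of rank $2$) from its value at $0$ (which vanishes) in the computation of $h_c^1(\A^1, \cH)$.
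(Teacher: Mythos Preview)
Your proof is correct and reaches the same conclusions, but via a genuinely different route than the paper's argument, and the comparison is worth recording.

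The paper's proof rests on the observation that $j^*\cH \simeq (\cL_{\addchar})_! \ast \cG$ as lisse sheaves on $\G_{m,k}$, by \cite[8.6.2~Prop.]{Katz1988}, which brings the problem into the domain of Katz's convolution theory. Once this identification is made, items (1) and (3) follow almost immediately by citing \cite[5.2.1~Prop., 8.0.2~Lem.]{Katz1988} for the rank, lisseness, and tameness, and the multiplicativity $h_c^1(\G_m,\cH)=h_c^1(\G_m,\cL_\addchar)\cdot h_c^1(\G_m,\cG)$ from \cite[5.2.2.1~Cor.]{Katz1988} combined with Euler--Poincar\'e on $\G_m$ for $\Swan_\infty(\cH)$. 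Item (2) is handled the same way you do, via Theorem~\ref{ThmGeomFT_MT}(3)(4).

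Your argument instead stays entirely within the Fourier transform framework: Laumon's local Fourier transform for lisseness and tameness, a direct Grothendieck--Ogg--Shafarevich computation for the rank, and for item~(3) a neat use of Fourier inversion to identify $h_c^1(\A^1,\cH)$ with the stalk $\widehat{\cH}_0 = (j_!i^*\cG)_0(-1) = 0$, followed by a second Euler--Poincar\'e count for $\dim\cH_0$. This is more self-contained---it avoids invoking the convolution machinery---at the cost of being slightly longer. As a small bonus, your computation $\dim\cH_0 = 1 \neq 3$ shows $\cH$ is genuinely not lisse at $0$, so you obtain $\mathrm{c}(\cH)=6$ exactly, whereas the paper only claims $\leq 6$. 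One minor quibble: your justification that $j_!i^*\cG$ is geometrically irreducible is correct as stated, but the paper already established this (indeed for $j_!i^*\cG \otimes \cL_{\addchar(ax)}$, any $a$) in the proof of the preceding lemma via Katz's diophantine criterion, so you could simply cite that.
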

\begin{proof}
Since $\cG$ is lisse on $\G_{m,k}$ and tame at $0$ by Lemma \ref{sheafGproperties}(3) (i.e.\ $\cG \in \mathcal{T}\subset \mathcal{T}_1$ in Katz's notation), we have an isomorphism
\begin{equation}
(\mathcal{L}_{\addchar})_! \ast \cG \simeq j^* \cH
\end{equation}
of lisse sheaves on $\G_{m,k}$  \cite[8.6.2 Prop.]{Katz1988}, where $\ast$ is Katz's geometric convolution. We can quickly derive the rest of the claimed properties of $\cH$ by bringing to bear Katz's theory of convolutions. 
\begin{enumerate}
\item The first three facts may be read off from \cite[5.2.1 Prop.(2)(3), 8.0.2 Lem.(1)]{Katz1988}. The formula for the trace function follows from proper base change and the Grothendieck-Lefshetz trace formula \cite[Arcata IV Thm.\ 5.4, Rapport Thm.\ 3.2]{SGA4.5} since by Theorem \ref{ThmGeomFT_MT}(1) the sheaf $\cH$ is elementary, and where the sum in \eqref{eq:t_Heqn} runs over $k^\times$ in lieu of $k$ because we have chosen the extension by zero $j_!$ of $i^{*}\cG$. 
\item These two facts follow from the expression of the sheaf $\cH$ as a geometric Fourier transform, specifically Theorem \ref{ThmGeomFT_MT}(3)(4). 
\item By the Euler-Poincar\'e formula \cite[2.3.1]{Katz1988} with $U=\G_m$ and the fact that $\cH$ is elementary, we have 
$\Swan_\infty(\cH) = h_c^1(U_{\overline{k}}, \cH)$. 
We have 
$$ 
h^1_c(U_{\overline{k}}, \cH) = h^1_c(U_{\overline{k}}, \cL_{\addchar})h^1_c(U_{\overline{k}}, \cG),
$$ 
by \cite[5.2.2.1 Cor.]{Katz1988}. Thus, by the Euler-Poincar\'e formula again and Lemma \ref{sheafGproperties}(3), we have $$\Swan_\infty(\cH) =\Swan_\infty(\cL_{\addchar}) \Swan_\infty(\cG) = 1 \cdot 1 =1.$$ The bound on the conductor follows by definition now. 
\end{enumerate}
\end{proof}

Now we draw together the previous results to conclude the required bound on $g(\xi,\psi)$. By Lemma \ref{sheafHproperties}(1) we have that
$$ 
g(\xi,\psi)=- \sum_y  t_{\cKl_{\rm ns}}(y) t_{\cH}(y).
$$ 
 We argue by the orthogonality relation in \cite[Thm.\ 5.2]{FKMS}, which is essentially the Riemann Hypothesis of Deligne \cite[Thm.\ 3.3.1]{DeligneWeil2}. First, take the Tate twists $\cKl_{\rm ns}(1/2)$ and $\cH(1)$, so that these sheaves retain the properties of  Lemmas \ref{sheafKlnsproperties} and \ref{sheafHproperties} except that they are of weight $0$. Then
$$ 
 g(\xi,\psi)=-q^{3/2} \sum_y  t_{\cKl_{\rm ns}(1/2)}(y) t_{\cH(1)}(y).
$$
Since $\cKl_{\rm ns}(1/2)$ and $\cH(1)$ are both geometrically irreducible, and of different ranks, they cannot be geometrically isomorphic. Finally, by the orthogonality relation \cite[Thm.\ 5.2]{FKMS} and since $\cKl_{\rm ns}$ and $\cH$ have bounded conductors (Lemmas \ref{sheafKlnsproperties}(4) and \ref{sheafHproperties}(3)), we have that 
\begin{equation*}
g(\xi,\psi) \ll q^2,
\end{equation*}
with an absolute implied constant. This concludes the proof of Proposition \ref{AGprop} for $\psi$ non-trivial.

\subsubsection{The case that $\psi$ is trivial}
Define
\begin{equation}\label{gxidef}
g(\xi) = - \sum_{y \in k^\times} t_{\cKl_{\rm ns}}(y) \Kl_2(y),
\end{equation}
where $\Kl_2(y) = \sum_{\substack{x_1,x_2 \in k^\times \\ x_1x_2=y}} \addchar(x_1+x_2)$ and $\cKl_{\rm ns}$ is defined in Lemma \ref{sheafKlnsproperties} with respect to $\xi$.  
\begin{myprop}\label{AGprop_triv}
If $\xi$ is non-trivial, then the character sum $g(\xi)$ satisfies $g(\xi) \ll q^{3/2}$ with an absolute and effective implied constant.
\end{myprop}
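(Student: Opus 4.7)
The plan is to follow the same strategy used in the proof of Proposition \ref{AGprop} for the nontrivial $\psi$ case, but replacing the Fourier-transform-constructed sheaf $\cH$ with the classical Kloosterman sheaf $\cKl_2$.

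First, I would recognize $\Kl_2(y)$ as the trace function of the classical Kloosterman sheaf $\cKl_2$ on $\G_{m,k}$, which is lisse of rank $2$, geometrically irreducible, pure of weight $1$, tame at $0$, totally ramified at $\infty$ with $\Swan_\infty(\cKl_2) = 1$ (both breaks equal to $1/2$), and of bounded conductor (${\rm c}(\cKl_2) = 5$). These are classical properties due to Deligne and Katz. Combined with Lemma \ref{sheafKlnsproperties} giving the analogous data for $\cKl_{\rm ns}$, both sheaves fit into the hypotheses of the orthogonality framework of \cite[Thm.\ 5.2]{FKMS}.

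Next, I would Tate-twist both sheaves to weight $0$, rewriting
\begin{equation*}
g(\xi) = -q \sum_{y \in k^\times} t_{\cKl_{\rm ns}(1/2)}(y)\, t_{\cKl_2(1/2)}(y),
\end{equation*}
and then apply the orthogonality relation \cite[Thm.\ 5.2]{FKMS}, using that $\cKl_2$ is geometrically self-dual (i.e.\ $\cKl_2^\vee \simeq \cKl_2(1)$ as geometric sheaves, a classical fact). Provided that $\cKl_{\rm ns}$ and $\cKl_2$ are not geometrically isomorphic, the inner sum is $\ll q^{-1/2}$, yielding $g(\xi) \ll q \cdot q^{1/2} = q^{3/2}$ with an absolute and effective implied constant.

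The main obstacle is verifying the geometric non-isomorphism of $\cKl_{\rm ns}$ and $\cKl_2$. This is where I would invoke Katz's classification of the geometric monodromy groups of these two types of rank-$2$ Kloosterman-like sheaves (see \cite[Ch.\ 11]{Katz1988}): the geometric monodromy group of $\cKl_2$ is $SL_2$, whereas $\cKl_{\rm ns}$ is induced (via the norm map $\Nm: \G_{m,K} \to \G_{m,k}$) from a rank-one Kummer-type sheaf on $\G_{m,K}$ attached to the nontrivial character $\xi$, and so its geometric monodromy is contained in the normalizer of a maximal torus in $GL_2$ and is of dihedral type. Since these two subgroups of $GL_2$ are distinct up to conjugacy, $\cKl_{\rm ns}$ and $\cKl_2$ cannot be geometrically isomorphic. (Alternatively, one could distinguish them by noting that $\xi \neq \overline{\xi}$ forces $\cKl_{\rm ns}$ to be non-self-dual on the nose, while $\cKl_2$ is symplectically self-dual.) This completes the proof.
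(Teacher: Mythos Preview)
Your approach is essentially the same as the paper's: recognize $\Kl_2$ as the trace function of the classical rank-$2$ Kloosterman sheaf $\cKl$, apply the orthogonality relation \cite[Thm.\ 5.2]{FKMS} after Tate-twisting to weight $0$, and reduce to showing that $\cKl_{\rm ns}$ and $\cKl$ are not geometrically isomorphic. The difference lies only in how you argue the non-isomorphism. The paper distinguishes the two sheaves \emph{locally at $0$}: pulling both back along $\pi:\G_{m,K}\to\G_{m,k}$, Katz's \cite[8.8.7 Lem.]{Katz1988} gives $\pi^*\cKl_{\rm ns}\simeq\cKl(\addchar_K;\xi,\xi^q;1,1)$ and $\pi^*\cKl\simeq\cKl(\addchar_K;1,1;1,1)$, and then \cite[\S7.4.1]{Katz1988} shows these have different $I_0$-representations whenever $\xi$ is nontrivial. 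This is a clean, purely local argument that uses exactly the hypothesis $\xi\neq 1$.

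Your global monodromy route is morally right but less precisely anchored. Katz's monodromy computations in \cite[Ch.~11]{Katz1988} treat the untwisted $\cKl_n$; the dihedral structure of $\cKl_{\rm ns}$ is not stated there, so you would still need to extract it from the construction in \cite[8.8.5]{Katz1988} (essentially redoing the induced-representation argument you sketch). More seriously, your parenthetical self-duality alternative has a gap: the Proposition only assumes $\xi$ nontrivial, and a nontrivial $\xi$ of order $2$ satisfies $\xi=\overline{\xi}$, so ``$\xi\neq\overline{\xi}$'' is not available in the stated generality. The paper's $I_0$ argument avoids both issues.
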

Since $t_{\cKl_{\rm ns}}(y)=-\Kl_{\rm ns}(y)$ and $\Kl_2(y) = S(y,1,p)$ when $k=\F_p$ and $\addchar=e_p$, Lemma \ref{AGprop_triv} implies that $\widehat{H}(\psi_0) \ll p^{1/2}$ when $p \neq 2$, $k=c_0=1$, and $p \nmid a_1a_2a_3$ cf.\ Lemma \ref{lemma:HhatSupercuspidalPrime_triv}.
\begin{proof}
Recall that $\Kl_2(y)= t_{\cKl}(y)$, where $\cKl=\cKl(\addchar; 1,1;1,1)$ in Katz's notation \cite[4.1.1 Thm.]{Katz1988} is the standard rank 2 Kloosterman sheaf attached to $\addchar$. Recall that $\cKl_{\rm ns}$ and $\cKl$ are both geometrically irreducible with conductor $5$, see Lemma \ref{sheafKlnsproperties}(3), (4) and e.g.\ \cite[Thm.\ 4.4]{FKMS}. Then, by the orthogonality relation \cite[Thm.\ 5.2]{FKMS}, to obtain square-root cancellation in \eqref{gxidef}, it suffices to show that  $\cKl_{\rm ns}$ is \emph{not} geometrically isomorphic to $\cKl$ (note $\cKl \simeq D(\cKl)$, see \cite[4.1.4 Cor.(1)]{Katz1988} or \cite[Thm.\ 4.4]{FKMS}, again). 
\begin{mylemma}
When $\xi$ is non-trivial, the lisse sheaves $ \cKl$ and $\cKl_{\rm ns}$ are not geometrically isomorphic.
\end{mylemma}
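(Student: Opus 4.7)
The plan is to argue by contradiction, using the mismatch between two directly computed averages of trace functions. If $\cKl$ and $\cKl_{\rm ns}$ were geometrically isomorphic, then since both are geometrically irreducible (Lemma~\ref{sheafKlnsproperties}(3) for $\cKl_{\rm ns}$, classical for $\cKl$), Schur's lemma forces $\cKl_{\rm ns} \simeq \cKl \otimes \alpha^{\deg}$ as arithmetic lisse sheaves on $\G_{m,k}$, for some $\alpha \in \overline{\Q}_\ell^{\times}$, where $\alpha^{\deg}$ denotes the rank 1 constant sheaf on which geometric Frobenius acts by $\alpha$. Moreover, both sheaves are pure of weight $1$ (Lemma~\ref{sheafKlnsproperties}(1) and \cite{DeligneWeil2} applied to $\cKl$), which forces $|\alpha|=1$ under our fixed embedding $\overline{\Q}_\ell \hookrightarrow \C$. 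In particular,
\begin{equation*}
t_{\cKl_{\rm ns}}(y) = \alpha \cdot t_{\cKl}(y) \qquad \text{for all } y \in k^\times,
\end{equation*}
so summing and taking absolute values yields the required identity $\bigl|\sum_y t_{\cKl_{\rm ns}}(y)\bigr| = \bigl|\sum_y t_{\cKl}(y)\bigr|$.

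Next, I will compute both averages directly. For $\cKl$, using $t_{\cKl}(y) = \Kl_2(y)$ and swapping order of summation:
\begin{equation*}
\sum_{y \in k^\times} t_{\cKl}(y) = \sum_{x_1, x_2 \in k^\times} \addchar(x_1 + x_2) = \Bigl(\sum_{x \in k^\times} \addchar(x)\Bigr)^2 = (-1)^2 = 1.
\end{equation*}
For $\cKl_{\rm ns}$, using the trace formula from Lemma~\ref{sheafKlnsproperties}(1) and swapping sums:
\begin{equation*}
\sum_{y \in k^\times} t_{\cKl_{\rm ns}}(y) = -\sum_{t \in K^\times} \xi(t) \overline{\addchar}_K(t) = -\tau_K(\xi, \overline{\addchar}_K),
\end{equation*}
a standard Gauss sum on $K$. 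Since $\xi$ is non-trivial and $\overline{\addchar}_K$ is non-trivial (as $\addchar$ is non-trivial and $\Tr: K \to k$ is surjective), one has $|\tau_K(\xi, \overline{\addchar}_K)| = |K|^{1/2} = q$.

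Combining these computations with the forced identity from the first paragraph yields $q=1$, which contradicts $q \geq 2$. Therefore $\cKl$ and $\cKl_{\rm ns}$ are not geometrically isomorphic. The only substantive input is the first paragraph's application of Schur's lemma and purity to move from geometric isomorphism to a pointwise scalar relation between trace functions; the main obstacle here is simply to align the sign and normalization conventions for the two trace functions (which are specified by Lemma~\ref{sheafKlnsproperties}(1) and by Katz's convention for $\cKl=\cKl(\addchar;1,1;1,1)$) so that the two averages are computed in a matching normalization.
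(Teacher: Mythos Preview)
Your proof is correct and takes a genuinely different route from the paper's. The paper distinguishes $\cKl$ and $\cKl_{\rm ns}$ via their local monodromy at $0$: after pulling back along $\G_{m,K}\to\G_{m,k}$ (which does not change the geometric fundamental group), both become standard Kloosterman sheaves over $K$, namely $\cKl(\addchar_K;1,1;1,1)$ and $\cKl(\addchar_K;\xi,\xi^q;1,1)$, and Katz's computation \cite[\S7.4.1]{Katz1988} of the tame monodromy shows these have non-isomorphic $I_0$-representations when $\xi\neq 1$. Your argument instead exploits the \emph{global} trace: from geometric isomorphism plus irreducibility you extract via Schur an arithmetic isomorphism up to a scalar $\alpha^{\deg}$ with $|\alpha|=1$ by purity, and then the two first-moment computations $\bigl|\sum_y t_{\cKl}(y)\bigr|=1$ versus $\bigl|\sum_y t_{\cKl_{\rm ns}}(y)\bigr|=|\tau_K(\xi,\overline{\addchar}_K)|=q$ give the contradiction. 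Your approach is more self-contained (it avoids the pullback and the citation of Katz's local monodromy tables) and uses only the purity and irreducibility already recorded in the paper; the paper's approach, on the other hand, gives finer structural information (non-isomorphism already as $I_0$-representations) and does not require the Schur/purity reduction to a scalar twist.
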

\begin{proof}
Indeed, Katz  showed that these sheaves are not isomorphic as $I_0$-representations \cite[Ch.\ 7]{Katz1988}. To pin the reference down more precisely, we unpack some notation. 

Let $\pi:\G_{m,K}  \to \G_{m,k}$ be the canonical projection. The representations of $\pi_1^{\rm geom} = \pi_1(\G_{m, \overline{k}}, \overline{\eta})$ associated to $\cKl$ and $\cKl_{\rm ns}$ are unchanged by the pull-backs $\pi^*\cKl$ and $\pi^*\cKl_{\rm ns}$. For $\cKl_{\rm ns}$ we have \cite[8.8.7 Lem.]{Katz1988} that $$\pi^*\cKl_{\rm ns} \simeq \cKl(\addchar_K; \xi, \xi^{q }; 1,1 ).$$ Meanwhile, we have $\cKl= \cL_{\addchar} \ast \cL_{\addchar}$ where $\ast$ is convolution in the sense of \cite[Ch.\ 5]{Katz1988}. Then, since $\ast$ commutes with taking fibers \cite[5.1 Convolution Theorem(2)]{Katz1988}, we have $$\pi^*\cKl \simeq \pi^*\cL_{\addchar} \ast \pi^*\cL_{\addchar} \simeq \cL_{\addchar \circ \Tr} \ast \cL_{\addchar \circ \Tr},$$ where the latter isomorphism is by \cite[\S 4.3(2)]{Katz1988}. So, in the notation of \cite[4.1.1 Thm.]{Katz1988}, we have 
$$ \pi^*\cKl \simeq \cKl(\addchar_K; 1,1;1,1).$$ 
Now working over $K$ so that the order of $\xi$ divides $|K|-1= q^2-1$, when $\xi$ is non-trivial the sheaves $\pi^*\cKl$ and $\pi^*\cKl_{\rm ns}$ are not isomorphic as $I_0$-representations \cite[\S 7.4.1]{Katz1988}, \`a fortiori as $\pi_1^{\rm geom} = \pi_1(\G_{m, \overline{k}}, \overline{\eta})$-representations. 
\end{proof}
\end{proof}

\subsection{The case $p=2$}
\label{section:pequals2supercuspidal}
\begin{mylemma}
\label{lemma:HhatPropertiesp=2}
Suppose $p=2$ and $k \geq c_0 + 100$.
\begin{enumerate}
\item If $(a_1 a_2 a_3, 2) \neq 1$ then $\widehat{H}(\psi)=0$.
\item We have
\begin{equation*}
\widehat{H}(\psi) \ll \frac{2^{3k/2}}{C(\psi)}.
\end{equation*}
\item If $\psi = \psi_0$ is trivial, then
\begin{equation*}
\widehat{H}(\psi_0) \ll 2^k \delta(k \geq 2c_0 +O(1)).
\end{equation*}
\end{enumerate}
\end{mylemma}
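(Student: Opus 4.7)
The plan is to follow the odd-$p$ templates: Lemma \ref{lemma:HhatVanishesUnlessCoprimeSupercuspidalInert} for (1), Proposition \ref{prop:HhatBoundkbiggish} for (2), and Lemma \ref{lemma:HhatpsiTrivialInertkLarger} for (3). The hypothesis $k \geq c_0 + 100$ is precisely the slack needed to absorb the $2$-adic complications: the Postnikov formula requires $i > e/(p-1) = e$, denominators of $2$ appear in the $2$-adic logarithm, and the naive odd-$p$ shift $t \mapsto t + 2^{k-1}$ has $\mathrm{Tr}$-image $2 \cdot 2^{k-1} \equiv 0 \pmod{2^k}$, hence does not flip the additive phase.

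For (1), after the change of variables $x_1 \to \mathrm{Nm}(t) x_1$ as in \eqref{eq:HhatDecentFormulaSupercuspidalInert3}, we translate $t \to t + h$ for an $h \in \cO_L$ chosen so that: $\xi(t+h) = \xi(t)$ (which requires $v_L(h) \geq e c_0$), the shift in $\mathrm{Nm}(t)$ multiplied by the $2$-divisible $a_1$ is $\equiv 0 \pmod{2^k}$, and the shift in $\mathrm{Tr}(t)$ has $2$-adic valuation strictly less than $k$, flipping the additive phase to $-1$. For $L/\Q_2$ unramified, take $h = 2^{k-1}\omega$ where $\omega \in \cO_L^\times$ is a primitive cube root of unity, so $\mathrm{Tr}(\omega) = -1$. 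For ramified $L$, where $\mathrm{Tr}(\cO_L) = 2\Z_2$, take $h = 2^{k-2}\alpha$ with $v_2(\mathrm{Tr}(\alpha)) = 1$ when $v_2(a_1) \geq 2$; the edge case $v_2(a_1) = 1$ in ramified $L$ is handled with a translate of the form $h = \pi^m \alpha$ with $m$ slightly smaller than $e(k-1)$, exploiting the $\pi$-adic filtration. In every case, the slack $k - c_0 \geq 100$ provides ample room.

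For (2), run the multi-variable Postnikov machinery of Lemmas \ref{lemma:HhatInitialEvaluation}--\ref{lemma:HhatInitialEvaluationRamified}: write $t = t_0(1 + \pi^m t_1)$ and $x_i \to x_i(1 + 2^{m'} y_i)$ with $m, m'$ slightly above $\lfloor k/2 \rfloor$ so that the Taylor expansions of $\overline{\psi}$, $\psi \circ \mathrm{Nm}$, and $\xi$ converge and truncate at degree two modulo $2^k$. The enlargement $k \geq c_0 + 100$ absorbs the bounded powers of $2$ lost to these convergence requirements and to denominators of $2, 3$ in $\log$. Additive orthogonality (including the ramified version \eqref{eq:addorth}) then yields a congruence system in the shape of \eqref{eq:PostnikovCongruenceSystemSupercuspidalInert} and \eqref{eq:PostnikovCongruenceSystemReducedSupercuspidalInert}, with solution count bounded via Hensel's lemma by $\rho(\Delta, 2^m)$, where $\Delta$ is a $2$-adic unit up to a bounded $2$-power. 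Combining with the $2^{k/2}$ prefactor yields $|\widehat{H}(\psi)| \ll 2^{3k/2}/C(\psi)$, the implied constant absorbing all remaining $2$-adic losses.

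For (3), setting $\psi = \psi_0$ forces $\ell_\psi = 0$, collapsing the congruence from (2) to $\mathrm{Nm}(\ell_\xi) \equiv 0 \pmod{2^{\lfloor k/2 \rfloor}}$; since $v_2(\mathrm{Nm}(\ell_\xi)) = 2(k-c_0) - d$, this holds iff $k \geq 2c_0 + O(1)$, giving the stated vanishing threshold. When nonvanishing, the sum reduces, just as in Lemma \ref{lemma:HhatpsiTrivialInertkLarger}, to a Ramanujan-type sum $S(\mathrm{Nm}(\ell_\xi), 0; 2^k)$ up to a unit of modulus $1$, whose absolute value is bounded by $(\mathrm{Nm}(\ell_\xi), 2^k) \leq 2^k$. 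The main obstacle is part (1): exhibiting a suitable translate in every ramification subcase and for every value of $v_2(a_i)$. With slack of $100$, this is a routine but careful case analysis; parts (2) and (3) then follow the odd-$p$ scripts essentially verbatim.
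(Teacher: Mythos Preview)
Your outline is on the right track, but it misses the paper's key simplification and contains a specific error in part~(3). The paper does \emph{not} keep the quadratic Postnikov terms at all: it takes $n=\lceil k/2\rceil+1$ (rather than $\lfloor k/2\rfloor$), which forces $2^{-1}\cdot 2^{2n}\equiv 0\pmod{2^k}$, so every degree-two term in the Taylor/Postnikov expansions dies and only the linear congruence system \eqref{eq:congruenceSystemp=2} survives. This sidesteps the whole $\rho(\Delta,\cdot)$/quadratic-Gauss-sum machinery, which is degenerate over $\mathbb{F}_2$ anyway (no completing the square, no clean Lemma~\ref{lemma:quadraticGaussSum}). As a bonus, part~(1) then drops out for free from the linear system: since $v_L(\ell_\xi)\geq 100$, the last congruence gives $t_0\equiv \ell_\psi+A\pmod{2}$, and then $a_ix_i\equiv t_0\pmod{2}$ is a unit. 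Your translation argument $t\mapsto t+h$ is correct (and your ramified edge-case worry is unnecessary, since $\Tr(\cO_L)=2\mathbb{Z}_2$ already gives the extra factor of $2$ needed in the norm shift), but it is redundant given the paper's route.

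The genuine error is in (3). Your sentence ``$\Nm(\ell_\xi)\equiv 0\pmod{2^{\lfloor k/2\rfloor}}$ \dots\ holds iff $k\geq 2c_0+O(1)$'' is wrong: with $v_2(\Nm(\ell_\xi))=2(k-c_0)-d$, that congruence is $2(k-c_0)-d\geq k/2$, i.e.\ $k\geq \tfrac{4}{3}c_0+O(1)$, the wrong threshold. The clean Ramanujan-sum reduction of Lemma~\ref{lemma:HhatpsiTrivialInertkLarger} does \emph{not} go through verbatim for $p=2$, because $\Tr(\ell_\xi)$ is only $\equiv 0\pmod{2^{k-d}}$ (not $=0$) and $\xi|_{\mathbb{Z}_2^\times}$ has conductor $2^d$ in the ramified cases, so extra phase terms survive. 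The paper instead carries the computation further to \eqref{p=2gathered}, observes that all phase terms except $-s\Nm(\ell_\xi)$ are $\equiv 0\pmod{2^{2m-d}}$, and then writes $s=s_0+2^6s'$; the sum over $s'$ is a pure additive-character sum forcing $v_2(\Nm(\ell_\xi))\geq k-6$, i.e.\ $k\geq 2c_0+d-6$, which is the correct $2c_0+O(1)$ threshold.
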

\begin{proof}
We pick up the computation from \eqref{eq:HhatDecentFormulaSupercuspidalInert2} and apply the Postnikov formula along the same lines as in the proof of Lemma \ref{lemma:HhatInitialEvaluation}.  Whereas in that lemma we chose $n=
\lfloor k/2\rfloor$, we now choose  $n = \lceil k/2 \rceil +1$.  This choice of $n$ renders the computations simpler since $2^{-1}p^{2n} \equiv 0 \pmod {p^k}$, allowing us to drop the quadratic terms. Set also $m=k-n = \lfloor k/2 \rfloor -1$  and note that $m, n = k/2 + O(1)$.  

For $i=1,2,3$, we replace $x_i$ by $x_i(1 + p^n y_i)$ and write $t=t_0(1+p^nt_1)$, where $(x_1, x_2, x_3, t_0)$ runs over a fixed choice of coset representatives for $[(\Z/p^k\Z)/ (p^n\Z/p^k\Z)]^3 \times (\cO_L/p^k\cO_L)/ (p^n\cO_L/p^k\cO_L)$, and where 
 $y_i \in \Z/p^{m}\Z$ and $t_1 \in \cO_L/p^{m}\cO_L$.  We have
  \begin{align}
 \overline{\psi}(1+p^n y_i) &=e_{p^k}(- \ell_{\psi} \log(1+p^n y_i))= e_{p^m}(- \ell_{\psi} y_i), \\
 \psi(\mathrm{Nm}(1+p^n t_1)) & = \psi_L(1+p^n t_1) = e_{p^m}(\mathrm{Tr}(\ell_{\psi}  t_1)), \\
 \xi(1+p^n t_1) & = e_{p^m}(\mathrm{Tr}(\ell_{\xi}  t_1)), \\
 e_{p^k}(-\Tr(t)) & = e_{p^k}(- \Tr(t_0))e_{p^m}(\Tr(t_0t_1)), \\
 e_{p^k}(a_ix_i) & \to e_{p^k}(a_ix_i)e_{p^m}(a_iy_ix_i), 
 \end{align}
 and
 \begin{multline}
 -\frac{x_1(1 + p^n y_1) x_2(1 + p^n y_2) x_3(1 + p^n y_3) a_1 a_2 a_3}{\mathrm{Nm}(t_0(1 + p^n t_1))}
\\
=
-A \cdot
[1 
+ p^n (y_1 + y_2 + y_3 - \mathrm{Tr}(t_1)) ]+O(p^{2n-1}),
\end{multline}
where $A = a_1a_2a_3x_1x_2x_3 \Nm(t_0)^{-1}$, as before. 
Thus we obtain
\begin{equation*}
\widehat{H}(\psi) = 
\overline{\gamma}\frac{p^{-d/2}}{p^{2k}}
\sumstar_{x_1, x_2, x_3 \shortmod{p^n}}
\sum_{t_0 \in (\mathcal{O}/p^n \mathcal{O})^{\times}}
(\dots) 
S_1,
\end{equation*}
where $(\dots)$ has absolute value at most $1$, and where
\begin{multline}\label{eq:p=2section1}
S_1 = 
\sum_{y_1, y_2, y_3 \shortmod{p^m}}
\sum_{t_1 \in \mathcal{O}/p^m \mathcal{O}} 
e_{p^{m}}(\mathrm{Tr}(t_1( \ell_{\psi}+\ell_\xi - t_0)) + a_1 x_1 y_1 + a_2 x_2 y_2 + a_3 x_3 y_3 
- \ell_{\psi}(y_1 + y_2 + y_3)
\\
- A (y_1 + y_2 + y_3 - \mathrm{Tr}(t_1)).
\end{multline}
Hence by additive orthogonality of characters \eqref{eq:addorth}, the sum $S_1$ equals $p^{5m}$ times the indicator function of the system of equations 
\begin{equation}
\label{eq:congruenceSystemp=2}
\begin{split}
a_i x_i - \ell_{\psi} - A = 0 \in \mz/p^m\Z, \\
\ell_{\psi} - t_0 + A +\ell_\xi = 0 \in \mathcal{O}/\mathfrak{p}^{em-d}.
\end{split}
\end{equation}

Recall \eqref{eq:valuationNormR}, from which we obtain $v_L(\ell_\xi)\geq 100$. Hence $t_0 \equiv \ell_{\psi} + A \pmod{p\cO_L}$, and by assumption $(t_0, p) = 1$.  Therefore, $a_i x_i \equiv t_0 \pmod{p\cO_L}$, and $(a_i x_i, p)=1$. This gives item (1). 

Now let us assume that $a_1=a_2=a_3=1$. The system of equations \eqref{eq:congruenceSystemp=2} is the same as in the $p$ odd case, up to the modulus of the congruences.  Carrying through the same algebraic manipulations as in the first paragraph of Lemma \ref{lemma:HhatInitialEvaluation2}, i.e.\ replacing some instances of $n$ there by $m$, and using the condition $v(\Tr \ell_\xi)\geq k-d$ from Lemma \ref{lemma:traceRisZero}, we deduce
that $\gamma p^{2k-5m+d/2}\widehat{H}(\psi)$ equals
\begin{multline}
\label{eq:pequals2supercuspidal_s-eqn_fixed}
\sumstar_{\substack{s \shortmod{p^m}  \\ \ell_\psi s^2 -\Nm(\ell_\xi)s + \Nm(\ell_\xi)\ell_\psi\equiv 0 \shortmod{p^m}}}
\thinspace 
\sumstar_{\substack{x_1,x_2,x_3 \shortmod{p^n} \\ x_1\equiv x_2\equiv x_3 \equiv s\shortmod{p^m}}} \\ \sum_{\substack{t_0 \in (\mathcal{O}/p^n \mathcal{O})^{\times} \\ t_0 \equiv \ell_\xi + s \shortmod{\fp^{em-d}}}} \psi\Big(\frac{\mathrm{Nm}(t_0)}{x_1 x_2 x_3}\Big)
\xi(t_0) e_{p^k}(-\mathrm{Tr}(t_0)+
  x_1 +  x_2 + x_3 
 - 
 A).
\end{multline}
where the summand does not depend on the choice of coset representative for $x_i, t_0 \pmod{p^k}$.
Focusing on the condition on the $s$ sum, $\widehat{H}(\psi)$ vanishes unless $\min(v( \mathrm{Nm}(\ell_\xi)),m) = \min(v(\ell_{\psi}),m)$, as in the proof of Proposition \ref{prop:HhatBoundkbiggish}.  If $v( \mathrm{Nm}(\ell_\xi)) \geq m$, then every value of $s$ is a solution, and we get the bound
\begin{equation}
\label{eq:Hhatp=2boundInProof}
\widehat{H}(\psi) \ll p^{5m-2k} p^{m} = p^{k+O(1)} \ll \frac{p^{3k/2 + O(1)}}{C(\psi)},
\end{equation}
since $v(\ell_{\psi}) \geq m$ implies $C(\psi) \ll p^{m} = p^{k/2+O(1)}$.  

Next suppose $v( \mathrm{Nm}(\ell_\xi)) < m$.  Then dividing through by $v( \mathrm{Nm}(\ell_\xi))$, 
and letting $U = p^{-v(\mathrm{Nm}(\ell_\xi))} \mathrm{Nm}(\ell_\xi)$,
we get
\begin{equation*}
s^2 \ell_{\psi}' - s U + \mathrm{Nm}(\ell_\xi) \ell_{\psi}' \equiv 0 \pmod{p^{m-v( \mathrm{Nm}(\ell_\xi))}}.
\end{equation*}
Since $v( \mathrm{Nm}(\ell_\xi)) \geq 1$,  this implies $s(s \ell_{\psi}' - U) \equiv 0 \pmod{p}$.  Since $(s,p) = 1$, then $s$ is uniquely determined modulo $p$.  By Hensel, it lifts to a unique solution modulo $p^{m-v( \mathrm{Nm}(\ell_\xi))}$, using that the derivative of the polynomial is nonzero modulo $p$ (it is congruent to $-U$ modulo $p$, since $2s \ell_{\psi}' \equiv 0 \pmod{p}$).  
Hence this unique solution  $s$ modulo $p^{m-v( \mathrm{Nm}(\ell_\xi))}$ gives rise to $p^{v( \mathrm{Nm}(\ell_\xi)) }$ solutions to the $s$ congruence in \eqref{eq:pequals2supercuspidal_s-eqn_fixed}, so
\begin{equation*}
\widehat{H}(\psi) \ll p^{5m-2k} p^{v( \mathrm{Nm}(\ell_\xi))} = \frac{p^{5m-k}}{p^{k-v( \mathrm{Nm}(\ell_\xi))}} = \frac{p^{3k/2 + O(1)}}{C(\psi)}.
\end{equation*}
Thus we have shown item (2).

For item (3) we pick up the calculation at \eqref{eq:pequals2supercuspidal_s-eqn_fixed} and use that $\ell_{\psi_0}= 0$. The outer sum there is only non-empty if $\Nm(\ell_\xi) \equiv 0 \pmod{p^m}$, which we now assume. For use in the following calculations, recall \eqref{eq:valuationNormR} that $v_E(\ell_\xi)\geq e(k-c_0)-d$, $v(\Nm(\ell_\xi))\geq 2(k-c_0)-d$,  Lemma \ref{lemma:traceRisZero} that $\Tr(\ell_\xi) = O(p^{k-d})$ and \eqref{eq:TraceImage} that $\Tr(\fp^{em-d}) = (p)^m$.

Replace $t_0$ in \eqref{eq:pequals2supercuspidal_s-eqn_fixed} with $ s+ \ell_\xi  + t_0\pi^{em-d}$, where $\pi$ is any uniformizer for $L$ and the new $t_0$ runs modulo $\fp^{e(n-m)-d}$. We have by the Postnikov formula that 
$$\xi(s+\ell_\xi + t_0 \pi^{em-d}) = \xi(s) e_{p^k}\Big(\Tr\Big(\frac{\ell_\xi^2}{s} + \frac{\ell_\xi t_0\pi^{em-d}}{s} - \frac{ \ell_\xi^2 t_0\pi^{em-d}}{s^2} + \frac{1}{3} \frac{\ell_\xi^4}{s^3} \Big)\Big).$$
Similarly, replace $x_i$ in \eqref{eq:pequals2supercuspidal_s-eqn_fixed} with $s+x_i p^m$ with the new $x_i$ running modulo $p^{n-m}$. 
To facilitate the following computation, let 
\begin{align*} L &  = x_1+x_2+x_3, \quad Q = x_1x_2+x_2x_3+x_1x_3, \\
T & = \Tr (\ell_\xi+ t_0 \pi^{em-d}), \quad N = \Nm(\ell_\xi+t_0\pi^{em-d}),\end{align*}
and note that $v(T)\geq m$ and $v(N) \geq m$.
We have
$$e_{p^k}(-\mathrm{Tr}(t_0)+  x_1 +  x_2 + x_3) \to e_{p^k}( -2s -T +3s+p^mL),$$
and 
\begin{multline*} 
A = \frac{(s+x_{1} p^m)(s+x_{2} p^m)(s+x_{3} p^m)}{\Nm(\ell_\xi + s + t_0 \pi^{em-d})} \\
=s-T-s^{-1} N+p^mL + s^{-1} \left( (T+s^{-1} N)^2 -  p^m T L-  s^{-1}p^m NL + p^{2m}Q\right) + O(p^k).
\end{multline*}
Gathering the above computations, after the indicated change of variables as well as the change $s \to s^{-1}$, the expression in \eqref{eq:pequals2supercuspidal_s-eqn_fixed} becomes
\begin{multline}\label{p=2gathered}
\sumstar_{s \shortmod{p^m}} \sum_{x_1,x_2,x_3 \shortmod{p^{n-m}}}\sum_{t_0 \in \cO/\fp^{e(n-m)-d}} \xi(s) e_{p^k}( -s\Nm(\ell_\xi) + s\Nm( t_0 \pi^{em-d}) \\ -s(T+Ns)^2 +  sp^m T L+ s^{2}p^m NL
- sp^{2m}Q - s^2\Tr ( \ell_\xi^2 t_0\pi^{em-d}) + s^3\Tr( \tfrac{1}{3} \ell_\xi^4)).
 \end{multline}
 Note that the argument of $e_{p^k}$ in \eqref{p=2gathered} is $\equiv -s\Nm(\ell_\xi) \pmod{p^{2m-d}}$. Moreover, since $\pi_\xi$ has trivial central character $\xi\vert_{\Q_p^\times}$ is a quadratic character of conductor $p^d$ by Lemma \ref{lemma:thetarestrictedtoZp}. So, we write $s=s_0+p^6s'$, where $s_0$ runs modulo $p^6$ and $s'$ runs modulo $p^{m-6}$, and obtain that $\gamma p^{2k-5m+d/2}\widehat{H}({\psi_0}) $ equals
$$\sumstar_{s_0 \shortmod{p^6}} \sum_{x_1,x_2,x_3 \shortmod{p^{n-m}}}\sum_{t_0 \in \cO/\fp^{e(n-m)-d}} (\cdots) \sum_{s' \shortmod{2^{m-6}}} e_{p^k}\left(  - \Nm(\ell_\xi) p^6s'\right), $$
where $(\cdots)$ has absolute value at most $1$ and does not depend on $s'$. The interior sum vanishes unless $2(k-c_0)-d\geq k-6,$ leading to the bound
$$  |\widehat{H}({\psi_0})|\ll p^{5m-2k+m}\delta(k\geq 2c_0 + d-6) \ll p^k\delta(k\geq 2c_0+O(1)). \qedhere $$  
\end{proof}

\section{Proof of Lemma \ref{lemma:Zlemma}}
In this section we prove Lemma \ref{lemma:Zlemma}, drawing on the estimates from Section \ref{section:HhatCalculations}.
Recall that $Z$ is given by \eqref{eq:Zformula2} and $Z_{\mathrm{fin}, p}$ is given by \eqref{eq:ZfinFormula2}.

Our first goal is to reduce the problem to estimating $Z$ when $Q$ is divisible by a single prime.
The overarching strategy of the proof is similar to  the proof of  \cite[Lem.\ 4.2]{PetrowYoungCoset}, which we summarize here.  
We decompose $Z = Z_0 + Z_1$, where $Z_0$ corresponds to the sub-sum with $\psi=\psi_0$ trivial.   A little simplification and using the multiplicativity of $|\widehat{H}|$ shows that
\begin{equation}
\label{eq:Z0productbound}
 Z_0 \ll Q^{\varepsilon}
 \prod_{p|Q} 
 \sum_{k \geq v_p(q')} \frac{p^{v_p(q')}}{p^k} 
 \sum_{a_1, a_2, a_3 | p^{\infty}}
 \frac{|\widehat{H}(\psi_0, a_{1}, a_{2}, a_{3})|}{a_{1}^{\sigma_1} a_{2}^{\sigma_2} a_{3}^{\sigma_3} (p^k/p^{v_p(q')})^{\sigma_4}}.
\end{equation}
where $\sigma_i > 1$ for all $i$, and where $\psi_0$ is the trivial character modulo $p^k$.  
Note that our desired upper bound on $Z_0$, namely \eqref{eq:Z0bound}, is multiplicative, and so is the right hand side of \eqref{eq:Z0productbound}, so it suffices to prove the bound one prime at a time. 

Meanwhile, for $\sigma_i>1/2$ we have
\begin{equation}
\label{eq:Z1multi}
Z_1 \ll Q^\eps \sum_{\substack{c_Q | Q^{\infty} \\ q' | c_Q}}
\frac{q'}{c_Q} 
\sum_{\substack{m_{i} | Q^{\infty} \\ (i=1,2,3)}}
\sum_{\psi \shortmod{c_Q}}  \frac{|\widehat{H}(\psi, m_{1}, m_{2}, m_{3})|}{m_{1}^{\sigma_1} m_{2}^{\sigma_2} m_{3}^{\sigma_3} (c_Q/q')^{\sigma_4}}
|\mathscr{L}(\psi)|,
 \end{equation}
where here and below we employ the shorthand notation
\begin{equation}
\mathscr{L}(\psi) = L(s_1, \psi) L(s_2, \psi) L(s_3, \psi) L(s_4, \overline{\psi}).
\end{equation}
By positivity, we have extended the sum in \eqref{eq:Z1multi} to include the trivial character.
We also remind the reader that for the purposes of estimating $Z_1$ in Lemma \ref{lemma:Zlemma}, we will take $s_1 = s_2 = s_3 = \sigma+it$, and $s_4 = \sigma - it$, but we generally leave this substitution implicit for brevity.
The fourth moment of Dirichlet $L$-functions is not itself multiplicative, but the bound in Theorem \ref{thm:fourthcoset} is multiplicative. So, with the goal of reducing to a multiplicative situation, we decompose the sums in \eqref{eq:Z1multi} as follows. 

Given $c_Q$, for odd primes $p$ with $v_p(c_Q)= c_0\geq 2$, recall the quantity $\Delta_p \in \Z/p^{v_p(c_Q)-1}\Z$ defined in \cite[Thms.\ 3.3, 3.4]{PetrowYoungCoset} when $\sigma_p$ is principal series and by equation \eqref{eq:DeltaDef} when $\sigma_p$ is supercuspidal with $L/\Q_p$ unramified. 
Using Lemmas \ref{lemma:HhatEvalPScase}, \ref{lemma:HhatVanishesUnlessCoprimeSupercuspidalInert}, \ref{lemma:HhatpsiTrivialInertkLarger}, \ref{lemma:HhatSupercuspidalPrimeTrivialPsi}, \ref{lemma:HhatpsiTrivialInertkSmallest}, \ref{lemma:HhatBoundSupercuspidalInertkequalsi0IntermediateStep}, \ref{lemma:HhatBoundInertRhoBound}, \ref{lemma:HhatSupercuspidalInertDegenerate}, \ref{lemma:HhatBoundSupercuspidalInertDegenerate}, \ref{lemma:HhatSupercuspidalPrime}, \ref{lemma:HhatSupercuspidalPrime_triv}, \ref{lemma:HhatPropertiesp=2}, and Propositions \ref{prop:HhatBoundkbiggish}, \ref{AGprop}, and \ref{AGprop_triv}, we will bound $|\widehat{H}(\psi)|$ in terms of 
 $c_Q$, $C(\psi)$, $v_p(\Delta_p)$, and $v_p(m_i)$ for $p | Q$ and $i=1,2,3$ only (and $(\sigma_p)_{p \in S}$, of course). We now decompose \eqref{eq:Z1multi} accordingly. 
Given $c_Q$,  let $$c_Q' = \prod_{\substack{p | c_Q, \,p \neq 2 \\ v_p(c_Q) = c_0\geq 2}} p^{v_p(c_Q)}.$$ 
For a positive integer $n$, let $\widetilde{n}$ be its square-free radical. Let $\Delta = \Delta(\psi) \in [1, c_Q'/\widetilde{c_Q'}]\cap \Z$ be such that $\Delta \equiv \Delta_p \pmod {p^{v_p(c_Q)-1}}$ for all  $ p|c_Q'$. For $a| c_Q'/\widetilde{c_Q'}$ write $a \| \Delta$ as shorthand for the condition that $v_p(\Delta_p) = v_p(a)$  for all $ p|c_Q'$.  

For $p | Q$, $m_1, m_2,m_3 | p^\infty$,  
$0\leq \beta \leq v_p(c_Q)$, and $0 \leq \alpha \leq \max(v_p(c_Q') -1, 0)$ 
let 
\begin{equation}\label{eq:sec6Mdef}
M(p^\alpha, p^\beta , m_1,m_2,m_3) = \begin{cases} \max_{\substack{C(\psi) = p^\beta \\ v_p(\Delta_p) = \alpha}} |\widehat{H}(\psi, m_1,m_2,m_3)| & \text{ if } v_p(c_Q') >0, \\ 
\max_{C(\psi) = p^\beta} |\widehat{H}(\psi, m_1,m_2,m_3)| & \text{ if } v_p(c_Q') =0 .
\end{cases}
\end{equation}
Extending the definition of $M$ multiplicatively, we obtain by Lemma \ref{lemma:multiplicativity} that for each $c_Q$, $b | c_Q$, $a | c_Q'/\widetilde{c_Q'}$, and $m_i | Q^\infty$   
\begin{equation}
\label{eq:Bdef} 
|\widehat{H}(\psi, m_1,m_2,m_3)|\leq M(a, b, m_1,m_2,m_3),
\end{equation}
 for all $ \psi$ with $ C(\psi) = b$  and $a\| \Delta$.

Implementing this decomposition of \eqref{eq:Z1multi}, we have
\begin{equation}
\label{eq:Z1multi_ian2} 
Z_1 \ll Q^\eps 
\sum_{\substack{ c_Q | Q^\infty \\ q'|c_Q }} \frac{q'}{c_Q} 
\sum_{b | c_Q} 
\sum_{a | c_Q'/\widetilde{c_Q'}} 
\sum_{\substack{m_i | Q^\infty \\ (i=1,2,3)}} 
\frac{M(a,b,m_1,m_2,m_3)}{m_1^{\sigma_1}m_2^{\sigma_2}m_3^{\sigma_3}(c_Q/q')^{\sigma_4}}
\sum_{\substack{\psi \shortmod{c_Q} \\ C(\psi) =b \\ a \| \Delta(\psi)}} |\mathscr{L}(\psi)|.
\end{equation}
We argue that \eqref{eq:Z1multi_ian2} may be restricted by $a|b$.  If $p \nmid a$, then clearly $v_p(a)\leq v_p(C(\psi))$, so suppose $p|a$, that is, $v_p(\Delta) > 0$.
If $\sigma_p$ is supercuspidal with $L/\Q_p$ unramified, then recall from Lemma \ref{lemma:vpNormOfEllTheta}(2) that $v_p(\mathrm{Nm}(\ell_{\xi})) = 0$, which implies from \eqref{eq:DeltaDef} that $v_p(\ell_{\psi}) = 0$, i.e., $v_p(C(\psi)) = v_p(c_Q) $.  If $\sigma_p$ is principal series, then in \cite[\S 3.2]{PetrowYoungCoset}, $\Delta_p$ is undefined unless $v_p(C(\psi)) =  v_p(c_Q) $.  In either case, $v_p(a)\leq v_p(c_Q'/\widetilde{c_Q'})\leq v_p(c_Q') \leq v_p(c_Q) = v_p(C(\psi))$. Hence we may assume $a|b$.

By positivity, and by trivially estimating finitely many Euler factors, we have 
\begin{equation}
\label{scrLsum}
\sum_{\substack{\psi \shortmod{c_Q} \\ C(\psi) =b \\ a \| \Delta(\psi)}} |\mathscr{L}(\psi)| \leq \sum_{\substack{\psi \shortmod{c_Q} \\ C(\psi) =b \\ \Delta(\psi) \equiv 0 \shortmod{a}}} |\mathscr{L}(\psi)|\ll Q^\eps  \sum_{\substack{\psi \shortmod{b} \\ \Delta(\psi) \equiv 0 \shortmod{a}}} |\mathscr{L}(\psi)|.
\end{equation}

We would like to apply Theorem \ref{thm:fourthcoset} to \eqref{scrLsum}, so we further break up the sum on the right over cosets. Let $X(b)=\{\psi \pmod b\}$ be the group of Dirichlet characters modulo $b$, and consider the subgroup $\psi_0X(b/a)$ of $X(b)$, where $\psi_0$ is the trivial character modulo $b$. Then $\psi, \psi' \in X(b)$ are in the same $\psi_0X(b/a)$-coset if and only if $\ell_\psi \equiv \ell_{\psi'} \pmod{a}$. Thus, if $\psi,\psi'$ are in the same $\psi_0X(b/a)$-coset, then $\Delta(\psi) \equiv \Delta(\psi') \pmod a$. So,
\begin{equation}\label{decomp_into_cosets} \sum_{\substack{\psi \shortmod{b} \\ \Delta(\psi) \equiv 0 \shortmod{a}}} |\mathscr{L}(\psi)| = \sum_{\substack{ \theta \in X(b)/ \psi_0X(b/a) \\ \Delta(\theta) \equiv 0 \shortmod a}} \sum_{ \eta \in  \psi_0X(b/a)} | \mathscr{L}(\eta . \theta)|.\end{equation}
We insert \eqref{decomp_into_cosets} back into \eqref{eq:Z1multi_ian2}, integrate both sides, and finally apply Theorem \ref{thm:fourthcoset} to obtain 
\begin{equation*}
\frac{1}{X^{1+\eps}} \int_{-X}^X |Z_1|\,dt \ll Q^{\varepsilon} Y_1,
\end{equation*}
where
\begin{equation}
\label{eq:Z1multi_ian3}
 Y_1 :=  \sum_{\substack{c_Q | Q^\infty \\ q' | c_Q }} \frac{q'}{c_Q} \sum_{b | c_Q} \sum_{\substack{a | c_Q'/\widetilde{c_Q'} \\ a \mid b}} \sum_{\substack{m_i | Q^\infty \\ (i=1,2,3)}} \frac{M(a,b,m_1,m_2,m_3)}{m_1^{\sigma_1}m_2^{\sigma_2}m_3^{\sigma_3}(c_Q/q')^{\sigma_4}} \sum_{\substack{ \theta \in X(b)/ \psi_0X(b/a) \\ \Delta(\theta) \equiv 0 \shortmod a}} \lcm(b/a, b_0),
\end{equation}
and where $b_0$ is the least positive integer such that $b_0 | b$ and $b^2 | b_0^3$.
Now $Y_1$ is multiplicative and so is the desired upper bound \eqref{eq:Z1bound}. Therefore, to prove the second part of Lemma \ref{lemma:Zlemma} it suffices to bound $Y_1$ for prime powers. We summarize the above discussion as follows. 
\begin{mylemma}
Given $k\geq v_p(q')$, let $k' = k$ if $p\neq 2$, $k=c_0$, and $c_0\geq 2$, and $k'=0$ otherwise. If the $p$-part of $Y_1$, i.e.\ 
$$\sum_{k = v_p(q')}^\infty \frac{p^{v_p(q')}}{p^k} \sum_{\beta =0}^k \sum_{\alpha = 0}^{\max(k'-1,0)} \sum_{\substack{m_i | p^\infty \\ (i=1,2,3)}} \frac{M(p^\alpha,p^\beta,m_1,m_2,m_3)}{m_1^{\sigma_1}m_2^{\sigma_2}m_3^{\sigma_3}p^{\sigma_4(k-v_p(q'))}} \sum_{\substack{ \theta \in X(p^\beta)/ \psi_0X(p^{\beta-\alpha}) \\ \Delta(\theta) \equiv 0 \shortmod {p^\alpha}}} p^{\max(\beta-\alpha, \lceil2 \beta/3\rceil)}$$ 
is $\ll_{\sigma, \eps} p^{(3/2+\eps)v_p(q')}$ for all $p \mid Q$, then the second part of Lemma \ref{lemma:Zlemma} holds. 
\end{mylemma}
 Recall that  the results from Section \ref{section:HhatCalculations}  typically assume that $p^k$ is large, however the contribution to $Z_1$ or $Z_0$ from terms with $p^k = O(1)$ may be handled directly with a trivial bound.  Based on this simple observation, in the forthcoming work we will implicitly assume that $p^k$ is large enough so that the bounds from Section \ref{section:HhatCalculations} may be applied.

\begin{mylemma}[Principal series]\label{lemma:ZpropertiesPS}
 Suppose $\sigma_p$ is trivial central character principal series or special with $c(\sigma_p)\geq 2$.  Then the properties of $Z$ from Lemma \ref{lemma:Zlemma} hold.
\end{mylemma}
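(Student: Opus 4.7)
The proof plan is to reduce Lemma \ref{lemma:ZpropertiesPS} to the analogous estimates already established in \cite{PetrowYoungWeyl, PetrowYoungCoset}. By the multiplicativity of $|\widehat{H}|$ (Lemma \ref{lemma:multiplicativity}) and the reduction at the start of this section, it suffices to treat the case $Q = p^{c(\sigma_p)} = p^{2c_0}$, so that $q' = p^{c_0}$ and $Q = (q')^2$. The decisive input is Lemma \ref{lemma:HhatEvalPScase}, which converts our $\widehat{H}$ into the quantity $\widehat{H}_{\mathrm{PY}}$ from \cite[(5.12)]{PetrowYoungWeyl}: namely $\widehat{H}(\psi)$ vanishes unless $c(\psi) \leq c_0 = v_p(q')$, and in the non-vanishing case
$$
|\widehat{H}(\psi, a_1, a_2, a_3)| = \frac{r}{p^{c_0}} |\widehat{H}_{\mathrm{PY}}(\psi, \chi, a_1, a_2, a_3, r)|,
$$
with $r = c_Q/q'$. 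Thus the sum over $\psi \pmod{c_Q}$ in \eqref{eq:Zformula2} reduces to a sum over characters mod $q'$ (lifted with multiplicity $r$), and the extra factor $r/p^{c_0}$ combines cleanly with $(c_Q/q')^{-s_4} = r^{-s_4}$ to match the normalization in loc.\ cit.

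For the $Z_0$ bound \eqref{eq:Z0bound}, the trivial character satisfies $c(\psi_0) = 1 \leq c_0$, so \eqref{eq:HhatHerevsHhatPY} applies. Substituting into \eqref{eq:Z0productbound} and summing the geometric series in $k \geq c_0$ reduces the task to the single-prime trivial-character bound already obtained in \cite[\S 6, \S 8]{PetrowYoungWeyl}. Since $(q')^2 / Q^{3/4} = (q')^{1/2}$, this yields the desired estimate. The meromorphy on $\mathrm{Re}(s_j) \geq 1/2 + \eta$ and the pole structure at $s_j = 1$ follow from the factored Dirichlet $L$-function form of the trivial-character contribution.

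For the $Z_1$ bound \eqref{eq:Z1bound}, we decompose the sum over non-trivial $\psi \pmod{c_Q}$ into cosets along the strategy of \cite[\S 3, \S 10--12]{PetrowYoungCoset}. The bounds on $|\widehat{H}(\psi)|$ on each coset are inherited from the bounds on $\widehat{H}_{\mathrm{PY}}$ in loc.\ cit.\ via \eqref{eq:HhatHerevsHhatPY}. Applying Theorem \ref{thm:fourthcoset} to each coset and summing yields an overall bound of the form $(q')^{3/2+\epsilon} X^{1+\epsilon}$, matching \eqref{eq:Z1bound}. The same bound for $Z_0$ in the strip $1/2 < \sigma \leq 0.99$ comes from the same coset analysis applied to the trivial character contribution, where the pole is shifted out of the region.

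The main obstacle in practice is not any new estimation, but careful bookkeeping: one must verify that the harmonic weights $q'/\varphi(c_Q)$, the factor $(c_Q/q')^{-s_4}$, and the normalization $r/p^{c_0}$ arising from Lemma \ref{lemma:HhatEvalPScase} together reproduce the exact combinatorial structure of the single-prime $Z$-function treated in \cite{PetrowYoungWeyl, PetrowYoungCoset}. Once this matching is done, every analytic step — the meromorphic continuation of $Z_0$, its pole structure, and the fourth-moment bound on $Z_1$ — transports verbatim from the earlier work.
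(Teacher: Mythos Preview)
Your proposal is correct and follows essentially the same route as the paper: both arguments use Lemma \ref{lemma:HhatEvalPScase} to convert $\widehat{H}$ into $\widehat{H}_{\mathrm{PY}}$, observe that the vanishing for $c(\psi) > c_0$ collapses the $\psi$-sum to characters modulo $q'$, and then defer to the estimates already proved in \cite{PetrowYoungWeyl, PetrowYoungCoset}. The paper's version is slightly more streamlined: rather than handling $Z_0$ and $Z_1$ separately as you do, it bounds $|Z|$ by $\frac{Q^{\varepsilon}}{p^{c_0}} \sum_{\psi \shortmod{p^{c_0}}} \|Z_{\mathrm{fin},p}^{\mathrm{PY}}\| \cdot |\mathscr{L}(\psi)|$ and then points directly to \cite[Lemma 4.2]{PetrowYoungCoset}, whose proof begins from exactly this expression. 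One small inaccuracy: the characters modulo $c_Q$ with $c(\psi) \leq c_0$ are in bijection with characters modulo $q'$, not ``lifted with multiplicity $r$''; the factor $r$ you need comes instead from the combination $\frac{q'}{\varphi(c_Q)} \cdot \frac{r}{p^{c_0}} \asymp p^{-c_0}$, which you otherwise track correctly.
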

\begin{proof}
Recall that the geometric conductor $v_p(q')= \lceil \frac{c(\sigma_p)}{2}\rceil = {c_0}$ in this situation. Let $Z_{\mathrm{fin}}^{\mathrm{PY}}(\sigma_1, \sigma_2, \sigma_3, \sigma_4)$ be defined as in \cite[\S 7]{PetrowYoungWeyl},
and let $\|Z_{\mathrm{fin}, p}^{\mathrm{PY}}\| = \|Z_{\mathrm{fin}, p}^{\mathrm{PY}}(\psi)\|$ cf.\ \eqref{eq:ZfinL1normDef} be defined by
\begin{equation}
\label{eq:ZfinPYnormDef}
\|Z_{\mathrm{fin}, p}^{\mathrm{PY}}(\psi)\| 
= \sum_{\substack{a_1, a_2, a_3, d |p^{\infty} }} \frac{|\widehat{H}_{\mathrm{PY}}(\psi, \chi, a_1, a_2, a_3, d)|}{a_1^{\sigma_1} a_2^{\sigma_2} a_3^{\sigma_3} d^{\sigma_4}}.
\end{equation}
Technical remark: in \cite[\S 7]{PetrowYoungWeyl}, the sum defining $Z_{\mathrm{fin},p}$ 
 had an extra condition $(a_1, d) = 1$ which holds automatically since $\widehat{H}_{\mathrm{PY}}$ vanishes otherwise, so we are free to omit it in \eqref{eq:ZfinPYnormDef}.

Applying Lemma \ref{lemma:HhatEvalPScase}  to \eqref{eq:Z0productbound},  the $p$-part of the bound on $Z_0$ for $\real(s_j)\geq \sigma>1$ is 
\begin{equation*}
\ll \frac{p^{\eps c(\sigma_p)}}{p^{c_0}}\sum_{k=c_0}^\infty \sum_{a_1,a_2,a_3 | p^\infty} \frac{|\widehat{H}_{\rm PY}(\psi_0, \chi, a_1,a_2,a_3,p^{k-c_0})|}{a_1^{\sigma_1} a_2^{\sigma_2}a_3^{\sigma_3}(p^{k-c_0})^{\sigma_4}} \\
= \frac{p^{\eps c(\sigma_p)}}{p^{c_0}}\|Z_{\mathrm{fin}, p}^{\mathrm{PY}}(\psi_0)\| \ll p^{\eps c(\sigma_p)} 
\end{equation*} 
by \cite[(4.3)]{PetrowYoungCoset}, which is acceptable. 

Turning to $Z_1$, when $\real(s_j)\geq \sigma>1/2$, we have by Lemma \ref{lemma:HhatEvalPScase}(1) that the $p$-part of $Y_1$ is bounded by 
\begin{equation}\label{ian_PS1}
  \sum_{\beta=0}^{c_0} \sum_{\alpha=0}^{\max(k'-1,0)} \sum_{k=c_0}^\infty \frac{p^{c_0}}{p^k}\sum_{m_1,m_2,m_3 | p^\infty}  \frac{M(p^\alpha, p^\beta, m_1,m_2,m_3)}{ m_1^{\sigma_1}m_2^{\sigma_2} m_3^{\sigma_3}p^{(k-c_0)\sigma_4}}  \sum_{\substack{ \theta \in X(p^\beta)/ \psi_0X(p^{\beta-\alpha}) \\ \Delta(\theta) \equiv 0 \shortmod {p^\alpha}}} p^{\max(\beta-\alpha, \lceil 2\beta/3\rceil)}.
\end{equation} 
By Lemma \ref{lemma:HhatEvalPScase}(2) we have $$|\widehat{H}(\psi, a_1, a_2, a_3)|
= p^{k-2c_0}|\widehat{H}_{\mathrm{PY}}(\psi, \chi, a_1, a_2, a_3, p^{k-c_0})|.$$ 
Therefore, the sub-sum of \eqref{ian_PS1} over $k, m_1,m_2,m_3$ is essentially equal to $p^{-c_0} \|Z_{\mathrm{fin}, p}^{\mathrm{PY}}(\psi)\|$, i.e.\ they are equal except in that the sub-sum of \eqref{ian_PS1} has extra maximum over $\psi$ inside the sums, see \eqref{eq:sec6Mdef}. So, the expression \eqref{ian_PS1} is the decomposition of formula (4.6) within the proof of Lemma 4.2 of \cite{PetrowYoungCoset} according to the possible values of $C(\psi)$ and $\Delta(\psi)$, locally at $p$. Regardless of these minor differences between \eqref{ian_PS1} and (4.6) of \cite{PetrowYoungCoset}, applying the same steps as in the proof of Lemma 4.2 of loc.\ cit., one obtains the desired properties of $Z_1$. 
\end{proof}

\begin{mylemma}[Supercuspidal, ramified]
\label{lemma:ZpropertiesRamified}
Suppose that $p$ is odd and $\sigma_p$ is a trivial central character supercuspidal representation corresponding to an admissible pair $(L/\Q_p,\xi)$ with $L/\Q_p$ ramified.
 Then the properties of $Z$ from Lemma \ref{lemma:Zlemma} hold.
\end{mylemma}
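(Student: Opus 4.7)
The plan is to mirror the proof of Lemma \ref{lemma:ZpropertiesPS}, but now using the bounds on $\widehat{H}(\psi)$ from Section \ref{section:HhatCalculations} specialized to the ramified supercuspidal case. A key simplification, emphasized in the overview Section \ref{section:sketch}, is that the ``singular character'' phenomenon does not arise when $L/\Q_p$ is ramified (since $\mathrm{Nm}(\ell_\xi)$ is then never a square residue mod $p$). Consequently, no coset decomposition of the character sum is needed, and a direct application of the fourth moment bound of Theorem \ref{thm:fourthcoset} will suffice.

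By the discussion preceding Lemma \ref{lemma:ZpropertiesPS}, I may assume $Q = p^\alpha$ is a prime power; the table \eqref{eq:supercuspidalTable} then gives $\alpha = 2c_0+1$ and $v_p(q') = c_0+1$, so $(q')^2/Q = p$. Since $k \geq v_p(q') \geq c_0+1$ throughout, Lemma \ref{lemma:HhatVanishesUnlessCoprimeSupercuspidalInert} forces $a_1 = a_2 = a_3 = 1$, and the sum over the $m_i$ in \eqref{eq:ZfinFormula2} collapses entirely. With the splitting $Z = Z_0 + Z_1$ into the trivial-character and non-trivial-character contributions, both pieces may be estimated prime-by-prime.

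For $Z_0$, Lemma \ref{lemma:HhatpsiTrivialInertkLarger} yields $|\widehat{H}(\psi_0,1,1,1)| \leq |S(\mathrm{Nm}(\ell_\xi),0;p^k)|$. From \eqref{eq:valuationNormR}, $v_p(\mathrm{Nm}(\ell_\xi)) = 2(k-c_0)-1$, so the Ramanujan sum vanishes except when $k=2c_0$ (contributing $p^{2c_0-1}$) or $k \geq 2c_0+1$ (contributing $\leq \varphi(p^k)$). Summing the geometric series in $k$ produces the bound $|Z_0| \ll q'\, p^{-c_0\min(1,\sigma_4)} |\zeta(s_1)\zeta(s_2)\zeta(s_3)\zeta(s_4)|$. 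This gives \eqref{eq:Z0bound} in $\mathrm{Re}(s_j) > 1$ comfortably, since $q'p^{-c_0}=p$ is much smaller than $(q')^2/Q^{3/4} = p^{(2c_0+5)/4}$; and \eqref{eq:Z1bound} in $1/2 < \mathrm{Re}(s_j) < 0.99$ follows from the classical fourth moment of $\zeta$.

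For $Z_1$, apply Proposition \ref{prop:HhatBoundkbiggish} to obtain $|\widehat{H}(\psi)| \ll p^{\lfloor 3k/2\rfloor}/C(\psi)$, then group characters $\psi$ mod $p^k$ by their conductor $C(\psi) = p^c$. The resulting double sum factors so that the $k$-sum is a geometric series in $p^{1/2-\sigma_4}$ (convergent for $\sigma_4 > 1/2$), dominated by its bottom term $k = \max(c_0+1,c)$. Integrating over $t \in [-X,X]$ and applying Hölder's inequality together with Theorem \ref{thm:fourthcoset} (with $d = p^c$, primitive conductor $p^c$) gives $\int_{-X}^X \sum_{\psi \bmod p^c,\, \text{prim.}} |\mathscr{L}(\psi)|\, dt \ll X p^c(pX)^\eps$. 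The dominant contribution comes from small $c$ and yields $\int |Z_1|\, dt \ll (q')^{3/2+\eps} X^{1+\eps}$ as required. Analyticity of $Z_1$ in $\mathrm{Re}(s_j) \geq \sigma > 1/2$ follows from the absolute convergence just established (applied pointwise without the $t$-integral).

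There is no truly hard step here; the main bookkeeping challenge is tracking the exponents carefully to verify that the savings $1/C(\psi)$ in Proposition \ref{prop:HhatBoundkbiggish}, combined with the factor $q'/\varphi(c_Q)$ in \eqref{eq:Zformula2} and the $\mathrm{lcm}(d,q_0)$ factor from Theorem \ref{thm:fourthcoset}, together yield exactly the $(q')^{3/2}$-shape. The ramified case is genuinely easier than the unramified supercuspidal case treated later, essentially because $v_p(q') = c_0+1$ here (rather than $c_0$), which is slightly unfavorable, but this is more than compensated by the absence of singular characters.
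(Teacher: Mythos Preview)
Your proposal is correct and follows essentially the same approach as the paper: reduce to $a_1=a_2=a_3=1$ via Lemma \ref{lemma:HhatVanishesUnlessCoprimeSupercuspidalInert}, bound $Z_0$ using Lemma \ref{lemma:HhatpsiTrivialInertkLarger} together with the Ramanujan-sum vanishing for $k<2c_0$, and bound $Z_1$ by combining Proposition \ref{prop:HhatBoundkbiggish} with Theorem \ref{thm:fourthcoset} grouped by conductor (the $1/C(\psi)$ saving exactly cancels the $p^j$ from the fourth moment, leaving the geometric series in $p^{1/2-\sigma_4}$). The only cosmetic difference is that the paper sums over $k$ first and then over the conductor $j\le k$, whereas you reverse the order; the arithmetic is the same.
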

\begin{proof}
Recall 
from \eqref{eq:supercuspidalTable} 
that $c(\sigma_p)= 2c_0+1$ with  geometric conductor  $q' = p^{c_0 + 1}$, so that $k'=0$.  Lemma \ref{lemma:HhatVanishesUnlessCoprimeSupercuspidalInert} and Proposition \ref{prop:HhatBoundkbiggish} assert that
$$M(1, p^\beta, m_1,m_2,m_3) \ll \delta(m_1m_2m_3=1)  p^{\lceil\frac{3k}{2}\rceil - \beta} .$$
Thus, the $p$-part of $Y_1$ is bounded by 
\begin{equation*}
\sum_{k = c_0+1}^{\infty} \frac{p^{v_p(q')}}{p^k} 
 \sum_{0 \leq j \leq k}
  \frac{ p^{3k/2}}{ p^{(k-c_0-1)\sigma_4}}
  \ll   \sum_{k = c_0+1}^{\infty} p^{v_p(q')} 
  \frac{ p^{k/2 + k\varepsilon}}{ p^{(k-c_0-1)\sigma_4}}
  \ll p^{(3/2+\varepsilon)v_p(q')}.
\end{equation*}
This is the desired bound on $Z_1$. 

To estimate $Z_0$, we have $\sigma_4 > 1$, $\psi = \psi_0$, $a_1 = a_2 = a_3 = 1$, so \eqref{eq:Z0productbound} becomes
\begin{equation*}
 Z_0 \ll 
 \sum_{k = c_0 + 1}^{\infty} \frac{p^{v_p(q')}}{p^k} \frac{|\widehat{H}(\psi_0, 1,1,1)|}{p^{(k-c_0-1)\sigma_4}}.
\end{equation*}
The bound on $|\widehat{H}|$ is given by Lemma \ref{lemma:HhatpsiTrivialInertkLarger}. 
Recall from Lemma \ref{lemma:vpNormOfEllTheta} that $v_p(\mathrm{Nm}(\ell_{\xi})) = 2 (k-c_0) - 1$.  It is well-known that for $k \geq 2$,  the Ramanujan sum
$S(p^j,0;p^k) = 0$ unless $j \geq k-1$, which means that $|\widehat{H}(\psi_0, 1, 1, 1)| = 0$ unless $2(k-c_0) - 1 \geq k-1$, that is, $k \geq 2c_0$.  Therefore,
\begin{equation*}
 Z_0 \ll 
 \sum_{k = 2c_0}^{\infty} \frac{p^{v_p(q')}}{p^k} \frac{(p^k, p^{2(k-c_0) - 1})}{p^{(k-c_0-1)\sigma_4}}
 \ll \frac{p^{v_p(q')}}{p^{c_0}} = p.
\end{equation*}
On the other hand, the target bound is $p^{2v_p(q')} /Q^{3/4} = p^{2(c_0+1) - \frac34 (2c_0+1)} = p^{\frac{c_0}{2} + \frac{5}{4}} \geq p^{7/4}$, so our bound is more than satisfactory.
\end{proof}

\begin{mylemma}[Supercuspidal, unramified]\label{lemma:ZpropertiesUnramified}
 Suppose that $p$ is odd and $\sigma_p$ is a trivial central character supercuspidal representation corresponding to an admissible pair $(L/\Q_p,\xi)$ with $L/\Q_p$ unramified. 
 Then the properties of $Z$ from Lemma \ref{lemma:Zlemma} hold.
\end{mylemma}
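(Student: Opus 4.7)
The plan is to proceed analogously to Lemma \ref{lemma:ZpropertiesRamified}, splitting $Z = Z_0 + Z_1$ and estimating each piece separately. However, the unramified case is more intricate than the ramified case for two reasons: \emph{(i)} the range $k = c_0$ contributes non-trivially and permits degenerate tuples $(a_1,a_2,a_3)$ with $p\mid a_1a_2a_3$; \emph{(ii)} when $k = c_0$ there can exist small cosets of ``singular'' characters $\psi$ on which $\widehat{H}(\psi,1,1,1)$ fails to exhibit square-root cancellation, precisely as flagged in Section \ref{section:sketch}. I will also need to invoke the algebro-geometric input (Propositions \ref{AGprop} and \ref{AGprop_triv}) when $k = c_0 = 1$.

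First, for $Z_0$: when $k \geq c_0 + 1$, Lemma \ref{lemma:HhatVanishesUnlessCoprimeSupercuspidalInert} forces $a_1 = a_2 = a_3 = 1$, and Lemma \ref{lemma:HhatpsiTrivialInertkLarger} expresses $\widehat{H}(\psi_0,1,1,1)$ as a Ramanujan sum $S(\mathrm{Nm}(\ell_\xi),0;p^k)$. Since $v_p(\mathrm{Nm}(\ell_\xi)) = 2(k-c_0)$ by Lemma \ref{lemma:vpNormOfEllTheta}, this Ramanujan sum vanishes unless $2(k-c_0) \geq k-1$, i.e., $k \leq 2c_0+1$, confining the sum to $O(c_0)$ terms. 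Combined with Lemmas \ref{lemma:HhatSupercuspidalPrimeTrivialPsi} and \ref{lemma:HhatpsiTrivialInertkSmallest} (which handle the $k = c_0$ stratum when $p \mid a_1a_2a_3$) and with Proposition \ref{AGprop_triv} when $k = c_0 = 1$, this yields a bound on $Z_0$ of size $O((q')^{1+\varepsilon})$, which is much smaller than the target $(q')^2/Q^{3/4} \gg (q')^{1/2}$ appearing in \eqref{eq:Z0bound}.

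Next, for $Z_1$, I split the $\psi$-sum by the conductor exponent $j = v_p(C(\psi))$, $0 \leq j \leq k$. In the range $k \geq c_0 + 1$, Lemma \ref{lemma:HhatVanishesUnlessCoprimeSupercuspidalInert} again forces $a_1=a_2=a_3=1$ and Proposition \ref{prop:HhatBoundkbiggish} gives $|\widehat{H}(\psi)| \ll p^{\lfloor 3k/2\rfloor}/C(\psi)$. Applying Theorem \ref{thm:fourthcoset} to the set of characters mod $p^k$ of conductor $p^j$ and summing over $j$ and $k$ gives the contribution $\ll (q')^{3/2+\varepsilon} X^{1+\varepsilon}$, as required for \eqref{eq:Z1bound}, exactly mirroring the calculation in Lemma \ref{lemma:ZpropertiesRamified}. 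For the remaining range $k = c_0$: when $a_1 = a_2 = a_3 = 1$, Lemma \ref{lemma:HhatBoundInertRhoBound} gives $|\widehat{H}(\psi)| \ll p^{c_0/2}\rho(\Delta, p^n) + p^{(c_0+1)/2}\rho(p^{-2}\Delta,p^{n-1})\delta(p^2\mid\Delta)$ with $\Delta = -\ell_\psi^2/\mathrm{Nm}(\ell_\xi) + 1/4$; the singular characters are those $\psi$ with $\rho(\Delta,p^n) \neq 0$, which (by Hensel's lemma for $p \equiv 3 \pmod 4$) are confined to $O(1)$ cosets of $\widehat{(\Z/p^{c_0}\Z)^\times}$ of a prescribed modulus. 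Inside each such coset I invoke Theorem \ref{thm:fourthcoset} with an appropriately chosen $d\mid q$, so that $\lcm(d,q_0)$ matches the density of the singular stratum. For degenerate tuples with $v_p(a_i) = j > 0$, Lemma \ref{lemma:HhatSupercuspidalInertDegenerate} forces $v_p(a_i) = j$ for all three $i$ and $C(\psi) = p^{c_0 - j}$, and Lemma \ref{lemma:HhatBoundSupercuspidalInertDegenerate} gives $|\widehat{H}| \ll p^{c_0/2 + 3j/2}$; a straightforward large sieve (fourth moment) suffices here as the factor $a_1^{-\sigma_1}a_2^{-\sigma_2}a_3^{-\sigma_3} = p^{-3j\sigma}$ produces ample savings. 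Finally, when $k = c_0 = 1$ with $(a_1a_2a_3,p) = 1$, the Postnikov formula is unavailable and I instead substitute the algebraic-geometry bound $g(\xi,\psi) \ll p^2$ of Proposition \ref{AGprop}, normalized via \eqref{eq:HhatPrimeSupercuspidal}, which yields $|\widehat{H}(\psi,1,1,1)| \ll p^{-1/2}$, i.e., square-root cancellation.

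The main obstacle is the coset bookkeeping for the singular characters at $k = c_0$. Specifically, I must verify that the product of (the size of $T_{\psi,\xi}$ contributed by $\rho(\Delta,p^n)$) $\times$ (the size of the coset of characters of bounded modulus on which this $\rho$ is large) $\times$ (the large-sieve gain from Theorem \ref{thm:fourthcoset} along that coset) ultimately combines to $\ll (q')^{3/2+\varepsilon}X^{1+\varepsilon}$. This is the precise analogue of the most delicate part of \cite{PetrowYoungCoset}; the key structural observation is that the singular condition $\ell_\psi^2 \equiv \mathrm{Nm}(\ell_\xi)/4 \pmod{p^2}$ here plays the same role as $\ell_\chi^2 \equiv -\ell_\psi^2/4 \pmod{p^2}$ there, and in both cases it cuts out $O(1)$ cosets at a modulus that is compatible with the coset fourth-moment bound. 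Once this coset combinatorics is set up correctly (via the decomposition sketched after \eqref{eq:Z1multi2}), the rest is bookkeeping.
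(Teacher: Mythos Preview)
Your overall strategy matches the paper's: split $Z = Z_0 + Z_1$, handle $k \geq c_0+1$ via Proposition \ref{prop:HhatBoundkbiggish} exactly as in the ramified case, and for $k = c_0$ stratify the $\psi$-sum according to $v_p(\Delta)$ using Lemma \ref{lemma:HhatBoundInertRhoBound} together with Theorem \ref{thm:fourthcoset}. However, there is a genuine error in your $Z_0$ analysis that causes it to fail as written.

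The Ramanujan sum $S(\mathrm{Nm}(\ell_\xi),0;p^k)$ (for $k \geq 2$) vanishes unless $v_p(\mathrm{Nm}(\ell_\xi)) \geq k-1$, i.e.\ $2(k-c_0) \geq k-1$, which gives $k \geq 2c_0-1$, \emph{not} $k \leq 2c_0+1$ as you write. The sum over $k$ is therefore an infinite tail, controlled by the factor $p^{-(k-c_0)\sigma_4}$ with $\sigma_4>1$, not a finite head of $O(c_0)$ terms. More seriously, your claimed bound $Z_0 \ll (q')^{1+\varepsilon}$ does \emph{not} suffice: in the unramified case $Q = (q')^2$, so the target \eqref{eq:Z0bound} reads $(q')^2/Q^{3/4} = (q')^{1/2}$, and $(q')^{1+\varepsilon}$ is \emph{larger}, not ``much smaller'', than this. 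The paper obtains $Z_0 = O(1)$ for $c_0 \geq 2$ (and $O(p^{1/2})$ for $c_0=1$) by first using the Ramanujan-sum vanishing to restrict to $k \geq 2c_0-1$ and then bounding the surviving terms individually; for the $k=c_0$ contribution one uses $|\tau_L(\xi)| = p^k$ and $|S(a,0;p^k)| \leq (a,p^k)$ in Lemma \ref{lemma:HhatpsiTrivialInertkSmallest} to get $O(p^{-c_0})$.

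Two smaller points. First, for $k = c_0 = 1$, Proposition \ref{AGprop} combined with \eqref{eq:HhatPrimeSupercuspidal} gives $|\widehat{H}(\psi,1,1,1)| \ll p^{1/2}$, not $p^{-1/2}$: one has $|\tau(\overline{\psi})| = p^{1/2}$ and $|g(\xi,\psi)| \ll p^2$, so $|\widehat{H}| \ll p^{-2} \cdot p^{1/2} \cdot p^2 = p^{1/2}$. This is still acceptable since $p^{1/2} = (q')^{1/2}$ feeds into the fourth moment to give $(q')^{3/2+\varepsilon}$. Second, what you call ``bookkeeping'' for the singular cosets is in fact the heart of the matter: the paper writes $\Upsilon = \sum_{0 \leq m \leq n} \Upsilon_m$ with $m = v_p(\Delta)$, notes that for each even $m=2j$ the condition $p^{2j} \mid \Delta$ cuts out at most two cosets of size $p^{k-2j}$ on which $\rho(\Delta,p^n) \leq 2p^{j}$, applies Theorem \ref{thm:fourthcoset} to each, and then must verify the arithmetic inequality $\lfloor k/4 \rfloor + \lceil 2k/3 \rceil \leq k$ (and its analogue for $\Upsilon'$) to close the estimate. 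These numerical checks are not automatic and should be carried out explicitly.
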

Recall that $c(\sigma_p)=2c_0$ and $q' = p^{c_0}$ when $L/\Q_p$ is unramified.
\begin{proof}
First consider the bound on $Z_1$, with $\sigma_i > 1/2$.  We have that the $p$-part of $Y_1$ equals
\begin{multline}
 \sum_{k=c_0}^\infty \frac{p^{c_0}}{p^{k}} \sum_{\beta=0}^k \sum_{\alpha=0}^{\max(k'-1,0)} \sum_{\substack{m_i | p^\infty \\ (i=1,2,3)}} \frac{M(p^\alpha,p^\beta,m_1,m_2,m_3)}{m_1^{\sigma_1}m_2^{\sigma_2}m_3^{\sigma_3}(p^{k-c_0})^{\sigma_4}} \sum_{\substack{ \theta \in X(p^\beta)/ \psi_0X(p^{\beta-\alpha}) \\ \Delta(\theta) \equiv 0 \shortmod {p^\alpha}}} p^{\max(\beta-\alpha, \lceil 2\beta/3\rceil)}.
\end{multline}
Write $Y_1 = Y_1^{(c_0)} + Y_1^{(>c_0)}$, 
where $Y_1^{(c_0)}$ corresponds to the terms with $k=c_0$, and $Y_1^{(>c_0)}$ corresponds to the terms with $k>c_0$.
We claim $Y_1^{(>c_0)} \ll (q')^{3/2+\varepsilon}$.  To see this, recall that for $k \geq c_0 + 1$ we may assume $a_1 = a_2 = a_3 = 1$ by Lemma \ref{lemma:HhatVanishesUnlessCoprimeSupercuspidalInert}.  
In addition, Proposition \ref{prop:HhatBoundkbiggish} gives the same bound for ramified and unramified $L$.  Therefore, the same bound on $Z_1$ from Lemma \ref{lemma:ZpropertiesRamified} carries over to $Y_1^{(>c_0)}$.

We turn to $Y_1^{(c_0)}$.  If $c_0 = 1$, then we apply Lemmas \ref{lemma:HhatSupercuspidalPrime} and \ref{lemma:HhatSupercuspidalPrime_triv}, and Propositions \ref{AGprop} and \ref{AGprop_triv} to give
\begin{equation}
Y_1^{(c_0)} \ll p^{\varepsilon} \frac{p^{v_p(q')}}{p} p^{1/2} p = p^{3/2+\varepsilon} =  p^{(3/2+\varepsilon)v_p(q')}.
\end{equation}

Now suppose $c_0 \geq 2$.  Consider first the $\beta=0$ (i.e.\ $\psi$ trivial) term of $Y^{(c_0)}$, which is 
\begin{equation}\label{beta0c0geq2_term}  
\sum_{\substack{m_i | p^\infty \\ (i=1,2,3)}} \frac{M(1,1,m_1,m_2,m_3)}{m_1^{\sigma_1}m_2^{\sigma_2}m_3^{\sigma_3}}.\end{equation}
By Lemma \ref{lemma:HhatBoundSupercuspidalInertkequalsi0IntermediateStep}, \eqref{eq:valuationNormR}, and  Lemma
\ref{lemma:HhatpsiTrivialInertkSmallest}, we have 
$$ M(1,1,m_1,m_2,m_3) \ll \begin{cases} p^{k/2} & \text{ if } m_1m_2m_3=1 \\ p^{-k} (m_1,p^k)(m_2,p^k)(m_3,p^k) & \text{ if } p | m_1m_2m_3.\end{cases}$$
Inserting this into  \eqref{beta0c0geq2_term}, we have for
 $\sigma_1, \sigma_2, \sigma_3 \geq \sigma \geq 1/2$ that the $\beta=0$ term of $Y^{(c_0)}$ is 
\begin{equation*}
 \ll p^{k/2} 
+ p^{-k} \Big(\sum_{\alpha=0}^{\infty} \frac{(p^\alpha, p^k)}{p^{\alpha \sigma}}\Big)^3 
\ll p^{k/2} = p^{\frac{1}{2}v_p(q')}.
\end{equation*}
The contribution of $\psi$ trivial to $Z_1$ is hence bounded satisfactorily.

Now consider the $\beta>0$ (i.e.\ $\psi$ non-trivial) terms of $Y_1^{(c_0)}$, namely
\begin{equation}\label{beta>0c0geq2_terms}
\sum_{\beta=1}^{k} \sum_{\alpha =0 }^{k-1}  \sum_{\substack{m_i | p^\infty \\ (i=1,2,3)}}  \frac{M(p^\alpha,p^\beta,m_1,m_2,m_3)}{m_1^{\sigma_1}m_2^{\sigma_2}m_3^{\sigma_3}}  \sum_{\substack{ \theta \in X(p^\beta)/ \psi_0X(p^{\beta-\alpha}) \\ \Delta(\theta) \equiv 0 \shortmod {p^\alpha}}} p^{\max(\beta-\alpha, \lceil 2\beta/3\rceil)}.
\end{equation}
Lemma \ref{lemma:HhatSupercuspidalInertDegenerate} now implies
\begin{equation*}
\sum_{\substack{m_i | p^\infty \\ (i=1,2,3)}} \frac{M(p^\alpha,p^\beta,m_1,m_2,m_3)}{m_1^{\sigma_1}m_2^{\sigma_2}m_3^{\sigma_3}}
\leq
M(p^\alpha,p^\beta,1,1,1)
 + \frac{M(p^\alpha,p^\beta, p^{k-\beta}, p^{k-\beta}, p^{k-\beta})}{p^{\frac{3}{2}(k-\beta)}} .
\end{equation*}
 Using Lemma \ref{lemma:HhatBoundSupercuspidalInertDegenerate} to bound the second term, we obtain
\begin{equation}
\label{Using_lemma:HhatBoundSupercuspidalInertDegenerate}
\sum_{\substack{m_i | p^\infty \\ (i=1,2,3)}}\frac{M(p^\alpha,p^\beta,m_1,m_2,m_3)}{m_1^{\sigma_1}m_2^{\sigma_2}m_3^{\sigma_3}}
  \leq
M(p^\alpha,p^\beta,1,1,1)
 +  p^{k/2}.
\end{equation}
The contribution from the $p^{k/2}$ term  to $Y_1^{(c_0)}$ is acceptable, as inserting it back into \eqref{beta>0c0geq2_terms}  leads to
\begin{equation*}
p^{k/2} \sum_{\beta=1}^{k} \sum_{\alpha =0 }^{k-1} \sum_{\substack{ \theta \in X(p^\beta)/ \psi_0X(p^{\beta-\alpha}) \\ \Delta(\theta) \equiv 0 \shortmod {p^\alpha}}} p^{\max(\beta-\alpha, \lceil 2\beta/3\rceil)} 
\ll p^{(\frac{3}{2} + \varepsilon)k}
= p^{(\frac{3}{2} + \varepsilon)v_p(q')}.
\end{equation*}
The key remaining issue is to estimate
$$\sum_{\beta=1}^{k} \sum_{\alpha =0 }^{k-1}  M(p^\alpha,p^\beta,1,1,1)\sum_{\substack{ \theta \in X(p^\beta)/ \psi_0X(p^{\beta-\alpha}) \\ \Delta(\theta) \equiv 0 \shortmod {p^\alpha}}} p^{\max(\beta-\alpha, \lceil 2\beta/3\rceil)}.$$

We use Lemma \ref{lemma:HhatBoundInertRhoBound} to bound $M(p^\alpha,p^\beta,1,1,1)$ in terms of the number of solutions $\rho(\Delta, p^m)$ to $x^2 \equiv \Delta \pmod {p^m}$. By Hensel's Lemma and clearing common factors of $p$, we have
\begin{equation}
\rho(\Delta,p^m) = \begin{cases} 1+\left( \frac{\Delta}{p}\right) & \text{ if } m=1 \\ \rho(\Delta,p) & \text{ if } m\geq 2 \text{ and } v(\Delta)=0 \\ 0 & \text{ if } m\geq 2 \text{ and } v(\Delta)=1 \\ p\rho(\Delta p^{-2}, p^{m-2}) &   \text{ if } m\geq 2 \text{ and } v(\Delta)\geq 2. \end{cases} 
\end{equation}
Thus, we have $\rho(\Delta, p^m) \leq R(v(\Delta), m)$ with $$R(\alpha, m) := \begin{cases} 
  2 & \text{ if } m \leq 1 \\ 
  0 & \text{ if } m \geq 2 \text{ and } \alpha \text{ odd } \\ 
  2 p^{\lfloor \min(\alpha, m)/2\rfloor} &  \text{ if } m \geq 2 \text{ and } \alpha \text{ even.}\end{cases} $$  
  Now if $k=2n$ then we have by Lemma \ref{lemma:HhatBoundInertRhoBound} that 
$$M(p^\alpha, p^\beta, 1,1,1) \leq p^{k/2}R(\alpha, n),$$
while if $k=2n+1$ we have 
$$M(p^\alpha, p^\beta, 1,1,1) \leq p^{k/2}R(\alpha, n) + p^{\frac{k+1}{2}} \delta(\alpha \geq 2) R(\alpha-2, n-1).$$
Thus, 
both $k=2n$ and $k=2n+1$ require the estimation of
\begin{equation}\label{eq:Upsilondef}
\Upsilon := p^{k/2}  \sum_{\alpha =0 }^{k-1}  R(\alpha,n) \sum_{\beta=1}^{k}  \sum_{\substack{ \theta \in X(p^\beta)/ \psi_0X(p^{\beta-\alpha}) \\ \Delta(\theta) \equiv 0 \shortmod {p^\alpha}}} p^{\max(\beta-\alpha, \lceil 2\beta/3\rceil)}, 
\end{equation}
while the case $k=2n+1$ additionally requires
\begin{equation}
\Upsilon' := p^{\frac{k+1}{2}}  \sum_{\alpha =2 }^{k-1}  R(\alpha-2,n-1) \sum_{\beta=1}^{k}\sum_{\substack{ \theta \in X(p^\beta)/ \psi_0X(p^{\beta-\alpha}) \\ \Delta(\theta) \equiv 0 \shortmod {p^\alpha}}} p^{\max(\beta-\alpha, \lceil 2\beta/3\rceil)}.
\end{equation}

We begin with $\Upsilon$ and split the $\alpha$ sum at $n$, i.e.\ 
we write
$\Upsilon = \sum_{0 \leq \alpha \leq n} \Upsilon_{\alpha}$, where $\Upsilon_{\alpha}$ is the $\alpha$ term of $\Upsilon$ for $0\leq \alpha < n$ and 
$$\Upsilon_n = p^{k/2}  \sum_{\alpha =n }^{k-1}  R(\alpha,n)\sum_{\beta=1}^{k} \sum_{\substack{ \theta \in X(p^\beta)/ \psi_0X(p^{\beta-\alpha}) \\ \Delta(\theta) \equiv 0 \shortmod {p^\alpha}}} p^{\max(\beta-\alpha, \lceil 2\beta/3\rceil)}.$$

If $\alpha = 0$, then $R(\alpha, n) \leq 2$, and we quickly obtain $\Upsilon_0 \ll p^{3k/2} = p^{3v_p(q')/2}$.
At the other extreme, if $\alpha \geq n$, then $R(\alpha, n) \leq 2 p^{\lfloor n/2 \rfloor}$.  The condition
$\Delta(\theta) \equiv 0 \pmod{p^n}$ then means that $4 \ell_{\psi}^2 \equiv   \mathrm{Nm}(\ell_\xi) \pmod{p^n}$ for all $\psi$ in the coset $\theta$.  Since $(4^{-1}\mathrm{Nm}(\ell_\xi), p) = 1$, there are at most two solutions $\ell_{\psi} \pmod{p^n}$ to this congruence.  Therefore,
\begin{equation}
\Upsilon_n
\\
\ll  p^{\frac{k}{2} + \lfloor \frac{n}{2} \rfloor + \max(k-n,\lceil \frac{2k}{3} \rceil) +  k \varepsilon}.
\end{equation}
Note $\lfloor n/2 \rfloor = \lfloor \lfloor k/2 \rfloor /2 \rfloor \leq \lfloor k/4 \rfloor$, and 
$k-n = \lceil k/2 \rceil \leq \lceil 2k/3 \rceil$.
One can check by brute force computation that 
\begin{equation}
\label{eq:floork}
\lfloor k/4 \rfloor + \lceil 2k/3 \rceil \leq k
\end{equation}
 for  $k \geq 0$ (one only needs to check $0 \leq k \leq 11$, by breaking into arithmetic progressions modulo $12$).
Hence $\Upsilon_n \ll  p^{(\frac{3}{2} + \varepsilon)v_p(q')}$.
 
Next we deal with the intermediate terms, with $1 \leq \alpha< n$. Since $n\geq 2$, the bound $R(\alpha,n)$  vanishes unless  $\alpha$ is even, in which case we write $\alpha = 2j$ and have $R(2j, n) = 2p^j$.
 Next, as in the case that $\alpha \geq n$ above, we observe that the condition $\Delta(\theta) \equiv 0 \pmod{p^{2j}}$ specifies at most two cosets $\theta$.
Then
\begin{multline}
  \sum_{0 < \alpha < n}  \Upsilon_{\alpha} =
2p^{k/2}
\sum_{0 < 2j < n} p^j
\sum_{\beta=1}^{k}\sum_{\substack{ \theta \in X(p^\beta)/ \psi_0X(p^{\beta-2j}) \\ \Delta(\theta) \equiv 0 \shortmod {p^{2j}}}} p^{\max(\beta-2j, \lceil 2\beta/3\rceil)} 
\\
\ll 
p^{k(\frac12 + \varepsilon)} 
\sum_{0 < 2j < n} p^j (p^{k-2j} + p^{\lceil 2k/3 \rceil})
\ll p^{k\varepsilon} (p^{3k/2} + p^{k/2 + \lfloor k/4 \rfloor + \lceil 2k/3 \rceil}).
\end{multline}
By \eqref{eq:floork}, this meets the desired bound.

We also need to estimate $\Upsilon'$.   This is similar to $\Upsilon$, though the exponents are slightly modified. We write $\Upsilon' = \sum_{0<2j\leq n+1} \Upsilon'_{2j}$, where $\Upsilon'_{\alpha}$ for $\alpha\leq n$ corresponds to the $\alpha$ term of $\Upsilon'$ and 
$$ \Upsilon'_{n+1} = p^{\frac{k+1}{2}}  \sum_{\alpha =n+1 }^{k-1}  R(\alpha-2,n-1)\sum_{\beta=1}^{k} \sum_{\substack{ \theta \in X(p^\beta)/ \psi_0X(p^{\beta-\alpha}) \\ \Delta(\theta) \equiv 0 \shortmod {p^\alpha}}} p^{\max(\beta-\alpha, \lceil 2\beta/3\rceil)}.$$
Recall that $$R(2j-2, n-1) = \begin{cases} 2p^{j-1} & \text{ if } 0<2j< n+1 \\ 2p^{\lfloor \frac{n-1}{2}\rfloor} & \text{ if } 2j \geq n+1.\end{cases}$$

For the first type of terms we get
\begin{multline}
\sum_{0<2j<n+1} \Upsilon'_{2j} =2 p^{\frac{k+1}{2}}
 \sum_{0<2j< n+1}p^{j-1}  \sum_{\beta=1}^{k}\sum_{\substack{ \theta \in X(p^\beta)/ \psi_0X(p^{\beta-2j}) \\ \Delta(\theta) \equiv 0 \shortmod {p^{2j}}}} p^{\max(\beta-2j, \lceil 2\beta/3\rceil)} 
\\
\ll 
p^{k(\frac12 + \varepsilon)} p^{-1/2}
\sum_{0 < 2j \leq n} p^j (p^{k-2j} + p^{\lceil 2k/3 \rceil})
\ll p^{k\varepsilon}  (p^{\frac{3k-3}{2}} + p^{\frac{k-1}{2} + \lfloor n/2 \rfloor + \lceil 2k/3 \rceil}).
\end{multline}
  Here $ \lfloor n/2 \rfloor  = \lfloor (k-1)/4 \rfloor$.  One can check that $\lfloor (k-1)/4 \rfloor + \lceil 2k/3 \rceil\leq k$ by brute force. For the case $\alpha \geq n+1$ we get
  \begin{multline}
\Upsilon_{n+1}'= 2 p^{\frac{k+1}{2}} p^{\lfloor \frac{n-1}{2}\rfloor } 
\sum_{\alpha = n+1}^{k-1} \sum_{\beta=1}^{k} 
\sum_{\substack{ \theta \in X(p^\beta)/ \psi_0 X(p^{\beta-\alpha}) \\ \Delta(\theta) \equiv 0 \shortmod {p^{\alpha}}}} p^{\max(\beta-\alpha, \lceil 2\beta/3\rceil)}
\\
\ll p^{k\eps} p^{\frac{k+1}{2} + \lfloor \frac{n-1}{2}\rfloor } ( p^{k-n-1}+ p^{\lceil 2k/3 \rceil})
\ll  p^{k\eps}( p^{k+\lfloor \frac{k-3}{4}\rfloor} + p^{\frac{k+1}{2} + \lfloor \frac{k-3}{4}\rfloor  + \lceil \frac{2k}{3} \rceil} ).
  \end{multline}
   For this bound to be satisfactory, it suffices to check that
 \begin{equation}
 1 + \lfloor (k-3)/4 \rfloor + \lceil 2k/3 \rceil \leq k,
 \end{equation}
for $k \geq 3$ odd.  This is easily checkable by brute force, again.
In summary, we have shown the desired bound \eqref{eq:Z1bound} on $Z_1$.

 Next consider the bound on $Z_0$, corresponding to $\psi = \psi_0$.  
 By Lemmas \ref{lemma:HhatpsiTrivialInertkSmallest}, \ref{lemma:HhatVanishesUnlessCoprimeSupercuspidalInert} and \ref{lemma:HhatpsiTrivialInertkLarger}, we have for $c_0 \geq 2$ that
\begin{equation*}
 |Z_0| \ll
 \sum_{k=c_0} 
 \frac{p^{c_0}}{p^{2k}} 
\Big(\sum_{a|p^{\infty}} \frac{|S(a,0;p^k)|}{a^{\sigma}} \Big)^3
+
\sum_{k \geq c_0+1} \frac{p^{c_0}}{p^k} \frac{|S(\mathrm{Nm}(\ell_{\xi}), 0;p^k)|}{p^{(k-c_0) \sigma_4}},
\end{equation*}
where $\sigma, \sigma_4 > 1$.  
The sum over $k=c_0$ is $\ll p^{-c_0}$.  
For the sum over $k \geq c_0 +1$, recall from Lemma \ref{lemma:vpNormOfEllTheta} that $v_p(\mathrm{Nm}(\ell_{\xi})) = 2(k-c_0)$, which means the sum over $k$ here can be further restricted to $k \geq 2c_0 - 1$.
The term with $k = 2c_0-1$ is $\ll \frac{p^{c_0}}{p^{2c_0-1}} \frac{p^{2c_0-2}}{p^{c_0-1}} =  1$.  Similarly, the sum over $k \geq 2c_0$ is $\ll \sum_{k \geq 2c_0} \frac{p^{c_0}}{p^k} \frac{p^k}{p^{k-c_0}} \ll 1$.
This is stronger than the claimed bound of
$p^{2v_p(q')}/Q^{3/4}$, since $Q = p^{2v_p(q')} = p^{2c_0}$.

Next suppose $c_0 = 1$.  As in the case $c_0 = 2$, the sum splits into $k=c_0$ and $k \geq c_0+1$.  
By Lemma \ref{lemma:HhatSupercuspidalPrimeTrivialPsi} and Proposition \ref{AGprop}, the contribution from $k=c_0$ is at most
\begin{equation*}
\frac{p^{c_0}}{p^{c_0}} \Big(p^{1/2} + \frac{1}{p} \Big(\sum_{a | p^{\infty}} \frac{(a, p)}{a^{\sigma}} \Big)^3 \Big)
\ll p^{1/2},
\end{equation*}
which is consistent with $p^{2v_p(q')}/Q^{3/4} = p^{1/2}$.  For $k \geq c_0 + 1$, we may assume $a_1 = a_2 = a_3 = 1$ by Lemma \ref{lemma:HhatVanishesUnlessCoprimeSupercuspidalInert}, giving
\begin{equation*}
\sum_{k \geq c_0 + 1} \frac{p^{c_0}}{p^k}
\sum_{\substack{\psi_0 \shortmod{p^k} \\ \text{trivial}}} \frac{|\widehat{H}(\psi_0, 1,1,1)|}{p^{(k-c_0) \sigma_4}}.
\end{equation*}
Lemma \ref{lemma:HhatpsiTrivialInertkLarger} shows this is $O(1)$, which is better than what is required.
\end{proof}

\begin{mylemma}[Supercuspidal, $p=2$]
\label{lemma:Zpropertiesp=2}
 Suppose $p=2$ and $\sigma_2$ is a trivial central character supercuspidal representation.
 Then the properties of $Z$ from Lemma \ref{lemma:Zlemma} hold.
\end{mylemma}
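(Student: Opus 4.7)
The proof should proceed in close parallel with Lemma \ref{lemma:ZpropertiesRamified}, using Lemma \ref{lemma:HhatPropertiesp=2} as the supercuspidal, $p=2$ substitute for Lemmas \ref{lemma:HhatVanishesUnlessCoprimeSupercuspidalInert}, \ref{prop:HhatBoundkbiggish}, and \ref{lemma:HhatpsiTrivialInertkLarger}. The key observation enabling this is that for $p=2$, the geometric conductor satisfies $v_2(q') = c_0 + O(1)$ with a bounded additive constant (at most $199$), so arithmetically $q' = 2^{c_0+O(1)}$ behaves like in the odd ramified case for the purpose of polynomial bounds of the shape $(q')^{3/2+\varepsilon}$ and $(q')^2/Q^{3/4}$. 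Consequently, all factors of $2^{O(1)}$ arising from the enlarged family are harmless.

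First I would invoke Lemma \ref{lemma:HhatPropertiesp=2}(1) to collapse the sums over $m_1,m_2,m_3$ in \eqref{eq:ZfinFormula2}, reducing to the case $a_1=a_2=a_3=1$. Then for $Z_1$, using Lemma \ref{lemma:HhatPropertiesp=2}(2) and splitting the sum over $\psi \pmod{2^k}$ according to $C(\psi)=2^{k-j}$ for $0 \le j \le k$, there are at most $2^{k-j}$ characters in each group and $|\widehat{H}(\psi,1,1,1)| \ll 2^{3k/2}/2^{k-j} = 2^{k/2+j}$ for such $\psi$. Theorem \ref{thm:fourthcoset} applied to the subgroup of characters of conductor dividing $2^{k-j}$ (taking the base character to be trivial, so $q_0=1$) then bounds
\begin{equation*}
\int_{-X}^X \sum_{\psi \pmod{2^k},\, C(\psi) \le 2^{k-j}} |\mathscr{L}(\psi)|\, dt \ll X \cdot 2^{k-j} \cdot (2^k X)^{\varepsilon},
\end{equation*}
yielding $\int_{-X}^X |\widehat{H}||\mathscr{L}| \ll 2^{3k/2+\varepsilon} X^{1+\varepsilon}$ uniformly in $j$. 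Summing over $0 \le j \le k$ and over $k \ge v_2(q')$ with the decay factor $2^{-(k-v_2(q'))\sigma_4}$ gives
\begin{equation*}
\int_{-X}^X |Z_1| \ll X^{1+\varepsilon} \sum_{k \ge v_2(q')} \frac{q'}{2^k} \cdot 2^{3k/2+k\varepsilon}\cdot 2^{-(k-v_2(q'))\sigma_4} \ll (q')^{3/2+\varepsilon} X^{1+\varepsilon},
\end{equation*}
for $\sigma_4 > 1/2$, which is \eqref{eq:Z1bound}.

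For $Z_0$, starting from \eqref{eq:Z0productbound} with a single prime $p=2$ and $a_1=a_2=a_3=1$, Lemma \ref{lemma:HhatPropertiesp=2}(3) gives $|\widehat{H}(\psi_0,1,1,1)| \ll 2^k \cdot \delta(k \ge 2c_0 + O(1))$, so
\begin{equation*}
Z_0 \ll q' \sum_{k \ge 2c_0+O(1)} \frac{1}{2^{(k-v_2(q'))\sigma_4}} \ll q' \cdot 2^{-(2c_0-v_2(q'))\sigma_4} \ll q' \cdot 2^{-c_0+O(1)} = 2^{O(1)},
\end{equation*}
for $\sigma_4 > 1$. Since the target bound \eqref{eq:Z0bound} is $(q')^2/Q^{3/4} = 2^{2c_0+O(1) - (3/2)c_0 + O(1)} = 2^{c_0/2 + O(1)}$, the estimate above is more than sufficient. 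The final bound for $Z_0$ near the $1/2$-lines follows by the same reasoning as for $Z_1$ since the bounds in Lemma \ref{lemma:HhatPropertiesp=2} are valid in the range $\sigma_i > 1/2$ throughout.

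I do not anticipate any genuine obstacle here beyond careful bookkeeping of the bounded additive discrepancy between $v_2(q')$ and $c_0$; all the genuinely 2-adic work (Postnikov analysis, quadratic expansion, additive orthogonality over the quadratic extension, and tracking of the enlarged neighborhood $\xi[200]$) has already been absorbed into Lemma \ref{lemma:HhatPropertiesp=2}, leaving this proof essentially identical in structure to the odd ramified supercuspidal case.
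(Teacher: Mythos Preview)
Your proposal is correct and follows essentially the same approach as the paper, which simply cites Lemma~\ref{lemma:HhatPropertiesp=2} and says the proofs of Lemmas~\ref{lemma:ZpropertiesRamified} and~\ref{lemma:ZpropertiesUnramified} carry over with minor modification. One small quibble: your invocation of Theorem~\ref{thm:fourthcoset} with trivial base character does not parse as stated (then $q=1$ forces $d\mid 1$); what you actually need is the classical fourth moment bound over all characters modulo $2^{k-j}$, which follows from Theorem~\ref{thm:fourthcoset} by taking any primitive character of that conductor as the base and $d=2^{k-j}$.
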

\begin{proof}
Recall that when $p=2$ we have $v_2(q')\geq c_0+100$ due to our choice of enlarged family $\sigma_2[200]$ when $p=2$. Then  a minor modification of the proof of Lemmas \ref{lemma:ZpropertiesRamified} and \ref{lemma:ZpropertiesUnramified} carries over here, using Lemma \ref{lemma:HhatPropertiesp=2} in place of the similar bounds that were used for $p$ odd.
\end{proof}

\end{document}